\newtheorem*{maintheorem}{\bf Main Theorem}
\newtheorem*{MainTheoremRevisited}{\bf Main Theorem Revisited}
\newtheorem*{theorem*}{\bf Theorem}
\newtheorem*{claim*}{Claim}
\newtheorem*{addendum*}{\bf Addendum}
\newtheorem{mytheorem}{\bf Theorem}
\newtheorem{definition}{\bf Definition}[section]
\newtheorem{theorem}[definition]{\bf Theorem}
\newtheorem{lemma}[definition]{\bf Lemma}
\newtheorem{proposition}[definition]{\bf Proposition}
\newtheorem{corollary}[definition]{\bf Corollary}
\newtheorem{remark}[definition]{\bf Remark}
\newtheorem{question}[definition]{\bf Question}
\newtheorem*{conjecture*}{\bf Conjecture}
\newtheorem{problem}[definition]{\bf Problem}
\newtheorem{claim}[definition]{\bf Claim}
\newcommand\TMO{T_OM}
\def\QQ{\mathbb Q}
\def\NN{\mathbb N}
\def\ZZ{\mathbb Z}
\def\RR{\mathbb R}
\def\cO{\mathcal O}
\def\cZ{\mathcal Z}
\def\cV{\mathcal V}
\def\cR{\mathcal R}
\def\cU{\mathcal U}
\def\cA{\mathcal A}
\def\cB{\mathcal B}
\def\cC{\mathcal C}
\def\cD{\mathcal D}
\def\cG{\mathcal G}
\def\cK{\mathcal K}
\def\eps{\epsilon}
\def\NDS{{\mathcal E_\omega(M)}}
\newcommand\Grass{\operatorname{Grass}}
\newcommand\Jac{\operatorname{Jac}}
\newcommand\Prob{{\mathbb P}}
\newcommand\Probe{{\mathbb P}_{\rm erg}}
\newcommand\Proberg{{\mathbb P}_{\rm erg}}
\newcommand\diam{\operatorname{diam}}
\newcommand{\Diff}{{\operatorname{Diff}}}
\newcommand\Id{\operatorname{Id}}
\newcommand\Lip{\operatorname{Lip}}
\newcommand\supp{\operatorname{supp}}
\renewcommand\top{{\operatorname{top}}}
\newcommand\loc{{\operatorname{loc}}}
\newcommand\BA{\mathbf A}\newcommand\BB{\mathbf B}\newcommand\BC{\mathbf C}\newcommand\BD{\mathbf D}
\newcommand\ignorethis[1]{}
\definecolor{darkgreen}{rgb}{0.0, 0.3, 0.0}
\date{\today}
\title{The entropy of $C^1$-diffeomorphisms \\ without a dominated splitting}
\author{J\'er\^ome Buzzi, Sylvain Crovisier, Todd Fisher\footnote{S.C was partially supported by the ERC project 692925 \emph{NUHGD}.  T.F.\ is supported by Simons Foundation grant \# 239708.}}
\renewcommand{\l@section}{\@dottedtocline{2}{3.8em}{3.2em}}
\renewcommand{\l@subsection}{\@dottedtocline{3}{3.8em}{3.2em}}
\newcommand{\subsectionruninhead}{\@startsection{subsection}{2}{0mm}{-\baselineskip}{-0mm}{\bf\large}}
\newcommand{\subsubsectionruninhead}{\@startsection{subsubsection}{3}{0mm}{-\baselineskip}{-0mm}{\bf\normalsize}}
\begin{document}

\maketitle

\begin{abstract} 
A classical {construction due to Newhouse creates} horseshoes from hyperbolic periodic orbits with large period and weak domination through local $C^1$-perturbations.  Our main theorem shows that,  when one works in the $C^1$ topology, the entropy of such horseshoes can be made arbitrarily close to an upper bound {following} from Ruelle's inequality, i.e., the sum of the positive Lyapunov exponents (or the same for the inverse diffeomorphism, whichever is smaller). 

This {optimal entropy creation} yields a number of consequences for $C^1$-generic diffeomorphisms, {especially in the absence of a dominated splitting.}
{For instance, in the conservative settings, we find formulas for the topological entropy,
deduce that the topological entropy is continuous, but not locally constant at the generic diffeomorphism
and we prove that these generic diffeomorphisms have no measure of maximum entropy. In the dissipative setting, we {show} the locally generic existence of infinitely many homoclinic classes with entropy bounded away from zero.}
\end{abstract}

{{\renewcommand{\thefootnote}{}%
\footnote{\emph{Mathematics Subject Classification (2010):} Primary 37C15; Secondary 37B40; 37D05; 37D30}}}

\tableofcontents

\par\medskip
\noindent\textbf{Keywords.} Topological entropy; measure theoretic entropy; dominated splitting; homoclinic tangency; homoclinic class; Lyapunov exponent.

\section{Introduction}

Topological and measure theoretic entropy are fundamental dynamical invariants.  Although there is a number of deep theorems about the entropy of smooth dynamics (see, e.g., \cite{Katok, LY, Newhouse1989, DN}), some basic questions remain open.
The present work addresses some of {these questions} for $C^1$-diffeomorphisms of compact manifolds. We focus especially in the conservative setting or when there is no dominated splitting (in some sense {this is} the opposite of the better understood uniformly hyperbolic diffeomorphisms). We build on the classical article of Newhouse \cite{Newhouse} and strengthen and generalize the recent work of Catalan and Tahzibi and others \cite{CT,C1,AACS} to higher dimension as well as to the volume-preserving case.

First, we try and understand the \emph{mechanisms underlying the entropy}, and in particular, 
the Lyapunov exponents of periodic points or invariant measures, and
 horseshoes. For diffeomorphisms, general results of Ruelle  and Katok  relate the entropy of measures to their Lyapunov exponents and to the existence of horseshoes. We study corresponding global variational principles.

Second, we are interested in  the
\emph {continuity properties and local variation}
of the map $f\mapsto h_\top(f)$. 
In general, this map is neither upper, nor lower semicontinuous, but within certain important classes of diffeomorphisms these properties may hold, see for example \cite{Katok, Newhouse1989}.  Also, local constancy may occur even for non structurally stable maps \cite{HSX,BFSV,U,FPS,BF}. We show that, in the conservative case, when there is no dominated splitting, the entropy is never locally constant but is continuous at the generic diffeomorphism.

Third, we ask about the \emph{localization of the entropy}: i.e., the entropy of homoclinic classes and of invariant measures.  In the dissipative setting, when there is no dominated splitting, we show the coexistence of infinitely many homoclinic classes with entropy bounded away from zero. In the conservative setting, also without a dominated splitting,  there is no measure maximizing the entropy and the topological entropy is a complete invariant of Borel conjugacy up to the periodic orbits.

All our results rely on a strengthening of the classical construction of Newhouse \cite{Newhouse} identifying a mechanism for entropy, namely,  the creation of horseshoes from hyperbolic periodic orbits with homoclinic tangencies through $C^1$-perturbations. When there is no dominated splitting, our main theorem produces horseshoes with entropy arbitrarily close to an upper bound following from Ruelle's inequality and this optimality allows the computation of entropy (and related quantities).

{Our perturbations are local: the horseshoes are obtained in an arbitrarily small neighborhood of a well chosen periodic orbit. This relies on the {perturbation} techniques that have been developed
for the $C^1$-topology by Franks~\cite{franks}, Gourme\-lon~\cite{Gourmelon2014}, Bochi and Bonatti~\cite{BochiBonatti}, among others.}

\subsection{Horseshoes arising from homoclinic tangencies}
Before stating our main theorem we need to review a few definitions.  Let us fix a closed (compact and boundaryless) connected Riemannian manifold $M$ of dimension $d_0$ and a $C^1$-diffeo\-mor\-phism $f$.
Any periodic point $p$ has $d_0$ Lyapunov exponents, repeated according to multiplicity,
$\lambda_1(f,p)\leq \dots\leq \lambda_{d_0}(f,p)$.
We set $\lambda^+:=\max(\lambda,0)$ and $\lambda^-:=\max(-\lambda,0)$.

If $N\geq 1$ is an integer,
we say that an invariant compact set $\Lambda$
has an \emph{$N$-dominated splitting} if there exists a non-trivial decomposition
$TM|_{\Lambda}=E\oplus F$ of the tangent bundle of $M$ above $\Lambda$ in two invariant continuous subbundles such that for all $x\in \Lambda$, all $n\geq N$ and all unit vectors $u\in E(x)$ and $v\in F(x)$ we have,
 $$
    \|Df^nu\|\leq \|Df^n v\|/2.
 $$
 In the case $E$ is uniformly contracted and $F$ uniformly expanded, the set $\Lambda$
 is \emph{(uniformly) hyperbolic}. A \emph{horseshoe} is a transitive hyperbolic set, that is locally maximal,
 totally disconnected, and not reduced to a periodic orbit.

We now state our main result, see Theorem~\ref{t.newhouse} for a more detailed version. 

\begin{maintheorem}
For any $C^1$-diffeomorphism $f$ of a compact $d_0$-dimensional manifold $M$ and any $C^1$-neighborhood $\cU$
of $f$, there exist $N,T\geq1$ with the following property.
{If} $p$ {is a periodic point for} $f$ with period at least $T$ and whose orbit has no $N$-dominated splitting, {then}
there exists $g\in\cU$ containing a horseshoe $K$ such that
 $$
   h_\top(g,K) \geq \Delta(f,p):=\min\bigg(\sum_{i=1}^{d_0} \lambda_i^+(f,p), \sum_{i=1}^{d_0} \lambda_i^-(f,p)\bigg).
 $$
 Moreover $K$ and the support of $g\circ f^{-1}$ can be chosen in an arbitrarily small
 neighborhood of the orbit of $p$, and 
if $f$ preserves a volume or symplectic form, then one can choose $g$ to preserve the volume or symplectic form respectively.
\end{maintheorem}
The above inequality is, in some sense, optimal -- see Remark~\ref{r.sharp}.

\paragraph{\it Extension to subbundles and higher smoothness.}
The perturbation method used in the proof of the Main Theorem extends to the case where the lack of dominated splitting
occurs  inside an invariant subbundle.  More specifically,
if $E$ is an invariant sub-bundle over the orbit $O$ of a periodic point $p$,
we denote by $\lambda_{E,1}(f,p)\leq \dots\leq \lambda_{E,\ell}(f,p)$ where
$\ell=\dim(E)$, the Lyapunov exponents corresponding to vectors tangent to $E$.
\begin{MainTheoremRevisited}
Let $f,\cU$ and $N,T\geq1$ be as above.
For any periodic point $p$ of $f$ with period at least $T$,
and any invariant bundle $E$ over the orbit of $p$
which does not admit an $N$-dominated splitting,
there exists $g\in \cU$ having a horseshoe $K$ such that
$$h_\top(g,K)\geq \Delta_E(f,p):=\min\bigg(\sum_{i=1}^{\ell} \lambda_{E,i}^+(f,p), \sum_{i=1}^{\ell} \lambda_{E,i}^-(f,p)\bigg).$$
 Moreover $K$ and the support of $g\circ f^{-1}$ can be chosen in an arbitrarily small
 neighborhood of the orbit of $p$, and 
if $f$ preserves a volume or symplectic form, then one can choose $g$ to preserve the volume or symplectic form respectively.
\end{MainTheoremRevisited}

Newhouse's result extends in the $C^r$-topology (assuming the existence of a saddle point with a homoclinic tangency),
but the entropy of the resulting horseshoes is smaller --this is unavoidable given the continuity properties of the topological entropy for smooth maps (see Newhouse \cite{Newhouse1989}  and, for  surfaces, \cite{burguet}).
Because of a program developed by Gourmelon~\cite{gourmelon-announce},
we believe that a version of our result is also true in higher smoothness,
in the setting of a \emph{cycle of basic sets}.
\begin{conjecture*}
For any $C^r$-diffeomorphism $f$ of a manifold $M$,
and for any hyperbolic periodic point $p$ which belongs to a cycle of basic sets
having no dominated splitting of any index,
there exists $g$, arbitrarily close to $f$ for the $C^r$-topology,
with a horseshoe $K$ such that $h_\top(g,K) \geq \Delta(f,p)/r$.
 \end{conjecture*}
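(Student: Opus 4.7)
The plan is to follow the same two-stage strategy used for the Main Theorem---first produce, by perturbation, a particularly flat homoclinic tangency attached to a periodic orbit whose Lyapunov spectrum is controlled, then renormalize near the tangency to extract a horseshoe---while replacing every $C^1$-perturbation lemma by its $C^r$ counterpart. The cycle of basic sets assumption is what substitutes, in the $C^r$ setting, for the freedom granted in $C^1$ by Franks' lemma: Gourmelon's program~\cite{gourmelon-announce} should yield $C^r$-realizability of prescribed derivative cocycles along a finite segment of a periodic orbit, provided one can absorb the discrepancies through transitions that are dynamically connected to $p$, i.e. through the cycle. Concretely, one first uses the absence of dominated splitting along the cycle to approximate $p$ by periodic orbits $q_n$, whose linear cocycles $Df^{\pi(q_n)}$ can be freely rotated in $GL$ (up to a $C^r$-cost); then one creates a homoclinic tangency between $W^s(q_n)$ and $W^u(q_n)$ using the heteroclinic connections provided by the cycle.

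Once a quadratic tangency of $W^s(p')$ and $W^u(p')$ is available for some periodic point $p'$ with $\lambda(p')\approx\lambda(p)$, the horseshoe is built by the standard renormalization: iterating one branch of $W^u(p')$ and composing with a local return map near the tangency, one obtains, after a suitable rescaling, a family of nearly affine maps on a box of size $\delta$ that expand by a factor depending on $\delta$ and on the exponents $\lambda_i$. The $C^r$ constraint enters exactly here: a $C^r$-small perturbation can only displace points by $O(\delta^r)$ on a domain of diameter $\delta$, so the number of disjoint rectangles one can stack in the box at iterate $n$ is roughly $\exp(n \Delta(f,p)/r)$ rather than $\exp(n\Delta(f,p))$. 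This is the Newhouse--Yomdin--Burguet type bound. Carrying this out on each block of consecutive Lyapunov exponents, as in the Revisited version, should give the announced $\Delta(f,p)/r$.

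The main obstacle is the rigidity of $C^r$-perturbations ($r\geq 2$), for two distinct reasons. First, creating the tangency: the $C^r$-Franks lemma along an orbit is extremely restrictive, and one really needs a cycle (two basic sets with a pair of transverse heteroclinic connections of opposite index behavior) in order to bend $W^s$ and $W^u$ into tangency without leaving a small $C^r$-ball---this is the part that Gourmelon's announced techniques are expected to handle. Second, achieving the full $\Delta(f,p)/r$ in the renormalization step requires saturating the Yomdin inequality simultaneously along all positive (or all negative) Lyapunov directions, which in dimension $\geq 3$ is the subtle higher-dimensional analogue of Burguet's surface argument~\cite{burguet}; proving it seems to require a careful accounting of how $C^r$-norms distribute among the Oseledets subspaces under the nonlinear change of coordinates that linearizes the dynamics near $p'$, and it is precisely this step that makes the statement a conjecture rather than a theorem.
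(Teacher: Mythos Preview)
The statement is a \emph{conjecture}, not a theorem: the paper does not prove it and offers no proof to compare against. The authors only motivate it heuristically, citing Gourmelon's announced $C^r$ program~\cite{gourmelon-announce} and the known entropy bounds in higher smoothness (Newhouse~\cite{Newhouse1989}, Burguet~\cite{burguet}).

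Your proposal is not a proof either, and to your credit you say so explicitly in the final paragraph. What you have written is a reasonable strategic outline that is in line with the paper's own heuristics: use the cycle of basic sets as a substitute for the $C^1$ Franks-type freedom (via Gourmelon's program), create a tangency, and then renormalize while paying the $1/r$ penalty imposed by the $C^r$ displacement constraint. The two obstacles you name---the lack of a usable $C^r$ Franks lemma along orbits, requiring instead the cycle structure, and the difficulty of saturating the Yomdin-type bound simultaneously in all relevant directions in dimension $\geq 3$---are exactly the reasons the statement is left as a conjecture. There is nothing wrong in your outline, but nothing in it goes beyond the heuristic level the paper already indicates; in particular, neither the paper nor your proposal supplies the missing ingredients (a $C^r$ perturbation lemma along cycles with control of the derivative cocycle, and the higher-dimensional sharp renormalization estimate).
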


A statement similar to our Main Theorem but restricted to the symplectic setting  has been published by Catalan~\cite{catalan-2016} while this paper was being written. Catalan's proof is different from ours and is specific to the symplectic setting. He uses a global perturbation in contrast to our construction.

\subsection{Consequences for conservative diffeomorphisms}
In the conservative setting
(i.e., in the volume-preserving or symplectic setting), generic irreducibility properties, see for example \cite{BC}, yield the strongest consequences to the Main Theorem. 

Let $\omega$ be a smooth volume or symplectic form on the manifold $M$ with dimension at least $2$. Let $\NDS$ be the interior of the set of $C^1$-diffeomorphisms of $\Diff^1_\omega(M)$ with no dominated splitting. This is always nonempty\footnote{In order to build a diffeomorphism that has robustly no dominated splitting,
it is enough to require for each $1\leq i< d_0$ the existence
of a periodic point $p$ whose tangent dynamics at the period has
$d_0$ simple eigenvalues
$\lambda_1,\dots,\lambda_{d_0}$ such that $|\lambda_k|<|\lambda_{k+1}|$ for each $k\neq i$
and such that $\lambda_i,\lambda_{i+1}$ are non-real conjugated complex numbers.}.
As is usual, we say that the \emph{generic diffeomorphism} in a Baire space of diffeomorphisms
$\mathcal{D}$ has some property, if it holds for all elements of some dense G$_\delta$ subset of $\mathcal{D}$.
\paragraph{a -- Entropy formulas.}
Let $\Grass_k(TM)$ denote the set of $k$-dimensional linear subspaces $E\subset TM$ and $\Jac(f,E)$ the Jacobian of $Df:E\to Df(E)$ with respect to some (arbitrary, fixed) Riemannian metric on $M$.

\begin{mytheorem}\label{t.entropycons}
For the generic $f\in\NDS$,  the topological entropy is equal to
 \begin{enumerate}[(i)]
  \item\label{i.entHS} the supremum of the topological entropies of its horseshoes,
  \item\label{i.entPer} $\Delta(f):=\sup\{\Delta(f,p):p$ periodic point$\}$, and

  \item\label{i.entSubmultiplicative} 
  $\max_{0\leq k\leq \dim(M) }\;\lim_{n\to\infty} \frac 1 n \log \left(\sup _{E\in\Grass_k(TM)}  \Jac(f^n,E)\right).$
 \end{enumerate}
 \end{mytheorem}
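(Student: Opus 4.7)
Write $\bar h(f) := \sup\{h_\top(f, K) : K \text{ horseshoe of } f\}$ and $\Phi(f) := \max_{0 \leq k \leq \dim M} \chi_k(f)$ with $\chi_k$ as in (iii). The aim is to prove $\bar h(f) = h_\top(f) = \Delta(f) = \Phi(f)$ on a residual subset of $\NDS$, which yields (i), (ii) and (iii) simultaneously. Only two non-trivial inequalities are needed: $h_\top(f) \leq \Delta(f)$ (giving also $\Phi(f) \leq \Delta(f)$), and the lower bound $\bar h(f) \geq \Delta(f)$ produced by the Main Theorem through a Baire argument.

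\emph{Upper bounds $h_\top, \Phi \leq \Delta$.} For any ergodic measure $\mu$, Ruelle's inequality yields $h_\mu(f) \leq \sum_i \lambda_i^+(\mu)$; since $f$ preserves a volume or symplectic form, $\sum_i \lambda_i(\mu) = 0$ and hence $\sum \lambda_i^+(\mu) = \sum \lambda_i^-(\mu) = \Delta(\mu)$. The variational principle then gives $h_\top(f) \leq \sup_\mu \Delta(\mu)$. The generic $C^1$-approximation of Lyapunov spectra by hyperbolic periodic orbits (Abdenur--Bonatti--Crovisier, Bochi--Bonatti) identifies $\sup_\mu \Delta(\mu)$ with $\sup_p \Delta(f, p) = \Delta(f)$. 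By Kingman's subadditive ergodic theorem, $\chi_k(f) = \sup_\mu \sum_{i > d_0 - k} \lambda_i(\mu)$; any top partial sum is bounded above by $\sum \lambda_i^+(\mu)$, so $\Phi(f) \leq \Delta(f)$. Conversely, specializing $k$ to the number of positive exponents of a near-extremal $\mu$ yields $\Phi(f) \geq \Delta(f)$.

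\emph{Lower bound $\bar h \geq \Delta$ via the Main Theorem and Baire.} The map $\bar h$ is lower semicontinuous since horseshoes persist with locally constant entropy under $C^1$-perturbations; similarly $\Delta$ is lower semicontinuous via the continuations of hyperbolic periodic orbits. For each rational $r > 0$, the set $\cO_r := \{f \in \NDS : \Delta(f) > r\}$ is open. The key claim is that $\{f : \bar h(f) > r\}$ is dense in $\cO_r$. Given $f \in \cO_r$ and a $C^1$-neighborhood $\cU$ of $f$, the Main Theorem supplies integers $N, T \geq 1$; since $f \in \NDS$ has no dominated splitting and its Lyapunov spectrum is approximated by hyperbolic periodic orbits, we select a periodic orbit $p$ of $f$ of period $\geq T$, without $N$-dominated splitting on its orbit, and with $\Delta(f, p) > r$. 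The Main Theorem produces $g \in \cU$ carrying a horseshoe of entropy at least $\Delta(f, p) > r$, so $\bar h(g) > r$. Consequently $\{\bar h > r\} \cup (\NDS \setminus \overline{\cO_r})$ is open and dense in $\NDS$ (any open $W \subseteq \overline{\cO_r}$ must meet $\cO_r$, and hence $\{\bar h > r\}$, since $\partial \cO_r$ has empty interior). Intersecting over rationals and invoking Baire yields a residual $\cR \subset \NDS$ on which $\bar h(f) \geq \Delta(f)$. Combined with the trivial $\bar h(f) \leq h_\top(f)$ and the previous paragraph, the four-term chain closes.

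\emph{Main obstacle.} The delicate step is the simultaneous choice of a periodic orbit satisfying the Main Theorem's hypotheses (period $\geq T$ and no $N$-dominated splitting on its orbit) while realizing $\Delta(f, p)$ arbitrarily close to $\Delta(f)$, especially when $N, T$ are large. This is where one must combine the $C^1$-generic periodic approximation of Lyapunov exponents with the fact that, inside the open set $\NDS$, the robust absence of dominated splitting propagates to the existence of periodic orbits escaping any prescribed level $N$ of domination. The rest of the argument is a packaging of Ruelle's inequality, Kingman's theorem and a standard Baire argument.
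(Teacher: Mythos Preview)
Your plan follows essentially the same route as the paper: Ruelle's inequality plus the ergodic closing lemma for the upper bound $h_\top(f)\leq\Delta(f)$, Lemma~\ref{l.deltak} (your Kingman argument) for $\Phi(f)=\sup_\mu\Delta^+(f,\mu)$, and the Main Theorem plus a Baire argument for $\bar h(f)\geq\Delta(f)$.

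The one point where your plan is genuinely incomplete is exactly the ``main obstacle'' you flag: producing, for a given $f$, a periodic orbit with $\Delta(f,p)$ close to $\Delta(f)$ \emph{and} period $\geq T$ \emph{and} no $N$-dominated splitting. Merely knowing that $f\in\NDS$ does not force a particular periodic orbit to lack an $N$-dominated splitting; a short orbit can perfectly well carry one even when $M$ does not. The paper's resolution (Lemma~\ref{l.ineq-entropy}) is to first pass to a dense set of $f$ satisfying three generic properties: (a) continuity of $\Delta$, (b) every homoclinic class equals $M$ (a specifically conservative fact from \cite{BC,ABC2}), and (c) inside any horseshoe one can find periodic orbits that are $\eps$-dense in the class with nearby exponents (Proposition~\ref{p.control-exponents}). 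Then, starting from an orbit $O$ with $\Delta(f,O)>\Delta(f)-\eps$, one replaces it by an orbit $O'$ that is $\eps$-dense in $H(O)=M$; such an $O'$ automatically has large period and, since dominated splittings pass to Hausdorff limits (Lemma~\ref{l.domination}), cannot carry an $N$-dominated splitting when $M$ itself has none. This is the mechanism your sentence ``the robust absence of dominated splitting propagates\ldots'' is gesturing at, and you should make it explicit, since the conservative input $H(O)=M$ is what makes the whole argument work.
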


\medskip

We observe that formulas \eqref{i.entPer}~and~\eqref{i.entSubmultiplicative} fail for an open and dense set of Anosov diffeomorphisms, since their topological entropy is locally constant.
For a (possibly dissipative) $C^1$ map $f\colon M\to M$ of a compact manifold, the supremum in item \eqref{i.entHS} is still a lower bound whereas Ruelle's inequality shows that the {expression} \eqref{i.entSubmultiplicative} becomes an upper bound:
\begin{equation*}
\begin{split}
\sup\{h_\top(f|K),\; &K\text{ horseshoe}\}\leq \\
&\leq h_\top(f)\leq \max_{0\leq k\leq \dim(M)}\lim_{n\to\infty} \frac 1 n \log \left(\sup _{E\in\Grass_k(TM)}  \Jac(f^n,E)\right).
\end{split}
\end{equation*}

{We note that} items~(\ref{i.entHS}) and (\ref{i.entPer}) above generalize a formula proved by Catalan and Tazhibi \cite{CT} for surface conservative diffeomorphisms and a weaker inequality from \cite{CH} in the symplectic setting (see also \cite{catalan-2016} for the symplectic case).
Item~\eqref{i.entSubmultiplicative}
looks very similar to a result obtained by Kozlovski \cite{Kozlovski1996} for all $C^\infty$, possibly dissipative, maps:
 $$
      h_\top(f) = \max_{0\leq k\leq \dim(M)} \lim_{n\to\infty} \frac1n \log \int_M \sup_{E\in\Grass_k(T_xM)} \Jac(f^n,E) \, dx.
 $$

\paragraph{b -- Measures of maximal entropy.}
The variational principle asserts that the topological entropy is the supremum
of the entropy of the invariant probability measures. A measure is a \emph{measure of maximal
entropy} if it realizes the supremum.

\begin{mytheorem}
\label{t.nomax}
The generic $f\in\NDS$ has no measure of maximal entropy.
\end{mytheorem}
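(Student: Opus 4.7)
The plan is to argue by contradiction: suppose that a generic $f\in\NDS$ admits an ergodic measure of maximal entropy $\mu$ (WLOG by ergodic decomposition), so $h_\mu(f)=h_\top(f)$. I would combine Ruelle's inequality, the entropy formulas from Theorem~\ref{t.entropycons}, a $C^1$-generic ergodic closing result, and the Main Theorem itself to derive a contradiction.

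First, I would pin down a rigid structure for $\mu$. Ruelle's inequality applied to $f$ and to $f^{-1}$ gives $h_\mu(f)\le\Delta(\mu)$. Formula~\eqref{i.entSubmultiplicative} of Theorem~\ref{t.entropycons} realizes $\sum_i\lambda_i^+(\mu)$ as an Oseledets Jacobian growth rate on the top Oseledets subspace at $\mu$-typical points, hence bounded above by the Grassmannian supremum $h_\top(f)$; the same reasoning applied to $f^{-1}$ yields $\Delta(\mu)\le h_\top(f)$. Under the MME hypothesis the chain of inequalities collapses: $h_\mu=\Delta(\mu)=h_\top(f)$. In other words, Pesin's entropy formula holds with equality for $\mu$ in both time directions, and $\mu$ has at least one positive Lyapunov exponent (since $h_\top(f)>0$ for every $f\in\NDS$, by applying the Main Theorem to some large-period periodic orbit).

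Next I would approximate $\mu$ by periodic orbits matching its Lyapunov data. By the $C^1$-generic ergodic closing lemma (Ma\~n\'e, refined by Abdenur--Bonatti--Crovisier to give convergence of the full Lyapunov spectrum), there is a sequence of periodic orbits $(p_n)$ of $f$ with period tending to infinity and $\Delta(f,p_n)\to\Delta(\mu)=h_\top(f)$. Since $f$ lies in the $C^1$-interior $\NDS$, for any fixed $N$ these orbits admit no $N$-dominated splitting. The conservative version of the Main Theorem then produces, in any prescribed $C^1$-neighborhood of $f$ in $\Diff^1_\omega(M)$, a diffeomorphism $g_n$ preserving $\omega$ with a horseshoe $K_n$ of entropy at least $\Delta(f,p_n)$, supported in an arbitrarily small neighborhood of the orbit of $p_n$.

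The main obstacle is converting this perturbation picture into an obstruction intrinsic to $f$ itself. I would package the construction above into a Baire-category argument: the set of $f\in\NDS$ admitting an MME is expressed as a countable union (over rationals $r,\epsilon>0$) of closed sets of the form $\{f:h_\top(f)\in[r,r+\epsilon]\text{ and some ergodic }\mu\text{ satisfies }h_\mu(f)\ge r\}$, and one shows each has empty interior. Given such $f$, the construction above yields $g_n\to f$ with horseshoes approaching entropy $h_\top(f)$; combining the generic continuity of $h_\top$ and its non-local-constancy on $\NDS$ (proved elsewhere in the paper) with the freedom in the Main Theorem's perturbation, one adjusts $g_n$ so that either $h_\top(g_n)$ leaves the interval $[r,r+\epsilon]$ or a newly created horseshoe carries a Bowen measure whose entropy strictly exceeds the candidate MME value. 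The technical heart of the argument is showing that the rigid Pesin equality $h_\mu=\Delta(\mu)$ of the first step is generically incompatible with the entropy-creation mechanism furnished by the Main Theorem.
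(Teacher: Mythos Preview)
Your proposal correctly identifies the preliminary rigidity $h_\mu=\Delta(\mu)=h_\top(f)$ and the approximation by periodic orbits, but the argument stalls at exactly the point you flag as ``the main obstacle,'' and what you sketch there does not close the gap. Producing perturbations $g_n$ with horseshoes of entropy close to $h_\top(f)$ is already the content of Theorem~\ref{t.entropycons}(\ref{i.entHS}) and gives no obstruction to $f$ itself having a measure of maximal entropy; nor does pushing $h_\top(g_n)$ above $h_\top(f)$ help, since the new horseshoe could simply carry the new MME. Your proposed Baire packaging also has a measurability problem: for $C^1$ diffeomorphisms in $\NDS$ the map $\mu\mapsto h_\mu(f)$ is not known to be upper semicontinuous, so $\{f:\exists\,\mu\text{ ergodic with }h_\mu(f)\ge r\}$ is not obviously closed.

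The paper's proof rests on a mechanism absent from your outline: a \emph{concentration phenomenon} for high-entropy measures (Proposition~\ref{p.HEconcentration}). One perturbs $f$ near a well-chosen periodic orbit $O$ so that (i) $h_\top$ jumps by a definite amount depending only on the $C^1$-size of the perturbation, while (ii) the Jacobian growth away from $O$ is still controlled by the old, smaller entropy (Lemma~\ref{l.perturb}). Ruelle's inequality then forces any ergodic measure with entropy near the new $h_\top$ to spend most of its time in Bowen balls around $O$. After a Baire argument this holds for the generic $f$, and Katok's covering formula~\eqref{e.KatokEntFor} shows that a measure so concentrated has entropy essentially $\log(\#X)/N$, which can be made arbitrarily small --- contradicting $h_\mu(f)=h_\top(f)>0$. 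The missing idea in your approach is precisely this local entropy bump with global Jacobian control; without it, the equality $h_\mu=\Delta(\mu)$ is not by itself an obstruction.
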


Note that $C^\infty$ diffeomorphisms always admit a measure of maximal
entropy by a theorem of Newhouse \cite{Newhouse1989}. {In general,} this fails in any finite smoothness \cite{misiurewicz,buzziNoMax}.

\paragraph{c -- Continuity, stability.}
We now discuss how the topological entropy depends on the diffeomorphism.

\begin{corollary}\label{c.continuity}
The generic diffeomorphism in $\NDS$ is a continuity point of the map $f\mapsto h_\top(f)$, defined over $\Diff^1_\omega(M)$.
\end{corollary}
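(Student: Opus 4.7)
The plan is to establish continuity at a generic $f\in\NDS$ by proving upper and lower semicontinuity of $g\mapsto h_\top(g)$ at $f$ separately, drawing on items~\eqref{i.entHS} and~\eqref{i.entSubmultiplicative} of Theorem~\ref{t.entropycons} respectively. Neither direction is automatic for $C^1$ diffeomorphisms; both succeed precisely because the generic $f$ saturates the two classical bounds on $h_\top$.

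For lower semicontinuity, I would invoke item~\eqref{i.entHS}: given $\epsilon>0$, one can pick a horseshoe $K$ of $f$ with $h_\top(f|K)\geq h_\top(f)-\epsilon$. Since $K$ is a locally maximal uniformly hyperbolic set, it admits a hyperbolic continuation $K_g$ in some $C^1$-neighborhood $\cV$ of $f$, and this continuation is topologically conjugate to $K$. Hence $h_\top(g)\geq h_\top(g|K_g)=h_\top(f|K)\geq h_\top(f)-\epsilon$ for every $g\in\cV\cap\Diff^1_\omega(M)$, giving $\liminf_{g\to f}h_\top(g)\geq h_\top(f)$.

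For upper semicontinuity, I would use item~\eqref{i.entSubmultiplicative} together with the observation that the functional appearing there is upper semicontinuous on $\Diff^1_\omega(M)$. For each $k\in\{0,\dots,\dim M\}$ and $n\geq 1$ set
$$\Phi_{k,n}(g):=\tfrac{1}{n}\log\sup_{E\in\Grass_k(TM)}\Jac(g^n,E).$$
The map $g\mapsto g^n$ is continuous in the $C^1$-topology and $\Grass_k(TM)$ is compact, so each $\Phi_{k,n}$ is continuous. The chain-rule identity $\Jac(g^{n+m},E)=\Jac(g^n,g^m(E))\cdot\Jac(g^m,E)$ yields subadditivity of $n\mapsto n\Phi_{k,n}(g)$, so by Fekete's lemma $\Phi_k(g):=\lim_n\Phi_{k,n}(g)=\inf_n\Phi_{k,n}(g)$, which makes $\Phi_k$ upper semicontinuous as an infimum of continuous functions, and hence $\Phi:=\max_k\Phi_k$ is upper semicontinuous as well. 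Ruelle's inequality in the form recalled after Theorem~\ref{t.entropycons} gives the universal bound $h_\top(g)\leq\Phi(g)$, while item~\eqref{i.entSubmultiplicative} gives $h_\top(f)=\Phi(f)$ at the generic $f\in\NDS$; combining these,
$$\limsup_{g\to f}h_\top(g)\leq\limsup_{g\to f}\Phi(g)\leq\Phi(f)=h_\top(f).$$

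Putting the two bounds together yields continuity of $h_\top$ at $f$. I do not expect a genuine obstacle in this argument itself: the real work has already been done upstream in Theorem~\ref{t.entropycons} (and behind it in the Main Theorem), which supplies the two equalities making both directions of semicontinuity hold simultaneously. The only ancillary verifications are the subadditivity identity above and the straightforward continuity of $g\mapsto\sup_{E}\Jac(g^n,E)$ on $\Diff^1_\omega(M)$.
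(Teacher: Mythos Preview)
Your argument is correct. The lower semicontinuity half is identical to the paper's. For upper semicontinuity you take a different route: you use item~\eqref{i.entSubmultiplicative} and the upper semicontinuity of $g\mapsto\max_k\sigma_k(g)$ (which is exactly Lemma~\ref{l.sigmak}(3) in the paper, though you re-derive it), together with the universal Ruelle bound $h_\top(g)\leq\max_k\sigma_k(g)$. The paper instead works with $\Delta(f)$ from item~\eqref{i.entPer}: it takes $f_n\to f$, picks near-maximal measures $\mu_n$, applies Ma\~n\'e's ergodic closing lemma (Theorem~\ref{t.ergodic-closing}) to perturb each $f_n$ to some $\tilde f_n$ with a periodic orbit $O_n$ satisfying $\Delta(\tilde f_n,O_n)\geq h_\top(f_n)-\eps/2$, and then uses that $f$ is a continuity point of $\Delta$ (Lemma~\ref{l.Delta-lsc}). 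Your approach is more direct and avoids the closing-lemma perturbation; the paper's approach has the mild advantage of using only items~\eqref{i.entHS} and~\eqref{i.entPer}, so it does not depend on the separate proof of item~\eqref{i.entSubmultiplicative} in Section~\ref{s.sub}. Both are valid and rest on the same underlying mechanism: at generic $f$ the topological entropy saturates an upper semicontinuous upper bound.
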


According to a classical result of Ma\~n\'e \cite{Mane}, structural stability is equivalent to hyperbolicity. As hyperbolic systems are not dense, this suggests studying weaker forms of stability, e.g.,  local constancy of the entropy (which does hold for some non-hyperbolic diffeomorphisms \cite{BFSV,BF}).

\begin{corollary}\label{c.unstability}
The map $f\mapsto h_\top(f)$ is nowhere locally constant in $\NDS$.
More precisely, for any $f$ and any G$_\delta$ set $\cG$ in $\NDS$,
there exists $g\in \cG$ arbitrarily close to $f$ such that $h_{top}(f)\neq h_{top}(g)$.
\end{corollary}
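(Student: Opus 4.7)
\noindent\emph{Plan of proof.}
I would argue by contradiction, combining the Main Theorem with a Bochi--Bonatti--Gourmelon perturbation along a periodic orbit to produce, inside any neighborhood of $f$, a diffeomorphism in $\cG$ with entropy strictly larger than $h_\top(f)$.

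Assume towards a contradiction that there is a neighborhood $\cU$ of $f$ in $\NDS$ and a constant $c = h_\top(f)$ with $h_\top(g) = c$ for every $g \in \cG \cap \cU$ (otherwise there is nothing to prove). We may take $\cG$ dense $G_\delta$; replacing it by its intersection with the residual set furnished by Theorem~\ref{t.entropycons}, we may further assume that for each $g \in \cG \cap \cU$, $h_\top(g) = \sup\{\Delta(g,p) : p \text{ periodic}\}$ equals $c$. Let $N,T$ denote the constants produced by the Main Theorem for the pair $(f,\cU)$.

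Next fix $g_0 \in \cG \cap \cU$. Since $g_0 \in \NDS$, there is no dominated splitting on $M$, so there exist periodic orbits of arbitrarily large period having no $N$-dominated splitting along them; by a tiny preliminary perturbation we may assume the chosen orbit $p$ of period $\geq T$ has no such splitting and still satisfies $\Delta(g_0,p) > c - \delta$. Along such an orbit, Bochi--Bonatti--Gourmelon perturbation theory for $C^1$ conservative linear cocycles guarantees that the Lyapunov-spectrum map $g \mapsto (\lambda_i(g,p_g))_i$ is locally open, subject only to the conservative constraint $\sum_i \lambda_i = 0$. Hence we can produce $g_1 \in \cU$ arbitrarily $C^1$-close to $g_0$ such that the continuation $p_1$ has Lyapunov exponents spread out further, yielding $\Delta(g_1, p_1) \geq c + 2\eta$ for some $\eta > 0$ --- the overshoot being possible because $c$ is bounded strictly above by the maximal $\Delta$ attainable in $\cU$, which is of order $\tfrac{d_0}{2}\log\|Df\|_\infty$.

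Applying the Main Theorem to $(g_1, p_1)$, there is $g_2 \in \cU$ carrying a horseshoe $K$ with $h_\top(g_2,K) \geq \Delta(g_1, p_1) \geq c + 2\eta$. Horseshoes and their entropies being $C^1$-robust, there is an open $\cV \subseteq \cU$ about $g_2$ on which every $h \in \cV$ admits a continuation $K_h$ of $K$ with $h_\top(h, K_h) \geq c + \eta$. Density of $\cG$ then yields $g_3 \in \cG \cap \cV$ with $h_\top(g_3) \geq h_\top(g_3, K_{g_3}) \geq c + \eta > c$, contradicting our standing assumption.

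The hardest and most delicate step is the overshoot in the third paragraph: one must verify that, even though $g_0$ realizes the supremum $c = \sup_p \Delta(g_0, p)$, a $C^1$-small perturbation genuinely pushes the value past $c$. The linear-algebraic input is the local openness of the Lyapunov-spectrum map in the absence of dominated splitting; the essential input from the hypothesis $f \in \NDS$ (rather than merely $f$ without dominated splitting) is that the supremum $c$ is not a rigid upper bound in any $C^1$-neighborhood, reflecting the fact that spreading Lyapunov exponents along a periodic orbit is always feasible when no dominated splitting obstructs it. Granting this, the remainder of the argument is a routine Baire-category and horseshoe-persistence manipulation in $\Diff^1_\omega(M)$.
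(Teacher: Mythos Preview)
Your approach is correct and is essentially the paper's own argument, just longer. The paper's proof is three lines: on the residual set furnished by Theorem~\ref{t.entropycons} one has $h_\top=\Delta$; Franks' Lemma (Theorem~\ref{t.linearize}) applied at a hyperbolic periodic orbit $p$ with $\Delta(f_1,p)$ close to $\Delta(f_1)$ robustly pushes $\Delta$ above $\Delta(f_1)$; choosing the perturbed map again in the residual set (intersected with the given $\cG$) yields $h_\top(f_2)=\Delta(f_2)>\Delta(f_1)=h_\top(f_1)$.

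Your steps 5--7 (invoke the Main Theorem to build a horseshoe, then use horseshoe persistence) are redundant. Once $\Delta(g_1,p_1)>c+2\eta$ holds robustly---which it does by continuity of the exponents of the hyperbolic continuation $p_1$---you may pick $g_3$ directly in $\cG$ intersected with the residual set of Theorem~\ref{t.entropycons} and conclude $h_\top(g_3)=\Delta(g_3)\geq\Delta(g_3,p_{g_3})>c$. The Main Theorem is already absorbed into Theorem~\ref{t.entropycons}; there is no need to reapply it.

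Two smaller points. The perturbation in your step 4 is just Franks' Lemma, not Bochi--Bonatti (the latter \emph{mixes} exponents while preserving the stable and unstable Jacobians, which is the opposite of what you want). And your justification of the overshoot is roundabout: the clean statement is that a Franks perturbation of $C^1$-size $\eps$ along a conservative periodic orbit increases $\Delta(\cdot,p)$ by some $b(\eps)>0$ depending only on $\eps$ and on $\sup\|Dg_0^{\pm1}\|$, not on the orbit or on $\delta$; choosing $\delta<b(\eps)/2$ gives the overshoot directly, and the bound $\tfrac{d_0}{2}\log\|Df\|_\infty$ plays no role.
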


{We note} the two following immediate consequences: 
\begin{itemize}
\item[(1)] there is an open and dense subset of $\Diff^1_\omega(M)$ on which local constancy of the entropy implies the existence of a dominated splitting; and 
\item[(2)] {the set $h_\top(\mathcal G)$ of the topological entropies of generic diffeomorphisms in $\NDS$ is uncountable.}
\end{itemize}

\paragraph{d -- Borel classification.}
The topological entropy is a {complete invariant among these diffeomorphisms for the following type of conjugacy}.
Recall that an automorphism $T$ of a Borel space $X$ is a bijection of $X$, bimeasurable with respect to a given $\sigma$-field. A typical example is a homeomorphism of a possibly non compact, complete metric space with its $\sigma$-field of Borel subsets.  The \emph{free part} of $T$ is the Borel subsystem $$X^{\operatorname{free}}:=\{x\in X:\{T^nx:n\geq0\}\textrm{ is infinite}\}.$$  {Two such} automorphisms $(X_1,T_1)$,
$(X_2,T_2)$ are \emph{Borel conjugate} if there is a Borel isomorphism $\psi:X_1\to X_2$ with $\psi\circ T_1=T_2\circ\psi$ 
(see \cite{Weiss1,Weiss2} for background).

Using a recent result of Hochman \cite{Hochman2}, Theorems \ref{t.entropycons} and \ref{t.nomax} yield the classification (we refer to Proposition \ref{p.Borel} for a more detailed statement):

\begin{corollary}\label{c.almostBorel}
The free parts of generic diffeomorphisms $f$ of $\NDS$ are classified, up to Borel conjugacy, by their topological entropy $h_\top(f)$. 
Moreover,  they are Borel conjugate to free parts of topological Markov shifts. 
\end{corollary}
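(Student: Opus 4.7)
The plan is to invoke Hochman's Borel classification theorem~\cite{Hochman2}, feeding in the two preceding results: Theorem~\ref{t.entropycons}(\ref{i.entHS}) provides horseshoes of entropy arbitrarily close to $h_\top(f)$, while Theorem~\ref{t.nomax} rules out any measure of maximal entropy. The target is the slightly more detailed Proposition~\ref{p.Borel}, from which the corollary is immediate.

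First I would fix a generic $f\in\NDS$ lying in the intersection of the residual sets produced by Theorems~\ref{t.entropycons} and~\ref{t.nomax}, and view $(M,f)$ as a Borel automorphism of the compact Polish space $M$, restricted to the free part $M^{\operatorname{free}}$. Each horseshoe $K_n\subset M$ with $h_\top(f|K_n)\nearrow h_\top(f)$ is topologically conjugate to a mixing subshift of finite type, and thus yields a Borel embedding of such an SFT into $M^{\operatorname{free}}$. Together with the absence of a measure of maximal entropy from Theorem~\ref{t.nomax}, this gives the two structural features a Hochman-type theorem asks for: the supremum of entropies of ergodic invariant probability measures equals $h_\top(f)$ but is not attained, and SFTs of every entropy strictly below $h_\top(f)$ sit inside $M^{\operatorname{free}}$.

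Second I would apply Hochman's theorem, which in this setting produces a Borel conjugacy between $(M^{\operatorname{free}},f)$ and the free part of a mixing countable-state Markov shift $\Sigma$ with $h_\top(\Sigma)=h_\top(f)$. Since any two mixing Markov shifts of equal entropy and without measure of maximal entropy have Borel-isomorphic free parts (a companion statement in \cite{Hochman2}), the topological entropy $h_\top(f)$ becomes a complete invariant of Borel conjugacy on these free parts. This yields both assertions of the corollary.

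The main obstacle I expect is to translate the dynamical conclusions of Theorems~\ref{t.entropycons} and~\ref{t.nomax} into the exact hypotheses required by Hochman's framework. In particular, Hochman's criterion involves the full \emph{entropy structure}, so one must check not merely that $h_\top(f)$ is approximated by horseshoes, but that every ergodic invariant probability measure of $f$ of entropy strictly less than $h_\top(f)$ is approximable (in the sense of~\cite{Hochman2}) by measures supported on horseshoes. Verifying this typically combines Theorem~\ref{t.entropycons}(\ref{i.entHS}) with Katok's horseshoe theorem for hyperbolic measures and a careful treatment of the non-hyperbolic ergodic measures via a suitable generic irreducibility argument, which is exactly the content one would want to package as Proposition~\ref{p.Borel}.
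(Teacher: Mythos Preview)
Your first two paragraphs are essentially the paper's proof: take a generic $f$ satisfying Theorems~\ref{t.entropycons} and~\ref{t.nomax}, observe that horseshoes give embedded mixing SFTs of entropy approaching $h_\top(f)$ while no measure attains the supremum, and invoke Hochman~\cite{Hochman2}. One small point you gloss over: a horseshoe is transitive but need not be mixing, and Hochman's statement wants mixing SFTs; the paper handles this by citing~\cite{AbdenurCrovisier} to enlarge each horseshoe to a mixing one.

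Your third paragraph, however, misidentifies the obstacle. Hochman's criterion does \emph{not} involve the entropy structure, and you do \emph{not} need to approximate every ergodic measure of $f$ by measures on horseshoes. The relevant input to Hochman's theorem is purely the value of $\sup_{\mu\in\Probe(f)} h(f,\mu)$ together with whether it is attained; nothing finer about the simplex of invariant measures is required. So the verification you propose (Katok's theorem for hyperbolic measures, a separate argument for non-hyperbolic ones) is unnecessary work, and Proposition~\ref{p.Borel} does not contain anything of that sort. The only genuine technical point the paper flags is that one uses an ``easy adaptation'' of Hochman's Corollary~1.3 via the techniques of~\cite{Hochman1}; this adaptation concerns passing between the almost-Borel and Borel frameworks, not any entropy-structure condition.
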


Thus, two generic diffeomorphisms in $\NDS$ are Borel conjugate themselves if and only if they have equal entropy and the same number of periodic points of each period (see \cite{Weiss1}). We do not believe  this to hold assuming only equal entropy.

\paragraph{e -- Entropy structure, tail entropy, symbolic extensions.}
For a diffeomorphism $f:M\to M$ of a compact manifold we let $\mathbb{P}(f)$ be the set of $f$-invariant Borel probability measures.  The entropy function for $\mu\in\mathbb{P}(f)$ is defined as $\mu\mapsto h_{\mu}(f)$. 
An \emph{entropy structure} is a non-decreasing sequence of functions on
$\mathbb{P}(f)$ converging to the entropy function in a certain manner. It captures many dynamical properties beyond the entropy function, see Section \ref{s.entropystructure}  and especially \cite{DownarowiczBook} for details.

For instance, the entropy structure determines the entropy at arbitrarily small scales. More precisely, for each $\eps,\delta>0$ and $n\geq 1$,
one considers the maximal cardinality $s_f(n,\delta,\eps)$ of sets
$\{x_1,\dots,x_\ell\}\subset M$ satisfying:
\begin{enumerate}[(a)]
\item for each $0\leq k<n$ and each $i,j$, one has $d(f^k(x_i),f^k(x_j))\leq \eps$,
\item for each $i\neq j$, there is $0\leq k< n$ such that $d(f^k(x_i),f^k(x_j))>\delta$.
\end{enumerate}
The \emph{tail entropy} is then defined as:
\begin{equation}\label{eq-def-tail-h}
 h^*(f):=\lim_{\eps\to 0}\lim_{\delta\to 0} \limsup_{n\to +\infty} \frac 1 n \log s_f(n,\delta,\eps).
\end{equation}

For $C^\infty$ diffeomorphisms, the tail entropy vanishes~\cite{buzzi}. Therefore, by \cite{BFF}, there always is a \emph{symbolic extension}, that is, a topological extension $\pi\colon (K,\sigma)\to (M,f)$ such that $(K, \sigma)$ is a subshift on a finite alphabet. This also holds for uniformly hyperbolic $C^1$-diffeomorphisms (using expansivity and Markov partitions).
This is in contrast to $C^1$-generic systems, for which the lack of dominated splitting prevents the existence of a
symbolic extension (see~\cite{CT,C1,AACS}).

The following strengthens these results inside $\NDS$
(see Proposition \ref{p.ualpha} for a complete statement).

\begin{mytheorem}\label{t.ualphageneral}
For the generic diffeomorphism in $\NDS$, there is no symbolic extension and, more precisely,
\begin{enumerate}
\item[(i)] the order of accumulation of the entropy structure is the first infinite ordinal, and
\item[(ii)] the tail entropy is equal to the topological entropy.
\end{enumerate} 
\end{mytheorem}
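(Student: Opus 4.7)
The proof strategy couples the Main Theorem's localized horseshoe construction with the entropy formula of Theorem~\ref{t.entropycons}(ii) via a Baire category argument. The geometric fact that drives everything is the following: if $K$ is a horseshoe contained in the $\epsilon$-tube $\bigcup_{i} B(f^i(p), \epsilon)$ around a periodic orbit $\mathcal{O}(p)$, then any two orbits $(f^k x)_k$ and $(f^k y)_k$ with $x, y \in K$ satisfy $d(f^k x, f^k y) \leq 2\epsilon$ for every $k$; hence, every $(n,\delta)$-separated subset of $K$ with $\delta < 2\epsilon$ contributes to $s_f(n, \delta, 2\epsilon)$ in the definition \eqref{eq-def-tail-h} of the tail entropy, yielding $h_\top(f, K) \leq h^*(f)$.

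For part~(ii), I would set $\mathcal{R}_k \subset \NDS$ to be the set of $f$ admitting a horseshoe $K$ contained in a $(1/k)$-tube around some periodic orbit, with $h_\top(f, K) > h_\top(f) - 1/k$. Each $\mathcal{R}_k$ is open since horseshoes and their entropies persist under $C^1$-perturbations and $h_\top$ is upper semicontinuous after adding a horseshoe of near-maximal entropy. Density is the main input: given $f \in \NDS$ and $k$, Theorem~\ref{t.entropycons}(ii) provides a periodic orbit $p$ with $\Delta(f,p) > h_\top(f) - 1/(3k)$; standard genericity techniques in $\Diff^1_\omega(M)$ ensure one can take the period arbitrarily large, and since $f$ lies in the interior of diffeomorphisms without any dominated splitting, the orbit of $p$ then has no $N$-dominated splitting for the constant $N$ associated to a $(1/k)$-neighborhood of $f$. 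The Main Theorem yields a small $C^1$-perturbation $g$ with a horseshoe $K_g$ in a $(1/k)$-tube around $\mathcal{O}(p)$ with $h_\top(g, K_g) \geq \Delta(f,p) - 1/(3k)$, and Corollary~\ref{c.continuity} applied at $f$ (chosen generic) controls $|h_\top(g)-h_\top(f)|$, so $g \in \mathcal{R}_k$ approximates $f$. For $f$ in the dense $G_\delta$ set $\bigcap_k \mathcal{R}_k$, the observation above forces $h^*(f) \geq h_\top(f)$, hence equality.

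For part~(i), the order of accumulation $\alpha(f)$ is a countable ordinal attached to the entropy structure that records the transfinite stabilization of entropy defects; the general theory from \cite{DownarowiczBook} gives $\alpha(f) \leq \omega$ whenever the tail entropy is computed via the standard double limit and $h_\top(f) < \infty$. For the opposite inequality $\alpha(f) \geq \omega$, I would construct, by iterating the Main Theorem together with Theorem~\ref{t.nomax}, nested sequences of ergodic measures supported on ever-smaller horseshoes with entropy approaching $h_\top(f)$ but whose weak-$*$ accumulation points (which exist but carry entropy strictly less than $h_\top$ by Theorem~\ref{t.nomax}) themselves admit, inside arbitrary neighborhoods, new nearby horseshoes of entropy close to $h_\top$. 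This descending recursion of length $\omega$ exhibits an entropy defect at every finite level of the entropy structure, ruling out $\alpha(f) < \omega$.

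The main obstacle will be part~(i), specifically the recursive bookkeeping needed to realize the ordinal $\omega$ rather than some smaller finite integer. One must show that the entropy defects do not collapse after finitely many derivations: at every step, given a measure $\mu$ where the defect persists, the Main Theorem must be applied within an arbitrarily small weak-$*$ neighborhood of $\mu$ to produce yet another horseshoe carrying entropy close to $h_\top(f)$. The flexibility in choosing both the periodic orbit and the scale of the tube in the Main Theorem is exactly what enables this, but verifying that the resulting measures indeed accumulate on $\mu$ (rather than escape to spurious limits) and preserve near-maximal entropy across the recursion is the delicate part, and will rely on Proposition~\ref{p.ualpha} to package the construction cleanly.
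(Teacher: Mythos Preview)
Your approach to part~(ii) is essentially correct and is a legitimate alternative to the paper's route. The paper does not argue directly from the definition~\eqref{eq-def-tail-h} of tail entropy; instead it invokes Proposition~\ref{p.ualpha}, which computes $u_n = n\cdot\widetilde{\Delta^+}$ for all finite $n$, and then reads off $h^*(f)=\|u_1\|_\infty=h_\top(f)$ via Theorem~\ref{t.DownarowiczEntropyStructure}. Your argument bypasses the entropy-structure machinery: a horseshoe with dynamical diameter $<\eps$ (in the sense of Remark~\ref{rem-small-hs}) has each piece $K_i$ contained in an infinite Bowen ball of radius $\eps$, so $h_\top(f,K)$ bounds the tail entropy at scale $\eps$ from below. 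One caveat: your set $\mathcal R_k$ is not obviously open, since $g\mapsto h_\top(g)$ is not upper semicontinuous everywhere on $\NDS$; you should run the Baire argument through $\Delta$ as in Lemma~\ref{l.ineq-entropy}, or simply quote Remark~\ref{rem-small-hs}, which already gives for generic $f$ horseshoes of arbitrarily small dynamical diameter and entropy arbitrarily close to $h_\top(f)$.

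Part~(i), however, has a genuine gap. Your claimed upper bound $\alpha(f)\leq\omega$ does \emph{not} follow from general theory: by~\cite{BurguetMcGoff} the order of accumulation can be an arbitrary countable ordinal, even for finite-entropy homeomorphisms of compact metric spaces. The paper obtains $\alpha\leq\omega$ only as a consequence of the explicit formula $u_n=n\cdot\widetilde{\Delta^+}$ from Proposition~\ref{p.ualpha}: since $h_\top(f)>0$ forces $\widetilde{\Delta^+}>0$ somewhere, $\sup_n u_n=\infty$, hence $u_\omega\equiv\infty$ and automatically $u_{\omega+1}=u_\omega$. Your recursive nesting of horseshoes gives the right intuition for $\alpha\geq\omega$, but making it rigorous means showing the $u_n$ are pairwise distinct, i.e., controlling iterated upper-semicontinuous envelopes of $h-h_k$ --- which is precisely the content of Proposition~\ref{p.ualpha} and its key Lemma~\ref{l.smallapprox}. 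Since you already concede that you would ``rely on Proposition~\ref{p.ualpha} to package the construction cleanly'', the honest conclusion is that part~(i) needs that proposition, and once it is in hand both parts reduce to the two-line argument the paper gives.
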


{This theorem strenghtens results of \cite{ABC2,CT,CH,AACS} showing the lack of a symbolic extension.}

\paragraph{f -- When there is some dominated splitting.}
Our revisited Main Theorem yields a formula for the tail entropy.
For diffeomorphisms in $\Diff^1_\omega(M)\setminus \NDS$,
there is a unique finest dominated splitting (see~\cite[Proposition B.2]{BDV}):
\begin{equation}\label{e.finest-DS}
TM=E_1\oplus E_2\oplus\dots\oplus E_\ell.
\end{equation}
We define the finest dominated splitting of a diffeomorphism in $\NDS$ to be the trivial splitting, i.e., $\ell=1$ and $E_1=TM$.

\begin{mytheorem}\label{t.tail-general}
There is a dense G$_\delta$ set $\mathcal G$ of diffeomorphisms $f\in\Diff^1_\omega(M)$ with the following properties.
Given the finest dominated splitting $E$ as {above}, the tail entropy coincides with
$$h^*(f)=\Delta^*(f):=
\sup\{\Delta_{E_i}(f,p)\; : \; p \text{ periodic },\;  1\leq i \leq \ell\}.$$
Moreover, the tail entropy is upper semicontinuous at any diffeomorphism in $\mathcal G$.
\end{mytheorem}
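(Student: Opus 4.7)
The finest dominated splitting $TM=E_1\oplus\cdots\oplus E_\ell$ is robust on an open and dense set, so a countable dense collection of data $(p,i,\eta)$ can be fixed, where $p$ is a hyperbolic periodic point, $i\in\{1,\ldots,\ell\}$, $\eta\in\mathbb{Q}_{>0}$, and generically $E_i|_{O(p)}$ admits no $N$-dominated sub-splitting (the latter being a generic density property of periodic Lyapunov data obtained from the $C^1$ perturbation machinery of Franks/Gourmelon/Bochi--Bonatti). For each such datum, the revisited Main Theorem, applied with bundle $E=E_i|_{O(p)}$, produces an arbitrarily small $C^1$-perturbation $g$ (preserving the volume or symplectic form), supported in any prescribed $\eps$-neighborhood $U$ of $O(p)$, such that $g$ has a horseshoe $K\subset U$ with $h_\top(g,K)\geq \Delta_{E_i}(f,p)-\eta$. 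Since $K$ is $g$-invariant and $K\subset U$, all orbits in $K$ remain $2\eps$-close in the dynamical metric, while the horseshoe still carries exponentially many $(n,\delta)$-separated orbits for $\delta\ll\eps$. Hence $h^\ast(g)\geq h_\top(g,K)$, and intersecting the corresponding open sets of $g$'s via Baire yields a dense G$_\delta$ set $\mathcal{G}_1$ on which $h^\ast(f)\geq \Delta^\ast(f)$.

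\textbf{Upper bound $h^\ast(f)\leq \Delta^\ast(f)$.} This combines two ingredients. The first is a bundle-wise Ruelle-type inequality for the tail entropy in the presence of a dominated splitting:
$$ h^\ast(f)\leq \sup_{\mu\in\mathbb{P}(f)}\max_{1\leq i\leq \ell}\Delta_{E_i}(f,\mu),\qquad \Delta_{E_i}(f,\mu):=\min\bigl(\textstyle\sum_j\lambda^+_{E_i,j}(f,\mu),\sum_j\lambda^-_{E_i,j}(f,\mu)\bigr). $$
This extends Ruelle's inequality bundle by bundle: domination between $E_i$ and $E_{i+1}$ forces orbits in transverse dominated directions to locally shadow one another, so small-scale complexity is generated only inside each $E_i$ and is controlled there by the Pesin--Ruelle estimate within that bundle. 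The second ingredient is the $C^1$-generic approximation of any ergodic measure by periodic orbits, simultaneously in the weak-$\ast$ topology and in Lyapunov exponents, in the conservative setting; this yields $\sup_\mu\Delta_{E_i}(f,\mu)=\sup_p\Delta_{E_i}(f,p)$ on a dense G$_\delta$ set $\mathcal{G}_2$. Combining gives $h^\ast(f)\leq \Delta^\ast(f)$ on $\mathcal{G}_2$.

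\textbf{Upper semicontinuity.} Set $\mathcal{G}:=\mathcal{G}_1\cap\mathcal{G}_2$, so $h^\ast(f)=\Delta^\ast(f)$ for $f\in\mathcal{G}$. For every $g\in\Diff^1_\omega(M)$ the inequality $h^\ast(g)\leq \sup_\mu\max_i \Delta_{E_i(g)}(g,\mu)$ still holds, with $E_i(g)$ the finest dominated splitting of $g$. As $g\to f$, each bundle of $E(g)$ refines some $E_j(f)$, so the bound computed with $E(g)$ is at most the same expression computed with $E(f)$. Weak-$\ast$ upper semicontinuity of integrated Lyapunov exponents together with the continuous dependence of $Dg$ on $g$ then gives
$$\limsup_{g\to f}\;\sup_\mu\max_i\Delta_{E_i(g)}(g,\mu)\;\leq\;\sup_\mu\max_i\Delta_{E_i(f)}(f,\mu)\;=\;h^\ast(f),$$
proving upper semicontinuity at $f\in\mathcal{G}$.

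\textbf{Main obstacle.} The delicate point is the bundle-wise Ruelle-type bound for $h^\ast$. While the classical Ruelle inequality controls $h_\mu$ via positive exponents of a single measure, the tail entropy sees local complexity at arbitrarily small scales and must be dominated uniformly. The bundle-by-bundle decomposition rests on a Pesin-theoretic analysis in which transverse dominated directions contribute no tail entropy because nearby orbits in such directions stay exponentially locked, so the local complexity is captured within a single $E_i$; making this quantitative in the purely $C^1$ setting, without uniform hyperbolicity and with only the weak shadowing given by domination, is the core technical hurdle, and it is here that one also needs to work inside $E_i$ with the exponents of $\mu$ restricted to that bundle, rather than with the global Lyapunov spectrum.
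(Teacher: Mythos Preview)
Your proposal is essentially correct and follows the same strategy as the paper: the lower bound via the Main Theorem Revisited producing horseshoes of small dynamical diameter, the upper bound via the bundle-wise Ruelle-type inequality $h^*(f)\leq\sup_\mu\max_i\Delta_{E_i}(f,\mu)$ (this is exactly the paper's Theorem~\ref{t.bound-tail}, proved through a local-entropy analysis in Propositions~\ref{p.variational}, \ref{p.bowen-domination}, \ref{p.bound-exponents}), and the generic reduction from measures to periodic orbits via the ergodic closing lemma. You are right that the bundle-wise tail-entropy bound is the technical heart of the matter.

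Two minor remarks. For the lower bound, your phrase ``generically $E_i|_{O(p)}$ admits no $N$-dominated sub-splitting'' needs a word of justification: the point is that $E_i$, being a factor of the \emph{finest} dominated splitting, has no dominated sub-splitting on all of $M$; since generically the homoclinic class of any periodic point is $M$, Proposition~\ref{p.control-exponents} lets you replace any periodic orbit by one that is $\eps$-dense in $M$ with nearby exponents, and Lemma~\ref{l.domination} then forbids an $N$-dominated splitting of $E_i$ along that orbit. For the upper semicontinuity, your refinement argument is correct but slightly roundabout: since Theorem~\ref{t.bound-tail} applies to \emph{any} dominated splitting, you can bound $h^*(g)$ directly using the continuation to $g$ of $f$'s finest splitting, bypassing the comparison with $g$'s own finest splitting. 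The paper instead passes through a perturbation of $f_n$ to generic $g_n$ via the ergodic closing lemma and invokes continuity of $\Delta^*$ at generic points; your route is arguably more direct.
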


Note that {when} there exists a dominated splitting, it gives $C>0$ such that
for any periodic point $p$ we have
$C+\max_i\Delta_{E_i}(f,p)<\Delta(f,p)$.
Hence, the above theorem (together with Theorem~\ref{t.entropycons}) gives
a characterization of diffeomorphisms with a dominated splitting.

\begin{corollary}\label{c.caracterize-DS}
The generic diffeomorphism $f$ in $\Diff^1_\omega(M)$
has a (non-trivial) dominated splitting if and only if
$h^*(f)<\Delta(f)$.
\end{corollary}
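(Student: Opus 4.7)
My plan is to derive this equivalence by combining Theorem~\ref{t.tail-general}, which identifies $h^*(f)$ with $\Delta^*(f)$ for a generic $f\in\Diff^1_\omega(M)$, with the uniform gap inequality recalled in the paragraph just before the corollary. First I would take the dense $G_\delta$ set $\mathcal G\subseteq\Diff^1_\omega(M)$ furnished by Theorem~\ref{t.tail-general} and intersect it with the open dense subset $\NDS\cup S$, where $S=\{f:f\text{ admits a non-trivial dominated splitting}\}$. Both pieces are open ($\NDS$ by definition; $S$ because admitting an $N$-dominated splitting of fixed index is an open condition, and we take a countable union over indices and $N$), and their union is dense because its complement is contained in $\partial\NDS$, which has empty interior by the definition of $\NDS$ as an interior. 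After this reduction, each $f$ in the refined generic set is either in $\NDS$ or in $S$, and I can treat the two cases separately.

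For the implication $(\Leftarrow)$, suppose $f\in\NDS$. Then $f$ has no dominated splitting, so the finest dominated splitting is the trivial one ($\ell=1$, $E_1=TM$), and $\Delta_{E_1}(f,p)=\Delta(f,p)$ for every periodic orbit. Hence $\Delta^*(f)=\Delta(f)$, and Theorem~\ref{t.tail-general} gives $h^*(f)=\Delta(f)$, ruling out the strict inequality. For the implication $(\Rightarrow)$, suppose $f\in S$, so the finest dominated splitting has $\ell\geq 2$. The inequality recalled just before the corollary provides a uniform constant $C>0$ such that $C+\max_i\Delta_{E_i}(f,p)<\Delta(f,p)$ for every periodic $p$. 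Taking the supremum over periodic orbits and indices $i$ yields $\Delta^*(f)\leq\Delta(f)-C$, and applying Theorem~\ref{t.tail-general} once more converts this into $h^*(f)=\Delta^*(f)<\Delta(f)$.

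The bulk of the mathematical content is packaged inside Theorem~\ref{t.tail-general} (and ultimately inside the revisited Main Theorem), so the corollary itself is essentially a bookkeeping exercise. The one subtle point is ensuring the two cases cover a generic diffeomorphism: the portion of $\Diff^1_\omega(M)$ on which $f$ has no dominated splitting but is nevertheless not in $\NDS$, namely $\partial\NDS$, must be meager, and this follows from the definition of $\NDS$ as an interior. Apart from this, the argument is a direct substitution of the identity $h^*(f)=\Delta^*(f)$ together with the uniform gap between $\Delta^*(f)$ and $\Delta(f)$ that domination produces; the genuine work — establishing that the domination constant gives a lower bound on the Lyapunov exponents not captured by any single $E_i$, hence the gap $C$ — is what I would expect to be the only non-trivial input, and it is already recorded as a remark in the paper.
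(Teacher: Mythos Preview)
Your argument is correct and follows the same route the paper sketches: combine $h^*(f)=\Delta^*(f)$ from Theorem~\ref{t.tail-general} with the uniform gap $C+\max_i\Delta_{E_i}(f,p)<\Delta(f,p)$ in the presence of domination, and with $\Delta^*(f)=\Delta(f)$ when the finest splitting is trivial. One small correction: the complement of $\NDS\cup S$ is $N\setminus\operatorname{int}(N)=\partial N$ (where $N$ is the closed set of diffeomorphisms with no dominated splitting), not $\partial\NDS$; in general $N\setminus\NDS$ need not lie in $\overline{\NDS}$, but $\partial N$ is the boundary of a closed set and hence nowhere dense, which is all you need.
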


{Catalan {\it et al.} \cite{C1,AACS} have shown that, $C^1$-generically, the existence of a ``good" dominated splitting (see item (i) below) is equivalent to zero tail entropy (in the volume-preserving setting and in the dissipative setting for isolated homoclinic classes). In the conservative setting, we show that this is also equivalent to both the continuity  of the tail entropy and its local constancy.

\begin{corollary}\label{c.unstability-hstar}
There is a dense G$_\delta$ set of diffeomorphisms $f$ in $\Diff^1_\omega(M)$, such that the following are equivalent:
\begin{itemize}
\item[(i)] any sub-bundle in the finest dominated splitting of $f$ is uniformly contracted, uniformly expanded or one-dimensional;
\item[(ii)] the tail entropy is {constant on a neighborhood of $f$};
\item[(iii)] {the tail entropy is continuous at $g=f$;}
\item[(iv)] $h^*(f)=0$.
\end{itemize}
\end{corollary}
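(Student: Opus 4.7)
The plan is to verify the four equivalences on a suitable dense $G_\delta$ refinement of the set $\mathcal G$ from Theorem~\ref{t.tail-general}, by establishing the chain $\text{(iv)} \Leftrightarrow \text{(i)} \Rightarrow \text{(ii)} \Rightarrow \text{(iii)} \Rightarrow \text{(iv)}$.

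First I would check $\text{(i)} \Leftrightarrow \text{(iv)}$ from the formula $h^*(f) = \Delta^*(f)$ (Theorem~\ref{t.tail-general}) by direct computation of $\Delta_{E_i}(f,p)$. If (i) holds, then for each block $E_i$ and each periodic point $p$, the Lyapunov exponents along $E_i$ are either all of the same sign (uniform contraction or expansion) or consist of a single value ($\dim E_i = 1$); so one of the partial sums $\sum_j \lambda_{E_i,j}^+(f,p)$ or $\sum_j \lambda_{E_i,j}^-(f,p)$ is zero, hence $\Delta_{E_i}(f,p) = 0$ and $\Delta^*(f) = 0 = h^*(f)$. Conversely, if some $E_i$ with $\dim E_i \geq 2$ is neither uniformly contracted nor uniformly expanded, then by the characterization of uniform hyperbolicity via periodic Lyapunov exponents (Ma\~n\'e, Wen, and in the conservative setting Bochi--Bonatti), after a further generic restriction of $\mathcal G$ one finds a periodic point $p$ whose Lyapunov spectrum along $E_i$ contains both a positive and a negative exponent; then $\Delta_{E_i}(f,p) > 0$ and $h^*(f) > 0$.

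Next I would prove $\text{(i)} \Rightarrow \text{(ii)}$ using the $C^1$-openness of (i) (the dimensions of the sub-bundles in the finest dominated splitting and the uniform hyperbolicity of a sub-bundle both persist under small $C^1$-perturbations), together with the general upper bound $h^*(g) \leq \Delta^*(g)$ valid for every $g \in \Diff^1_\omega(M)$. This upper bound, obtained from Ruelle's inequality applied inside each non-dominated sub-bundle, is part of the proof of Theorem~\ref{t.tail-general}; combined with the openness, it yields $h^*(g) = 0$ on a whole $C^1$-neighborhood of $f$. The step $\text{(ii)} \Rightarrow \text{(iii)}$ is immediate.

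The delicate step is $\text{(iii)} \Rightarrow \text{(iv)}$. Since $h^*$ is upper semicontinuous at $f$ by Theorem~\ref{t.tail-general}, continuity at $f$ reduces to lower semicontinuity. Supposing by contradiction that $h^*(f) > 0$, condition (i) must fail and some block $E_i$ is multi-dimensional and non-hyperbolic. I would then construct $g_n \to f$ with $h^*(g_n)$ bounded away from $h^*(f)$, contradicting lower semicontinuity. The main obstacle lies in this construction: one must design small conservative $C^1$-perturbations that strictly decrease the tail entropy in the non-hyperbolic block $E_i$, using the $C^1$-perturbation toolbox developed in the paper (Bochi--Ma\~n\'e Lyapunov averaging, Franks--Gourmelon lemmas) to destroy the homoclinic mechanisms inside $E_i$ that realize the lower bound $h^*(f) \geq \Delta_{E_i}(f,p)$, while respecting the volume or symplectic form.
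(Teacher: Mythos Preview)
Your chain $\text{(i)} \Leftrightarrow \text{(iv)}$, $\text{(i)} \Rightarrow \text{(ii)} \Rightarrow \text{(iii)}$ is fine and essentially matches the paper. The problem is your treatment of $\text{(iii)} \Rightarrow \text{(iv)}$: you have made this step far harder than it is, and the construction you describe (conservative $C^1$-perturbations that \emph{strictly decrease} the tail entropy in a non-hyperbolic block) is not needed and is not what the paper does.

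The paper's argument for $\text{(iii)} \Rightarrow \text{(iv)}$ is one line: by \cite{avila}, $\Diff^\infty_\omega(M)$ is $C^1$-dense in $\Diff^1_\omega(M)$, and by \cite{buzzi} every $C^\infty$ diffeomorphism has $h^*=0$. Hence there is a sequence $g_n\to f$ with $h^*(g_n)=0$; if $h^*$ is continuous at $f$, then $h^*(f)=0$. No perturbation of Lyapunov exponents, no Franks--Gourmelon machinery, no destruction of homoclinic mechanisms is required. Your proposed route is not known to work and you yourself flag it as an obstacle; replace it with this density argument.

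A secondary point: your $\neg\text{(i)} \Rightarrow \neg\text{(iv)}$ is too quick. Failure of uniform contraction/expansion on a multi-dimensional $E_i$ does \emph{not} by itself guarantee a single periodic point whose Lyapunov spectrum along $E_i$ has both signs; it may only give periodic points with exponents of a fixed sign approaching $0$, for which $\Delta_{E_i}(f,p)=0$. The paper handles this by showing that the set $\{f\in\mathcal G:\Delta^*(f)>0 \text{ or (i) holds}\}$ is open and dense in $\mathcal G$: if all periodic exponents along $E_i$ have a fixed sign but some are arbitrarily small, a Franks-lemma perturbation creates a periodic point with mixed signs (hence $\Delta^*>0$); if different periodic points carry opposite signs, the barycenter property of \cite{ABCDW} produces periodic points with small exponents and one perturbs again; otherwise the exponents along $E_i$ are uniformly bounded away from $0$ with a fixed sign, which via Corollary~\ref{c.ergodic-closing} forces uniform hyperbolicity of $E_i$. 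You should incorporate this trichotomy rather than assert the existence of a mixed-sign periodic point directly.
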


We also obtain:

\begin{corollary}\label{c.loc-cte}
For the generic $f$ in $\Diff^1_\omega(M)$,
whenever $h_\top(f)=h^*(f)$, the topological entropy fails to be locally constant.
\end{corollary}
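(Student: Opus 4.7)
The plan is to derive a contradiction from assuming $h_\top$ is locally constant equal to $c:=h_\top(f)=h^*(f)$ on a $C^1$-neighborhood $V$ of $f$, by producing $g\in V$ with $h_\top(g)>c$. I would work inside a dense $G_\delta$ set $\mathcal G\subset \Diff^1_\omega(M)$ on which Theorem~\ref{t.tail-general}, Corollaries~\ref{c.caracterize-DS} and~\ref{c.unstability} and the Main Theorem all apply with uniform constants.

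The first step is to dichotomize according to whether $V\cap\NDS$ is empty. If $V\cap\NDS\neq\emptyset$, then $V\cap\NDS\cap\mathcal G$ is a non-empty $G_\delta$ subset of $\NDS$, and applying Corollary~\ref{c.unstability} with our $f$ and this $G_\delta$ directly provides $g\in V\cap\NDS$ arbitrarily close to $f$ with $h_\top(g)\neq c$, contradicting the local constancy on $V$. I may therefore assume $V\cap\NDS=\emptyset$; combined with the genericity of $f$, this forces $f$ into the interior of the set of diffeomorphisms carrying a non-trivial dominated splitting, and Corollary~\ref{c.caracterize-DS} then gives $c=h^*(f)<\Delta(f)$. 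I then fix a periodic point $p$ of $f$ with $\Delta(f,p)>c$ and period at least the threshold $T$ furnished by the Main Theorem for some $C^1$-neighborhood $\mathcal U\subset V$ of $f$; let $N$ denote the corresponding dominance constant.

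The crucial step is to perturb $f$ within $V$, via a conservative Franks--Bochi-type perturbation, into a diffeomorphism $f_1\in V$ whose orbit through $p$ no longer admits an $N$-dominated splitting, while keeping $\Delta(f_1,p)>c$. Once this is achieved, the Main Theorem applied to $f_1$ produces $g\in V$ carrying a horseshoe $K$ with $h_\top(g,K)\ge \Delta(f_1,p)>c$, hence $h_\top(g)>c$, which contradicts the assumed local constancy.

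The main obstacle is this perturbative step. In the volume-preserving or symplectic setting, destroying the $N$-dominance on a single orbit without losing a definite fraction of its sum of positive Lyapunov exponents is delicate, because adjacent blocks of the finest dominated splitting of $f$ give eigenvalue moduli on the orbit of $p$ that are uniformly separated, while a $C^1$-small perturbation only moves individual eigenvalues by a small factor. This is handled by taking $p$ of sufficiently large period: the large period amplifies, via iteration along the orbit, the effect of $C^1$-small conservative perturbations, giving enough room to rotate the Oseledets directions across two adjacent blocks of the finest splitting (for instance by turning a pair of real eigenvalues into a complex conjugate pair), and thus to destroy the $N$-dominance on this particular orbit, while only negligibly perturbing its Lyapunov spectrum.
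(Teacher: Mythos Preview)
Your dichotomy and the treatment of the case $V\cap\NDS\neq\emptyset$ are fine, but Step~5 in the second case cannot be carried out, and your heuristic for why it should work is mistaken. If $V\cap\NDS=\emptyset$, the finest dominated splitting $TM=E_1\oplus\dots\oplus E_\ell$ of $f$ (with $\ell\geq 2$) is robust: there is an integer $N_0$ such that every $g$ in a neighborhood of $f$ has this same $N_0$-dominated splitting on all of $M$, hence on every periodic orbit. The integer $N$ supplied by the Main Theorem is determined by $d_0$, $C$ and the size of $\mathcal U$; there is no mechanism forcing $N<N_0$, so in general every periodic orbit of every small perturbation still has an $N$-dominated splitting, and the Main Theorem is vacuous. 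Your justification that a large period ``amplifies'' small perturbations is correct for Lyapunov exponents (this is exactly the Bochi--Bonatti mechanism), but it is false for domination: $N_0$-domination is a uniform bound on $\|Df^n|_E\|\cdot\|Df^{-n}|_F\|$ at each point, which is stable under small perturbations of each linear map $Df(f^i(p))$ regardless of how many of them there are. In particular you cannot turn eigenvalues coming from two different blocks $E_j$, $E_{j+1}$ into a complex conjugate pair by a $C^1$-small perturbation, because their moduli are separated by a factor $\geq 2^{1/N_0}$ per iterate.

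The paper's proof avoids this obstruction entirely. It does not try to destroy domination, nor does it invoke the (original) Main Theorem. Instead it uses the formula $h^*(f)=\Delta^*(f)=\sup_{p,i}\Delta_{E_i}(f,p)$ from Theorem~\ref{t.tail-general}: since $h^*(f)=h_\top(f)>0$, there exist $p$ and a bundle $E_i$ of the finest dominated splitting with $\Delta_{E_i}(f,p)>0$, hence $E_i$ contains both positive and negative exponents. A conservative Franks perturbation \emph{inside} $E_i$ (identity on the other bundles) then robustly increases $\Delta_{E_i}(\cdot,p)$, hence $\Delta^*$, by a definite amount; for generic $g$ nearby one concludes $h_\top(g)\geq h^*(g)=\Delta^*(g)>\Delta^*(f)=h_\top(f)$. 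The point is that the entropy gain comes from $\Delta_{E_i}$, not from $\Delta$, and is realized through the generic identity $h^*=\Delta^*$ rather than by building a horseshoe directly.
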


The following questions are thus natural.

\begin{question}
Consider a generic $f\in \Diff^1_\omega(M)$ with a dominated splitting.\\
i. Is it possible to have $h^*(f)=h_{top}(f)$?\\
ii. When $h^*(f)<h_{top}(f)$, is the map $g\mapsto h_{top}(g)$ locally constant near $f$?\\
iii. Does $h_{top}(f)$ coincide with the supremum of the entropy of the horseshoes?
\end{question}

\subsection{Consequences for dissipative diffeomorphisms}\label{ss.dissipative}
For dissipative (i.e. non-necessarily conservative) diffeomorphisms, the chain-recurrent dynamics decomposes into
disjoint invariant compact sets -- the chain-recurrence classes~\cite[Section 9.1]{robinson}.
For $C^1$-generic diffeomorphisms, each chain recurrent class that contains a periodic orbit $O$
coincides with its \emph{homoclinic class $H(O)$} (see Section~\ref{s.prelim}).  In particular,
if the class is not reduced to $O$ it contains an increasing sequence of
horseshoes whose union is dense in $H(O)$.
The other chain-recurrence classes are called \emph{aperiodic classes},
see~\cite{BC}.

\paragraph{a-- Entropy of individual chain-recurrence classes.}
Some of the previous consequences extend immediately to
homoclinic classes lacking any dominated splitting.

\begin{corollary}
Let $f$ be a generic diffeomorphism in $\Diff^1(M)$
and let $H(O)$ be a homoclinic class with no dominated splitting.
Then 
$$\Delta(f_{|H(O)}):=\sup\{\Delta(f,p):  p \text{ periodic point in } H(O)\}$$
is equal to the supremum of the entropy of horseshoes in $H(O)$.
Moreover it is a lower bound for the tail entropy $h^*(f_{|H(O)})$.
\end{corollary}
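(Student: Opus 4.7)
The plan is to prove equality between $\Delta(f_{|H(O)})$ and $\sup\{h_\top(f,K):K\subset H(O)\text{ horseshoe}\}$ by two inequalities, and then to deduce the tail entropy bound from the same construction. For the upper bound I would apply Ruelle's inequality to both $f$ and $f^{-1}$: for any ergodic measure $\mu$ supported on a horseshoe $K\subset H(O)$,
\[
  h_\mu(f)\leq \min\Bigl(\textstyle\sum_i\lambda_i^+(\mu),\,\sum_i\lambda_i^-(\mu)\Bigr).
\]
Since $K$ is uniformly hyperbolic, Lyapunov exponents vary continuously with the ergodic measure and periodic orbits are dense; approximating $\mu$ by a periodic $q\in K\subset H(O)$ gives $h_\mu(f)\leq \Delta(f,q)+o(1)$, and taking the sup over $\mu$ and $K$ gives the desired bound.

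For the lower bound, fix $\eps>0$, a $C^1$-neighborhood $\cU$ of $f$ and let $N,T$ be the constants from the Revisited Main Theorem. Two $C^1$-generic ingredients come into play. First, periodic orbits are dense in $H(O)$ and their Lyapunov exponents approximate those of arbitrary ergodic measures, so there is a periodic $p\in H(O)$ with $\Delta(f,p)\geq \Delta(f_{|H(O)})-\eps/2$. Second, because $H(O)$ carries no dominated splitting, a standard $C^1$-connecting argument (inserting a long excursion through a periodic orbit realizing weak domination into an orbit approximating $p$) produces a periodic $q\in H(O)$ of period at least $T$, with $\Delta(f,q)\geq \Delta(f_{|H(O)})-\eps$, and whose tangent dynamics admits no $N$-dominated splitting. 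Applying the Revisited Main Theorem to $q$ yields $g\in\cU$ with a horseshoe $K_g$ inside a small neighborhood $V$ of $\mathrm{orb}(q)\subset H(O)$, satisfying $h_\top(g,K_g)\geq \Delta(f,q)$.

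The next step is to transfer $K_g$ back to $f$. Horseshoes are $C^1$-robust, so the finitely many periodic points coding $K_g$ have analytic continuations for every diffeomorphism $g'$ sufficiently close to $g$; in particular, taking $\cU$ small enough, these continuations are defined for $f$, are hyperbolic of the same index as $p$, and lie in $V$. At a $C^1$-generic $f$, homoclinic intersections are robust and homoclinic classes are lower semicontinuous, so these continuations are \hrel\ to $O$, producing a horseshoe $K\subset H(O)$ of $f$ with $h_\top(f,K)\geq \Delta(f,q)-\eps$. Letting $\eps\to 0$ proves the equality. For the tail entropy bound I would use the same construction but, crucially, choose $\supp(g\circ f^{-1})$ and $V$ inside an arbitrarily small neighborhood of $\mathrm{orb}(q)$. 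The horseshoes $K_g$ then carry entropy close to $\Delta(f,q)$ concentrated at arbitrarily small $C^0$-scales and produced by arbitrarily small $C^1$-perturbations; the Downarowicz--Newhouse characterization of $h^*$ as the entropy defect at arbitrarily small scales (equivalently, the defect of upper semicontinuity of the entropy function on $\Prob(f)$) then yields $h^*(f_{|H(O)})\geq \Delta(f,q)$, and the sup over $q$ gives the claim.

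The main obstacle is the transfer step: upgrading a horseshoe of a perturbation $g$ into a horseshoe of $f$ that actually sits inside $H(O)$, rather than merely close to it. This is where genericity is used in an essential way, through the persistence of homoclinic relations and the $C^1$-generic lower semicontinuity of entropy of homoclinic classes. The remaining ingredients --- Ruelle's inequality, the Revisited Main Theorem, and closing/connecting arguments to locate a suitable periodic orbit $q\in H(O)$ --- are either invoked directly or routine in the $C^1$-category.
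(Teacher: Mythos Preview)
Your upper bound is fine. The gap is in the ``transfer step'' of the lower bound, and it is a genuine circularity in the quantifiers. You first fix the neighborhood $\cU$ of $f$, which determines $N,T$; only then do you locate $q$ and perturb to $g\in\cU$, obtaining the horseshoe $K_g$. The $C^1$-robustness radius of $K_g$ is determined \emph{after} all of these choices, and in fact the construction of the Main Theorem forces $K_g$ to be built near a periodic orbit of very large period with very weak hyperbolicity (the stable and unstable eigenvalues have moduli approaching $1$ as the period grows). So the neighborhood of $g$ on which $K_g$ persists can be arbitrarily small compared with $d_{C^1}(f,g)$, and shrinking $\cU$ only makes the period larger and the robustness worse. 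There is no way to arrange that the continuation of $K_g$ is defined at $f$ itself.

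The paper's route avoids this entirely by a Baire argument rather than a transfer. One shows that, for each rational $a$, the set of diffeomorphisms $g$ for which the continuation $H(O_g)$ contains a horseshoe $K$ with $h_\top(g,K)>a$ is \emph{open} (structural stability of horseshoes plus the fact that the Main Theorem keeps the new horseshoe homoclinically related to $O$) and \emph{dense} among those $g$ where $H(O_g)$ has no dominated splitting (this is exactly what the Main Theorem, together with Proposition~\ref{p.control-exponents} to produce a $T,N$-weak orbit in the class, provides). Intersecting over $a<\Delta(f_{|H(O)})$ and combining with continuity points of $\Delta$ along the class gives the generic equality; the same argument with the ``small dynamical diameter'' refinement (Remark~\ref{rem-small-hs}) yields horseshoes of arbitrarily small scale inside $H(O)$ and hence the tail-entropy lower bound. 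Your sketch of the tail-entropy part is in the right spirit, but it too needs the Baire formulation: you cannot place the small-scale horseshoe at $f$ by continuation, only by genericity.
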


Note that a homoclinic class may contain periodic points with different stable dimensions.
The equality in the previous corollary still holds if one restricts
to periodic points and horseshoes having a given stable dimension.

However we don't know if a homoclinic class $H(O)$ may support a measure
which is only approximated in the vague topology by periodic orbits \emph{outside} $H(O)$.
This is the main obstacle to the following generalization of Theorem \ref{t.entropycons}:

\begin{problem}
For a generic diffeomorphism $f\in \Diff^1(M)$
with a homoclinic class $H(O)$, does $h_{top}(f_{|H(O)})$
coincides with the supremum of the entropy of horseshoes in $H(O)$?
\end{problem}

It is known that the generic diffeomorphism can exhibit aperiodic classes~\cite{bonatti-diaz-aperiodic},
but the examples are limits of periodic orbits with very large periods
and have zero entropy. A major question is thus:

\begin{problem}
For a generic diffeomorphism $f\in \Diff^1(M)$
does the entropy vanish on aperiodic classes?
\end{problem}

\paragraph{b-- Infinitely many chain-recurrence classes with large entropy.}
It has been known for sometime \cite{bonatti-diaz-aperiodic} that coexistence of infinitely many non-trivial homoclinic classes is locally generic. We show that this is still the case even for homoclinic classes with a given lower bound on entropy.

\begin{mytheorem}\label{t.infiniteHC}
For the generic $f\in \Diff^1(M)$,
any homoclinic class $H(O)$ with no dominated splitting
is accumulated by infinitely many disjoint homoclinic classes
with topological entropy larger than a constant $h>0$.
\end{mytheorem}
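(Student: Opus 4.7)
The plan is to combine the Main Theorem with a Baire category argument: produce, by successive localized perturbations, arbitrarily many pairwise disjoint horseshoes of definite entropy accumulating on the class $H(O)$, then intersect countably many resulting dense open sets to obtain the desired residual property.

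The first step is to find a countable family of periodic points $p_n$ of $f$ in $H(O)$ whose orbits are pairwise disjoint, accumulate on $H(O)$, have periods at least $T$, admit no $N$-dominated splitting (for the integers $N,T$ associated to a fixed $C^1$-neighborhood $\cU$ of $f$ via the Main Theorem), and satisfy $\Delta(f,p_n)\geq h$ for a uniform constant $h>0$. Such a family exists because $H(O)$ has no dominated splitting: one can produce an ergodic measure supported in $H(O)$ whose Oseledets decomposition admits no $N$-dominated splitting and whose sum of positive (or of negative) Lyapunov exponents is bounded below, and then approximate it by periodic orbits via Ma\~n\'e's ergodic closing lemma and the $C^1$-generic weak-$\ast$ density of periodic measures due to Abdenur-Bonatti-Crovisier; the uniform lower bound on $\Delta(f,p_n)$ is then inherited from the Lyapunov spectrum of the measure.

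The second step is to apply the Main Theorem successively. Each application yields a perturbation supported in a small neighborhood $V_n$ of the orbit $\cO(p_n)$ containing a horseshoe of entropy at least $h$. Since the neighborhoods $V_n$ can be arranged pairwise disjoint inside the $1/m$-neighborhood of $H(O)$, the individual perturbations compose (because they have disjoint supports) into a single $g\in\cU$ carrying simultaneously $m$ horseshoes $K_1,\dots,K_m$, with $K_i\subset V_i$, each of entropy at least $h$, for any prescribed $m\geq 1$. Let $\cR_m$ denote the set of $f'\in\Diff^1(M)$ such that for every homoclinic class $H(O')$ of $f'$ without dominated splitting there exists $h'=h'(H(O'))>0$ and at least $m$ pairwise disjoint homoclinic classes of $f'$ of topological entropy at least $h'$, all contained in the $1/m$-Hausdorff neighborhood of $H(O')$. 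The construction above shows $\cR_m$ is dense in $\Diff^1(M)$; openness follows from the $C^1$-robustness of horseshoes and the lower semicontinuity of the topological entropy on horseshoes. Intersecting over $m\in\NN$ produces the required residual set, and for $f$ in this intersection the infinite family of disjoint homoclinic classes accumulating on $H(O)$ is obtained by a diagonal extraction.

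The main obstacle is to ensure that the horseshoes $K_1,\dots,K_m$ inside $g$ really sit in pairwise distinct homoclinic classes of $g$, and that none coincides with the continuation of $H(O)$. Since each $K_i$ lies in a small disjoint neighborhood $V_i$, an additional small perturbation (via the Franks-Gourmelon lemma underlying the proof of the Main Theorem) can be used to make each $V_i$ locally trapping, so that the maximal invariant set inside $V_i$ becomes an isolated piece of the chain-recurrent dynamics of $g$. For $C^1$-generic $g$, chain-recurrence classes containing a periodic orbit coincide with homoclinic classes, so these isolated pieces become genuine pairwise distinct homoclinic classes, and Bonatti-Crovisier-type arguments guarantee that the original generic $f$ can be approximated by such $g$ satisfying the density condition, completing the argument.
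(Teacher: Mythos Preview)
There is a genuine gap in the separation step, and it is not repaired by your ``main obstacle'' paragraph. The Main Theorem produces a horseshoe $K_i$ that is \emph{homoclinically related} to the periodic orbit $\cO(p_i)$ used to build it; since $p_i\in H(O)$, this places $K_i$ inside the very homoclinic class $H(O)$ you are trying to accumulate on. Your proposed fix---making $V_i$ trapping by an additional small perturbation after $K_i$ has been created---cannot work in this order: once $K_i$ and $\cO(p_i)$ are homoclinically related, a perturbation supported near $\partial V_i$ does not sever that relation, and a perturbation large enough to make $V_i$ genuinely trapping (forward or backward) would have to be of definite $C^1$-size and would interact with the horseshoe construction. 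The paper resolves this by reversing the order of operations (Lemma~\ref{l.expell}): first use Franks' lemma and the Bochi--Bonatti technique to turn a well-chosen periodic orbit $\cO$ into a sink or source, thereby creating a trapping region $U$ separating $\cO$ from $p$; then restore the original dynamics in a smaller neighborhood of $\cO$ (so $U$ remains trapping); and only then apply the Main Theorem inside $U$. This yields a \emph{single} horseshoe robustly disjoint from the chain-recurrence class of $p$, and the infinitude of accumulating classes comes from a Baire argument over a period bound $n$, using that for generic $f$ distinct chain-recurrence classes are disjoint.

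A second issue: your set $\cR_m$ is defined by a universal quantifier over \emph{all} homoclinic classes without dominated splitting, and this is not an open condition---under perturbation new such classes may appear for which the $m$ nearby horseshoes have not been constructed. The paper avoids this by quantifying only over the finitely many periodic points of period at most $n$, whose hyperbolic continuations vary continuously, and by using the generic continuity of $g\mapsto H(p_g)$ together with the open-or-closed dichotomy for the existence of a dominated splitting on $H(p_g)$ (the Claim in the proof of Theorem~\ref{t.infiniteHC}).
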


In fact, this theorem can be proved using Newhouse's original argument. The use of Theorem \ref{t.newhouse} instead improves the lower bound $h$ on the entropy of the accumulating homoclinic classes from
$$\sup\{\min_i|\lambda_i(f,p)|:  p \text{ periodic point in } H(O)\}$$
to $\Delta(f_{|H(O)})$.

\paragraph{c-- Approximation of measures by horseshoes with large entropy.} 
Katok's horseshoe theorem says that any hyperbolic ergodic measure of a $C^{1+\alpha}$-diffeomorphism ($\alpha>0$) can be approximated in entropy by
horseshoes.
We show a similar result for generic $C^1$ diffeomorphisms with no dominated splitting:

\begin{mytheorem}\label{t.katok}
For a generic $f\in\Diff^1(M)$
(or $f\in\Diff^1_\omega(M)$),
if $\mu$ is  an ergodic measure such that $\supp\mu$ has no dominated splitting, then there exists a sequence of horseshoes $(K_n)$ which converge to $\mu$ in the three following senses:
 \begin{itemize}
  \item[(i)] in the weak-* topology,
  \item[(ii)] with respect to the Hausdorff distance to the support,
  \item[(iii)] with respect to entropy:  $\lim_n h_\top(f,K_n)=h(f,\mu)$.
\end{itemize}
\end{mytheorem}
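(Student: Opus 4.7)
\emph{Proof plan.}
The plan combines four ingredients: a $C^1$-generic approximation of $\mu$ by periodic orbits, the Main Theorem applied to those orbits, a Baire argument transferring the resulting horseshoes from perturbations to $f$ itself, and a symbolic cutting-down step to obtain the precise entropy $h_\mu(f)$.

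First I would fix a residual $\mathcal R_1 \subset \Diff^1(M)$ (resp.\ $\Diff^1_\omega(M)$) on which a strong form of the ergodic closing lemma holds: for any $f \in \mathcal R_1$ and any ergodic measure $\mu$ of $f$, there is a sequence of periodic orbits $O(p_n)$ of $f$ whose periodic measures $\mu_{p_n}$ converge to $\mu$ in weak-$*$, whose supports converge to $\supp(\mu)$ in Hausdorff distance, and whose Lyapunov spectra $\lambda_i(f, p_n)$ converge to $\lambda_i(\mu)$. This is a combination of refinements of Ma\~n\'e's ergodic closing lemma due to Abdenur--Bonatti--Crovisier and Crovisier, together with their conservative analogues. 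Since the property of admitting an $N$-dominated splitting is closed under Hausdorff convergence of invariant compact sets, the hypothesis that $\supp(\mu)$ admits no dominated splitting forces $O(p_n)$ to admit no $N$-dominated splitting for $n$ large, for every fixed $N$.

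Next I would use a Baire argument to produce horseshoes for $f$ itself. For integers $N, T, k \geq 1$, consider the set of $f$ such that every periodic orbit $O$ of period at least $T$ without $N$-dominated splitting lies in a horseshoe of $f$ within Hausdorff distance $1/k$ of $O$ with entropy at least $\Delta(f, O) - 1/k$. Openness comes from the continuity of hyperbolic continuations (supports, periodic points, and entropies all depend continuously on $f$). Density comes from the Main Theorem, applied simultaneously to the finitely many periodic orbits of bounded period, thanks to the localized nature of its perturbations. The countable intersection over $N, T, k$ yields a residual set $\mathcal R_2$. For $f \in \mathcal R_1 \cap \mathcal R_2$ and the $p_n$ from Step~1, one obtains horseshoes $K_n^0 \ni O(p_n)$ within $1/n$ Hausdorff distance of $O(p_n)$ with $h_\top(f, K_n^0) \geq \Delta(f, p_n) - 1/n$. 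Continuity of Lyapunov exponents gives $\Delta(f, p_n) \to \Delta(\mu)$, and Ruelle's inequality applied to $f$ and $f^{-1}$ yields $h_\mu(f) \leq \Delta(\mu)$, hence $\liminf_n h_\top(f, K_n^0) \geq h_\mu(f)$.

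The horseshoes $K_n^0$ may have entropy strictly larger than $h_\mu(f)$ (one only has $h_\top(f, K_n^0) \leq \Delta(\mu) + o(1)$ via Ruelle inside $K_n^0$), so the final step is to shrink $K_n^0$ to a sub-horseshoe $K_n$ of entropy converging exactly to $h_\mu(f)$ while preserving the approximation of $\mu$. Since $K_n^0$ is topologically conjugate to a transitive subshift of finite type containing $O(p_n)$ as a distinguished cycle, a standard construction in symbolic dynamics provides, for any target $h' \in [0, h_\top(f, K_n^0))$, a transitive sub-SFT of $K_n^0$ still containing $O(p_n)$ whose entropy is within $1/n$ of $h'$ (for instance, restrict to paths returning to the cycle of $p_n$ with a prescribed maximal excursion length, and then invoke density of entropies of transitive SFTs). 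Choosing $h' := \max(h_\mu(f), 1/n)$ and letting $K_n$ be the resulting sub-horseshoe: the orbit $O(p_n)$ still lies in $K_n$, so the invariant measure $\mu_n := \mu_{p_n}$ is supported in $K_n$ and satisfies $\mu_n \to \mu$ weak-$*$, giving (i); the chain $O(p_n) \subset \supp(K_n) \subset K_n^0 \subset $ $1/n$-neighborhood of $O(p_n)$ places $\supp(K_n)$ Hausdorff-close to $\supp(\mu)$, giving (ii); and (iii) holds by construction. The principal obstacle is this cutting-down step: producing a sub-horseshoe of prescribed smaller entropy that still contains the distinguished periodic orbit $O(p_n)$, so that $\mu_{p_n}$ survives as an invariant measure on it and the support remains close to $\supp(\mu)$; this requires working carefully inside the symbolic model of $K_n^0$.
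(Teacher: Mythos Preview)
Your overall strategy---ergodic closing lemma, Main Theorem on the approximating periodic orbits, a Baire transfer, and a symbolic cutting-down to the target entropy---is the paper's strategy too, and your first, second, and fourth steps are sound. The gap is in the Baire step.

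The set you define for each $(N,T,k)$---all $f$ such that \emph{every} periodic orbit of period $\geq T$ without $N$-dominated splitting lies in a nearby horseshoe of the stated entropy---is neither open nor dense as written. For density: the Main Theorem only produces a horseshoe by an $\eps$-perturbation when the orbit has no $N(\eps)$-dominated splitting, with $N(\eps)\to\infty$ as $\eps\to 0$. An orbit lacking an $N$-dominated splitting for your \emph{fixed} index $N$ may perfectly well admit an $N(\eps)$-dominated splitting, and then the Main Theorem gives you nothing; so you cannot perturb an arbitrary $f_0$ into your set by an arbitrarily small amount. Your own sentence ``applied simultaneously to the finitely many periodic orbits of bounded period'' also contradicts the definition of the set (period $\geq T$, unbounded above). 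For openness: even after bounding the period and restricting to the Kupka--Smale locus, the implication ``no $N$-dominated splitting $\Rightarrow$ horseshoe'' is equivalent to the disjunction ``has an $N$-dominated splitting or has a horseshoe'', and the first disjunct is a \emph{closed} condition on the periodic cocycle; a small perturbation can destroy the $N$-dominated splitting without creating any horseshoe.

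The paper avoids all of this with a different Baire mechanism. It considers the set-valued map $\Phi$ sending $f$ to the closure, in $\Prob(M)\times\cK(M)\times[0,L\dim M]$, of all triples $(\nu,K,h(f,\nu))$ with $K$ a horseshoe of $f$ and $\nu$ ergodic on $K$. Hyperbolic continuation makes $\Phi$ lower semicontinuous, hence continuous on a residual set $\cG_0$. For $f\in\cG_0$ (and in the residual set of Corollary~\ref{c.ergodic-closing}) one builds perturbations $g_n\to f$ carrying horseshoes in small neighborhoods of the approximating periodic orbits $\cO_n$, cuts their entropy down to $h(f,\mu)$ (via Katok's approximation theorem, as you also propose), and obtains triples in $\Phi(g_n)$ converging to $(\mu,\supp\mu,h(f,\mu))$. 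Continuity of $\Phi$ at $f$ then places this triple in $\Phi(f)$, i.e., produces the desired horseshoes for $f$ itself---without ever asserting a universal property over all periodic orbits.
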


Let us point out that this approximation property also holds for $C^1$-diffeomorphisms \emph{with} a dominated splitting \cite{gelfert}, though for completely different reasons.

\paragraph{d-- Dimension of homoclinic classes.}
Newhouse already noticed~\cite{Newhouse} that homoclinic tangencies
of surface diffeomorphisms allows one to build horseshoes with large Hausdorff dimension.
In this direction we obtain:

\begin{mytheorem}\label{t.dimension}
For a generic $f\in\Diff^1(M)$
and a periodic point $p$ such that
$$\sum_{i=1}^{\ell} \lambda_{i}^-(f,p)\geq \sum_{i=1}^{\ell} \lambda_{i}^+(f,p),$$
if the homoclinic class  containing $p$ has no dominated splitting,
then its Hausdorff dimension is strictly larger than the
unstable dimension of $p$
\end{mytheorem}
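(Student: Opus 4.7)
Under the hypothesis $\sum_{i=1}^{\ell}\lambda_i^-(f,p)\geq\sum_{i=1}^{\ell}\lambda_i^+(f,p)$, we have $\Delta(f,p)=\sum_i\lambda_i^+(f,p)$. Let $u$ denote the unstable dimension of $p$, i.e.\ the number of positive Lyapunov exponents counted with multiplicity. The plan is to use the Main Theorem to construct, after a small $C^1$-perturbation $g$ of $f$, a horseshoe $K$ with Hausdorff dimension strictly larger than $u$; to show that $K$ is contained in the homoclinic class of the continuation $p_g$; and finally to conclude by a Baire argument that $\dim H(p)>u$ for generic $f$.

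For any $\epsilon>0$, the Main Theorem yields $g$ that is $\epsilon$-$C^1$-close to $f$ and a horseshoe $K$ contained in an arbitrarily small neighborhood of the orbit of $p$, with $h_\top(g,K)\geq\sum_i\lambda_i^+(f,p)-\epsilon$. The Newhouse-type construction ensures that $K$ has the same index $u$ as $p$ and contains a hyperbolic periodic point homoclinically related to $p_g$, hence $K\subset H_g(p_g)$.

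The heart of the proof is the dimension estimate for $K$. Let $\mu_0$ be the Bowen measure on $K$. Since $K$ lies in a small neighborhood of $O(p)$ and $g$ is $C^1$-close to $f$, the Lyapunov exponents of $\mu_0$ are within $O(\epsilon)$ of those of $p$: $\sum_i\chi^+_i(\mu_0)\leq\sum_i\lambda_i^+(f,p)+O(\epsilon)$ and each $|\chi^-_j(\mu_0)|\leq C$ for some constant $C$ independent of $\epsilon$. Combined with $h_{\mu_0}=h_\top(g,K)\geq\sum_i\lambda_i^+(f,p)-\epsilon$, Ruelle's inequality is nearly saturated, forcing the unstable partial dimension of $\mu_0$ to be close to maximal, namely $\dim^u\mu_0\geq u-O(\epsilon)$ (via a Ledrappier--Young-type analysis, or more concretely by direct inspection of the Newhouse Markov partition whose $\approx e^{T\sum\lambda^+_i(f,p)}$ rectangles have unstable widths $\approx\prod_i e^{-\lambda^+_i(f,p)T}$ of near-unit total unstable volume). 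Simultaneously, positivity of entropy and boundedness of stable exponents give $\dim^s\mu_0\geq h_{\mu_0}/C\geq\delta_0>0$ with $\delta_0$ independent of small $\epsilon$. By the local product structure of horseshoes, $\dim K\geq\dim^u\mu_0+\dim^s\mu_0>u$ provided $\epsilon$ is small enough.

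The Baire argument is standard: the property ``$H(p)$ contains a horseshoe of dimension $>u+\delta_0/2$ homoclinically related to $p$'' is $C^1$-open in $f$ (persistence of horseshoes under $C^1$ perturbation, continuity of Hausdorff dimension of horseshoes via Bowen's pressure formula, and robustness of transverse homoclinic intersections) and, by the previous paragraph, dense among diffeomorphisms satisfying the hypotheses of the theorem. Taking countable intersection over all periodic orbits (Kupka--Smale generically gives finitely many of each period) yields a dense $G_\delta$ of $f$'s for which every eligible $p$ has $\dim H(p)>u$. The main obstacle is the sharp bound $\dim^u\mu_0\geq u-O(\epsilon)$ in the non-conformal higher-dimensional case: for $C^2$-systems Ledrappier--Young suffices, but at $C^1$ regularity one must either invoke the pressure formalism on the unstable Jacobian cocycle (valid because the unstable bundle of the uniformly hyperbolic $K$ is continuous) or exploit the specific geometry of the Newhouse Markov partition, in particular the near-alignment of the unstable bundle of $K$ with the positive Oseledets subspace at $p$.
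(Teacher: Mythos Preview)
Your overall strategy---perturb to create a horseshoe inside $H(p_g)$, bound its Hausdorff dimension from below, then run a Baire argument---matches the paper's. However, there is a genuine gap in the dimension estimate, and a related one in the Baire step, both of which the paper resolves by exploiting a structural feature of the Main Theorem's construction that you overlook.

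The horseshoe $K$ produced by the Main Theorem (Theorem~\ref{t.newhouse} and Proposition~\ref{p.newhouse}) is not an arbitrary horseshoe near $\cO(p)$: it is \emph{affine and conformal} in the linearizing chart, with the tangent dynamics acting as a homothety of rate $\exp(\Delta^-(f,p)/k)$ on the stable bundle and $\exp(\Delta^+(f,p)/(d_0-k))$ on the unstable bundle (see item~1 of Proposition~\ref{p.preparation} and the proof of Proposition~\ref{p.dimension}). For such a conformal horseshoe the Hausdorff dimension is exactly $d^s+d^u$ with $d^u=h_\top(g|K)/|\lambda^u|$ and $d^s=h_\top(g|K)/|\lambda^s|$ (this is classical, e.g.\ \cite[Theorem~22.2]{pesin}), and the inequality $h_\top(g|K)\geq\Delta^+(g,p)$ immediately gives $d^u\geq d_0-k$ and $d^s>0$. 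Your route via the Bowen measure, Ruelle near-saturation, and a ``Ledrappier--Young-type analysis'' does not work at $C^1$ regularity in the non-conformal case; you flag this yourself as the main obstacle, but the proposed fixes (pressure formalism on the unstable Jacobian, or ``near-alignment'' of bundles) do not yield Hausdorff dimension lower bounds for non-conformal $C^1$ horseshoes in dimension $\geq 3$.

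The same issue undermines your Baire argument: you invoke ``continuity of Hausdorff dimension of horseshoes via Bowen's pressure formula'', but Bowen's formula applies to conformal repellers and surface horseshoes, not to general higher-dimensional $C^1$ horseshoes. The paper (Proposition~\ref{p.dimension}) instead proves continuity of $h\mapsto\dim_H(K_h)$ at the specific $g$ constructed, by showing that the conjugacy $\phi$ between $K$ and its continuation $K_h$ is bi-H\"older with exponent tending to $1$ as $h\to g$; this argument (extending \cite{PV}) again relies on the conformality of $K$. Once you use conformality, both gaps close and the proof goes through exactly as you outline.
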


\subsection{Outline}

The paper proceeds as follows. In Section \ref{s.prelim} we state background results and definitions.  In Section \ref{s.perturbative}  we {list the perturbative tools}.  Section \ref{s.main} is {devoted to our Main Theorem, i.e., the construction of the linear horseshoe}.  In Section \ref{s.entropy}  we prove Theorem \ref{t.entropycons} on the computation of entropy in the conservative setting and {its} Corollaries \ref{c.continuity} and \ref{c.unstability}.  In Section \ref{s.non} we prove Theorem \ref{t.nomax} on the nonexistence of measures of maximal entropy
and the Borel classification, Corollary \ref{c.almostBorel}.
In Section \ref{s.entropystructure} we prove Theorems~\ref{t.ualphageneral} and~\ref{t.tail-general} and Corollaries \ref{c.unstability-hstar}, \ref{c.loc-cte} on the entropy structure, tail entropy, and symbolic extensions.
In Section \ref{s.nonconservative} we prove Theorems \ref{t.infiniteHC}, \ref{t.katok}
and  and \ref{t.dimension} {about} homoclinic classes with no dominated splitting in the dissipative setting.
\\

\noindent{\bf Acknowledgements:}

{The authors wish to thank Nicolas Gourmelon for discussions about the conservative versions of some of his perturbation techniques and also David Burguet as well as Tomasz Downarowicz for discussions about the entropy structure of diffeomorphisms.
 We also thank Yongluo Cao for pointing out a gap in the original proof of Theorem \ref{t.bound-tail}.

\section{Preliminaries}\label{s.prelim}

In this section we review some facts of hyperbolic theory, perturbation tools and generic results in the $C^1$ setting and extend some of them as needed in the rest of the paper. {We refer to \cite{robinson} for further details.}

\paragraph{Symplectic linear algebra.}
We use the Euclidean norm on $\RR^{d_0}$ and the operator norm on matrices.
When $d_0=2d$,
one considers the canonical symplectic form is $\omega(u,v):=u^TJv$ with $J:=\begin{pmatrix} 0 & I_{d}\\ -I_{d} & 0\end{pmatrix}$. For $d\times d$-matrices $\BA,\BB,\BC,\BD$,
 \begin{equation}\label{e.symplecticmatrix}
   \begin{pmatrix} \BA & \BB\\ \BC& \BD\end{pmatrix}\in Sp(2d,\RR) \iff
    \left\{\begin{array}{l} \BA^T\BC=\BA\BC^T,\\ \BB^T\BD=\BD^T\BB,\text{ and }\\
    \BA^T\BD-\BC^T\BB=I_d.\end{array}\right. 
 \end{equation}

The \emph{symplectic complement} $E^\omega$ of a subspace $E\subset \RR^{2d}$ is the linear subspace of vectors $v$ such that
$\omega(v,u)=0$ for all $u\in E$.

A subspace $E\subset \RR^{2d}$ is \emph{symplectic} if the restriction $\omega|E\times E$ of the symplectic form is non-degenerate (i.e. symplectic).
Two symplectic subspaces $E,E'$ are \emph{$\omega$-orthogonal} if for any $u\in E$, $u'\in E'$ one has $\omega(u,u')=0$.

A $d$-dimensional subspace $E$ is \emph{Lagrangian} if the restriction of the symplectic form vanishes, i.e. $E^\omega=E$.
Obviously, the stable (resp. unstable) space of a linear symplectic map is Lagrangian.

\begin{proposition}\label{p.change-basis2}
There exists $C>0$ (only depending on $d$)
such that for any Lagrangian space $L\subset \RR^{2d}$,
there exists $A\in Sp({2d},\RR)$
satisfying
$$A(L)=\RR^{d}\times \{0\}^{d} \text{ and }
\|A\|,\|A^{-1}\|<C.$$
\end{proposition}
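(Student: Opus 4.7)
The plan is to exhibit an explicit symplectic matrix that is also orthogonal, which automatically gives $\|A\|=\|A^{-1}\|=1$; we can therefore aim for the optimal constant $C=1$. The key conceptual point is that the intersection $O(2d)\cap Sp(2d,\RR)$ (the unitary group $U(d)$) acts transitively on the Lagrangian Grassmannian, and this action realizes the desired change of basis.

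First I would pick an orthonormal basis $v_1,\dots,v_d$ of the Lagrangian $L$ (this is a purely Euclidean choice, possible since $\dim L=d$). Then I would check two elementary identities using the symplectic form $\omega(u,v)=u^TJv$ and the fact that $J^TJ=I$ and $J^2=-I$: namely, $\langle v_i,-Jv_j\rangle=-v_i^TJv_j=-\omega(v_i,v_j)=0$ (since $L$ is Lagrangian) and $\langle -Jv_i,-Jv_j\rangle=v_i^TJ^TJv_j=\delta_{ij}$. Thus the vectors $-Jv_1,\dots,-Jv_d$ form an orthonormal basis of $L^\perp$, and the concatenation $v_1,\dots,v_d,-Jv_1,\dots,-Jv_d$ is an orthonormal basis of $\RR^{2d}$.

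Next I define the linear map $B\in O(2d)$ by sending the standard basis via $Be_i=v_i$ and $Be_{i+d}=-Jv_i$ for $1\le i\le d$; by construction $B$ is orthogonal. The main verification is that $B$ is symplectic, which reduces to checking $\omega(Be_a,Be_b)=\omega(e_a,e_b)$ on the three types of index pairs. The Lagrangian condition on $L$ handles the case of two indices $\le d$; a direct calculation using $J^2=-I$ gives $\omega(v_i,-Jv_j)=\delta_{ij}$, matching $\omega(e_i,e_{j+d})$; and $\omega(-Jv_i,-Jv_j)=\omega(v_i,v_j)=0$ handles the third case. (Alternatively one writes $B$ in block form and verifies the equations in display~\eqref{e.symplecticmatrix}.)

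Finally I set $A:=B^{-1}$: then $A(L)=A(B(\RR^d\times\{0\}^d))=\RR^d\times\{0\}^d$, and since $B\in O(2d)\cap Sp(2d,\RR)$ so is $A$, giving $\|A\|=\|A^{-1}\|=1$. Any universal constant $C>1$ works. There is no real obstacle here; the only thing requiring care is the sign conventions in the symplectic checks (getting the factor $-J$ right so that $\omega(v_i,-Jv_j)=+\delta_{ij}$ rather than $-\delta_{ij}$).
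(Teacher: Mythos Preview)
Your proof is correct, and it takes a genuinely different route from the paper's. The paper argues indirectly: it first proves a local claim (if a Lagrangian basis is $\delta$-close to the standard one, a symplectic map $\eps$-close to the identity straightens it), then invokes transitivity of $Sp(2d,\RR)$ on Lagrangians together with compactness of the Lagrangian Grassmannian to extract a uniform bound $C$. This gives existence of $C$ but no explicit value. Your approach instead exploits the classical fact that the \emph{compact} subgroup $O(2d)\cap Sp(2d,\RR)\cong U(d)$ already acts transitively on Lagrangians, and you construct the required map explicitly as an orthogonal symplectic matrix, yielding the optimal bound $\|A\|=\|A^{-1}\|=1$. Your argument is shorter, fully constructive, and gives a sharper conclusion; the paper's local claim, on the other hand, is phrased so that it could in principle be reused when one needs a symplectic correction close to the identity, though that is not actually exploited elsewhere in the paper.
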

\begin{proof}
We first prove:
\begin{claim*}
For any $\eps>0$, there exists $\delta>0$ such that
if $(e_1,\dots,e_{d})$ is $\delta$-close to the first $d$ vectors of the standard basis $(E_1,\dots,E_{2d})$ of
$\RR^{2d}$ and generates a Lagrangian space,
then there exists $A\in Sp(2d,\RR)$ that is $\eps$-close to the identity
such that $A(e_i)=E_i$ for $i=1,\dots,d$.
\end{claim*}
Indeed, if $(e_1,\dots,e_d)$ is the matrix $(m_{ij})_{1\leq i\leq 2d,1\leq j\leq d}$ with $e_j=\sum_{i=1}^{2d} m_{ij}E_i$ for $j=1,\dots,m$, one defines a $2d\times 2d$ matrix as:
 $$
   \begin{pmatrix}\BA& 0\\ \BC&\BD\end{pmatrix} \text{ such that }\begin{pmatrix} \BA \\ \BC\end{pmatrix}=(e_1,\dots,e_d)
    \text{ and }\BD=(\BA^T)^{-1}.
  $$
This matrix sends $E_i$ to $e_i$ for $1\leq i\leq d$ and is symplectic since $(\BA^T\BC-\BC^T\BA)_{ij}=\omega(e_i,e_j)=0$. Hence its inverse has the claimed properties.

It is well-known that the symplectic group acts transitively on the Lagrangian spaces (by a variation of the preceding proof).
The proposition follows from the claim and the compactness of the set of Lagrangian spaces.
\end{proof}

\paragraph{Distance for the $C^1$ topology.}
Let $M$ be a compact connected boundaryless Riemannian manifold with dimension $d_0$.
The tangent bundle is endowed with a natural distance: if $\nabla$ is the Levi-Civita connection,
the distance between $u\in T_xM$ and $v\in T_yM$ is the infimum of $\|u-\Gamma_\gamma v\|+\text{Length}(\gamma)$ ($\Gamma_\gamma$ denoting the parallel transport) over $C^1$-curves $\gamma$ between $x$ and $y$.
Let $\Diff^1(M)$ denote the space of $C^1$-diffeomorphisms of $M$.
{We will use the following standard} distance defining the $C^1$-topology:
$$d_{C^1}(f,g)=\sup_{v\in T^1M} \max\big( d(Df (v),Dg (v)), \; d(Df^{-1}(v),Dg^{-1}(v))\big).$$
We say that $g$ is an \emph{$\eps$-perturbation} of $f$ when $d_{C^1}(g,f)<\eps$. 
{We say that a property holds \emph{robustly} if it holds for all $\eps$-perturbations for $\eps>0$ small enough.}

\paragraph{Conservative diffeomorphisms.}
The vector space $\RR^{d_0}$ is endowed with the standard volume form
and with the standard symplectic form described above (when $d_0$ is even).
If $\omega$ is a volume or a symplectic form on $M$, one denotes by $\Diff^1_\omega(M)$ the subspace of diffeomorphisms which preserve $\omega$.
The charts $\chi \colon U\to \RR^{d_0}$ of $M$ that we will consider will always send $\omega$ on the standard Lebesgue volume
or symplectic form of $\RR^{d_0}$; it is well-known that any point admits a neighborhood with such a chart.

\paragraph{Invariant measures.}
We denote by $\Prob(f)$ the set of all Borel probability measures that $f$ preserves,
by $\Probe(f)$ the set of those which are ergodic and by $d_*$
a distance on the space of probability measures of $M$ compatible with the weak star topology.

\paragraph{Dominated splitting.}
The definition has been stated in the introduction. {The \emph{index} of a dominated splitting $E\oplus F$ is the dimension of $E$, its least expanded bundle.}
We note:

\begin{lemma}\label{l.domination}
If $(f_n)$ converges to $f$ for the $C^1$-topology
and if $(\Lambda_n)$ are compact sets in $M$ which converge to $\Lambda$ for the Hausdorff topology,
such that $\Lambda_n$ is $f_n$-invariant and has {an $N$-dominated splitting} at index $i$ for $f_n$,
then $\Lambda$ is $f$-invariant and has an $N$-dominated splitting at index $i$ for $f$.
\end{lemma}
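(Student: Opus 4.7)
The plan is to combine a limit argument in the Grassmannian bundle with the standard uniqueness of dominated splittings. I will first treat the invariance of $\Lambda$, then construct a candidate splitting pointwise via subsequences, pass the domination inequality to the limit, and finally use uniqueness to promote the pointwise construction to a genuine continuous invariant splitting.

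\emph{Step 1 (invariance of $\Lambda$).} Since $f_n\to f$ in $C^1$, in particular uniformly, and $\Lambda_n\to\Lambda$ for the Hausdorff distance, for any $x\in\Lambda$ pick $x_n\in\Lambda_n$ with $x_n\to x$; then $f_n(x_n)\in\Lambda_n$ and $f_n(x_n)\to f(x)$, so $f(x)\in\Lambda$. The argument applied to $f_n^{-1}\to f^{-1}$ gives $f^{-1}(\Lambda)\subset\Lambda$, hence $\Lambda$ is $f$-invariant.

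\emph{Step 2 (extraction of a splitting).} Let $E_n\oplus F_n$ denote the $N$-dominated splitting of $\Lambda_n$, with $\dim E_n=i$. View $E_n$ as a continuous section $\Lambda_n\to\Grass_i(TM)$ and likewise for $F_n$ in $\Grass_{d_0-i}(TM)$. Their graphs $\Gamma(E_n)$ and $\Gamma(F_n)$ are compact subsets of the corresponding (compact) Grassmannian bundles, so, up to a subsequence, they converge for the Hausdorff distance to compact sets $\widetilde E$ and $\widetilde F$ projecting onto $\Lambda$. For each $x\in\Lambda$, this gives at least one pair of subspaces $(E(x),F(x))\in \Grass_i(T_xM)\times\Grass_{d_0-i}(T_xM)$ arising as a limit of $(E_n(x_n),F_n(x_n))$ for some $x_n\to x$.

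\emph{Step 3 (limit of the domination inequality).} Fix $x\in\Lambda$, a pair $(E(x),F(x))$ obtained as above, and any integer $n\geq N$. Given unit vectors $u\in E(x)$ and $v\in F(x)$, select $u_k\in E_{n_k}(x_{n_k})$ and $v_k\in F_{n_k}(x_{n_k})$, both of unit norm, with $u_k\to u$ and $v_k\to v$; this is possible by the convergence of the graphs. Because $d_{C^1}(f_{n_k},f)\to 0$, the iterates $Df_{n_k}^n$ converge uniformly to $Df^n$ on a neighborhood of $\Lambda$, so $Df_{n_k}^n u_k\to Df^n u$ and $Df_{n_k}^n v_k\to Df^n v$. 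Passing to the limit in
\[
\|Df_{n_k}^n u_k\|\leq \tfrac12\|Df_{n_k}^n v_k\|
\]
yields $\|Df^n u\|\leq \tfrac12\|Df^n v\|$, as required for an $N$-dominated splitting of index $i$.

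\emph{Step 4 (well-definedness and invariance).} At this stage $E(x)$ and $F(x)$ could a priori depend on the chosen subsequence; however, once existence at $x$ is established, the standard uniqueness of an $N$-dominated splitting at a fixed index (any two such splittings over the same invariant set must coincide, because the slow direction is characterized by the growth rate of $Df^n$) forces $\widetilde E$ and $\widetilde F$ to be graphs of well-defined maps $E,F\colon\Lambda\to\Grass(TM)$. Their continuity then follows from the Hausdorff convergence of the graphs, since any cluster point of $E(x_m)$ along $x_m\to x$ in $\Lambda$ is itself a limit of the $E_n$'s along a diagonal sequence and must equal $E(x)$. Invariance $Df(E(x))=E(f(x))$ and $Df(F(x))=F(f(x))$ is obtained by passing to the limit in $Df_{n_k}(E_{n_k}(x_{n_k}))=E_{n_k}(f_{n_k}(x_{n_k}))$. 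The main (minor) obstacle is Step 4: one must check that the uniqueness of dominated splittings at fixed index applies even though we obtained the splitting only pointwise; but this is immediate once the domination inequality from Step 3 is available at every point of $\Lambda$.
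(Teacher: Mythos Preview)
The paper does not prove this lemma; it is stated as a standard fact (``We note:\dots'') in the preliminaries and used without argument. Your proof is correct and follows the expected route: compactness of the Grassmannian bundles to extract limits, passage to the limit in the domination inequality, and uniqueness of dominated splittings of a fixed index to conclude.

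One small logical reordering would make Step~4 cleaner. You invoke uniqueness to show $\widetilde E,\widetilde F$ are graphs and \emph{then} check invariance; but the standard uniqueness argument (comparing two candidate splittings at $x$ via the exponential gap $\|Df^{kN}|_E\|\leq 2^{-k}m(Df^{kN}|_F)$) needs to iterate the domination along the whole orbit, which requires knowing in advance that $Df$ carries limit pairs to limit pairs. So first observe that the compact limit set $\widetilde{EF}\subset \Grass_i(TM)\times_M\Grass_{d_0-i}(TM)$ is invariant under $(x,E,F)\mapsto(f(x),Df(E),Df(F))$ --- this is exactly the limit of $Df_{n_k}(E_{n_k}(x_{n_k}))=E_{n_k}(f_{n_k}(x_{n_k}))$ that you already wrote down. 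Then, for any two elements $(E_1,F_1),(E_2,F_2)\in\widetilde{EF}_x$, their forward orbits give two invariant $N$-dominated splittings along the orbit of $x$, and the usual growth-rate comparison forces $E_1=E_2$, $F_1=F_2$. This is what you mean by ``immediate once Step~3 is available at every point'', but it is worth saying that invariance of the limit \emph{set} comes first. After that, continuity follows as you indicate.
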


In particular, if $\Lambda$ has no $N$-dominated splitting at index $i$ for $f$,
then for any diffeomorphism $g$ sufficiently $C^1$-close,
any invariant compact set $\Lambda'$ close to $\Lambda$ has no $N$-dominated splitting at index $i$ either.

\begin{definition}\label{def-TNweak} 
Given positive integers $T, N$,
we say that a periodic point $p$ is \emph{$T, N$-weak} if the period of $p$ is at least $T$ and its orbit has no $N$-dominated splitting.
\end{definition}

\paragraph{Uniform hyperbolicity.}
Let $\Lambda$ be a compact invariant subset of $M$. If there is a splitting $TM|\Lambda=E\oplus F$ with $E$ uniformly contracted and $F$ uniformly expanded, one says that $\Lambda$ is a \emph{hyperbolic set} for $f$.  
If $\Lambda$ is a hyperbolic set for a $C^r$ diffeomorphism $f$, and $x\in\Lambda$, the \emph{stable manifold of $x$}, denoted
$W^s(x)=\{ y\in M\, :\, d(f^nx, f^ny)\to 0, n\to \infty\}$, is a $C^r$ immersed submanifold.  Likewise, we can define the unstable manifold of $x$ using backward iteration. 

A hyperbolic set $\Lambda$ is \emph{locally maximal} if there exists a neighborhood $U$ of $\Lambda$ such that $\Lambda=\bigcap_{n\in\mathbb{Z}}f^n(U)$.  An invariant set $\Lambda$ is transitive for $f$ if there exists a point in $\Lambda$ whose orbit closure is $\Lambda$.
We recall that a \emph{horseshoe} for a diffeomorphism is a locally maximal transitive hyperbolic set that is homeomorphic to the Cantor set.
\medskip

If $p$ is a periodic point for $f$,
we denote by $\pi(p)$ the period of $p$ and by $\cO(p)$  its orbit.
It is a \emph{saddle} if the orbit $\cO(p)$ is hyperbolic and is neither a sink nor a source (so both $E$ and $F$ are nontrivial).

\paragraph{Homoclinic classes.}
Two hyperbolic periodic orbits $O_1,O_2$ are \emph{homoclinically related}
if 
\begin{itemize}
\item[--] $W^s(O_1)$ has a transverse intersection point with $W^u(O_2)$, and 
\item[--] $W^u(O_1)$ has a transverse intersection point with $W^s(O_2)$.
\end{itemize}
Equivalently, there exists a horseshoe containing $O_1$ and $O_2$.
The \emph{homoclinic class} of a hyperbolic periodic point $p$ is the closure of the union of
hyperbolic periodic orbits homoclinically related to the orbit of $p$; it is denoted $H(p)$.
\medskip

\paragraph{Lyapunov exponents.}
Let $f\in \mathrm{Diff}^{1}(M)$ and $\mu$ be an $f$-invariant Borel probability measure.  Then there is a full measure set of points 
such that $T_x M = E_1 (x) \oplus \cdots E_{k(x)}(x)$ where each $E_i(x)$ is a linear subspace and $v\in E_i(x)$ if and only if $v=0$ or 
$$\lim_{n\to \pm \infty} \frac{\log \|D_x f^n v\|}{n}=\mu_i(x)$$
where $\mu_1(x)< \mu_2(x)<\cdots< \mu_{k(x)}(x)$. Each $\mu_i(x)$ is a \emph{Lyapunov exponent of $x$}. It has some \emph{multiplicity} $\dim E_i(x)$. It is convenient to define $\lambda_1(x)\leq\lambda_2(x)\leq\dots\leq\lambda_{d_0}(x)$ by repeating each $\mu_i(x)$ according to the multiplicity. 

If $\mu$ is an ergodic measure then the previous functions $k,\mu_i,\lambda_j$ are constant almost everywhere.  In this case we will denote the Lyapunov exponents by $\mu_i(\mu)$ or $\lambda_i(\mu)$. For an ergodic measure we let $\Delta^+(\mu)$ be the sum of the positive Lyapunov exponents counted with multiplicity.  Likewise, $\Delta^-(\mu)\leq0$ is the sum of the negative Lyapunov exponents counted with multiplicity, and 
$$
\Delta(\mu)=\min\{ \Delta^+(\mu), -\Delta^-(\mu)\}.$$
The notations $\Delta^+(p)$, $\Delta^-(p)$, and $\Delta(p)$ are defined {in the obvious way.}

When there exists a dominated splitting
$TM=E\oplus F\oplus G$, one defines
$\Delta^+_F(\mu)$ (resp. $\Delta^-_F(\mu)$) as the sum of the positive (resp. negative)
Lyapunov exponents associated to directions inside $F$.
One sets $\Delta_F(\mu)=\min(\Delta_F^+(\mu),-\Delta^-_F(\mu))$.

We sometimes include in the notations the dependence of the previous objects on the diffeomorphism, e.g.,  writing $\cO(f,p)$ instead of $\cO(p)$.

\section{Perturbative tools}\label{s.perturbative}

We will need to perturb the differential or to linearize the map along a periodic orbit as well as to create homoclinic tangencies. In order to restrict ourselves to local perturbations, we do not use the powerful technique of transitions introduced in~\cite{BDV} and applied in many subsequent works. Instead we follow mostly
the approach developed in~\cite{Gourmelon2010,Gourmelon2014,BochiBonatti}.
Note that we also want to preserve some homoclinic connections and volume or symplectic forms.
In this section we state all the results that we need and that have an independent interest.
The proofs in the the dissipative case are already known. The proof of conservative versions
are given in a separate paper \cite{BCF}.

\begin{definition}\label{def-evNpert}
Consider $f\in\Diff^1(M)$, a finite set $X\subset M$, a neighborhood $V$ of $X$, and $\eps>0$.
A diffeomorphism $g$ is an \emph{$(\eps, V, X)$-perturbation} of $f$ if $d_{C^1}(f,g)<\eps$ and $g(x)=f(x)$ for all $x$ outside of $V\setminus X$.
\end{definition}

\begin{definition}
Given $f\in \Diff^1(M)$, a periodic point $p$ of period $\ell =\pi(p)$ and $\eps>0$,
an \emph{$\eps$-path of linear perturbations at $\cO(p)$}
is a family of paths $(A_{i}(t))_{t\in[0,1]}$, $1\leq i\leq \ell$, of linear maps $A_i(t)\colon T_{f^i(p)}M\to T_{f^{i+1}(p)}M$
satisfying:
\begin{itemize}
\item[--] $A_{i}(0)=Df(f^i(p))$,
\item[--] $\sup_{t\in [0,1]} \big(\max{\|Df(f^i(p))-A_{i}(t)\|, \|Df^{-1}(f^i(p))-A^{-1}_{i}(t)\|}\big)<\eps$.
\end{itemize}
\end{definition}
\medskip

For modifying the image of one point, one uses this classical proposition.

\begin{proposition}\label{p.elementary}
For any $C,\eps>0$,
there is $\eta>0$ with the following property.
For any $f\in \Diff^1(M)$ such that $Df$, $Df^{-1}$ are bounded by $C$,
for any points $x,y$ such that $r:=d(x,y)$ is small enough,
one can find an $(\eps, B(x,r/\eta),x)$ perturbation $g$ of $f$ satisfying $g(x)=f(y)$.
Furthermore, if $f$ preserves a volume or symplectic form, one can choose $g$ to preserve it.
\end{proposition}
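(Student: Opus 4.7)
The plan is to realize $g$ as $g = h \circ f$, where $h$ is a compactly supported diffeomorphism near $f(x)$ that sends $f(x)$ to $f(y)$ while being $C^0$-close to the identity. First I would pick $r$ small enough that $B(f(x), Cr)$ lies in the domain of a normalized chart $\chi$ around $f(x)$ with uniform distortion bounds, and work from now on in this Euclidean chart, setting $v := \chi(f(y)) - \chi(f(x))$, so that $|v| \leq C' r$ for a constant $C'$ depending only on the geometry of the chart and on $C$. Fix once and for all a universal bump $\phi \colon \RR^{d_0} \to [0,1]$ with $\phi = 1$ on $B(0,1/2)$, $\phi = 0$ off $B(0,1)$, and $\|\nabla \phi\|_\infty \leq C_0$, and scale by $\rho := r/\eta$.

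In the dissipative case, I would define $h(u) := u + \phi(u/\rho)\, v$. Then $h(0) = v$, so $h$ sends $f(x)$ to $f(y)$; $h$ is the identity outside $B(0,\rho)$; and
\[
\|Dh - I\| \;\leq\; |v|\cdot \|\nabla(\phi(\cdot/\rho))\|_\infty \;\leq\; C'r \cdot \frac{C_0}{\rho} \;=\; C'C_0\, \eta.
\]
For $\eta$ small enough, $h$ is a diffeomorphism with $\|Dh - I\|$ and $\|Dh^{-1} - I\|$ at most $2C'C_0\eta$, and $g := h \circ f$ (extended by $f$ outside the chart) satisfies $d_{C^1}(g,f) \leq 2CC'C_0\eta < \eps$, provided $\eta \leq \eta_0(\eps,C)$. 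The map $g$ equals $f$ on the open set $f^{-1}(M \setminus B(f(x),\rho))$, which contains the complement of $B(x,r/\eta'')$ for some $\eta''$ comparable to $\eta$ (using the Lipschitz bound on $f^{-1}$); renaming $\eta$ concludes.

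For the conservative case, the only change is to replace the additive translation by a form-preserving one. In the symplectic setting, work in a Darboux chart and consider the Hamiltonian $H(u) := \phi(u/\rho)\, \omega_0(w, u)$ where $w$ is the vector dual to $v$ under $\omega_0$; its Hamiltonian vector field $X_H$ agrees with the constant field $v$ on $B(0,\rho/2)$ and vanishes outside $B(0,\rho)$, and the same bump-scaling estimate shows $\|DX_H\|_\infty \leq C''\eta/\rho\cdot |v| \lesssim \eta$, so the time-$1$ map $h$ of $X_H$ obeys the same $C^1$ estimate as before. In the volume-preserving setting, one uses instead a divergence-free vector field supported in $B(0,\rho)$ and equal to $v$ near the origin (easily built in dimension $\geq 2$ by taking $\mathrm{curl}$ of a bump potential when $d_0 = 2$, or an analogous antiderivative construction for $d_0 \geq 3$); the integral estimate is identical.

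The only genuine obstacle is handling the conservative case with all three requirements (prescribed image, prescribed support, and $C^1$-smallness) simultaneously; this requires some care to ensure that the generating Hamiltonian (or divergence-free potential) can be built with $C^2$-norm on the right scale $|v|/\rho$, and I would defer the detailed verification to the companion paper \cite{BCF} as is done in the excerpt.
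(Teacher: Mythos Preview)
The paper does not prove this proposition---it is introduced as a ``classical'' fact and used without argument---so there is no approach to compare against. Your construction $g = h \circ f$, with $h$ a compactly supported $C^1$-small diffeomorphism built from a scaled bump translation (respectively, the time-$1$ map of a cut-off Hamiltonian or divergence-free field in the conservative cases), is exactly the standard argument, and your $C^1$ estimates $\|Dh - I\| \lesssim |v|/\rho \lesssim \eta$ are correct.

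Two small points worth tightening. In the symplectic case, the time-$1$ map of $X_H$ sends $0$ to $v$ only if the trajectory of $0$ stays inside the region $\{\phi\equiv 1\}$ for the whole time interval $[0,1]$; this requires $|v| < \rho/2$, i.e., $\eta < 1/(2C')$, which you should state explicitly. For the volume-preserving case in dimension $d_0 \geq 3$, your description is a bit vague; one clean sentence suffices---for instance, rotate so that $v$ lies along $e_1$, take the $2$-dimensional Hamiltonian construction in the $(u_1,u_2)$-plane, and multiply by a bump $\psi(u_3,\dots,u_{d_0})$, then correct the resulting (small, compactly supported, zero-average) divergence by a standard potential, all of which keeps $\|DX\|_\infty \lesssim |v|/\rho$. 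As you note, the paper itself defers the conservative details to the companion paper \cite{BCF}.
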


\subsection{Franks' lemma.}
We will use the following two strengthening of the classical Franks' lemma.

\begin{theorem}[Franks' lemma with linearization]\label{t.linearize}
Consider $f\in \Diff^1(M)$, $\eps>0$ small, a finite set $X\subset M$ and a chart $\chi\colon V\to \RR^{d_0}$ with $X\subset V$.
For $x\in X$, let $A_x\colon T_xM\to T_{f(x)}M$ be a linear map such that 
$$\max(\|A_x-Df(x)\|, \|A^{-1}_x-Df^{-1}(x)\|)<\eps/2.$$
Then there exists an $(\eps, V,X)$-perturbation $g$ of $f$ such that for each $x\in X$ the map $\chi\circ g \circ \chi^{-1}$ is linear in a neighborhood of $\chi(x)$ and
$Dg(x)=A_x$. Moreover if $f$ preserves a volume or a symplectic form, one can choose $g$ to preserve it also.
\end{theorem}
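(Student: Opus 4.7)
The plan is to argue locally and point by point. Since $X$ is finite, I choose pairwise disjoint open balls $V_x \Subset V$ around each $x \in X$; a perturbation supported in $\bigcup_{x\in X} V_x\setminus\{x\}$ treats each $x$ independently. Working in the chart $\chi$, write $\tilde x := \chi(x)$, $\tilde f := \chi \circ f \circ \chi^{-1}$, and the targeted affine model $L_x(y) := \chi(f(x)) + A_x(y - \tilde x)$, identifying $A_x$ with its matrix in the chart. I will build $\tilde g := \chi\circ g\circ\chi^{-1}$ in two stages.

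\emph{Stage 1 -- tight derivative matching.} I use a logarithmic-type radial bump $\psi(y) := \psi_0(|y-\tilde x|/r)$ satisfying $\psi_0(t)=1$ for $t\leq 1$, $\psi_0(t)=0$ for $t$ large, and the crucial estimate $|t\,\psi_0'(t)|\leq 1$; set
$$\tilde g_0(y) := \tilde f(y) + \psi(y)\cdot (A_x - D\tilde f(\tilde x))(y - \tilde x).$$
This gives $\tilde g_0(\tilde x) = \tilde f(\tilde x)$ and $D\tilde g_0(\tilde x) = A_x$. Because $|\nabla\psi(y)|\cdot|y-\tilde x|\leq 1$, one gets the pointwise estimate
$$\|D\tilde g_0(y) - D\tilde f(y)\|\leq |\nabla\psi(y)|\cdot\|A_x - D\tilde f(\tilde x)\|\cdot|y-\tilde x| + \|A_x - D\tilde f(\tilde x)\|\leq \tfrac{\eps}{2}+\tfrac{\eps}{2},$$
which, after a standard smoothing adjustment and for $r$ small enough to make $\|D\tilde f(y)-D\tilde f(\tilde x)\|$ negligible on $B(\tilde x,r)$, is strictly less than $\eps$. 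The symmetric hypothesis $\|A_x^{-1}-Df^{-1}(x)\|<\eps/2$ and an analogous construction performed in a chart at $\chi(f(x))$ (or for $f^{-1}$ instead of $f$) provide the matching bound on $D\tilde g_0^{-1}-D\tilde f^{-1}$; this is the delicate point, as the naive inverse estimate $\|Dg^{-1}-Df^{-1}\|\leq\|Df^{-1}\|^2\cdot\|Dg-Df\|$ would not be sharp enough.

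\emph{Stage 2 -- rigidification near $\tilde x$.} The map $\tilde g_0$ has $D\tilde g_0(\tilde x)=A_x$ but is generally not affine in any neighborhood of $\tilde x$. Set $Q(y) := \tilde g_0(y) - L_x(y)$; then $Q(\tilde x) = 0$ and $DQ(\tilde x) = 0$, so $\|Q(y)\| = o(|y-\tilde x|)$ and $\|DQ(y)\|=o(1)$ as $y\to\tilde x$. Fix a smooth bump $\beta_x$ equal to $1$ on $B(\tilde x,r_1/2)$ and supported in $B(\tilde x,r_1)$ with $r_1$ very small, and put $\tilde g(y) := \tilde g_0(y) - \beta_x(y)Q(y)$. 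Then $\tilde g=L_x$ on $B(\tilde x,r_1/2)$, providing the required linearity in a neighborhood of $\chi(x)$, while $\tilde g=\tilde g_0$ outside $B(\tilde x,r_1)$. Since $\|\nabla\beta_x\|\cdot\|Q\|+\|DQ\|\leq (C/r_1)\cdot o(r_1)+o(1)=o(1)$ as $r_1\to 0$, shrinking $r_1$ keeps the cumulative $C^1$-distance between $g$ and $f$ below $\eps$.

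\emph{Conservative case and main obstacle.} When $f$ preserves a volume or symplectic form $\omega$, pick $\chi$ so that $\chi_*\omega$ is the standard form on $\RR^{d_0}$, and require $g$ to preserve $\omega$ as well. The bump interpolations in Stages~1 and~2 generally do not preserve $\omega$ and must be replaced by perturbations realized as time-one maps of small divergence-free or Hamiltonian vector fields supported in $V_x\setminus\{x\}$, whose linearization at $\tilde x$ is precisely the desired correction. This is consistent because the $A_x$ arising in the theorem's hypotheses are themselves volume- or symplectic-preserving. The \textbf{main obstacle} is precisely this conservative version: simultaneously producing a form-preserving perturbation that matches $A_x$ and is affine in the chart, all within the sharp $\eps/2\to\eps$ budget. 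This is achieved via Hamiltonian perturbations with carefully chosen generating functions, and the full details are deferred to the companion paper \cite{BCF}.
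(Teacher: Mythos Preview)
The paper does not prove this theorem at all. Section~3 presents it as a perturbative tool, stating that ``the proofs in the dissipative case are already known'' and that ``the proof of conservative versions are given in a separate paper~\cite{BCF}.'' So there is no proof in the paper to compare against; your sketch has to stand on its own, and your deferral of the conservative case to~\cite{BCF} is exactly what the paper itself does.

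Your dissipative sketch follows the standard route and the forward estimate in Stage~1 is correct: the log-type bump with $|t\psi_0'(t)|\leq 1$ (achievable with support on $[0,R]$ for $R\geq e$) gives $\|D\tilde g_0-D\tilde f\|\leq 2\|A_x-D\tilde f(\tilde x)\|<\eps$, and Stage~2 costs only $o(1)$. The genuine gap is the inverse estimate. You write that ``an analogous construction performed in a chart at $\chi(f(x))$ (or for $f^{-1}$ instead of $f$)'' provides the bound on $D\tilde g_0^{-1}-D\tilde f^{-1}$, but this does not make sense as stated: you need a \emph{single} diffeomorphism $g$ satisfying both $\|Dg-Df\|<\eps$ and $\|Dg^{-1}-Df^{-1}\|<\eps$, and running an independent bump construction on the $f^{-1}$ side produces a different map. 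The $g$ you built in Stages~1--2 is not symmetric under inversion, so nothing you wrote explains why its inverse satisfies the required bound in the transition annulus where $\psi$ is neither $0$ nor $1$. One way to close this is to write $g=f\circ h$ with $h$ a near-identity diffeomorphism supported near $x$, built by the same bump interpolation but between $\Id$ and the linear map $Df(x)^{-1}A_x$; then $g^{-1}=h^{-1}\circ f^{-1}$, and the estimate on $Dh^{-1}-\Id$ (which follows from $\|Df(x)^{-1}A_x-\Id\|$ being small and the bump bound) combined with $\|A_x^{-1}-Df^{-1}(x)\|<\eps/2$ gives what you need. This is routine but must actually be written; as it stands your proposal correctly flags the difficulty without resolving it.
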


The next statement is proved in~\cite{Gourmelon2014} in the dissipative case. It generalizes Franks' lemma while preserving a homoclinic relation.

\begin{theorem}[Franks' lemma with homoclinic connection]\label{l.gourmelon}
Let  $f\in \Diff^1(M)$ and $\eps > 0$ small.
Consider:
\begin{itemize}
\item[--] a hyperbolic periodic point $p$ of period $\ell =\pi(p)$,
\item[--] a chart $\chi\colon V\to \RR^{d_0}$ with $\cO(p)\subset V$,
\item[--] a hyperbolic periodic point $q$ homoclinically related to $\cO(p)$, and
\item[--] an $\eps/2$-path of linear perturbations $(A_{i}(t))_{t\in[0,1]}$, $1\leq i\leq \ell$, at $\cO(p)$ such that the composition $A_{\ell}(t)\circ \dots A_{1}(t)$ is hyperbolic for each $t\in [0,1]$.
\end{itemize}
Then there exists an $(\eps, V,\cO(p))$-perturbation $g$ of $f$ such that, for each $i$ the chart
 $\chi\circ g\circ \chi^{-1}$ is linear and coincides with $A_{i}(1)$ near $f^i(p)$, and $\cO(p)$ is still homoclinically related to $q$.
\smallskip
Moreover if $f$ and the linear maps $A_i(t)$ preserve a volume or a symplectic form,
one can choose $g$ to preserve it also.
\end{theorem}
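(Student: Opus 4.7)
The plan is to reduce the statement to a parametrized form of the linearizing Franks' lemma (Theorem \ref{t.linearize}) combined with persistence of transverse intersections along a continuous deformation. Since the dissipative case is due to Gourmelon \cite{Gourmelon2014}, I would focus on the conservative setting.

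First, I would shrink the chart domain $V$ if needed so that its closure is disjoint from $\cO(q)$, which is possible since $\cO(q)\cap\cO(p)=\emptyset$. Applying a parameter-dependent conservative version of Theorem \ref{t.linearize} to the $\eps/2$-path $(A_i(t))_{t\in[0,1]}$, I would construct a $C^1$-continuous family $(g_t)_{t\in[0,1]}\subset\Diff^1_\omega(M)$ with $g_0=f$, each $g_t$ being an $(\eps,V,\cO(p))$-perturbation of $f$, and such that $\chi\circ g_t\circ\chi^{-1}$ is linear and equal to $A_i(t)$ near $\chi(f^i(p))$. Since $g_t$ agrees with $f$ on $\cO(p)\cup\cO(q)\cup(M\setminus V)$, both orbits persist as $g_t$-periodic, and the hypothesis that $A_\ell(t)\circ\cdots\circ A_1(t)$ is hyperbolic for every $t$ keeps $\cO(p)$ hyperbolic throughout the deformation.

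Second, I would preserve the homoclinic relation by tracking transverse intersections along the path. By hypothesis there exist transverse points $x\in W^s(\cO(p),f)\pitchfork W^u(\cO(q),f)$ and $y\in W^u(\cO(p),f)\pitchfork W^s(\cO(q),f)$. The local invariant manifolds of $\cO(q)$ are unchanged along $(g_t)$ because the perturbation is supported away from $\cO(q)$, while those of $\cO(p)$ depend $C^1$-continuously on $t$ by hyperbolic persistence. Transversality of complementary-dimensional submanifolds is an open condition under $C^1$-perturbation, hence $x$ and $y$ extend to unique continuations $x_t,y_t$ for $t$ in a neighborhood of $0$. A standard connectedness/compactness argument on $[0,1]$ propagates these continuations up to $t=1$, producing transverse homoclinic intersections for $g:=g_1$ and thus the desired homoclinic relation with $q$.

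The main obstacle is the parametrized conservative Franks' lemma needed in the first step: one must produce the $C^1$-continuous family $(g_t)$ which preserves the form at every time, is confined to $V\setminus\cO(p)$, linearizes the chart near each point of $\cO(p)$ with prescribed differential $A_i(t)$, and remains of $C^1$-distance at most $\eps$ from $f$. In the symplectic setting this can be done by interpolating Hamiltonian generating functions adapted to the linear models $A_i(t)$ in conservative Darboux charts around each $f^i(p)$; in the volume-preserving setting one uses a Moser-type time-dependent correction to ensure $\omega$ is exactly preserved after the interpolation. The detailed verification of these parametrized conservative perturbations, together with the compatibility of the cutoffs with the $(\eps,V,\cO(p))$ support condition, is carried out in the companion paper~\cite{BCF}.
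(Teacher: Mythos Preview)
Your continuation argument in the second step has a genuine gap. You claim a ``standard connectedness/compactness argument on $[0,1]$'' propagates the transverse intersections from $t=0$ to $t=1$, but openness of transversality only shows that the set of parameters where the homoclinic relation holds is \emph{open} in $[0,1]$; you give no argument that it is closed, and in general it is not. As $t$ varies, the stable and unstable eigenspaces of $A_\ell(t)\circ\cdots\circ A_1(t)$ rotate, the local invariant manifolds of $\cO(p)$ move accordingly, and a transverse intersection $x_t$ can drift along the non-compact global invariant manifolds, degenerate to a tangency at some $t_0\in(0,1)$, and vanish for $t>t_0$. Nothing in your setup rules this out, so the ``connectedness/compactness'' step is simply not available.

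This is exactly why Gourmelon's theorem is non-trivial. The freedom you are not exploiting is that a Franks-type perturbation realizing a given linear cocycle can be supported in balls of arbitrarily small radius around the points $f^i(p)$; Gourmelon's argument uses this to control which pieces of the invariant manifolds of $\cO(p)$ are affected, so that the specific heteroclinic orbits connecting $\cO(p)$ to $\cO(q)$ survive. Your path $(g_t)$ is an arbitrary realization of the cocycle path, with no such geometric control. Note also that the present paper does not prove this theorem: it cites \cite{Gourmelon2014} for the dissipative case and defers the conservative case entirely to the companion paper \cite{BCF}. The substantive work --- which is not a continuity argument of the kind you propose --- lies in those references.
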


\subsection{Perturbation of periodic linear cocycles}\label{ss.perturbative-cocycle}
We now explain how to obtain the paths of linear perturbations necessary to apply the previous theorem.
These results are well-known in the dissipative case.
The volume-preserving case follows
(since the modulus of the determinant is {almost} preserved along the paths of dissipative perturbations).
The symplectic case is more delicate and proved in \cite{BCF}. 

Let $G$ be a subgroup of $GL(d_0,\RR)$. A \emph{periodic cocycle} with period $\ell$ is an $\ell$-periodic sequence $(A_i)_{i\in \ZZ}$ in $G$.  The \emph{eigenvalues (at the period)} are the eigenvalues of $\cA:=A_{\ell}\dots A_{1}$. The cocycle is \emph{hyperbolic} if $\cA$ has no eigenvalue on the unit circle. The cocycle is \emph{bounded by $C>1$} if $\max (\|A_i\|,\|A_i^{-1}\|)\leq C$ for $1\leq i\leq \ell$. An $\eps$-path of perturbations is a family of periodic cocycles $(A_i(t))_{t\in [0,1]}$ in $G$ such that
$A_{i}(0)=A_i$ and
$\max{\|A_i-A_{i}(t)\|, \|A_i^{-1}-A^{-1}_{i}(t)\|}<\eps$
for each $t\in [0,1]$.

The next proposition allows us to obtain simple spectrum for the periodic cocycle. The proof in the dissipative setting is essentially contained in the claim of the proof of~\cite[Lemma 7.3]{BochiBonatti}.

\begin{proposition}[Simple spectrum]\label{p.simple}
For any $d_0\geq 1$ and $\eps>0$, any periodic cocycle in $GL(d_0,\RR)$ or $SL(d_0,\RR)$ admits an $\eps$-path of perturbations $(A_i(t))_{t\in [0,1]}$ with the same period $\ell$,
such that the composition
$\cA(t):=A_{\ell}(t)\dots A_{1}(t)$ satisfies:
\begin{itemize}
\item[--] the moduli of the eigenvalues of $\cA(t)$ are constant in $t\in[0,1]$;
\item[--] $\cA(1)$ has $d_0$ distinct eigenvalues; their arguments are in $\pi\QQ$.
\end{itemize}
The same result holds in the group $Sp(d_0,\RR)$ if the cocycle is hyperbolic.
\end{proposition}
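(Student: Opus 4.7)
The plan is to (i) reduce the problem to finding a suitable perturbation path for the single product matrix $\cA:=A_\ell\dots A_1$, and (ii) construct this path using the real Jordan normal form of $\cA$.

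For (i), given any path $(\cA(t))_{t\in[0,1]}$ in the group $G$ (where $G$ denotes $GL(d_0,\RR)$, $SL(d_0,\RR)$, or $Sp(d_0,\RR)$) with $\cA(0)=\cA$, the claimed spectral behavior at $\cA(1)$, and $\sup_t \|\cA(t)-\cA\|$ small, I would define
\[
A_i(t) := A_i \text{ for } 1\leq i<\ell, \qquad A_\ell(t) := \cA(t)\cdot (A_{\ell-1}\cdots A_1)^{-1}.
\]
The resulting cocycle has product $\cA(t)$, and since $A_{\ell-1}\cdots A_1$ and its inverse are fixed, a sufficiently small perturbation of $\cA$ yields an $\eps$-path of perturbations of the cocycle; moreover $A_\ell(t)\in G$ whenever $\cA(t)\in G$.

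For (ii), in the $GL$ (or $SL$) case I would write $\cA = P J P^{-1}$ with $J$ the real Jordan normal form and construct $J(t)$ in two phases, setting $\cA(t) := P J(t) P^{-1}$. \emph{Phase A (shrink the nilpotent parts).} Each Jordan block $\rho I_k + N$ (real eigenvalue) or its complex-pair analogue is deformed along the path $s \mapsto \rho I_k + (1-s)N$ to a block-diagonal matrix whose diagonal blocks are $\rho$, $-\rho$, or $\rho R_\theta$ (a rotation scaled by $\rho$). Eigenvalues---and so their moduli---are unchanged throughout. \emph{Phase B (split coincident eigenvalues).} Within each modulus class $\{j:|\lambda_j|=\rho\}$ I would perturb the resulting block-diagonal matrix to spread the eigenvalues as distinct points on the circle of radius $\rho$; for instance, a $2\times 2$ block $\rho I_2$ deforms continuously to $\rho R_\theta$, turning a coincident pair $(\rho,\rho)$ into a distinct pair $(\rho e^{i\theta},\rho e^{-i\theta})$ of the same modulus. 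By density, the final arguments can be chosen in $\pi\QQ$ and pairwise distinct. In the $SL$ case this construction automatically preserves $\det \cA(t)$, since each eigenvalue's argument change is cancelled by that of its complex conjugate and the parity of negative real eigenvalues is preserved along the path.

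The symplectic case is the main obstacle. Eigenvalues of a symplectic matrix come in reciprocal pairs (or complex quadruples $\{\lambda,\bar\lambda,\lambda^{-1},\bar\lambda^{-1}\}$), the hyperbolicity hypothesis keeps them off the unit circle, and the symplectic form must be preserved throughout the path, so the $GL$-Jordan deformation does not directly apply. My approach would be to use a Williamson-style normal form for a hyperbolic $\cA\in Sp(2d,\RR)$, splitting it into symplectically invariant pieces on which an analogue of Phases A and B can be performed while deforming reciprocal pairs jointly, so that the path remains in $Sp(2d,\RR)$. Verifying the existence of such moduli-preserving paths within the smaller Lie group $Sp(2d,\RR)$ is the delicate step and requires a careful analysis of symplectic normal forms; this is why the authors defer it to the companion paper~\cite{BCF}. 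The $GL$ and $SL$ cases, as sketched above, follow the strategy already present in the proof of~\cite[Lemma 7.3]{BochiBonatti}.
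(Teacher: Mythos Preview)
Your reduction (i) is fine: setting $A_\ell(t)=\cA(t)(A_{\ell-1}\cdots A_1)^{-1}$ does give an $\eps$-path of the cocycle \emph{provided} $\cA(t)$ stays within a fixed small neighborhood of $\cA$ (of radius roughly $\eps/\|(A_{\ell-1}\cdots A_1)^{-1}\|$). The gap is that your construction in (ii) does not satisfy this. Phase~A deforms each Jordan block along $s\mapsto\rho I_k+(1-s)N$; at $s=1$ this changes $J$ by the fixed matrix $-N$, so $\cA(t)=PJ(t)P^{-1}$ moves away from $\cA$ by $PNP^{-1}$, a quantity that depends only on $\cA$ and not on $\eps$. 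Hence for small $\eps$ the induced perturbation of $A_\ell$ is not $\eps$-small, and the argument breaks. (Put differently: the nilpotent part of $\cA$ is an intrinsic, non-small piece of $\cA$; removing it is never a small perturbation. Already for period $\ell=1$ the proposition demands an arbitrarily small perturbation of $\cA$ itself, which your Phase~A cannot deliver.)

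The repair is to bypass Phase~A and perturb each Jordan block directly to a nearby matrix with $k$ distinct eigenvalues on the circle of radius $\rho$. For $k=2$, the path
\[
s\mapsto\begin{pmatrix}\rho\cos s & 1\\ -\rho^2\sin^2 s & \rho\cos s\end{pmatrix}
\]
starts at the Jordan block, has eigenvalues $\rho e^{\pm is}$ throughout, and is $O(s^2)$-close to the starting block for small $s$; for larger $k$ one prescribes the characteristic polynomial $\prod_j(\lambda-\rho e^{i\theta_j})$ (conjugate pairs, small $\theta_j$) and realizes it by a small perturbation of the last row of the block. This merges your Phases~A and~B into a single small moduli-preserving deformation, after which reduction (i) applies. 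The paper itself does not write out these details---it cites the claim inside the proof of~\cite[Lemma~7.3]{BochiBonatti} for $GL$ and $SL$, and defers the symplectic case to~\cite{BCF}---so your discussion of the symplectic obstruction is in line with the paper's own treatment.
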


We will now exploit the lack of an $N$-dominated splitting.
One can make all  
stable (resp. unstable) eigenvalues to have equal modulus by a perturbation.
This has been proved in the dissipative and volume-preserving settings  in~\cite[Theorem 4.1]{BochiBonatti}.

\begin{theorem}[Mixing the exponents]\label{t.BochiBonatti}
For any $d_0\geq 1$, $C>1$, $\eps>0$, there exists $N \geq 1$ with the following property.
For any hyperbolic periodic cocycle $(A_i)_{i\in \ZZ}$ in $GL(d_0,\RR)$, $SL(d_0,\RR)$ or $Sp(d_0,\RR)$ bounded by $C$, with period $\pi$
and no $N$-dominated splitting, there exists an $\eps$-path of perturbations $(A_i(t))_{t\in [0,1]}$ with period $\ell$, a multiple of $\pi$, such that:
\begin{itemize}
\item[--] $\cA(t):=A_\ell(t)\dots A_1(t)$ is hyperbolic for any $t\in[0,1]$,
\item[--] $t\mapsto |\det(\cA(t)_{|E^s})|$ and $t\mapsto |\det(\cA(t)_{|E^u})|$ are constant in $t\in[0,1]$,
\item[--] the stable (resp. unstable) eigenvalues of $\cA(1)$ have the same moduli.
\end{itemize}
\end{theorem}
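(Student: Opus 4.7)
My plan is to reduce to the two known cases and treat the symplectic one, which is the genuinely new content. The $GL(d_0,\RR)$ statement is Bochi--Bonatti \cite[Theorem 4.1]{BochiBonatti}, and the $SL(d_0,\RR)$ version follows from it: the perturbations constructed there are products of elementary rotations in planes spanned by consecutive Oseledets directions, so the $|\det|$ is automatically preserved; any residual error can be absorbed by a small final diagonal adjustment. So I would concentrate on $Sp(d_0,\RR)$ with $d_0=2d$. After applying Proposition~\ref{p.simple} to reduce to simple spectrum, I would exploit that, by hyperbolicity of $\cA$, the stable and unstable bundles $E^s,E^u$ of $\cA$ are $d$-dimensional Lagrangian subspaces $\omega$-dual to one another, and the eigenvalues of $\cA$ come in reciprocal pairs $\{\lambda,\lambda^{-1}\}$.

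The core iterative step is the following: assuming the stable eigenvalue moduli of $\cA$ are not all equal, find an index $1\leq i<d$ with $|\lambda_i|<|\lambda_{i+1}|$ and produce an $\eps$-path ending at a cocycle whose restriction to $E^s$ has one fewer strict inequality. If the Oseledets splitting $E_i\oplus E_{i+1}$ were an $N$-dominated splitting of the cocycle, the hypothesis would be violated; hence, by the Bochi--Bonatti cone/flag analysis, there are long orbit segments along which the directions $E_i(f^k p)$ and $E_{i+1}(f^k p)$ become nearly parallel. On such a segment one can distribute a small rotation in the plane they span, ending with an equalization of $|\lambda_i|$ and $|\lambda_{i+1}|$; this is precisely the Bochi--Bonatti ``mixing'' move. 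The length $N$ needed to make this rotation $\eps$-small is determined quantitatively by $d_0,C,\eps$, producing the constant $N$ of the theorem.

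To make such a perturbation symplectic, I would couple the rotation in the stable planes with its dual in the unstable planes. Precisely, in an $\omega$-symplectic basis adapted to $E^s\oplus E^u$, a perturbation of block form $\mathrm{diag}(R,(R^T)^{-1})$ with $R\in GL(d)$ lies in $Sp(2d,\RR)$; choosing $R$ to be the planar rotation in the span of $E_i,E_{i+1}\subset E^s$ simultaneously rotates the dual directions in $E^u$, automatically handling the reciprocal eigenvalue pairs. Thus each stable mixing move has a canonical symplectic lift that preserves both $|\det(\cA_{|E^s})|$ and $|\det(\cA_{|E^u})|$ along the path, and iterating the procedure yields all stable moduli equal, and, by duality, all unstable moduli equal. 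Hyperbolicity along $t\in[0,1]$ is preserved because each elementary move is built from small rotations in planes transverse to the $E^s$--$E^u$ splitting.

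The main obstacle is not the group-theoretic structure but the time-distribution: the infinitesimal perturbations must belong to the Lie algebra $\mathfrak{sp}(2d,\RR)$ at every $t\in[0,1]$, so one cannot simply import the $GL$ construction verbatim. What saves the day is that $\mathfrak{sp}$ contains all elements of the form $\mathrm{diag}(A,-A^T)$ in a symplectic basis adapted to $E^s\oplus E^u$; this gives exactly the infinitesimal freedom needed to run the Bochi--Bonatti spreading lemma inside $Sp(2d,\RR)$, at the cost of working in a continuously varying symplectic frame (which one can construct using Proposition~\ref{p.change-basis2} on Lagrangian subspaces). The technical verification that these frames can be chosen with uniform bounds, and that the resulting symplectic perturbations remain $\eps$-small at every time, is the delicate part carried out in \cite{BCF}.
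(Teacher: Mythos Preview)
The paper does not prove this theorem: it cites \cite[Theorem~4.1]{BochiBonatti} for the $GL(d_0,\RR)$ and $SL(d_0,\RR)$ cases and defers the symplectic case entirely to the companion paper \cite{BCF}. Your proposal identifies exactly the same references and goes further by sketching a plausible strategy for the symplectic case (lifting the Bochi--Bonatti planar rotations to $Sp(2d,\RR)$ via the block form $\mathrm{diag}(R,(R^T)^{-1})$ in a Lagrangian-adapted frame); since the paper itself contains no argument to compare against, your treatment is at least as complete, though the phrase ``planes transverse to the $E^s$--$E^u$ splitting'' should read ``planes contained in $E^s$ (and dually in $E^u$)'', which is what actually preserves hyperbolicity along the path.
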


\begin{remark}
In \cite{BochiBonatti} a stronger statement is proven for $GL(d_0,\RR)$ and $SL(d_0,\RR)$.  Namely,
there exists $T\geq 1$ which only depends on $d_0,C,\eps,$ and  $N$, such that
any multiple $\ell\geq T$ of $\pi$ satisfies the conclusion of Theorem~\ref{t.BochiBonatti}.
In particular, if the period $\pi$ is larger than $\ell$, one can choose $\ell=\pi$.
We do not know if this uniformity holds in $Sp(d_0,\RR)$.
\end{remark}

\subsection{Homoclinic tangencies}
The following theorem has been proved by Gourmelon in~\cite[Theorem 3.1]{Gourmelon2010} and~\cite[Theorem 8]{Gourmelon2014}
in the dissipative case.
\begin{theorem}[Homoclinic tangency]\label{t.gourmelon}
For any {$d_0\geq1$}, $C>1$, $\eps>0$, there exist $N,T\geq 1$ with the following property.
Consider
\begin{itemize}
\item[--] a diffeomorphism $f\in \Diff^1(M)$ {of a $d_0$-dimensional manifold $M$}
such that the norms of $Df$ and $Df^{-1}$ are bounded by $C$,
\item[--] a periodic saddle $O$ with period larger than $T$ such that {the splitting defined by the stable and unstable bundles is not an $N$-dominated splitting,} and
\item[--] a neighborhood $V$ of $O$.
\end{itemize}
Then there exists an $(\eps, V, O)$-perturbation $g$ of $f$ and $p\in O$ such that
\begin{itemize}
\item[--] $W^s(p)$, $W^u(p)$ have a tangency $z$ whose orbit is contained in $V$, and
\item[--] the derivative of $f$ and $g$ coincide along $O$.
\end{itemize}

Moreover if $O$ is homoclinically related to a periodic point $q$ for $f$, then the perturbation $g$
can be chosen to still have this property. If $f$ preserves a volume or a symplectic form, one can choose $g$ to preserve it also.
\end{theorem}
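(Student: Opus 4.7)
The plan is to reduce the dynamics along $\cO(p)$ to a normal form where $Df$ restricted to $E^s$ and to $E^u$ is conformal (all eigenvalues have equal modulus), then linearize the dynamics in charts near $\cO(p)$ while preserving the homoclinic relation with $q$, and finally create the tangency by a tiny local $C^1$-perturbation at a transverse homoclinic point.

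\textbf{Step 1 (conformalize along $O$).} Fix a bound $C$ for $\|Df^{\pm 1}\|$ and apply Theorem~\ref{t.BochiBonatti} with constants $C$ and $\eps/3$ to the cocycle $(Df(f^i(p)))_i$; this determines the integer $N$. Under the non-domination hypothesis the theorem yields an $\eps/3$-path of linear perturbations along $\cO(p)$ whose endpoint has all stable eigenvalues of one modulus $\lambda<1$ and all unstable eigenvalues of one modulus $\mu>1$. A further application of Proposition~\ref{p.simple} along a hyperbolic path gives simple spectrum with arguments in $\pi\QQ$, so that a suitable iterate of the return map acts by $\lambda\cdot\Id$ on $E^s$ and $\mu\cdot\Id$ on $E^u$.

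\textbf{Step 2 (realize the normal form).} Apply Theorem~\ref{l.gourmelon} along this path to obtain an $(\eps,V,\cO(p))$-perturbation $g_0$ of $f$ whose derivative along $\cO(p)$ is the new cocycle, which is affine in the chart $\chi$ near each $f^i(p)$, and which keeps $\cO(p)$ homoclinically related to $q$. Thus $W^s(\cO(p),g_0)$ and $W^u(\cO(p),g_0)$ still intersect transversally and the action on $E^{s/u}$ is conformal up to a rational rotation.

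\textbf{Step 3 (geometric tangency).} Pick a transverse homoclinic point $z\in W^s(\cO(p),g_0)\cap W^u(\cO(p),g_0)$; by iterating forward we may assume $z$ lies in a linearization chart near some $f^i(p)$, where $W^s_{\loc}$ is the affine flat $f^i(p)+E^s$. Apply Proposition~\ref{p.elementary} at a point $y$ of the $g_0$-orbit of $z$ chosen at positive distance from $\cO(p)$ (hence outside the linearized zones) to realize a tiny translation, which tilts the local unstable leaf through $y$ by a prescribed small rotation. Because the period is at least the chosen $T$, the conformal $E^u$-iteration through the linear charts transports this rotation without geometric distortion; one then chooses the rotation so that at an appropriate later iterate of $z$ the tangent plane of the perturbed unstable leaf coincides with $E^s$. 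This gives the desired tangency, with full orbit contained in $V$ provided $z$ was taken close enough to $\cO(p)$.

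The Step~3 perturbation is supported away from $\cO(p)$, so $Dg=Dg_0$ along $\cO(p)$; being small, it also preserves the transverse intersections with $q$ that were present for $g_0$, hence preserves the homoclinic relation. Conservative versions follow by invoking the symplectic/volume-preserving counterparts of Theorem~\ref{t.BochiBonatti}, Proposition~\ref{p.simple}, Theorem~\ref{l.gourmelon}, and Proposition~\ref{p.elementary}. The main obstacle is the tangency creation in the symplectic setting: $W^s$ and $W^u$ are Lagrangian, and the tilt must land the tangent plane on another Lagrangian containing $E^s$. This is handled via Proposition~\ref{p.change-basis2}, which produces uniformly bounded symplectic charts straightening any Lagrangian onto $\RR^{d}\times\{0\}^{d}$, inside which the required symplectic rotation and cutoff can be written explicitly through a generating function.
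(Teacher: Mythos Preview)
The paper does not prove this theorem; it is quoted from Gourmelon's work \cite{Gourmelon2010,Gourmelon2014} for the dissipative case, with the conservative extensions deferred to the companion paper \cite{BCF}. So there is no ``paper's own proof'' to compare to. I will therefore evaluate your argument on its own merits.

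There are two genuine gaps.

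\textbf{First, your construction violates the conclusion.} The theorem requires that the derivative of $f$ and $g$ coincide along $O$. But Steps~1--2 replace the cocycle $(Df(f^i(p)))_i$ by a conformalized one via Theorem~\ref{t.BochiBonatti} and realize this new cocycle along $O$ via Theorem~\ref{l.gourmelon}. After this, $Dg_0|_O \neq Df|_O$, and nothing in Step~3 restores the original derivative. Gourmelon's actual argument works differently: the lack of $N$-domination is used not to conformalize, but to find iterates along $O$ where the angle between the stable and unstable bundles (or, more precisely, the relative expansion rates) is weak enough that a $C^1$-small local perturbation of the invariant manifolds themselves --- not of the periodic cocycle --- produces a tangency.

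\textbf{Second, Step~3 assumes what is not given.} You write ``Pick a transverse homoclinic point $z\in W^s(\cO(p),g_0)\cap W^u(\cO(p),g_0)$,'' but the hypotheses of the theorem provide no such point: $O$ is just a periodic saddle with weak domination, with no assumed homoclinic intersection. The optional homoclinic relation to $q$ is heteroclinic, not homoclinic, and in any case appears only in the ``Moreover'' clause. Even granting a transverse homoclinic point, your tangency mechanism is unclear: Proposition~\ref{p.elementary} moves a point, it does not tilt a tangent plane; and a conformal return map preserves angles, so a small tilt stays small and cannot by itself turn a transverse intersection into a tangent one. The missing idea is precisely that weak domination forces the local stable and unstable manifolds near $O$ to be $C^1$-close at some scale, which is what makes the tangency achievable by a small perturbation supported away from $O$.
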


\begin{remark}\label{r.gourmelon}
As a consequence, one can also obtain a transverse intersection $z'$ between $W^s(O)$ and $W^u(O)$,
whose orbit is contained in $V$. This implies that $O$ belongs to a horseshoe of $g$ which is contained in $V$.
\end{remark}

\subsection{Approximation by periodic orbits with control of the Lyapunov exponents}
Ma\~n\'e's ergodic closing lemma {\cite{Mane82}} allows the approximation of any
ergodic measure by a periodic orbit, after a $C^1$-small perturbation.
In~\cite[Proposition 6.1]{ABC} a control of the Lyapunov exponents is added
(the proof given in the dissipative setting is still valid in the conservative cases).

\begin{theorem}[Ergodic closing lemma]\label{t.ergodic-closing}
For any $C^1$-diffeomorphism $f$ and any ergodic measure $\mu$,
there exists a small $C^1$-perturbation $g$ of $f$ having a periodic orbit $O$ that is close to
$\mu$ for the vague topology and whose Lyapunov exponents (repeated according to multiplicity)
are close to the Lyapunov exponents of $\mu$ for $f$.
If $f$ preserves a volume or symplectic form, $g$ also does.
Moreover $f$ and $g$ coincide outside a small neighborhood of the support of $\mu$.
\end{theorem}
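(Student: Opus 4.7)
The plan is to refine Ma\~n\'e's ergodic closing lemma by tracking how its $C^1$-small, localized perturbation affects the tangent cocycle along the new periodic orbit. The dissipative statement with exponent control is \cite[Proposition 6.1]{ABC}; for the conservative cases one substitutes the volume-preserving or symplectic versions of the closing lemma and repeats the same tangent estimate.

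First, I would fix $\delta>0$ and select a point $x\in\supp\mu$ that is simultaneously generic for Birkhoff, Oseledets-regular, and satisfies Ma\~n\'e's closing property; such points form a set of full $\mu$-measure. Restricting to a Pesin block $\Lambda_\delta$ of measure at least $1-\delta$, the Oseledets splitting $T_yM=\bigoplus_i E_i(y)$ varies uniformly continuously in $y\in\Lambda_\delta$, with angles bounded below. For all sufficiently large $n$, the orbit segment $x,f(x),\ldots,f^{n-1}(x)$ visits $\Lambda_\delta$ with frequency at least $1-2\delta$, the empirical measure $\frac{1}{n}\sum_{k=0}^{n-1}\delta_{f^k(x)}$ is $\delta$-close to $\mu$ in the vague topology, and for every partial sum $F$ of Oseledets subspaces $\frac{1}{n}\log\|Df^n(x)|_F\|$ lies within $\delta$ of the corresponding sum of Lyapunov exponents $\lambda_i(\mu)$.

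Second, I apply Ma\~n\'e's closing lemma to this segment: after a $C^1$-small perturbation $g$ of $f$, supported in an arbitrarily small neighborhood of the orbit of $x$ (hence of $\supp\mu$), the point $x$ becomes periodic of some period close to $n$ under $g$. The key structural feature is that the perturbation is localized near a uniformly bounded number of orbit points, which can be chosen to lie in $\Lambda_\delta$, and $\|Dg-Df\|$ is uniformly bounded. Consequently $Dg^n(x)$ equals $Df^n(x)$ precomposed and postcomposed with finitely many uniformly bounded correction matrices acting between Oseledets subspaces of definite mutual angle. The logarithmic growth rates of $Dg^n(x)$ along the perturbed invariant subspaces then lie within $O(\delta)$ of those of $Df^n(x)|_{E_i(x)}$, and hence within $O(\delta)$ of $\lambda_i(\mu)$.

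Third, for the conservative cases, I would invoke the volume-preserving closing lemma (Arnaud) and the symplectic closing lemma (Xia), each providing the analogous localized perturbation inside $\Diff^1_\omega(M)$, and repeat the tangent estimate verbatim. The main obstacle, in any of these settings, is ensuring that the bounded tangent corrections introduced by the closing procedure do not mix distinct Lyapunov exponents: this is overcome by performing the closure at return times lying inside the Pesin block $\Lambda_\delta$, where uniform continuity of the splitting and a uniform lower bound on the angles together guarantee that the fixed number of uniformly bounded corrections contribute only an $O(1/n)$ additive error to each logarithmic growth rate, yielding control on all Lyapunov exponents counted with multiplicity.
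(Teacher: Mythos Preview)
The paper does not give its own proof of this statement; it simply cites \cite[Proposition~6.1]{ABC} and remarks that the dissipative argument there goes through unchanged in the volume-preserving and symplectic categories. So there is no detailed ``paper's proof'' to compare your sketch against beyond that reference.

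Your outline is correct in spirit (close the orbit via Ma\~n\'e, then estimate the tangent cocycle), but it rests on a claim that does not hold: you assert that the closing perturbation is ``localized near a uniformly bounded number of orbit points,'' and then deduce that $Dg^n(x)$ differs from $Df^n(x)$ by a \emph{fixed} finite number of bounded corrections. Ma\~n\'e's ergodic closing lemma does not give this; the perturbation is in general supported along a portion of the orbit whose length grows with the period~$n$. Your subsequent claim that these finitely many corrections act ``between Oseledets subspaces of definite mutual angle'' and therefore contribute only $O(1/n)$ to each exponent is thus unsupported, and the Pesin-block machinery you set up to control angles does not rescue it.

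The argument in \cite{ABC} avoids this issue entirely and is simpler. The closed periodic orbit shadows the original $\mu$-generic segment, so by uniform continuity of $Df$ together with the $C^1$-smallness of $g-f$, the cocycle $Dg$ along the periodic orbit is \emph{termwise} close to $Df$ along the segment. One then controls all Lyapunov exponents, not just the extremal ones, by passing to exterior powers: for each $k$, $\tfrac1n\log\|\Lambda^k Dg^n\|$ is close to $\tfrac1n\log\|\Lambda^k Df^n\|$ by submultiplicativity applied to termwise-close factors, which pins down every partial sum $\lambda_{d_0}+\cdots+\lambda_{d_0-k+1}$ and hence every individual exponent. No Oseledets splitting, angle bounds, or Pesin blocks are needed. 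Your third paragraph, on the conservative cases, matches the paper's one-line remark.
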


\begin{corollary}\label{c.ergodic-closing}
For any generic diffeomorphism $f$
in $\Diff^1(M)$ or $\Diff^1_\omega(M)$,
and any ergodic measure $\mu$,
there exists a sequence of periodic orbits $(O_n)$ which converge to $\mu$
in the weak star topology and whose Lyapunov exponents (repeated according to multiplicity) converge to those of $\mu$.
\end{corollary}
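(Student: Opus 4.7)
This is a standard Baire category argument powered by the ergodic closing lemma. The plan is to build a dense G$_\delta$ set $\mathcal{G}$ by, for each ``target'' pair consisting of a weak-$*$ open set $A\subset \Prob(M)$ and an open set $B\subset \RR^{d_0}$ (taken from fixed countable bases of these topologies), demanding either the presence of an approximating periodic orbit of $f$ or the impossibility of producing one even after an arbitrarily small perturbation.

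Concretely, for $(i,j)\in \NN^2$, let $\mathcal{O}_{i,j}$ be the set of $f\in \Diff^1(M)$ (resp.\ $\Diff^1_\omega(M)$) admitting a periodic orbit $O$ whose empirical measure lies in $A_i$, whose vector of Lyapunov exponents (repeated according to multiplicity) lies in $B_j$, and such that $1$ is not an eigenvalue of $Df^{\pi(O)}$ along $O$. This non-resonance condition forces $O$ to persist continuously as a periodic orbit of every sufficiently $C^1$-close diffeomorphism, and the eigenvalues vary continuously as well, so $\mathcal{O}_{i,j}$ is open. Setting
$$\mathcal{G}_{i,j} := \mathcal{O}_{i,j}\;\cup\;\big(\Diff^1(M)\setminus \overline{\mathcal{O}_{i,j}}\big),$$
one obtains an open set whose complement is contained in the boundary $\partial \mathcal{O}_{i,j}$, hence is nowhere dense; thus $\mathcal{G}_{i,j}$ is open and dense. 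Let $\mathcal{G}:=\bigcap_{i,j} \mathcal{G}_{i,j}$, which is residual.

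To conclude, fix $f\in \mathcal{G}$ and an ergodic $\mu\in \Probe(f)$. For any $i,j$ with $A_i\ni \mu$ and $B_j\ni \lambda(\mu)$, Theorem~\ref{t.ergodic-closing} yields arbitrarily $C^1$-small perturbations $g$ of $f$ carrying a periodic orbit whose empirical measure lies in $A_i$ and whose Lyapunov exponents lie in $B_j$. A further arbitrarily small application of Franks' lemma (Theorem~\ref{t.linearize}) perturbs the derivative along this orbit to remove any eigenvalue equal to $1$ while keeping the other data inside $A_i\times B_j$. Hence $f\in \overline{\mathcal{O}_{i,j}}$, and since $f\in \mathcal{G}_{i,j}$ this forces $f\in \mathcal{O}_{i,j}$ itself. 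Taking $A_i,B_j$ shrinking to $\{\mu\}$ and $\{\lambda(\mu)\}$ respectively produces the desired sequence $(O_n)$ of periodic orbits of $f$ converging to $\mu$ in the weak-$*$ topology and whose Lyapunov exponents converge to those of $\mu$.

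The only delicate point is the openness of $\mathcal{O}_{i,j}$: without the non-resonance condition the approximating periodic orbit could bifurcate under a $C^1$-perturbation, breaking openness. Imposing it is costless for density because Franks' lemma allows us to remove a unit eigenvalue by an arbitrarily small perturbation supported near the orbit. No new ingredient is needed for the conservative case, since both Theorem~\ref{t.ergodic-closing} and Theorem~\ref{t.linearize} are stated in $\Diff^1_\omega(M)$.
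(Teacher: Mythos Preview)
Your argument is correct and is exactly the standard Baire argument the paper has in mind; indeed the paper states Corollary~\ref{c.ergodic-closing} without proof, as an immediate consequence of Theorem~\ref{t.ergodic-closing}, and your countable-basis/open-dense scheme (with the non-resonance condition to secure openness and Franks' lemma to restore it after perturbation) is precisely how one passes from the perturbative statement to the generic one.
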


This technique for controlling the exponents
(see also the proof of~\cite[Theorem 3.10]{ABC}) allows one to spread
a periodic orbit in a hyperbolic set and gives the next result.

\begin{proposition}\label{p.control-exponents}
For any generic diffeomorphism $f$
in $\Diff^1(M)$ or in $\Diff^1_\omega(M)$,
any horseshoe $K$, any periodic orbit $O\subset K$ and any $\eps>0$,
there exists a periodic orbit $O'\subset K$
that is $\eps$-close to $K$ in the Hausdorff distance
and whose collection of Lyapunov exponents is $\eps$-close to those
of $O$.
\end{proposition}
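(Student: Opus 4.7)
The plan is to construct $O'$ inside $K$ as a periodic orbit whose symbolic coding combines $N$ copies of the coding of $O$ with a short excursion through an $\eps$-dense periodic orbit of $K$. Hausdorff density of $O'$ will come from the excursion block, while closeness of the Lyapunov exponents will come from a spectral analysis showing that the long $O$-shadowing block dominates the derivative cocycle over the full period.

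First I would fix a Markov partition $\cR$ of the horseshoe $K$ of mesh $<\eps/2$, giving a topological conjugacy between $(K,f)$ and a subshift of finite type $(\Sigma,\sigma)$, under which periodic orbits of $f|_K$ correspond to admissible periodic words. By transitivity of $\Sigma$, pick an admissible periodic word $w_0$ of length $\ell_0$ visiting every rectangle of $\cR$; its coded periodic orbit $O_0\subset K$ is then $\eps/2$-dense in $K$. Let $w$ be the length-$\ell$ word coding $O$, and choose admissible connectors $u_1, u_2$ of bounded length $L$ so that $w'_N := u_1\, w^N\, u_2\, w_0$ is admissible and periodic for every $N\geq 1$. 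Let $O'_N\subset K$ be the coded periodic orbit, of period $\pi'_N = N\ell + \ell_0 + 2L$. Since the $w_0$-subsegment of $O'_N$ lies in the same rectangles as $O_0$, it is within $\eps/2$ of $O_0$, so $O'_N$ is $\eps$-close to $K$ in the Hausdorff distance, uniformly in $N$.

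To control the Lyapunov exponents of $O'_N$, observe first that the orbital measure $\delta_{O'_N}$ converges in the weak-$*$ topology to $\delta_O$ as $N\to\infty$, since the excursion occupies a fraction $O(1/N)$ of the period. For any continuous invariant subbundle $E\subset TM|_K$ (for instance $E^s$, $E^u$, or any subbundle of a finer dominated splitting of $K$), the function $\log|\det Df|_E|$ is continuous, so the sums of Lyapunov exponents of $O'_N$ within $E$ converge to those of $O$. For the individual exponents, pick $p'\in O'_N$ at the start of the $w^N$-block, set $A := Df^\ell|_{T_pM}$ for $p\in O$, and express the full-period derivative as
\[
Df^{\pi'_N}(p') = B_N \cdot A_N,
\]
where $A_N = Df^{N\ell}(p')$ is the $O$-shadowing cocycle and $B_N$ is the bounded excursion cocycle with $\|B_N^{\pm1}\| = O(1)$. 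Uniform hyperbolicity of $K$ provides uniformly bounded stable and unstable holonomies conjugating $A_N$ to $A^N$ by matrices of norm $O(1)$; applying the Gelfand spectral radius formula to each exterior power $\Lambda^k(B_N A^N)$ (and to its inverse) yields $(1/N)\log|\mathrm{eig}_i(B_N A^N)| \to \log|\mathrm{eig}_i(A)|$ as an unordered multiset. Dividing by $\pi'_N \approx N\ell$, the individual Lyapunov exponents of $O'_N$ converge to those of $O$, so any sufficiently large $N$ satisfies the conclusion.

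The main technical obstacle is the uniform-in-$N$ control of the cocycle along the $O$-shadowing block: a naive $C^0$-continuity estimate on $Df$ yields a multiplicative error exponential in $N$, unacceptable for the spectral analysis. The correct bound comes from the hyperbolic structure of $K$, whose bounded-distortion stable and unstable holonomies along shadowing orbits produce a conjugacy between $A_N$ and $A^N$ with $N$-independent norm; this is precisely the ingredient underlying the proof of \cite[Theorem~3.10]{ABC} and Ma\~n\'e's ergodic closing argument~\cite{Mane82}. Genericity of $f$ enters only to ensure that the horseshoe-internal structures used here (density of periodic orbits, continuous invariant splittings, existence of Markov partitions of arbitrarily small mesh) are available uniformly in both the dissipative and conservative settings.
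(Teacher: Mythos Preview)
Your construction of $O'_N$ and the Hausdorff-density argument are correct, and weak-$*$ convergence does control sums of exponents in each continuous invariant subbundle. The gap is in the passage to \emph{individual} exponents.

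A bounded multiplicative factor can mix the eigenvalues of $A^N$: with $\dim E^u=2$, $A|_{E^u}=\operatorname{diag}(4,2)$, and excursion cocycle $B|_{E^u}=\bigl(\begin{smallmatrix}0&1\\-1&0\end{smallmatrix}\bigr)$, both eigenvalues of $BA^N|_{E^u}$ have modulus $8^{N/2}$, so the two unstable exponents of $O'_N$ both converge to $\tfrac32\log 2$, not to $2\log 2$ and $\log 2$. Your Gelfand/exterior-power claim fails already at $k=1$: the spectral radius of $BA^N$ is $8^{N/2}$, not $4^N$. Nothing in the construction forces $B$ to respect the eigen-flag of $A$; absent a finer dominated splitting on $K$ there is no obstruction to such a $B$, so your particular sequence $O'_N$ need not have the correct individual exponents. (Even the preliminary claim that $A_N$ is conjugate to $A^N$ by $N$-uniformly bounded matrices requires a summable modulus of continuity for $Df$, which mere $C^1$ regularity does not provide.)

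This is exactly where genericity enters, and your last paragraph misidentifies its role: Markov partitions of small mesh, density of periodic orbits, and a continuous hyperbolic splitting are automatic on any $C^1$ horseshoe. The paper's route, via the technique of \cite[Theorem~3.10]{ABC}, is perturbative followed by a Baire argument. Given your candidate $O'_N$, one uses Franks' lemma at the $N\ell$ points of the shadowing block: perturbations of size $O(1/N)$ at each point accumulate to realize any bounded modification of the block cocycle, in particular one that cancels $B$ and makes the full-period derivative conjugate to a genuine power of $Df^\ell(p)$. This gives the conclusion on a $C^1$-dense set of $f$; since for a fixed horseshoe, fixed $O$, and fixed $\eps$ the conclusion is $C^1$-open by hyperbolic continuation, a standard Baire argument then produces the dense $G_\delta$.
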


\section{Proof of the Main Theorem}\label{s.main}

We now state and prove a more detailed version of the Main Theorem (recall Definitions \ref{def-TNweak} and \ref{def-evNpert}).

\begin{theorem}\label{t.newhouse}
Given any integer $d_0\geq2$ and numbers  $C>0$ and $\eps>0$, there exist integers $N,T\geq1$ with the following property.
For any closed $d_0$-dimensional Riemannian manifold $M$, any diffeomorphism $f\in\Diff^1(M)$ with $\Lip(f),\Lip(f^{-1})\leq C$,  if $p$ is a $T,N$-weak periodic point and $V$ is a neighborhood of the orbit of $p$, then
there exists an $(\eps,V,\cO(p))$-perturbation $g$ such that
\begin{enumerate}
\item[--]
for any $x$ in $\mathcal{O}(p)$ we have $Df(x)=Dg(x)$ and
\item[--] $g$ contains a horseshoe $K\subset V$ such that $p\in K$ and
 $
   h_\top(g,K) \geq \Delta(g,p).$
\end{enumerate}

Moreover, if the orbit of $p$ is homoclinically related to a hyperbolic periodic point $q$ of $f$, one can choose $g$ such that the orbits of $p$ and $q$ are still homoclinically related.  Finally, 
if $f$ preserves a volume or symplectic form, then $g$ can be chosen to preserve the volume or symplectic form.  \end{theorem}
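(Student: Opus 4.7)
\medskip
\noindent\textbf{Proof proposal for Theorem~\ref{t.newhouse}.}

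\emph{Setting up the constants.} The plan is to combine the perturbative tools of Section~\ref{s.perturbative} with an explicit horseshoe construction inside a linear model around $\cO(p)$. Given $d_0, C, \eps$, we first apply Theorem~\ref{t.BochiBonatti} with tolerance $\eps/3$ to obtain a threshold $N_0$, and Theorem~\ref{t.gourmelon} with tolerance $\eps/3$ to obtain $(N_1, T)$; we then set $N := \max(N_0, N_1)$.

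\emph{Normalizing the derivative cocycle along $\cO(p)$.} Let $\ell = \pi(p) \geq T$ and consider $A_i := Df(f^i(p))$. Because this periodic cocycle has no $N$-dominated splitting, Theorem~\ref{t.BochiBonatti} yields an $(\eps/3)$-path of perturbations along which the product stays hyperbolic and whose endpoint has all stable (resp. unstable) eigenvalues sharing one common modulus $|\lambda^s| < 1$ (resp. $\lambda^u > 1$). Extending this path via Proposition~\ref{p.simple} (which preserves eigenvalue moduli) we additionally arrange simple spectrum with arguments in $\pi\QQ$. We then realize the full path via Theorem~\ref{l.gourmelon}: this produces an $(\eps/2, V, \cO(p))$-perturbation $g_1$ of $f$ which is linear in a chart near each iterate of $p$, keeps $\cO(p)$ homoclinically related to $q$ when applicable, and preserves $\omega$ when relevant.

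\emph{Creating the tangency and bounding the entropy.} The new derivative at $\cO(p)$ still has no $N$-dominated splitting inside $E^s$ or inside $E^u$, so Theorem~\ref{t.gourmelon} applies to $g_1$ and produces a further $(\eps/2)$-perturbation $g$ with a homoclinic tangency $z$ whose orbit lies in $V$ and with $Dg = Dg_1$ on $\cO(p)$; by Remark~\ref{r.gourmelon}, $g$ also admits a transverse homoclinic intersection and hence a horseshoe containing $p$ inside $V$. Assume $\Delta^+(g,p) \leq -\Delta^-(g,p)$ (else argue for $g^{-1}$ by symmetry), so that $\Delta(g,p) = d^u \log \lambda^u$ with $d^u = \dim E^u$. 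In the linearizing chart, the iterate $Dg^{k\ell}$ acts on $E^u$ as a scalar $(\lambda^u)^{k\ell}$ composed with a rational rotation, so a small $d^u$-dimensional box in $E^u$ dilates isotropically, with volume multiplied by $(\lambda^u)^{k\ell d^u}$. Following the Newhouse scheme, we fix a tubular neighborhood $W$ of the tangency transverse to $W^u(p)$, push a small box of $E^u$-side $\delta$ in $W$ forward along the orbit of $z$ through the linear region around $\cO(p)$, and count the disjoint unstable ``strips'' that return to $W$ via the tangency: there are at least $c_0 \delta^{-d^u}(\lambda^u)^{k\ell d^u}$ of them, for some $c_0 > 0$ independent of $k$, with common return time $k\ell + O(1)$. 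The Smale/Markov construction on these strips gives a horseshoe $K \subset V$ whose entropy satisfies
\begin{equation*}
h_\top(g, K) \;\geq\; \frac{\log\!\bigl(c_0 \delta^{-d^u}(\lambda^u)^{k\ell d^u}\bigr)}{k\ell + O(1)} \;\xrightarrow[k\to\infty]{}\; d^u \log \lambda^u \;=\; \Delta(g,p).
\end{equation*}

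\emph{Main obstacle.} The technical core lies in the equalization step: it is precisely the fact that \emph{all} unstable eigenvalues share the single modulus $\lambda^u$ that turns volume growth into exactly $(\lambda^u)^{k\ell d^u}$ disjoint Markov symbols after one large return, and hence produces entropy matching the Ruelle bound $\Delta(g,p)$ rather than a smaller exponent; any residual spread of moduli would degrade this to the smallest unstable exponent times $d^u$. A secondary subtlety, deferred to \cite{BCF}, is that Proposition~\ref{p.simple} and Theorem~\ref{t.BochiBonatti} must be implemented inside $Sp(d_0,\RR)$ in the symplectic case, which requires respecting the Lagrangian structure of $E^s$ and $E^u$ all along the path of linear perturbations.
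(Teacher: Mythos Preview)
Your proposal has a genuine gap in the strip-counting step, and it is precisely the step you flag as the ``technical core.'' You assert that a small $d^u$-dimensional unstable box, dilated in volume by $(\lambda^u)^{k\ell d^u}$, produces on the order of $(\lambda^u)^{k\ell d^u}$ disjoint return strips through the tangency. But a homoclinic tangency folds the unstable manifold along \emph{one} direction; the Newhouse mechanism therefore yields on the order of $(\lambda^u)^{k\ell}$ strips, not volume-many. Your construction would give a horseshoe with entropy close to $\log\lambda^u$, which is $\Delta^+(g,p)/d^u$, not $\Delta^+(g,p)$. Equalizing the unstable moduli does make the volume growth isotropic, but it does not by itself turn one-dimensional folding into $d^u$-dimensional folding.

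The paper closes exactly this gap with an idea absent from your outline: after equalizing the moduli, it arranges the transition map $g^m$ at the homoclinic tangency to \emph{cyclically permute} the coordinate axes, $E_i\mapsto E_{i+1}$ (Proposition~\ref{p.preparation}, item~3). One then designs a rectangle $R$ whose side-lengths $\delta_1,\dots,\delta_{d_0}$ satisfy $\Lambda_i\delta_i=\delta_{i+1}$ for $i\neq k$, so that a single return $h^{\ell+m+2n}$ telescopes the expansion through all coordinates. The oscillations are still one-dimensional, but the number $L$ of allowable oscillations is now bounded by $\min((\Lambda_1\cdots\Lambda_k)^{-1},\Lambda_{k+1}\cdots\Lambda_{d_0})\approx e^{\ell\Delta(g,p)}$ (eq.~\eqref{e.choice-L}), and $h_\top(K)=\log L/(\ell+m+2n)\to\Delta(g,p)$. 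The circular permutation is what converts the product of \emph{all} stable (resp.\ unstable) exponents into entropy; without it you only see one exponent.

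There is a second, structural issue: your construction modifies $Df$ along $\cO(p)$ via Theorem~\ref{t.BochiBonatti}, so the conclusion $Dg(x)=Df(x)$ for $x\in\cO(p)$ fails. The paper avoids this by first building a small preliminary horseshoe $K_0\ni p$ (Step~1), then performing the Bochi--Bonatti equalization and the tangency construction on an auxiliary periodic point $x\in K_0$ of suitably large period (Steps~2--3), leaving the tangent map along $\cO(p)$ untouched. The final horseshoe contains both $O=\cO(x)$ and $\cO(p)$ because they remain homoclinically related throughout.
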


The strategy is similar to that of Newhouse in \cite{Newhouse}.  First, we construct a suitable homoclinic tangency using the lack of a strong dominated splitting.
Second, we create a horseshoe by suitable oscillations in a stable-unstable $2$-plane.  To exploit all the exponents we realize a circular permutation of the coordinates through the transfer map describing the homoclinic tangency.

\begin{remark}\label{r.sharp}
The theorem above is optimal in the following sense. It builds a diffeomorphism $g_0$ arbitrarily $C^1$-close to $f$ with $h_\top(g_0)\geq\Delta(f)$. However, 
 $$
  \limsup_{g\to f} h_\top(g)\leq \sup_{\mu\in \Probe(f)}\Delta(f,\mu),
 $$
as a consequence of Ruelle's inequality (see Sec. \ref{ss.periodichorseshoes}) and of the upper semicontinuity of  the right hand side  (see Lemma \ref{l-usc1}).
 \end{remark}

\subsection{A linearized homoclinic tangency}\label{s.preparation}

The first step in the proof of Theorem \ref{t.newhouse} is the creation of a homoclinic tangency that circularly permutes all the eigendirections of the periodic orbit, see Figure~\ref{f.permute}. 

\begin{proposition}\label{p.preparation}
For all $d_0\geq2,C,\eps>0$, there exist integers $N,T\geq1$ with the following property. 
For any diffeomorphism $f:M\to M$ with $\Lip(f),\Lip(f^{-1})\leq C$ of a $d_0$-dimensional closed manifold $M$ and any $T,N$-weak periodic saddle $p$ and any neighborhood $V$ of $\cO(p)$,
there exist
\begin{itemize}
\item[--]
an $(\eps,V, \cO(p))$-perturbation $g$ such that $Df(x)=Dg(x)$ for each $x\in \cO(p)$,
\item[--] a periodic orbit $O$ 
of $g$ such that $O\cup \cO(p)$ is included in a horseshoe in $V$ and satisfies
$\Delta^+(g,O)=\Delta^+(f,p)$ and $\Delta^-(g,O)=\Delta^-(f,p)$,
\item[--] a chart $\chi\colon W\to \RR^{d_0}$ with $O\subset W$, and
\item[--] a homoclinic point $\tau$ of $O$ for $g$, whose orbit is included in $V$, and an integer $m\geq 1$
\end{itemize}
such that the following hold:
 \begin{enumerate}
 \item $\chi\circ g\circ\chi^{-1}$ is linear on its domain and is diagonalizable;
more precisely, its restriction to
$\RR^k\times \{0\}^{d_0-k}$ is a homothety by a factor $\exp(\Delta^-(f,O)/k)<1$
and its restriction to $\{0\}^k\times \RR^{d_0-k}$
is a homothety by a factor $\exp(\Delta^+(f,O)/(d_0-k))>1$;
    \item $g^{-n}\tau\in W$ and $g^{n+m}\tau\in W$ for all $n\geq0$; and
  \item $\chi\circ g^{m}\circ\chi^{-1}$ is linear in a neighborhood of $\tau$ and satisfies
  $$\forall i=1,\dots,d_0\qquad D(\chi\circ g^{m}\circ\chi^{-1})(\tau) E_i=E_{i+1}$$
where $E_i:=\{0\}^{i-1}\times \RR\times \{0\}^{d_0-i}$  and $E_{{d_0}+1}:=E_1$.
 \end{enumerate}
Moreover, if $\cO(p)$ is homoclinically related to a  periodic point $q$ of $f$, then one can choose $g$ such that $\cO(p)$ and $q$ are homoclinically related for $g$.
Finally, if $f$ preserves a volume or a symplectic form, then one can choose $g$ to preserve the volume or symplectic form.
\end{proposition}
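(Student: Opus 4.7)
My plan is to realize, through a sequence of local $C^1$-small perturbations, the following specifications on $g$: a prescribed Lyapunov spectrum along $\cO(p)$, a homoclinic tangency of $\cO(p)$ with orbit in $V$, a chart near $\cO(p)$ in which $g$ becomes a fixed diagonal linear map $L$, and a chart near the tangency point in which the $m$-th iterate is the cyclic permutation of the coordinate axes. The constants $N$ and $T$ are chosen so that Theorem~\ref{t.BochiBonatti}, Proposition~\ref{p.simple} and Theorem~\ref{t.gourmelon} each apply with perturbation size $\eps/4$, and that a further ``rotation-unwinding'' correction distributed over $\pi(p)\geq T$ cocycle steps also stays $C^1$-small.

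The first step prepares the cocycle $(Df(f^{i-1}(p)))_i$ along $\cO(p)$. I apply Theorem~\ref{t.BochiBonatti} to obtain an $\eps$-path of perturbations whose endpoint has stable (resp.\ unstable) eigenvalues of common modulus; since the path preserves $|\det(\cdot|_{E^s})|$ and $|\det(\cdot|_{E^u})|$, the sums $\Delta^\pm(f,p)$ are left invariant. Proposition~\ref{p.simple} together with a small unwinding perturbation, both spread across the $\pi(p)$ steps, then align the stable (resp.\ unstable) eigenvalues to the common positive real value $\alpha^{\pi(p)}$ (resp.\ $\beta^{\pi(p)}$), where $\alpha:=\exp(\Delta^-(f,p)/k)$ and $\beta:=\exp(\Delta^+(f,p)/(d_0-k))$. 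The product cocycle is then semisimple and conjugate to $L^{\pi(p)}$, with $L:=\mathrm{diag}(\alpha\,\Id_k,\,\beta\,\Id_{d_0-k})$; in the symplectic case $k=d_0/2$ and $\alpha\beta=1$, so $L$ is symplectic.

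The second step realizes this cocycle dynamically and arranges the remaining conditions. Theorem~\ref{t.gourmelon} applied to the cocycle path yields $g_1\in\cU$ realizing the prescribed derivatives on $\cO(p)$ and producing a homoclinic tangency $z$ of $\cO(p)$ with orbit in $V$, while preserving any homoclinic relation to $q$ and any conservative structure; by Remark~\ref{r.gourmelon}, $\cO(p)$ lies in a horseshoe $K\subset V$ of $g_1$. I then pick bases $T_i$ of $T_{f^i(p)}M$ conjugating $Dg_1(f^i(p))$ to $L$ (possible by the conjugacy just obtained), assemble them into a chart $\chi$ near $\cO(p)$, and apply Theorem~\ref{t.linearize} to perturb so that $\chi\circ g\circ\chi^{-1}=L$ on its domain. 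Finally, choosing a homoclinic point $\tau$ near $z$ whose backward orbit lies in $W$ and whose forward orbit re-enters $W$ after $m$ steps, I apply Theorem~\ref{t.linearize} along the finite set $\{\tau,g\tau,\dots,g^{m-1}\tau\}$ (disjoint from $\cO(p)$) to make $\chi\circ g^m\circ\chi^{-1}$ affine near $\tau$ with linear part the cyclic permutation $P\colon E_i\mapsto E_{i+1}$, which is symplectic in the symplectic setting. Setting $O:=\cO(p)$ then gives $\Delta^\pm(g,O)=\Delta^\pm(f,p)$ and $O\cup\cO(p)\subset K$, and the total perturbation is $(\eps,V,\cO(p))$-small.

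The main obstacle is the cocycle preparation: Theorem~\ref{t.BochiBonatti} alone delivers only equal moduli, whereas the proposition demands exactly equal (real positive) eigenvalues, so the supplementary unwinding perturbation is essential and its $\eps$-smallness relies crucially on $\pi(p)\geq T$ being large. Subsidiary difficulties are the compatibility of every intermediate perturbation, every basis $T_i$, and the permutation $P$ with $\omega$ in the conservative cases (handled by the Franks-type lemmas of \cite{BCF}), and the careful coordination of the chart $\chi$ with the intermediate derivatives realizing the target cyclic permutation near $\tau$.
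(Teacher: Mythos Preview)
There are two genuine gaps.

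First, you set $O:=\cO(p)$, but the proposition requires both that $Dg(x)=Df(x)$ for every $x\in\cO(p)$ \emph{and} that in the chart $\chi$ the map $g$ acts near $O$ as the fixed homothety pair $L=\mathrm{diag}(\alpha\Id_k,\beta\Id_{d_0-k})$. These two constraints are incompatible unless the original cocycle along $\cO(p)$ is already of this form. The paper resolves this by first building (via Theorem~\ref{t.gourmelon} and Remark~\ref{r.gourmelon}) a small horseshoe $K_0\subset V$ containing $p$, linearized so that points of $K_0$ near $p$ share the tangent cocycle of $\cO(p)$; it then selects a \emph{new} periodic point $x\in K_0$ of very large period and perturbs the cocycle along $\cO(x)$, not along $\cO(p)$, via Theorem~\ref{l.gourmelon}. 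The orbit $O$ in the statement is $\cO(x)$, and the derivatives on $\cO(p)$ are never touched. This device also replaces your ``unwinding'' step: after Theorem~\ref{t.BochiBonatti} and Proposition~\ref{p.simple} produce eigenvalues of equal modulus with arguments in $\pi\QQ$ at some period $n$ (a multiple of $\pi(p)$, which in the symplectic case cannot be taken equal to $\pi(p)$), one simply chooses $x$ with period a multiple of $an$ for suitable $a$, so the eigenvalues become real positive automatically.

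Second, realizing the cyclic permutation $P$ as $D(\chi\circ g^m\circ\chi^{-1})(\tau)$ by a Franks-type perturbation along $\{\tau,\dots,g^{m-1}\tau\}$ is not justified: the actual derivative $Dg^m(\tau)$ and the target $P$ differ by a linear map that is a priori far from the identity, so an $\eps$-small perturbation distributed over finitely many points does not suffice. The paper instead exploits the homothety structure. At the tangency $z$ one chooses a frame $F,F_1^s,\dots,F_{k-1}^s,F',F_1^u,\dots,F_{d_0-k-1}^u$ with $F$ the common tangent line to $W^s$ and $W^u$; because $Dg^{\pi(x)}(x)$ acts as a homothety on $E^s$ and on $E^u$, any element of $SL(k,\RR)$ (resp.\ $SL(d_0-k,\RR)$) can be factored into near-identity pieces spread over arbitrarily many iterates in the linearization domain, and this is what allows one to send the chosen frame onto $E_1\oplus\dots\oplus E_{d_0}$ forward and onto the cyclically shifted frame backward by $\eps$-small perturbations. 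The symplectic case needs a further argument (choosing the $F_i^{s},F_i^{u}$ so that $F\oplus F'$ and each $F_i^s\oplus F_i^u$ are symplectic and pairwise $\omega$-orthogonal), which your proposal does not address.
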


\begin{figure}
\begin{center}
\includegraphics[width=0.7\textwidth]{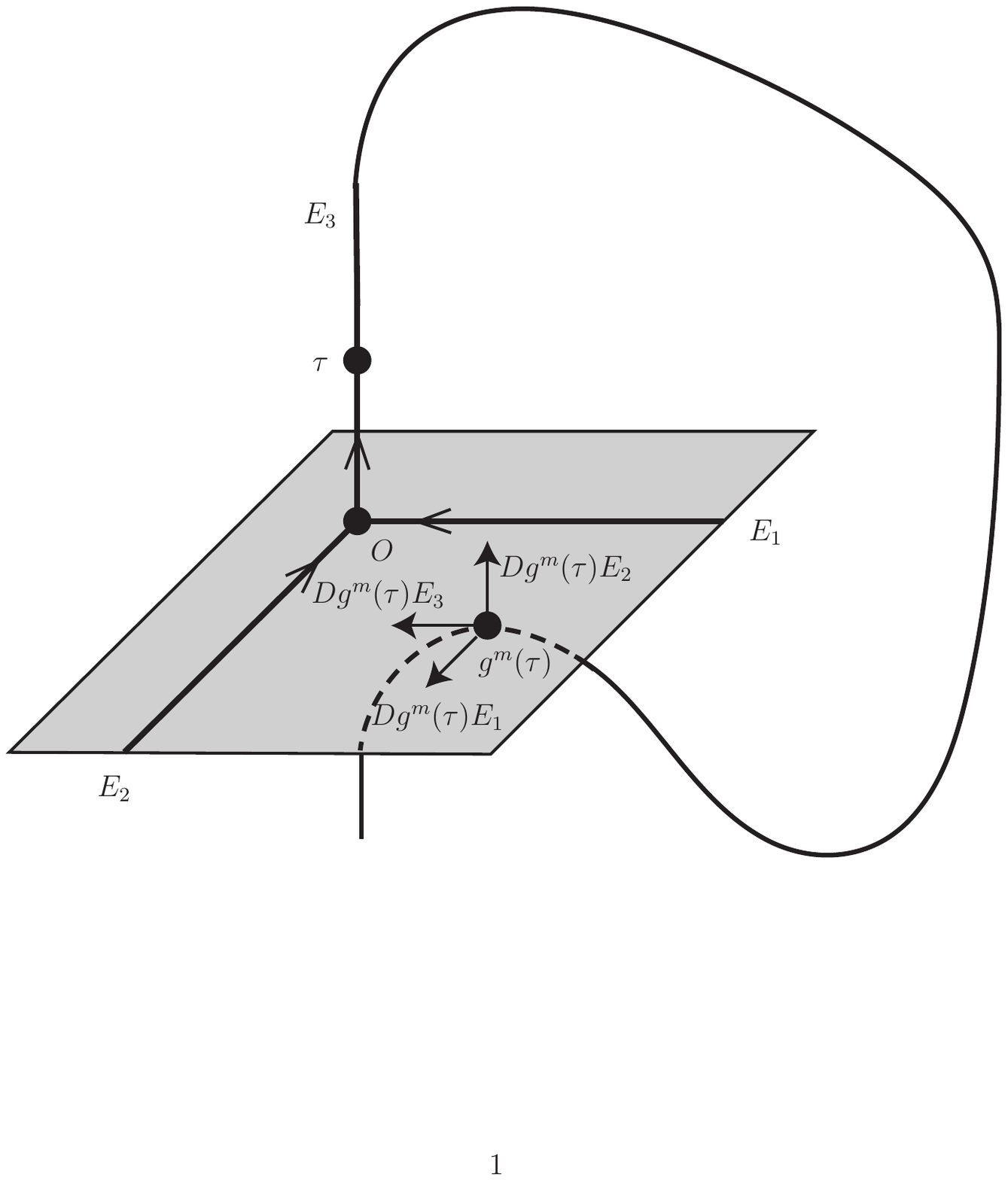}
\caption{Permutation of coordinates
}\label{f.permute}
\end{center}
\end{figure}

We first prove Proposition~\ref{p.preparation} in the dissipative case. The
adaptation to the volume-preserving and symplectic cases will be explained after.

\subsubsection{Dissipative setting}
Let $d_0,C$ and $\eps$ as in the statement of the proposition. Let $f\in\Diff^1(M)$ and let $p$ be a $T,N$-weak periodic saddle as above.

From Theorem~\ref{t.gourmelon}, we get integers $N,T\geq1$ (depending only on $d_0,C,\eps$)
such that for any $g\in B_{C^1}(f,\eps)$ and any $N,T$-weak periodic orbit $O$ of $g$, there exists an $\eps/10$-perturbation
$g'$ of $g$ which exhibits a homoclinic tangency.
Maybe after increasing $N$ (still depending only on $d_0,C,\eps$), Theorem~\ref{t.BochiBonatti} gives a perturbation of a repetition of the linear cocycle $Dg_O$ with all the stable (resp.\ unstable) eigenvalues with the same modulus.

Consider $p,V$ as in the statement of the proposition
and let $k$ be the stable dimension of $p$ and ${d_0}=\dim(M)$.
The proof splits into three perturbations. 
Each of them is supported in $V$ and preserves the orbit of $p$.
One takes a chart of a neighborhood of $\cO(p)$ so that the tangent maps at nearby points may be compared.

\medbreak
\noindent{\bf Step 1. Small horseshoes.}
\noindent{\it There exists an $(\eps/3,V,\cO(p))$-perturbation $g_1$ of $f$ in $\Diff^1(M)$
having a horseshoe $K_0\subset V$ that contains $p$.
Moreover, points $y\in K_0$ close to $p$ satisfy 
$$Dg_1(g_1^{i}(y))=Df(f^i(p))\textrm{ for any }i\in \ZZ,$$
and if $\cO(p)$ is homoclinically related to an orbit $\cO(q)$ of $f$, then one can choose $g_1$ such that $\cO(p)$ and $\cO(q)$ are still homoclinically related.}
\medbreak

From Theorem~\ref{t.gourmelon}
and Remark~\ref{r.gourmelon}, we can obtain an $(\eps/6,V,\cO(p))$-perturbation $g_0$ of $f$
with a horseshoe in $V$ which contains $p$. Moreover $Df$ and $Dg_0$ coincide on $\cO(p)$, and
Theorem~\ref{t.linearize} shows that $g_0$ may be chosen so that it is linear
in a neighborhood of the orbit $\cO(p)$.

Let us consider a point $z$ in the horseshoe which is homoclinic to $\cO(p)$.
Large (positive and negative) iterates of the point $z$ belong to the linearizing
neighborhood of $\cO(p)$.
Since $d_{C^1}(f,g_0)<\eps/6$ (and assuming $V$ small), Theorem~\ref{t.linearize} gives a further perturbation $g_1$
localized at the (finitely many) others iterates of $z$,
such that
\begin{itemize}
\item[--]  $d_{C^1}(g_0,g_1)<\eps/6$, 
\item[--] $g_1$ coincides with $g_0$ on the orbit of $z$, and
\item[--] for any $i\in \ZZ$ the map
$Dg_1$ is constant on a neighborhood of $g_1^{i}(z)$
where it coincides with $Df(f^i(p))$.
\end{itemize}
We define $K_0$ as the maximal invariant set in a neighborhood of
$$\cO(p)\cup\{g_1^i(z), i\in \ZZ\}.$$

Recall that Theorem~\ref{t.gourmelon} allows us to choose the perturbation so that it preserves the homoclinic relation with an orbit $\cO(q)$.
The perturbations to linearize near $\cO(p)$ and from the two applications of Theorem \ref{t.linearize} can be assumed to be sufficiently $C^1$-small given
the heteroclinic connection between $\cO(p)$ and $\cO(q)$.
Hence, these orbits are still homoclinically related for $g_1$.

\medbreak
\noindent {\bf Step 2. Stable and unstable homotheties.}
\noindent{\it There exists a $\pi(x)$-periodic point $x\in K_0$
such that for any neighborhood $V_x$ of $\cO(x)$
there exists a diffeomorphism $g_2$ satisfying the following:
\begin{itemize}
\item[--] $d_{C^1}(g_2,g_1)<\eps/3$, and $g_2=g_1$ on $\cO(x)\cup (M\setminus V_x)$,
\item[--] $x$ and $p$ belong to a horseshoe of $g_2$ contained in $V$,
\item[--] $\Delta^+(g_2,x) =  \Delta^+(f,p)$, $\Delta^-(g_2,x) =  \Delta^-(f,p)$ and
\item[--] $Dg_2^{\pi(x)}(x)$ is conjugate to 
$\Lambda^s\Id_{\RR^k}\otimes\Lambda^u\Id_{\RR^{{d_0}-k}}$,
where $$k\log \Lambda^s=\pi(x)\Delta^-(f,p) \text{ and } ({d_0}-k)\log \Lambda^u=\pi(x)\Delta^+(f,p).$$
\end{itemize}
Furthermore, if $\cO(p)$ is homoclinically related to an orbit $\cO(q)$ for $f$, then one can choose $g_2$ such that $\cO(p)$ and $\cO(q)$ are still homoclinically related.}
\medbreak

We will introduce a periodic point $x\in K_0$ close to $p$ with large period $\ell$ and
build $\eps/6$-paths of linear perturbations $(A_i(t))_{t\in [0,1]}$, $1\leq i\leq \ell$, at $\cO(x)$
such that the composition $\cA(t)=A_{\ell}(t)\circ\dots\circ A_1(t)$ is hyperbolic for each $t$.
Then Theorem~\ref{l.gourmelon} will realize $(A_i(1))_{i=1}^{\ell}$ by a perturbation $g_2$ of $g_1$ so that $Dg_2(g_2^i(x))=A_i(1)$ for each $i$ while preserving the orbit of $x$ and the homoclinic relation of $x$ to $p$. It remains to choose $x$ and define the paths $(A_i(t))$. We do it in three steps.

\smallbreak\noindent\emph{1. Equal modulus of all stable, resp.\ unstable, eigenvalues.}   
The lack of strong dominated splitting on the orbit of $p$ and Theorem~\ref{t.BochiBonatti} gives a multiple $n$ of the period of $p$ and paths $t\mapsto(B_j(t))_{1\leq j\leq n}$ such that  $B_j(0)=Dg_1(g^j_0(p))$ and the final composition $ \cB(1)=
 B(n)\circ\dots\circ B(1)$ has $k$ stable eigenvalues with the same modulus and ${d_0}-k$ unstable eigenvalues with the same modulus.
Moreover, the product of the moduli along the stable and the unstable spaces are unchanged during the deformation.
The stable moduli are thus equal to $\exp(n\Delta^-(f,p)/k)$ and the unstable ones to
$\exp(n\Delta^+(f,p)/({d_0}-k))$.

\smallbreak\noindent\emph{2. Simple eigenvalues with rational arguments.} Using Proposition~\ref{p.simple}, one builds paths $(\bar B_j(t))$,  arbitrarily close to the paths $( B_j(t))$, and
such that the composition $\bar \cB(1)= \bar B_{n}(1)\circ\dots\circ \bar B_1(1)$ has simple eigenvalues,
of the form $\lambda=\Lambda^\sigma e^{i\theta}$, where $\theta$ is rational and $\Lambda^\sigma\in \{\Lambda^s,\Lambda^u\}$.

\smallbreak\noindent\emph{3. Product of homotheties for a higher order periodic cocycle.}
We let $a\geq 1$ be an integer such that $\lambda^a$ is real and positive for any eigenvalue $\lambda$ of $\bar \cB(1)$, and consider a periodic point $x$ in $K_0$ close to $p$ whose period $\ell$ is a multiple of $an$.
Note that by the construction of $g_1$ we know that $Dg_1(g_1^i(x))$ coincides with $Dg_1(g_1^i(p))=Df(f^i(p))$.

The corresponding paths $(A_j(t))=(\bar B_{j\mathrm{mod} n}(t))$ for $1\le j\le \ell$ are such that the composition $\cA(1)=A_{\ell}(1)\circ\dots\circ A_1(1)$ has a stable, real and positive eigenvalue with multiplicity $k$ and an unstable, real and positive eigenvalue with multiplicity ${d_0}-k$. Moreover, it is a product of two homotheties.
\medbreak

\medbreak
\noindent {\bf Step  3. A homoclinic tangency.}
\noindent{\it There exists an $\eps/3$-perturbation $g_3$ of $g_2$
satisfying properties 1, 2, 3 of Proposition~\ref{p.preparation}.}
\medbreak

Starting from the diffeomorphism $g_2$ and its periodic orbit $O=\cO(x)$,
we use Theorem \ref{t.gourmelon} and then \ref{t.linearize} to perform an $\eps/6$-perturbation which 
creates a homoclinic tangency $z$ for $x$
whose orbit is contained in $V$ and again linearizes the dynamics in a neighborhood of the orbit of $\cO(x)$.  Furthermore, both perturbations keep the tangent map along $\cO(x)$ unchanged and
the homoclinic relation to $\cO(p)$.
Also, the perturbation is supported in an arbitrarily small neighborhood of $\cO(x)$ that is contained in $V$.

Since $Dg_2^{\pi(x)}(x)$ is a homothety along the stable and unstable spaces,
there exists an invariant decomposition $E_1\oplus\dots\oplus E_{d_0}$ into one-dimensional subbundles of $TM$ above the orbit of $x$ such that
$E_1\oplus \dots\oplus E_k$ coincides with the stable bundle,  and $E_{k+1}\oplus \dots\oplus E_{d_0}$ coincides with the unstable bundle.

One may assume that the intersection between the stable and unstable manifolds of $x$ at $z$ is one-dimensional.
One denotes by $F$ the tangent line to this intersection at $z$.
One then chooses ${d_0}-1$ lines $F_1^s,\dots,F_{k-1}^s$, $F^u_1,\dots,F^u_{{d_0}-k-1}$, $F'$ in $T_zM$ such that
\begin{itemize}
\item[--] $F\oplus F_1^s\oplus\dots\oplus F_{k-1}^s=T_z W^s(x)$,
\item[--] $F_1^u\oplus\dots\oplus F_{{d_0}-k-1}^u\oplus F=T_z W^u(x)$, and
\item[--] $F\oplus F_1^s\oplus \dots\oplus F_{k-1}^s\oplus F'\oplus F^u_1\oplus \dots\oplus F^u_{{d_0}-k-1}=T_zM$.
\end{itemize}

Using Theorem~\ref{t.linearize} near a finite piece of the forward orbit of $z$ we perform a local perturbation of $g_2$ with the following properties.
\begin{itemize}
\item[--] After a large positive iterate, the orbit of $z$ remains in the linearizing chart and the dynamics act on
the forward iterates of the space $F\oplus F_1^s\oplus\dots\oplus F_{k-1}^s$ as a homothety. Since $SL(k,\RR)$ is connected, we can  perturb the map so that after a large forward iterate, the image of the spaces $F, F_1^s,\dots, F_{k-1}^s$
coincide with $E_1,\dots, E_k$ respectively.
\item[--] The space $F'\oplus F_1^u\oplus\dots\oplus F_{{d_0}-k-1}^u$ is transverse to the stable space at $z$, hence,
it converges under forward iteration to the unstable space.
After a perturbation, a large forward iterate sends
this space on the unstable direction $E_{k+1}\oplus\dots\oplus E_{{d_0}}$.
Then the tangent dynamics acts by homothety inside this space.
Therefore, a perturbation near finitely many further iterates will ensure that
the spaces $F', F_1^u,\dots, F_{{d_0}-k-1}^u$ are eventually mapped
on  $E_{k+1},\dots, E_{{d_0}}$ respectively.
\end{itemize}
Thus, the splitting 
$$F\oplus F_1^s\oplus\dots\oplus F_{k-1}^s\oplus F'\oplus  F_1^u\oplus\dots\oplus F_{{d_0}-k-1}^u$$
 at $z$ is eventually mapped to $E_{1}\oplus\dots\oplus E_{{d_0}}$ by any sufficiently large iterate.
Working in a similar way along the backward orbit, one can ensure that after a large backward iterate,
the splitting 
$$F_1^s\oplus\dots\oplus F_{k-1}^s\oplus F'\oplus F_1^u\oplus\dots\oplus F_{{d_0}-k-1}^u\oplus F$$ 
is mapped on
$E_1\oplus \dots\oplus E_{d_0}$.

Possibly after reducing $W$ we know $\chi\circ g\circ\chi^{-1}$ is locally linear on its domain.
One can choose the coordinate axis $\{0\}^{j-1}\times \RR\times \{0\}^{d_0-j}$ to be preserved and identified with $E_j$.
Theorem~\ref{t.linearize} gives an arbitrarily small perturbation at the finitely many iterates $f^n(z)$, $|n|\leq\ell$, for some large $\ell\geq1$,
such that the new diffeomorphism $g_3$ satisfies:
$g_3^n(z)\in W$ for any $n$ satisfying $|n|\geq \ell$
and $\chi\circ g_3^{2\ell}\circ \chi^{-1}$ is linear in a neighborhood of $g_3^{-\ell}(z)$.
By construction:
\begin{itemize}
\item[--] $Dg_3(f^{j}(z))$ preserves each direction $E_1,\dots, E_{d_0}$ for $|j|\geq \ell$.
\item[--] $Dg_3^{2\ell}(f^{-\ell}(z))$ sends $E_1$,\dots, $E_{d_0}$ on $E_2$,\dots, $E_{d_0}$, $E_1$.
\end{itemize}
Setting $\tau=g_3^{-\ell}(z)$ and $m=2\ell$ gives all the required properties.
\medbreak

\medbreak
\noindent {\bf Conclusion in the dissipative case.}
Note that the perturbations $g_2,g_3$ performed in the last two steps are supported in a small neighborhood of $\cO(x)$; hence, these perturbations may be chosen to preserve a homoclinic relation between $\cO(p)$ and another periodic point $q$. 
By taking $g=g_3$, Proposition~\ref{p.preparation}
is now proved in the dissipative case.

\subsubsection{The conservative case}

\paragraph{ The volume-preserving case.}
In this setting, it is enough to use the volume-preserving cases of Theorems \ref{t.linearize}, \ref{l.gourmelon}, \ref{t.BochiBonatti}, and \ref{t.gourmelon} and to note that linear perturbations (of the differential) can  be made volume-preserving in an obvious way.

\paragraph{The symplectic case.}
When $f$ preserves a symplectic form $\omega$, only the last step has to be modified.
We denote $d_0=2d$ and the stable and unstable dimensions coincide: $k=d_0-k=d$.
We refer to Section~\ref{s.prelim} for standard definitions and basic results about symplectic linear algebra.

Let us choose any line $F'$ transverse to $T_zW^s(x)+T_zW^u(x)$.

\begin{lemma}
One can choose $F^s_1,\dots,F^s_{d-1},F^u_1,\dots,F^u_{d-1}$
such that the planes $F\oplus F'$, $F^s_1\oplus F^u_1$,\dots, $F^s_{d-1}\oplus F^u_{d-1}$
are symplectic and $\omega$-orthogonal to each other.
\end{lemma}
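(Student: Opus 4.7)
The plan is to reduce this to standard symplectic linear algebra by identifying the right symplectic subspace of $T_zM$.

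First, I recall that since $g$ is symplectic and $x$ is a hyperbolic periodic point, $W^s(x)$ and $W^u(x)$ are Lagrangian submanifolds; hence $T_zW^s(x)$ and $T_zW^u(x)$ are Lagrangian subspaces of $T_zM$ of dimension $d$ that intersect along the tangency line $F$. Being Lagrangian, each of them is contained in its own symplectic complement, and in particular in $F^\omega$; by dimension count $F^\omega = T_zW^s(x) + T_zW^u(x)$. Since $F'$ was chosen transverse to $T_zW^s(x)+T_zW^u(x) = F^\omega$, the pairing $\omega(F,F')$ is nonzero, so the $2$-plane $F\oplus F'$ is symplectic.

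Next, I set $H := (F\oplus F')^\omega$, a symplectic subspace of dimension $2(d-1)$, and claim that $L_1 := T_zW^s(x)\cap H$ and $L_2 := T_zW^u(x)\cap H$ are two \emph{transverse Lagrangian subspaces} of $H$. Since $T_zW^s(x)\subset F^\omega$, one has $L_1 = T_zW^s(x)\cap (F')^\omega$. The linear form $\omega(F',\cdot)$ restricts nontrivially to $T_zW^s(x)$, otherwise $F'$ would lie in $T_zW^s(x)^\omega = T_zW^s(x)$, contradicting transversality; hence $\dim L_1 = d-1$, and the same argument gives $\dim L_2 = d-1$. Both are isotropic in $H$ (as they sit inside Lagrangian subspaces of $T_zM$), hence Lagrangian in $H$ by dimension. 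Finally, $L_1\cap L_2 \subset T_zW^s(x)\cap T_zW^u(x) = F$, and $F\cap H = \{0\}$ because $\omega(F,F')\ne 0$; so $L_1$ and $L_2$ are transverse in $H$.

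Given two transverse Lagrangian subspaces of a symplectic space, I will choose dual bases $(e_1,\dots,e_{d-1})$ of $L_1$ and $(f_1,\dots,f_{d-1})$ of $L_2$ satisfying $\omega(e_i,f_j) = \delta_{ij}$. Setting $F_i^s := \RR\, e_i\subset T_zW^s(x)$ and $F_i^u := \RR\, f_i\subset T_zW^u(x)$, each plane $F_i^s\oplus F_i^u$ is symplectic, and by duality these planes are mutually $\omega$-orthogonal. They are also $\omega$-orthogonal to $F\oplus F'$ because $L_1,L_2\subset H=(F\oplus F')^\omega$. Finally, $F\not\subset L_1$ forces $F\oplus L_1 = T_zW^s(x)$ by dimension, which gives the required direct sum decomposition of $T_zW^s(x)$, and the unstable case is identical.

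The argument is essentially formal; the only point requiring a moment's care is the transversality of $L_1$ and $L_2$ inside $H$, which rests on the simple observation that $F\cap H$ is trivial because $F'$ was chosen outside $F^\omega$.
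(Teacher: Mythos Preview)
Your proof is correct and follows essentially the same route as the paper's: both identify the $(d-1)$-dimensional subspaces $L_1=T_zW^s(x)\cap(F')^\omega$ and $L_2=T_zW^u(x)\cap(F')^\omega$ (which the paper calls $H^s,H^u$) and then pick the lines $F_i^s,F_i^u$ inside them. The only cosmetic difference is the final step: you observe once and for all that $L_1,L_2$ are transverse Lagrangians in the symplectic subspace $H=(F\oplus F')^\omega$ and take a dual basis, whereas the paper builds the pairs $F_i^s\oplus F_i^u$ one at a time by induction; your packaging is a bit cleaner and also makes the symplecticity of $F\oplus F'$ explicit, which the paper leaves implicit.
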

\begin{proof}
Note that $T_zW^s(x)$ and $T_zW^u(x)$ are both Lagrangian spaces which do not contain $F'$. Consequently the symplectic complement $(F')^\omega$ of $F'$
contains neither $T_zW^s(x)$ nor $T_zW^u(x)$.
Note also that $F^\omega$ contains, hence coincides with, $T_zW^s(x)+T_zW^u(x)$;
in particular, $(F')^\omega$ is transverse to $F$.
The spaces 
$$H^s:= (F')^\omega\cap T_zW^s(x)\textrm{ and }H^u:= (F')^\omega\cap T_zW^u(x)$$ thus satisfy:
$T_zW^s(x)=F\oplus H^s$ and $T_zW^u(x)=F\oplus H^u$.
One then inductively chooses $d-1$ lines $F^s_1,\dots,F^s_{d-1}$ which span $H^s$ and $d-1$ lines $F^u_1,\dots,F^u_{d-1}$ which span $H^u$: one first takes $F_1^s\subset H^s$ and $F_1^u\subset H^u$ such that $F^u_1$ is transverse to $(F^s_1)^\omega$;
the plane $F^u_1\oplus F^s_1$ is thus symplectic. One then repeats the construction replacing $H^s$ and $H^u$ by
$H^s\cap (F_1^u)^\omega$ and $H^u\cap (F_1^s)^\omega$.
\end{proof}

Since $E^s(x)$ and $E^u(x)$ are two transverse Lagrangian spaces, one can choose
the decomposition $E_1\oplus \dots\oplus E_{2d}$ such that each plane
$E_i\oplus E_{i+d}$, $1\leq i\leq d$ is symplectic and $\omega$-orthogonal to the other ones.
After a large iterate, $F\oplus F^s_1\oplus\dots\oplus F^s_{d-1}$ is mapped on the stable space $E^s(x)$
and $F'\oplus F^u_1\oplus\dots\oplus F^u_{k-1}$ is sent to a Lagrangian space close to the unstable space $E^u(x)$.
After composing by a symplectomorphism close to the identity, the second space is sent to $E^u(x)$ and the first still coincides with $E^s$.
Indeed if $A$ is a linear map whose restriction to $E^s(x)$ is the identity and which sends $F'\oplus F^u_1\oplus\dots\oplus F^u_{k-1}$
to $E^u(x)$, then writing that these spaces are Lagrangian gives that $A$ is symplectic.

The connected component of the identity of the group of symplectic maps which preserve the Lagrangian space $E^s(x),E^u(x)$
acts transitively on the decompositions of $E^s(x)$ (one can identify $E^s(x)\oplus E^u(x)$ with the decomposition $\RR^d\times \RR^d$
endowed with the standard symplectic form, then any product $A=B^T\times B^{-1}$, where $B\in GL(d,\RR)$ is symplectic).
Recall also that $Dg_2(x)$ acts as a homothety along the space $E^s(x)$. One can thus decompose $A$ as a product of linear maps close to the identity
and realize it along an arbitrarily large piece of the orbit.
Hence, we obtain symplectic perturbations at finitely many forward iterates ensuring that the decomposition
$F\oplus F^s_1\oplus\dots\oplus F^s_{d-1}$ is eventually mapped to $E_1\oplus\dots\oplus E_d$ as required.
Note that the image of $F'$ is in the symplectic complement of $E_2\oplus\dots \oplus E_d$,
hence coincides with $E_{d+1}$. With the same argument one checks that the already chosen perturbation must send the decomposition $F'\oplus F^u_1\oplus\dots\oplus F^u_{d-1}$ to $E_{d+1}\oplus \dots \oplus E_{2d}$.

One perturbs similarly along the backward orbit so that $$F_1^s\oplus\dots\oplus F_{d-1}^s\oplus F'\oplus F_1^u\oplus\dots\oplus F_{d-1}^u\oplus F$$ is mapped on
$E_1\oplus \dots\oplus E_{2d}$ as before. This concludes the proof of Proposition \ref{p.preparation} in the symplectic case.

\subsection{The horseshoe}

We now prove Theorem \ref{t.newhouse} by constructing the horseshoe with the desired entropy. Let $d_0,C,\eps$ and $f\in\Diff^1(M)$ be as in the statement.
Fixing integers $N$ and $T$ as in Proposition~\ref{p.preparation} we let $\cO(p)$ be a $T,N$-weak periodic orbit,
and let $V$ be a neighborhood of $\cO(p)$.

Let $g\in B_{C^1}(f,\eps/2)$ be the perturbation and $O$ be the hyperbolic periodic orbit given by Proposition~\ref{p.preparation}.
Using Theorem~\ref{l.gourmelon}, one can slightly increase $\Delta(O)$ by an arbitrarily small perturbation so that
$\Delta(O)>\Delta(\cO(p))$.
This compensates for the arbitrarily small loss encountered when converting exponents to entropy.  Furthermore, this can be done while keeping the homoclinic tangency and the given homoclinic relations.
The properties stated in Proposition~\ref{p.preparation} are still satisfied with the chart $\chi\colon W\to \RR^{d_0}$,
the coordinate axis $E_j:=\RR^{j-1}\times \RR\times \{0\}^{d_0-j}$, the homoclinic point $\tau$ and the integer $m$.
\medskip

The proof of Theorem~\ref{t.newhouse} will be concluded by applying the following proposition.
Using a sufficiently small $C^1$-perturbation, one preserves any finite number of homoclinic relations between $O$ and other periodic orbits.

\begin{proposition}\label{p.newhouse}
Given $g\in \Diff^1(M)$, $\eps'>0$, a hyperbolic periodic orbit $O$, a chart $\chi\colon W\to \RR^{d_0}$ with $O\subset W$,
a homoclinic point $\tau$ of $O$ and $m\geq 1$ satisfying items 1, 2, 3 of Proposition~\ref{p.preparation},
there exist an $(\eps', W, O)$-perturbation $h$ of $g$ and a (linear and conformal)
horseshoe $K$ in the $\eps'$-neighborhood of
$O\cup\operatorname{orbit}(\tau)$ such that
$h_\top(h,K)\geq \Delta(g,O)-\eps'$ and
$K$ and $O$ are homoclinically related.
Moreover, the perturbation preserves a volume or symplectic form if $g$ does.
\end{proposition}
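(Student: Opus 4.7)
The plan is to construct the horseshoe $K$ as a locally maximal hyperbolic invariant set of the perturbed diffeomorphism $h$, realized as a linear, conformal, Smale-type horseshoe in the chart $\chi$.  Its entropy will come from a full-shift structure on $s\approx\exp(N\pi(O)\Delta(g,O))$ symbols with block time $L=N\pi(O)+m$; the cyclic permutation of axes given by property~3 will be essential for pushing the branching factor $s$ all the way up to the Ruelle bound rather than stopping at a single Lyapunov exponent.

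I fix a large integer $N$ with $N\pi(O)\gg m$, set $L:=N\pi(O)+m$, and let $s:=\lfloor\exp(N\pi(O)\Delta(g,O))\rfloor$.  In the chart $\chi$, I take a small box $R=\prod_{i=1}^{d_0}[-\rho_i,\rho_i]$ around $\tau$, with the $\rho_i$ chosen in geometric proportion to the homothety rates $\Lambda^s,\Lambda^u$ (from property~1), so that the cyclic permutation $P$ preserves the shape of $R$ up to rescaling.  The ``block map'' $\Phi:=g^L$, viewed in the chart from near $\tau$ to near $g^m(\tau)$, has derivative $L_O^N\circ P$ at $\tau$, where $L_O=Dg^{\pi(O)}$ is diagonal with eigenvalues $\Lambda^s$ (multiplicity $k$) and $\Lambda^u$ (multiplicity $d_0-k$) and $P$ is the cyclic permutation from property~3.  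A direct computation gives $(L_O^N\circ P)^{d_0}=\exp(N\pi(O)(\Delta^+(O)+\Delta^-(O)))\,\Id$: over a super-cycle of $d_0$ blocks every coordinate axis has been contracted $k$ times and expanded $d_0-k$ times, so the composite is a scalar homothety.

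Using Proposition~\ref{p.elementary} together with Theorem~\ref{t.linearize} (and their volume- or symplectic-preserving versions from Section~\ref{s.perturbative}), I then perform an $\eps'/2$-perturbation $h$ of $g$ supported in $W\setminus\cO(p)$, arranging that $h^L$ carves from $R$ exactly $s$ pairwise disjoint full-height subrectangles whose $h^L$-images cross $R$ completely.  The cyclic permutation ensures that the stretching from block to block reaches all $d_0$ coordinate directions and so permits $s\approx\exp(N\pi(O)\Delta(g,O))$ branches, rather than only the $\exp(N\pi(O)\lambda)$ allowed by a single Lyapunov exponent $\lambda$.  A standard Markov construction then produces a linear conformal horseshoe $K$ in the prescribed neighborhood of $O\cup\operatorname{orbit}(\tau)$, homoclinically related to $O$, with
\[
h_\top(h,K)\;\ge\;\frac{\log s}{L}\;\ge\;\Delta(g,O)-\eps'
\]
for $N$ large enough.

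The main obstacle is the precise geometric realization of these $s$ branches under a small, localized perturbation: one must control the $h^L$-images of a large family of subrectangles of $R$ so that the intersection pattern is a genuine full shift, all while the perturbation remains $\eps'$-small and supported in $W\setminus\cO(p)$.  The conservative cases are handled by the volume- and symplectic-preserving variants of Proposition~\ref{p.elementary} and Theorem~\ref{t.linearize}, and the super-block cocycle, being conformal, is automatically conservative.  Persistence of any given homoclinic relation between $O$ and another periodic orbit $q$ is automatic since the perturbation can be supported away from $\cO(q)$ by shrinking its support inside $W$.
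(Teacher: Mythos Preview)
Your sketch points in the right direction---a Smale horseshoe along the homoclinic orbit, using the cyclic permutation of axes from item~3 to harvest all the exponents---but two essential pieces are missing, and neither is routine.

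First, the return geometry is wrong as stated. With $R$ centered at $\tau$ and $L=N\pi(O)+m$, the point $g^L(\tau)=g^{N\pi(O)}(g^m(\tau))$ lies on $W^s_{\loc}(O)$, exponentially close to $O$; it is neither near $\tau\in W^u(O)$ nor near $g^m(\tau)$ as you write. So $g^L(R)$ is a thin box near $O$ and you have given no mechanism for it to cross $R$ at all, let alone in $s$ Markov strips. In the paper the rectangle is placed instead near a point $z^s\approx g^{n+m}(\tau)$ on the stable side, and the block map is $h^{\ell+m+2n}$, with separate parameters for the stable approach ($\ell$), the passage through the tangency ($m$), and two buffer segments of length $n$ whose sole role is to make the $C^1$-size of the perturbation small.

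Second, your observation that $(L_O^NP)^{d_0}$ is a scalar homothety is correct but does not by itself yield $s\approx\exp(N\pi(O)\,\Delta)$ branches in a \emph{single} block; in one block $L_O^NP$ is just a permutation followed by a diagonal map, and an oscillation along one coordinate produces only $\approx\exp(N\pi(O)\,\Delta^+/(d_0-k))$ folds. The paper's mechanism for funneling the full product of exponents into a single direction is different and is really the whole proof: one takes an \emph{anisotropic} box with telescoping side-lengths $\delta_i=\Lambda_1\cdots\Lambda_{i-1}\,\delta$ for $i\le k$ and $\delta_i=(\Lambda_i\cdots\Lambda_{d_0})^{-1}\delta$ for $i>k$, so that the cyclic permutation together with the scalings $\Lambda_i$ nearly preserves the shape of $R$; all the residual volume change is then routed through a single coordinate pair by two explicit \emph{shear} perturbations (near $g^{-n}(\tau)$ and $g^{n+m}(\tau)$), and one oscillation perturbation (near $g^m(\tau)$) creates $L\approx\min(\Lambda_1^{-1}\cdots\Lambda_k^{-1},\,\Lambda_{k+1}\cdots\Lambda_{d_0})$ branches. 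That choice of rectangle and those three localized perturbations, together with the verification that the sub-rectangles and their images line up, are exactly what you flag as the ``main obstacle'' and set aside.
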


\begin{proof}
By increasing $m$, one can assume that $\tau$ and $g^m(\tau)$ belong respectively to the unstable manifold and the stable manifold
of the \emph{same point} of the orbit $O$. 
We denote by $k$ the number of negative exponents of $O$.

{Our construction will depend on parameters $\rho,\eta>0$, $\ell,n,L\in\NN$.
We will specify a rectangle $R$ close to $g^{n+m}(\tau)$ and a perturbation
supported near
$g^{n+m-1}(\tau)$, $g^{m-1}(\tau)$ and $g^{-n-1}(\tau)$ performing:
\begin{itemize}
\item[--] $L$ oscillations near $g^{m-1}(\tau)$ creating the horseshoe and
\item[--] small shears near $g^{-n-1}(\tau)$ and $g^{n+m-1}(\tau)$ making  map $g^{\ell+m+2n}$ linear on the horseshoe.
\end{itemize}

More precisely, we  choose first a small number $\rho>0$ (only depending on $g$) controlling the
$C^0$-size of the perturbation, then $\eta>0$ small enough and a large integer multiple $n$ of $\pi(O)$ in order to make the $C^1$-size
of the perturbation smaller than $\eps'$.
Also the diameter $\delta$ of $R$ will be directly related to $n$.
At last, one chooses large integers $\ell$ (a multiple of $\pi(O)$) and $L$
in order to control the size of the iterates of $R$ and the entropy of the horseshoe.}

\paragraph{The rectangle $R$.}
We pick a point $z^s$ arbitrarily close to $g^{m+n}(\tau)$ whose iterates $g^j(z^s)$, $0\leq j\leq \ell$, belong to $W$
and such that $g^{\ell+n}(z^s)$ is arbitrarily close to $\tau$. Observe that $\tilde z^s:=g^{\ell+2n+m}(z^s)$ is close to $z^s$, exponentially in $\ell$.

Let $\lambda_1\leq \dots\leq \lambda_{d_0}$ be the Lyapunov exponents of $O$:
the linear map which coincides with $\chi\circ g\circ \chi^{-1}$ near $O$
expands $E_i$ by a factor $e^{\lambda_i}$.
The linear map which coincides with $\chi \circ g^m\circ \chi^{-1}$ near $\tau$ sends the axis $E_i$ to $E_{i+1}$
and expands it by some factor~$\mu_i$.
We choose the scale
$$\delta =C^{-1} e^{(\lambda_1+\lambda_{d_0})n}\rho,$$
where $C>0$ only depends on $g$ and bounds $\max(|\mu_i|,|\mu_i|^{-1})$.

Then, for $i=1,\dots,d_0$, one defines the following:
\begin{itemize}
\item[--] $\Lambda_i:=\mu_ie^{(\ell+n)\lambda_i+n\lambda_{i+1}}$ (indices to be understood modulo $d_0$),
\item[--]
$\delta_i:=\Lambda_1\cdots\Lambda_{i-1} \cdot \delta \text{ for }i=1,\dots,k$ (in particular $\delta_1:=\delta$),
\item[--] $\delta_i:=(\Lambda_i\cdots\Lambda_{d_0})^{-1} \cdot \delta \text{ for }i=k+1,\dots,{d_0}$, and
\item[--]    $R:=\chi(z^s)+Q(\delta_1,\dots,\delta_{d_0})$ where $Q(\delta_1,\dots,\delta_{d_0}):=\prod_{i=1}^{d_0} [-\delta_i,\delta_i]. $
\end{itemize}
Hence $\Lambda_1,\dots,\Lambda_k$, $\Lambda_{k+1}^{-1},\dots,\Lambda_{d_0}^{-1}$, are exponentially small in $\ell$ and $\delta_i\leq\delta$ for all $i$.

As $\delta$ is small enough (though independent of $\ell$), the $\ell+n$ first iterates of $\chi^{-1}(R)$ are contained in $W$
and the last one of these, very close to $\tau$, is contained in the linearization domain of $\chi\circ g^m\circ \chi^{-1}$.
The form of $g$ near $p$ and of $g^m$ near $\tau$ implies 
that
$$
 (\chi\circ  g^{\ell+m+2n}\circ\chi^{-1}) (R) = \chi(\widetilde z^s) +
   Q(\Lambda_{d_0}\delta_{d_0},\Lambda_{1}\delta_{1},\dots,\Lambda_{d_0-1}\delta_{d_0-1}).$$
Remark that $\Lambda_{i}\delta_{i}=\delta_{i+1}$ for $i\ne k$, and 
 $
 \Lambda_k\delta_k=\Lambda_1\dots\Lambda_k\cdot\delta = \Lambda_1\dots\Lambda_{d_0}\cdot \delta_{k+1}.
 $
 These numbers are all less than $\delta$.

\paragraph{The construction.} The perturbation of $g$ will be given as $h=\psi\circ g$ where $\psi$ coincides with the identity outside the
$\rho$-neighborhoods of $g^{-n}(\tau)$, $g^{m}(\tau)$, $g^{n+m}(\tau)$.
One will consider the coordinates defined by $\chi$
and centered at $z^s$, $g^\ell(z^s)$, $g^{\ell+n}(z^s)$,
$g^{\ell+n+m}(z^s)$ and $\tilde z^s$ respectively.
Let $(a_1,\dots,a_{d_0})=\chi(z^s)-\chi(\widetilde z^s)$.

We fix once and for all a smooth map $\Phi\colon \RR\to \RR$ such that
\begin{itemize}
\item[--] $\Phi(x)=0$ for $x\leq-1/2$ or $x\geq L-1/2$,
\item[--] $\Phi(x)=x$ for $|x|\leq 1/4$,
\item[--] $\Phi(x+j)=\Phi(x)$ for $x\in[-1/2,1/2]$ and $j=0,1,\dots,L-1$.
\end{itemize}
Note that its $C^0$ and $C^1$ sizes do not depend on the choice of $L$.

On the ball $B(g^{-n}(\tau),\rho)$,
the map $\psi$ creates a first shear: denoting by $(x_i)$ the coordinates of a point $x$,
and by $(x'_i)$ the coordinates of the image $x'=\psi(x)$, the map is defined
with the following formulas:
\begin{equation}\label{e.pertg-n}
x'_i=x_i \text{ if } i\neq d_0\quad \text{ and }\quad  x'_{d_0}=x_{d_0}+\eta^{-1}e^{n(\lambda_{k}-\lambda_{d_0})} \frac{\mu_k}{\mu_{d_0}}.x_k.
\end{equation}
On the ball $B(g^{m}(\tau),\rho)$, we define the oscillations by defining $\psi$ through:
 \begin{equation}\label{e.pertgm}
 x'_i=x_i \text{ if } i\neq k+1\quad \text{ and } \quad
x'_{k+1} = x_{k+1}-\eta e^{-n\lambda_1}\frac{\delta}{L}\Phi\left(e^{n\lambda_1}\frac{L x_{1}}{\delta }\right).
\end{equation}
Finally, on $B(g^{n+m}(\tau),\rho)$ we create a second shear by defining $\psi$ through:
 \begin{equation}\label{e.pertgmn}
 x'_i=x_i+a_i \text{ if } i\neq 1\quad \text{ and } \quad x'_{1}=x_{1}+\eta^{-1}e^{n(\lambda_1-\lambda_{k+1})} \cdot x_{k+1}+a_{1}.
\end{equation}
These three maps are isotopic to the identity and
can be chosen conservative.
\medbreak

We ensure that the $C^1$-size of the perturbation is less than $\eps'$ by taking $\eta$ small and then $n$ large.
Note that these choices are independent of $\ell$ and $L$.
We pick an integer $L$ within a constant factor of the following upper bound:
\begin{equation}\label{e.choice-L}
L<\frac{\eta}8e^{n(\lambda_{k+1}-\lambda_{1})}\min({\Lambda^{-1}_{1}\cdots\Lambda^{-1}_{k}},{\Lambda_{k+1}\cdots\Lambda_{d_0}}).
\end{equation}

\begin{figure}[ht!]
\begin{center}
\includegraphics[width=140mm]{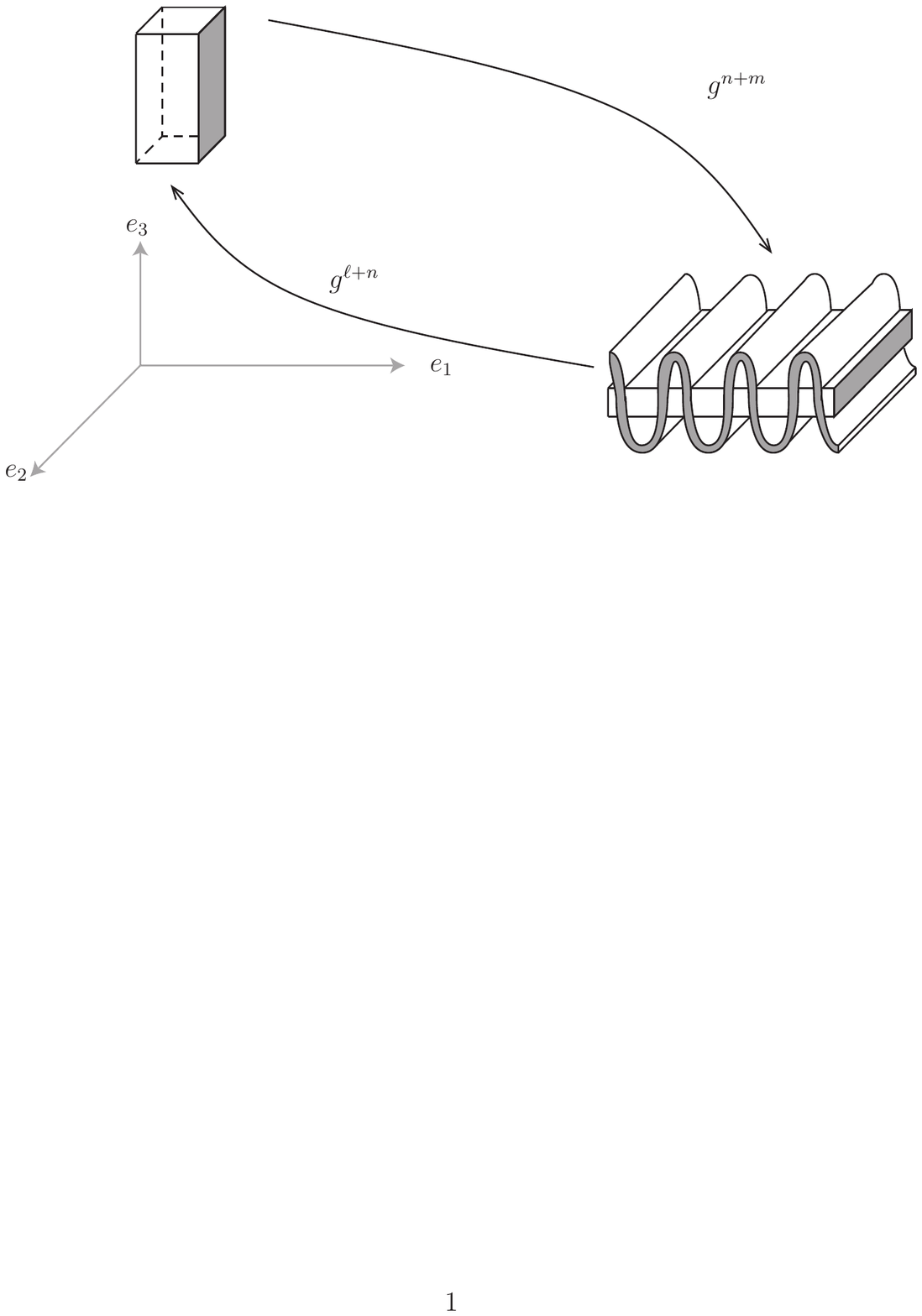}
\caption{{The rectangle $R$ and its images by $g^{\ell+n}$ and $g^{\ell+2n+m}$ when $d_0=3$ and $k=2$. 
One face (together with its images) is shaded gray.}}\label{f.hs1}
\end{center}
\end{figure}

Figure \ref{f.hs1} represents the rectangle $R$ and its images by the perturbation $g$.

\paragraph{Localization of the iterates of $R$.}
Observe that the iterates of the rectangle $R$ remain small.
More precisely,
all $g^j(R)$, for $0\leq j<\ell+n$,
lie in the linearization domain and
$g^{\ell+n}(R)$ lies in
the domain of eq. \eqref{e.pertg-n} since, for all $0\leq j\leq\ell$
 \begin{equation}\label{e.inclusion}
   \chi(g^j(R))=\chi(g^iz^s)+Q(e^{\lambda_1j}\delta_1,\dots,e^{\lambda_{d_0}j}\delta_{d_0}) \subset B(\chi(g^jz^s),\rho).
 \end{equation}
Indeed, $e^{\lambda_ij}\delta_i\leq\delta$  except possibly for $i=d_0$ where 
$$e^{\lambda_{d_0}j}\delta_{d_0}=e^{\lambda_{d_0}j}\Lambda_{d_0}^{-1}\delta \leq \mu_{d_0}^{-1}e^{-(\lambda_1+\lambda_{d_0})n}\delta\leq\rho.$$
In order to control the perturbation given by eq. \eqref{e.pertg-n}, note that 
$$e^{\lambda_k\ell}\delta_k/e^{\lambda_{d_0}\ell}\delta_{d_0}=e^{\lambda_k\ell}\Lambda_1\dots\Lambda_{k-1}\cdot \Lambda_{d_0}e^{-\lambda_{d_0}\ell}=e^{\lambda_k\ell}\Lambda_1\dots\Lambda_{k-1}\mu_{d_0}e^{(\lambda_{d_0}+\lambda_1)n} $$
is exponentially small in $\ell$.
Since $\ell$ is large once $n$ has been fixed, the scale of $g^\ell(R)$ along the $k^\text{th}$-direction is much smaller than scale along the $d_0^\text{th}$ direction.
Hence, the above inclusion~\eqref{e.inclusion} extends to $0\leq j\leq\ell+n+m$.

 Let $(x_i)$ be the coordinates of some point $x$ in $R$.
 Then the coordinates $(y_i)$ of $h^{m+n+\ell}(x)=g^{m+n}\circ\psi\circ g^\ell(x)$ near $g^{\ell+n+m}(z^s)$ are
 \begin{equation}\label{e-formulay}\begin{aligned}
   &y_1 = \mu_{d_0} e^{\lambda_{d_0}(n+\ell)}x_{d_0} + \eta^{-1}\mu_k e^{\lambda_k(\ell+n)} x_k\textrm{ and }\\
   &y_i \,= \mu_{i-1} e^{\lambda_{i-1}(n+\ell)} x_{i-1} \qquad (i\ne 1).
 \end{aligned}\end{equation}

Note that, in eq.~\eqref{e.pertgm} we have 
$$\Phi(e^{n\lambda_1}Ly_1/\delta)=(e^{n\lambda_1}L/\delta)y_1-j$$ 
if $|e^{n\lambda_1}(L/\delta)y_1-j|\leq 1/4$ for some $j\in\{0,\dots,L-1\}$.
This holds in particular if
 \begin{equation}\label{e.xwell}
   \left|\frac{L}\delta \Lambda_{d_0} x_{d_0}-j\right|<1/8  \text{ for some }j\in\{0,\dots,L-1\}.
 \end{equation}
Indeed from~\eqref{e.choice-L},
the quantity $|e^{n\lambda_1}\tfrac{L}\delta.\eta^{-1}\mu_ke^{\lambda_k(\ell+n)}x_k|$ is bounded by 
 $$
   e^{\lambda_1n}\frac L\delta \eta^{-1}\Lambda_k e^{-\lambda_{k+1} n}\cdot \delta_k 
     < \frac{1}{8\delta} (\Lambda_1\cdots\Lambda_k)^{-1} \cdot \Lambda_k(\Lambda_1\cdots\Lambda_{k-1}\delta) = \frac18.
 $$

Arguing as before, $g^j(h^{\ell+n+m}(R))$ has diameter
smaller than $\rho$ for $0\leq j\leq n$.

Under condition \eqref{e.xwell}, the coordinates $(z_i)$
of $h^{\ell+m+2n}(x)=\psi\circ g^n\circ\psi(y)$ satisfy, 
 \begin{equation}\label{e-formulaz}\begin{aligned}
   &z_1 = \eta^{-1} e^{-(\lambda_{k+1}-\lambda_1)n}\Lambda_k x_k + j\delta/L\\
   &z_{k+1} = - \eta e^{(\lambda_{k+1}-\lambda_1)n}(\Lambda_{d_0}  x_{d_0} - j \delta/L)\\
   &z_i = \Lambda_{i-1}x_{i-1}  \qquad (i\ne 1,k+1).
 \end{aligned}\end{equation}
Under condition $\Lambda_{d_0}x_{d_0}<-\delta/2L$, then the last term in eq.~\eqref{e.pertgm} vanishes
and
the coordinates $(z_i)$ are given by:
 \begin{equation}\label{e-formulazz}\begin{aligned}
   &z_1 = \Lambda_{d_0}x_{d_0} +{2}\eta^{-1}e^{-(\lambda_{k+1}-\lambda_1)n}\Lambda_k x_k\\
   &z_i = \Lambda_{i-1}x_{i-1}  \qquad (i\ne 1).
 \end{aligned}\end{equation}

\paragraph{Dynamics of the construction.}
For each $j\in\{0,\dots,L-1\}$ we define the subrectangle
$$
   R^s_j:=\chi(z^s)+\prod_{i=1}^{d_0-1} [-\delta_i,\delta_i]\times \left[\frac{(j-1/8)}{L}\delta_{d_0},\frac{(j+1/8)}{L}\delta_{d_0}\right]\subset R.
   $$
In particular, the condition \eqref{e.xwell} is satisfied and
$h^{\ell+m+2n}|R^s_j$ is given by eq.~\eqref{e-formulaz}. Its image is therefore the rectangle
$$R^u_j:=\chi(z^s)+
\bigg(\frac{j\delta_1}{L}+[-\delta'_1,\delta'_1]\bigg)\times
\prod_{i=2}^{d_0} [-\delta'_i,\delta'_i],$$
where $\delta'_i=\Lambda_{i-1}\delta_{i-1}=\delta_i$ for $i\not\in\{ 1,k+1\}$ and
$$\delta'_1=\eta^{-1}e^{n(\lambda_1-\lambda_{k+1})}\Lambda_k\delta_{k},\quad
\delta'_{k+1}=\frac\eta{8L} e^{n(\lambda_{k+1}-\lambda_1)}\delta_1.$$
The condition~\eqref{e.choice-L} ensures that $\delta'_1< \frac{\delta}{8L}$
and that $\delta'_{k+1}>\delta_{k+1}$. 

Observe that the space $\RR^k\times \{0\}$ (resp.\ $\{0\}\times \RR^{d_0-k}$) is invariant
and contracted (resp.\ expanded).
Hence the subrectangles $R^s_j$ of $R$ (with full stable length) have images which intersect
$R$ as subrectangles of $R$ with full unstable length.
One deduces that the maximal invariant set of $h$ in
$$\bigcup_{i=0}^{\ell+m+2n-1}h^i(R^s_{0}\cup\dots R^s_{L-1})$$
is a horseshoe $K$ with topological entropy $h_\top(h,K)=\log(L)/(\ell+m+2n)$.
\medskip

Remember that  $\ell$ can be taken arbitrarily large, while keeping $\eta,m,n$ fixed,
and that $L$ has been taken within a constant factor of the bound in \eqref{e.choice-L}.
One thus sees that the topological entropy of the horseshoe $K$ is arbitrarily close to $\Delta(g,O)$.
\medskip

Let us check the positions of the stable and unstable manifolds near $g^{n+m}(\tau)$. We use the coordinates $\chi$ (centered at the fixed point $z^s$).
\begin{itemize}
 \item[--]  The local component of $W^s(O)$ at $g^{m+n}(\tau)$ is a preimage of some local stable manifold at $O$, hence it it does not depend on $\psi$ (because of its support).
 \item[--]  The segment defined by $-\delta<x_1<-\delta e^{-n\lambda_1}/2L$, $x_i=0$ for $i\ne1$
 in the coordinates centered at $g^{n+m}(\tau)$,
is contained in the image by $h^{m+2n+1}$ of a segment of $W^u(O)$ centered at $g^{-n-1}(\tau)$ (see the definition of $\Phi$).
 \item[--] The segments defined by $-\delta<x_1<\delta$, $x_i=\text{cte}$, $i\ne1$,
 in the coordinates centered at $z_s$,
and which meet $K$ are contained in $W^s(K)$ since they are mapped to stable segments in $R^s_j$ for some $j$. 
 \item[--]  The segments defined by $-\tfrac9{10}\delta<x_1<-\tfrac35\delta/L$ with constant $x_i$ ($i\ne1$), in the coordinates centered at $z_s$, which are included in the image by $h^{\ell+m+2n}$ of unstable line segments through $R$ parallel to ${d_0}$-axis meeting $K$ are contained in $W^u(K)$.
\end{itemize}
Observe that the intersection of the horseshoe $K$ with $R$ (near $g^{m+n}(\tau)$) is contained in the domain
where $x_1>-\delta/L$.
Moreover, to create the intersections between $W^s(O)$ and $W^u(K)$, resp. $W^u(O)$ and $W^s(K)$, we see that we need to push $W^u(K)$, resp. $W^u(O)$, by an amount exponentially small in $\ell$. Thus we can use a perturbation with support of size $\delta/3$
near the point with coordinate $x_1=\tfrac \delta 2$, independent of $\ell$. Hence its $C^1$-size can be made arbitrarily small. Inside $R$, this perturbation modifies the stable segments introduced above
but not the unstable ones, allowing the creation of the transverse intersections.

 Consequently, for the diffeomorphism $h$, the set $K$ and the orbit $O$ are homoclinically related.
This concludes the proof of Proposition~\ref{p.newhouse} and Theorem \ref{t.newhouse}.
\end{proof}

\subsection{Hausdorff dimension}
\newcommand\HD{\dim_H}
We explain how the construction proving Theorem \ref{t.newhouse} can yield a horseshoe with a large Hausdorff dimension in a $C^1$-robust way. This will lead to the proof of Theorem~\ref{t.dimension} in Section~\ref{ss.dimension}.

\begin{proposition}\label{p.dimension}
{In the setting of Theorem \ref{t.newhouse},}
if $\Delta(f,p)=\Delta^+(f,p)$,
the horseshoe $K$ can be assumed to have Hausdorff dimension
$\dim_H(K)$ larger than the unstable dimension of $p$.
 Also, the diffeomorphism $g$ is a continuity point of the map
$h\mapsto \dim_H(K_h)$, where $K_h$ is the hyperbolic continuation of
$K$ for diffeomorphisms $h$ that are $C^1$-close to $g$.
\end{proposition}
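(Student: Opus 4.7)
The plan is to exploit the explicit affine structure of $K$ from Proposition~\ref{p.newhouse}. Writing $T:=\ell+m+2n$, the return map $h^T$ is, in the chart $\chi$, a cyclic permutation of coordinates composed with diagonal multipliers $\Lambda_1,\dots,\Lambda_{d_0}$. Its stable block has $k$-th power equal to the homothety of ratio $\sigma_s^k:=\eta^{-1}e^{-(\lambda_{k+1}-\lambda_1)n}\prod_{i=1}^k\Lambda_i<1$, and its unstable block has $(d_0-k)$-th power equal to the homothety of ratio $\mu_u^{d_0-k}:=\eta\, e^{(\lambda_{k+1}-\lambda_1)n}\prod_{i=k+1}^{d_0}\Lambda_i>1$. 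In particular, $K$ has all $k$ stable (resp.\ all $d_0-k$ unstable) Lyapunov exponents of $h$ equal, with common value $\log\sigma_s/T$ (resp.\ $\log\mu_u/T$).

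For the Hausdorff dimension I would use the self-similar $L$-shift coding of $K$ for $h^T$. The constraint $h^{mT}(x)\in R^s_{j_m}$ refines at each forward step exactly one unstable coordinate of $x$, cycling through all $d_0-k$ unstable axes; hence each unstable coordinate is independently refined once per cycle of $d_0-k$ iterations by an $L$-ary similitude of contraction ratio $\mu_u^{-(d_0-k)}$. Consequently $K\cap W^u_\loc$ is the Cartesian product of $d_0-k$ one-dimensional self-similar Cantor sets of dimension $\log L/((d_0-k)\log\mu_u)$, which gives $\dim_H(K\cap W^u_\loc)=\log L/\log\mu_u$. The symmetric argument on the backward iterates yields $\dim_H(K\cap W^s_\loc)=\log L/\log(1/\sigma_s)$, and the affine local product structure of $K$ produces
\[
 \dim_H K=\frac{\log L}{\log\mu_u}+\frac{\log L}{\log(1/\sigma_s)}.
\]

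Letting $\ell\to\infty$ with $\eta,m,n$ fixed and choosing $L$ close to the maximum allowed by \eqref{e.choice-L}, the $n$-dependent factors in the explicit expressions of $\sigma_s$ and $\mu_u$ cancel and one obtains $\log\mu_u/T\to\Delta^+(f,p)/(d_0-k)$, $\log(1/\sigma_s)/T\to\Delta^-(f,p)/k$, and $\log L/T\to\Delta(f,p)=\Delta^+(f,p)$. Substituting,
\[
 \dim_H K\longrightarrow(d_0-k)+k\,\frac{\Delta^+(f,p)}{\Delta^-(f,p)},
\]
which strictly exceeds the unstable dimension $d_0-k$ of $p$, since the hypothesis $\Delta(f,p)=\Delta^+(f,p)$ forces $\Delta^+(f,p)/\Delta^-(f,p)\in(0,1]$; choosing the parameters large enough gives the first conclusion. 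For continuity at $g$, the hyperbolic continuation $K_h$ stays conjugate to the full $L$-shift for $h$ in a $C^1$-neighborhood of $g$, its stable and unstable Lyapunov exponents depend continuously on $h$, and the stable/unstable Hausdorff dimensions of $K_h$ are characterized by Bowen pressure equations whose solutions depend continuously on $h$ at $g$ because the Lyapunov spectrum of $K$ is conformal within each bundle; combined with the $C^1$-robustness of the local product decomposition, this yields continuity of $h\mapsto\dim_H K_h$ at $g$. The main obstacle I foresee is precisely this last step: for general $C^1$-hyperbolic sets with multi-dimensional bundles Hausdorff dimension need only be lower semi-continuous, and two-sided continuity at $g$ rests essentially on the conformal structure present there.
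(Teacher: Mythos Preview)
Your dimension computation arrives at the same formula as the paper, namely
\[
  \dim_H K=\frac{h_\top(g|K)}{|\lambda^u|}+\frac{h_\top(g|K)}{|\lambda^s|}
\]
with $|\lambda^u|\approx\Delta^+(f,p)/(d_0-k)$, $|\lambda^s|\approx\Delta^-(f,p)/k$ and $h_\top(g|K)\approx\Delta^+(f,p)$, yielding $\dim_H K>d_0-k$. The paper simply invokes the conformal-horseshoe formula from \cite{pesin}; your explicit coding is more detailed, though the claim that $K\cap W^u_\loc$ is literally a Cartesian product of one-dimensional Cantor sets needs care (the linear part of $h^T|E^u$ is a scaled cyclic permutation, not a diagonal map). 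What is rigorously true is that $(h^T|E^u)^{d_0-k}$ is a homothety of ratio $\mu_u^{d_0-k}$, so the $L^{d_0-k}$-branch IFS for $h^{(d_0-k)T}$ is genuinely self-similar and gives $\dim_H(K\cap W^u_\loc)=\log L/\log\mu_u$ directly.

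The continuity step is where your proposal has a real gap, which you correctly identify. For $h\neq g$ the continuation $K_h$ need not be conformal on either bundle, and in that regime the Bowen pressure equation does \emph{not} characterize the Hausdorff dimension of $K_h\cap W^{u/s}_\loc$ in multi-dimensional bundles; it gives only an upper bound. So continuity of the solution to the pressure equation does not by itself yield continuity of $\dim_H(K_h)$.

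The paper sidesteps this by never computing $\dim_H(K_h)$. Instead it shows that the conjugacy $\phi:K\to K_h$ is bi-H\"older with exponent $\alpha(h)\to1$ as $h\to g$, whence $\alpha\dim_H K\le\dim_H K_h\le\alpha^{-1}\dim_H K$. The point is that the H\"older exponent close to $1$ uses only the conformality of $K$ \emph{for $g$}: since all unstable (resp.\ stable) exponents of $K$ coincide with a common value $\lambda^u$ (resp.\ $\lambda^s$), for $h$ close to $g$ distances in local unstable leaves of $K_h$ expand under $n$ iterates by a factor in $[e^{(\lambda^u-\eps)n},e^{(\lambda^u+\eps)n}]$, and similarly for stable leaves. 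This is the higher-dimensional conformal version of the Palis--Viana argument \cite{PV}, and it delivers continuity at $g$ without any dimension formula for $K_h$.
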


\begin{proof}
In the constructions in the proof of Theorem \ref{t.newhouse}, the diffeomorphism $g$
is affine in a neighborhood of $K$ (using the chart $\chi$).
Hence it is conformal along the stable and the unstable directions.
The Hausdorff dimension is then equal to $d^s+d^u$ where
$d^s=h_{top}(g_{|K})/|\lambda^s|$ and $d^u=h_{top}(g_{|K})/|\lambda^u|$.
Here $\lambda^u$ is the Lyapunov exponent in the unstable directions
and is close to $\Delta^+(f,p)/(d_0-k)$
and $\lambda^s$ is the Lyapunov exponent in the
stable direction and is close to $\Delta^-(f,p)/k$, see~\cite[Theorem 22.2]{pesin}.
As ${h_{top}(g_{|K})\geq}\min(\Delta^-(g,p), \Delta^+(g,p))$, { 
 $$d^u\geq \Delta^+(g,p)/|\lambda^u|=d_0-k \text{ and } d^s>0.$$
Thus $\HD(H(p,g))\geq \HD(K)>d_0-k$, which is the claim for $g$.}

In order to prove the second assertion of the statement,
one considers for each diffeomorphism $h$ that is $C^1$-close to $g$,
the (unique) homeomorphism $\phi$ close to the identity which conjugates $(K,g)$
with $(K_h,h)$. The following claim will imply that $\HD(K)\alpha\leq \HD(K_h)\leq\HD(K)/\alpha$ and hence the continuity.

\medbreak
\noindent{\bf Claim.} \emph{$\phi$ and $\phi^{-1}$ are H\"older with an exponent $\alpha\in (0,1)$ arbitrarily close to $1$
as $h$ gets closer to $g$}.  

Indeed, the distance $d(x,y)$ between two points $x,y$ in $K$ (or $K_h$)
is equivalent to $\max(d(x,[x,y]), d(y,[x,y]))$, where $[x,y]=W^s_{loc}(x)\cap W^u_{loc}(y)$ is the local product. Hence it suffices to prove that the restriction of $h,h^{-1}$ the local stable and unstable manifolds
is H\"older with an exponent $\alpha$ close to $1$.
This property has been proved in~\cite{PV} for horseshoes on surfaces,
but the same proof extends in any dimension when the horseshoe is conformal.
In our case, it is enough to note that for any $\eps>0$ close to $0$,
if $h$ is close enough to $g$, then the distance inside the local unstable manifolds
increases after $n$ iterates by a factor in $[\exp(n(\lambda^u-\eps)), \exp(n(\lambda^u+\eps))]$ and the distance inside the local stable manifolds decreases by a factor in
$[\exp(n(\lambda^s-\eps)), \exp(n(\lambda^s+\eps))]$.
\end{proof}

\section{Entropy formulas}\label{s.entropy}

In this section we prove Theorem \ref{t.entropycons} and its Corollaries~\ref{c.continuity} and~\ref{c.unstability}.  These results concern various ways to compute the entropy for generic maps in $\NDS$ and the dynamical consequences.

\subsection{Periodic points and horseshoes}\label{ss.periodichorseshoes}
We first prove the items~\eqref{i.entHS} and~\eqref{i.entPer} of Theorem \ref{t.entropycons}, relating the topological entropy to that of horseshoes and to the exponents of the periodic orbits.
From Ruelle's inequality we know
for any $f$ in $\Diff^1(M)$ and any $\mu\in \Probe(f)$ that
$$h(f,\mu)\leq \sum_{i=1}^{\dim(M)} \lambda_i^+(\mu).$$
Applying this result to
$f$ and $f^{-1}$, one gets $h(f,\mu)\leq \Delta(f,\mu)$.
The variational principle then gives $h_\top(f)\leq \sup_{\mu\in \Probe(f)}\Delta(f,\mu)$.
Combined with the ergodic closing lemma
(Corollary~\ref{c.ergodic-closing}), one obtains:
\begin{lemma}
For any $f$ in a dense G$_\delta$ subset of $\mathrm{Diff}^1_\omega(M)$,
$$h_\top(f)\leq \sup_{p\in \mathrm{Per}(f)}\Delta(f,p).$$
\end{lemma}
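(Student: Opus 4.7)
The plan is to combine the three ingredients already laid out in the excerpt: Ruelle's inequality applied to both $f$ and $f^{-1}$, the variational principle, and the control of Lyapunov exponents along periodic orbit approximation (Corollary~\ref{c.ergodic-closing}). The first two sentences of the paragraph preceding the lemma already deliver the bound $h_\top(f)\le\sup_{\mu\in\Probe(f)}\Delta(f,\mu)$ for every $f\in\Diff^1_\omega(M)$; thus the only remaining task is to replace, in a generic way, the ergodic-measure supremum by a periodic-orbit supremum.

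First, I would let $\cR\subset\Diff^1_\omega(M)$ be the dense G$_\delta$ set provided by Corollary~\ref{c.ergodic-closing}. For $f\in\cR$, given any ergodic measure $\mu\in\Probe(f)$, that corollary furnishes a sequence of periodic orbits $(O_n)$ of $f$ whose Lyapunov spectra $(\lambda_1(f,O_n),\dots,\lambda_{d_0}(f,O_n))$ (counted with multiplicity) converge to $(\lambda_1(\mu),\dots,\lambda_{d_0}(\mu))$. Since the maps $(\lambda_1,\dots,\lambda_{d_0})\mapsto \sum_i \lambda_i^+$ and $(\lambda_1,\dots,\lambda_{d_0})\mapsto \sum_i \lambda_i^-$ are continuous on $\RR^{d_0}$, the function $\Delta$ is continuous in the Lyapunov spectrum, and therefore
$$\Delta(f,\mu)=\lim_{n\to\infty}\Delta(f,O_n)\leq\sup_{p\in\mathrm{Per}(f)}\Delta(f,p).$$
Taking the supremum over $\mu\in\Probe(f)$ and combining with the inequality $h_\top(f)\le\sup_{\mu}\Delta(f,\mu)$ already derived from Ruelle and the variational principle yields the desired estimate on $\cR$.

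The only subtle point worth highlighting is that the standard closing lemma alone would not suffice, because Lyapunov exponents are generally only upper semicontinuous under weak-$*$ limits; it is precisely the strengthened ergodic closing lemma of~\cite{ABC} (used here via Corollary~\ref{c.ergodic-closing}) that guarantees convergence of the full spectrum and hence of $\Delta$. There is no further obstacle: Ruelle's inequality holds pointwise in $C^1$ regularity (no extra smoothness is needed), and the variational principle is valid on any compact metric system, so nothing in the argument is specific to the conservative setting beyond the assumption $f\in\Diff^1_\omega(M)$ coming into the genericity of $\cR$.
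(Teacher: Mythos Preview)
Your proof is correct and follows essentially the same approach as the paper: Ruelle's inequality for $f$ and $f^{-1}$ plus the variational principle give $h_\top(f)\le\sup_{\mu\in\Probe(f)}\Delta(f,\mu)$, and Corollary~\ref{c.ergodic-closing} (the ergodic closing lemma with control of exponents) then replaces the supremum over ergodic measures by the supremum over periodic orbits on a dense G$_\delta$ set. Your additional remark about why the ordinary closing lemma would not suffice is apt and makes explicit what the paper leaves implicit.
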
}
Clearly for any diffeomorphism we have
$$\sup_{ \text{horseshoe } K}\; h_\top(f,K)\leq h_\top(f).$$
The two first items of Theorem \ref{t.entropycons} follow from these inequalities and from the next results.

\begin{lemma}\label{l.Delta-lsc}
The map  $f\mapsto \Delta(f):=\sup_{p\in \mathrm{Per}(f)}\Delta(f,p)$ defined over $\Diff^1_\omega(M)$ has a  dense G$_\delta$ subset of continuity points. The same holds over $\Diff^1(M)$.
\end{lemma}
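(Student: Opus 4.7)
Define $\widetilde\Delta(f):=\sup\{\Delta(f,p)\,:\,p \text{ is a hyperbolic periodic orbit of } f\}$ and its upper regularization $\widetilde\Delta^*(f):=\limsup_{g\to f}\widetilde\Delta(g)$, which is automatically upper semi-continuous. The strategy is to show that $\widetilde\Delta$ is lower semi-continuous and that $\widetilde\Delta\le\Delta\le\widetilde\Delta^*$, and then conclude by Baire category.

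Lower semi-continuity of $\widetilde\Delta$ will follow from the fact that any hyperbolic periodic orbit $p_0$ of $f_0$ persists as a hyperbolic periodic orbit $p_g$ of $g$ for all $g$ in a $C^1$-neighborhood of $f_0$, with Lyapunov exponents, and hence $\Delta(g,p_g)$, depending continuously on $g$; thus $\widetilde\Delta$ is the pointwise supremum of a family of locally continuous functions. For the sandwich, only the right inequality requires work. Given $f$ and $\eps>0$, pick a periodic orbit $p_0$ of $f$ with $\Delta(f,p_0)>\Delta(f)-\eps$, and apply Theorem~\ref{t.linearize} to obtain an arbitrarily $C^1$-close diffeomorphism $\tilde f$ that coincides with $f$ on $\cO(p_0)$ and whose linear cocycle along $\cO(p_0)$ is a hyperbolic perturbation of $Df$ with eigenvalue moduli arbitrarily close to the original ones (taken in the appropriate conservative class when applicable). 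Then $p_0$ is a hyperbolic periodic orbit of $\tilde f$ with $\Delta(\tilde f,p_0)>\Delta(f)-2\eps$, so $\widetilde\Delta(\tilde f)>\Delta(f)-2\eps$. Letting $\tilde f\to f$ in $C^1$ and $\eps\to0$ gives $\widetilde\Delta^*(f)\ge\Delta(f)$.

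Since $\widetilde\Delta$ is lsc and locally bounded (in terms of the Lipschitz norms of $f^{\pm1}$), the classical Baire-category argument implies that its continuity set $\cR$ is a dense $G_\delta$ subset of $\Diff^1_\omega(M)$. At any $f_0\in\cR$, continuity of $\widetilde\Delta$ forces $\widetilde\Delta(f_0)=\widetilde\Delta^*(f_0)$, and then the sandwich collapses to $\widetilde\Delta(f_0)=\Delta(f_0)=\widetilde\Delta^*(f_0)$. Combined with $\Delta\ge\widetilde\Delta$ and continuity of $\widetilde\Delta$ at $f_0$, we get $\liminf_{g\to f_0}\Delta(g)\ge\Delta(f_0)$; combined with $\Delta\le\widetilde\Delta^*$ and upper semi-continuity of $\widetilde\Delta^*$, we get $\limsup_{g\to f_0}\Delta(g)\le\Delta(f_0)$. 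Hence $\Delta$ is continuous on $\cR$. The dissipative case $\Diff^1(M)$ is identical, using the non-conservative version of Theorem~\ref{t.linearize}. The main obstacle is the inequality $\Delta\le\widetilde\Delta^*$: it amounts to hyperbolizing an arbitrary (possibly non-hyperbolic) periodic orbit by a $C^1$-small perturbation with essentially the same Lyapunov spectrum and staying within the required conservative class, which is exactly the content of Theorem~\ref{t.linearize}.
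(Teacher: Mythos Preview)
Your argument is correct and rests on the same key ingredient as the paper's --- using Franks' lemma (Theorem~\ref{t.linearize}) to robustify a periodic orbit $p_0$ with $\Delta(f,p_0)$ close to $\Delta(f)$ --- but the packaging differs. The paper proceeds directly: for each $a\in\QQ$ it defines $U^+_a$ (resp.\ $U^-_a$) as the set of diffeomorphisms with $\Delta>a$ (resp.\ $\Delta<a$) on a whole neighborhood, and Franks' lemma shows that $U^+_a\cup U^-_a$ is open and dense; the intersection over $a\in\QQ$ gives the residual set of continuity points. Your sandwich $\widetilde\Delta\le\Delta\le\widetilde\Delta^*$ with an lsc lower bound is a more conceptual reformulation of the same idea (lower semicontinuity of $\widetilde\Delta$ encodes the openness of the $U^+_a$, and the inequality $\Delta\le\widetilde\Delta^*$ is exactly the density step). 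One caveat in the symplectic setting: a simple elliptic eigenvalue pair of a symplectic matrix is robust, so one cannot always perturb the cocycle along $\cO(p_0)$ to a \emph{hyperbolic} one inside $Sp(d_0,\RR)$ as you assert; however, such pairs contribute $0$ to $\Delta$, so it suffices to replace ``hyperbolic'' by ``non-degenerate'' (no eigenvalue equal to $1$) in the definition of $\widetilde\Delta$ --- non-degenerate periodic orbits still persist with continuously varying Lyapunov exponents, and non-degeneracy can always be achieved by an arbitrarily small symplectic perturbation of the cocycle. The paper's one-line proof elides this point as well.
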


\begin{proof}
The following proof applies to both the dissipative and conservative settings.
 For any $a\in \QQ$, let $U^+_a$ (resp. $U^- _a$) be the set of diffeomorphisms $f$ such that for any $g$ that is $C^1$-close to $f$, one has $\Delta(g)>a$ (resp. $\Delta(g)<a$). Franks' Lemma (Theorem~\ref{t.linearize}) implies that any $f$ such that $\Delta(f)\geq a$ lies in the closure of $U^+_a$. Hence $U^-_a\cup U^+_a$ is open and dense in $\NDS$.
\end{proof}

\begin{lemma}\label{l.ineq-entropy}
For any $f$ in a dense G$_\delta$ subset of $\NDS$ we have
$$\Delta(f)=\sup_{p\in \mathrm{Per}(f)}\Delta(f,p)\leq 
\sup_{ \text{horseshoe } K}\; h_\top(f,K).$$
\end{lemma}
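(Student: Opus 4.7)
The plan is to combine the Main Theorem with standard lower-semicontinuity and genericity arguments. First, observe that both functions involved are lower-semicontinuous: $\Delta$ because hyperbolic periodic orbits persist under $C^1$-small perturbations with continuously varying Lyapunov exponents (so $V_a := \{f \in \NDS : \Delta(f) > a\}$ is open for every $a \in \QQ_+$), and $\Phi(f) := \sup_K h_\top(f,K)$ (sup over horseshoes) because horseshoes and their entropies persist. Therefore the set $\cG_\Phi$ of continuity points of $\Phi$ is a dense $G_\delta$ in $\NDS$.

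The heart of the argument is to show that for every $a \in \QQ_+$, the open set $W_a := \{f \in \NDS : \Phi(f) > a\}$ is dense in $V_a$. Given $f \in V_a$ and an arbitrary $C^1$-neighborhood $\cU$ of $f$, pick a hyperbolic periodic point $p$ with $\Delta(f,p) > a$. The goal is to produce a horseshoe of entropy greater than $a$ inside $\cU$ using Theorem~\ref{t.newhouse}. Since the Main Theorem requires the target orbit to be $T,N$-weak for integers depending on $f$ and $\cU$, I would first perform a preliminary $C^1$-small perturbation of $f$ inside $\cU$ to arrange this condition: exploiting that $f \in \NDS$ has no dominated splitting robustly, I combine the ergodic closing lemma (Corollary~\ref{c.ergodic-closing}) with the mixing-of-exponents theorem (Theorem~\ref{t.BochiBonatti}) to produce a nearby diffeomorphism $f'$ with a long periodic orbit $p'$ having Lyapunov exponents close to those of $p$ (so $\Delta(f',p') > a$) and no $N$-dominated splitting on $\cO(p')$. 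Applying the Main Theorem to $(f',p')$ then yields $g \in \cU$ with a horseshoe of entropy $\geq \Delta(f',p') > a$, establishing $W_a \cap \cU \neq \emptyset$.

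To conclude: for $f \in \cG_\Phi$ and any rational $a < \Delta(f)$, one has $f \in V_a \subseteq \overline{W_a}$; by continuity of $\Phi$ at $f$, $\Phi(f) \geq a$. Letting $a \uparrow \Delta(f)$ yields $\Phi(f) \geq \Delta(f)$, which together with the obvious reverse inequality gives the lemma on the dense $G_\delta$ set $\cG_\Phi$.

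The main obstacle is the preliminary perturbation step, where one must simultaneously preserve a large value of $\Delta$ on the created periodic orbit and destroy any $N$-dominated splitting on it, for the (possibly large) $N$ dictated by the Main Theorem applied to the chosen sub-neighborhood. There is an intrinsic tension between these requirements—a wider Lyapunov spectrum generates dominated splittings at smaller scales—and the resolution requires using Theorem~\ref{t.BochiBonatti} in conjunction with the robust absence of any dominated splitting on $M$ afforded by membership in $\NDS$, which permits mixing of exponents at the cocycle level without sacrificing the target value of $\Delta$.
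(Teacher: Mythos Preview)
Your Baire-category packaging (continuity points of $\Phi$, density of $W_a$ in $V_a$) is sound, and indeed the paper runs a very similar outer argument using continuity points of $\Delta$ rather than of $\Phi$. The real issue is the ``preliminary perturbation step,'' which you correctly flag as the obstacle but do not actually resolve.

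Concretely: you need, near an arbitrary $f\in V_a$, a periodic orbit that simultaneously has $\Delta>a$ \emph{and} is $T,N$-weak for the $N$ handed to you by the Main Theorem. Neither of the tools you invoke delivers this. Corollary~\ref{c.ergodic-closing} approximates measures by periodic orbits with nearby exponents but gives no control over dominated splittings on those orbits. Theorem~\ref{t.BochiBonatti} \emph{assumes} the absence of an $N$-dominated splitting on the cocycle; it does not destroy one. And the robust absence of a dominated splitting on $M$ does not transfer automatically to a given periodic orbit: a short orbit can carry an $N$-dominated splitting even when $M$ does not. So as written there is no mechanism producing the required $T,N$-weak orbit with $\Delta>a$.

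The paper closes this gap without any preliminary perturbation of $f$, by exploiting the conservative setting more heavily. For generic $f_0\in\NDS$ one has $H(O)=M$ for every periodic orbit (the sets $\cG_2,\cG_3$ in the proof). Hence, starting from any $O$ with $\Delta(f_0,O)>\Delta(f_0)-\eps/3$, one finds \emph{for the same $f_0$} a periodic orbit $O'$ that is $\eps$-dense in $M$ with exponents $\eps$-close to those of $O$. Since $M$ itself has no $N$-dominated splitting, Lemma~\ref{l.domination} forces a sufficiently $M$-dense orbit $O'$ to have none either, and its period is automatically large. Only then is the Main Theorem applied. This ``spread the orbit over $M$'' step is precisely the missing idea in your argument; once you insert it, the rest of your outline goes through. (Minor remark: the lemma asserts only the inequality $\Delta(f)\le\Phi(f)$, so no ``obvious reverse inequality'' is needed at the end.)
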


\begin{proof} There exists a dense set of diffeomorphisms $f_0$ which belong to the intersection
of the sets $\cG_1,\cG_2,\cG_3\subset \NDS$ defined as follow.

\begin{enumerate}
\item $f\in\cG_1$ if it is a continuity point of $\Delta$ over $\NDS$.

\item $f\in \cG_2$ if any periodic orbit $O$ is hyperbolic, and if
its homoclinic class $H(O)$ is the whole manifold $M$. From~\cite{BC,ABC2},
it contains a dense G$_\delta$ subset of $\NDS$.

\item $f\in \cG_3$ if for any hyperbolic periodic orbit $O$ and any $\eps>0$,
there exists a periodic orbit $O'$ which is $\eps$-dense
in $H(O)$ and whose collection of Lyapunov exponents is $\eps$-close to
the Lyapunov exponents of $O$. {It is dense in $\cG_2$;} indeed, there exists a horseshoe containing $O$
that is $\eps/2$-dense in $H(O)$; then one applies Corollary~\ref{c.ergodic-closing}.
\end{enumerate}

Let us fix a neighborhood $\cU$ of $f_0$ and consider the integers $T,N$ given
by Theorem~\ref{t.newhouse}.
For $\eps>0$ small,
one chooses a periodic orbit $O$ such that $\Delta(f_0,O)>\Delta(f_0)-\eps/3$.
Since $f_0\in \cG_2\cap \cG_3$, one can replace $O$ by a periodic orbit which
is $\eps$-dense in $H(O)=M$.
Hence it has a large period and (since $f_0\in \NDS$), it does not have an $N$-dominated splitting.
{Theorem \ref{t.newhouse} yields} 
a diffeomorphism $g$ in $\cU$ having a horseshoe $K$
whose topological entropy is larger than $\Delta(f_0)-\eps/2$.
This property is open.
Since $f_0\in \cG_1$, one gets a non-empty open set of diffeomorphisms $g$ close to $f_0$, having
a horseshoe $K$ such that
\begin{equation}\label{e.ineq-entropy}
h_\top(g,K)>\Delta(g)-\eps.
\end{equation}
Taking the intersection of the sets of diffeomorphisms satisfying~\eqref{e.ineq-entropy} for each $\eps=1/n$, yields the required dense G$_\delta$ set.
\end{proof}

\begin{remark}\label{rem-small-hs}
\emph{The first two items of Theorem \ref{t.entropycons} can be strengthened:}
\smallskip

\noindent
For any $\delta>0$, one can restrict the supremum in
item~\eqref{i.entHS} to horseshoes $K$ with dynamical diameter smaller than $\delta$,
i.e., admitting an invariant partition $K:=K_1\cup\dots \cup K_\ell$ into compact subsets with diameters smaller than $\delta$.
\smallskip

\noindent
One can restrict the supremum in~\eqref{i.entPer} to periodic orbits with period larger than $\delta^{-1}$.
\smallskip

\emph{Indeed, in the proof of Lemma~\ref{l.ineq-entropy}, starting with a initial periodic orbit $O$,
one can find $O'$ arbitrarily dense in $M$ and $K$ contained in an arbitrarily small
of $O'$ such that $\Delta(O')$ and $h_\top(K)$ are larger than $\Delta(O)-\eps$.
In particular, $O'$ has large period and $K$ has small diameter.}
\end{remark}

\def\iLsc{(a)}
\def\iPtc{(b)}

\subsection{Continuity, stability}
We now prove Corollaries~\ref{c.continuity} and~\ref{c.unstability}.

\begin{proof}[Proof of Corollary \ref{c.continuity}]
From item~\eqref{i.entHS} of Theorem~\ref{t.entropycons} and {Lemma~\ref{l.Delta-lsc}},
there exists a dense G$_\delta$ set of diffeomorphisms $f\in \NDS$ satisfying:
\begin{enumerate}
\item[(a)] $h_\top(f)= \sup\{h_\top(f,K):\; K \text{ horseshoe of $f$}\}=\Delta(f)$.

\item[(b)] $f$ is a continuity point of $g\mapsto \Delta(g)$.
\end{enumerate}
From (a), a generic $f\in\NDS$ is a point of lower semicontinuity for the topological entropy by structural stability of  horseshoes.

Let $\eps>0$.
Let us consider a sequence $f_n\to f$ and ergodic measures $\mu_n$
for $f_n$ with $h(f_n,\mu_n)>h_\top(f_n)-\eps/4$. Ma\~n\'e's ergodic closing lemma
(Theorem~\ref{t.ergodic-closing})
gives a $C^1$-perturbation $\tilde f_n$ of $f_n$ and a hyperbolic periodic orbit $O_n$
with exponents close to those of $\mu_n$. Using Ruelle's inequality, one gets
$$\Delta(\tilde f_n, O_n)\geq \Delta(f_n,\mu_n)-\eps/4 \geq h(f_n,\mu_n)-\eps/4\geq h_\top(f_n)-\eps/2.$$
From (a) and (b), one also has
$$h_\top(f)+\eps/2=\Delta(f)+\eps/2\geq \Delta(\tilde f_n)\geq \Delta(\tilde f_n, O_n).$$
All these inequalities together give the upper semi-continuity
{of the topological entropy at $f$}
and conclude the proof of
Corollary \ref{c.continuity}.
\end{proof}

\begin{proof}[Proof of Corollary \ref{c.unstability}]
Theorem \ref{t.entropycons} provides a dense G$_\delta$ subset $\cG\subset\NDS$ on which the entropy is equal to $\Delta$.  Now, any $f\in\NDS$ can be perturbed to $f_1\in\cG$ and Franks' Lemma (Theorem \ref{t.linearize}) yields a further perturbation $f_2$ for which, robustly, $\Delta(f_2)>\Delta(f_1)$. Choosing $f_2\in\cG$ yields $h_\top(f_2)\ne h_\top(f_1)$.
\end{proof}

\subsection{Submultiplicative entropy formula} \label{s.sub}

We now prove item~\eqref{i.entSubmultiplicative} of Theorem~\ref{t.entropycons}, expressing the entropy as the maximum growth rate of the Jacobian. We consider
any $C^1$-diffeomorphism $f$ of a compact, $d_0$-dimensional, Riemannian manifold $M$
and for each integer $0\leq k\leq d_0$, define 
 \begin{equation}\label{def.sigmak}
 \sigma_k(f):=\lim_{n\to\infty}   \; \sup _{E\in\Grass_k(TM)}  \;\frac 1 n \log\Jac(f^n,E).
 \end{equation}

We first state some elementary properties.
\begin{lemma}\label{l.sigmak} The following hold for any $f\in\Diff^1(M)$:
\begin{enumerate}
\item $\sigma_0(f)=0$ (and $\sigma_{d_0}(f)=0$ if $f$ is conservative).
\item The limit defining $\sigma_k(f)$ exists and
 \begin{equation}\label{e.fekkel}
 \begin{split}
    \sigma_k(f)&=\inf_{n\geq1} \;\sup_{E\in\Grass_k(TM)}   \; \frac 1 n\log\operatorname{Jac} (f^n,E)\\
    &=\sup_{E\in\Grass_k(TM)} \inf_{n\geq 1} \;\frac 1 n \log  \operatorname{Jac} (f^n,E).
    \end{split}
 \end{equation}
  \item Each map $f\mapsto \sigma_k(f)$ is upper semicontinuous in the $C^1$ topology.
{More precisely,
for any $f$ and any $\alpha>0$, there exists $N_1$, such that for every
$1\leq k\leq d_0$, every $g$ $C^1$-close to $f$ we have
 \begin{equation}\label{e.sigmausc}
    \forall n\geq N_1,\; \forall E\in\Grass_k(TM),\quad
    \log \Jac(g^n,E) \leq (\sigma_k(f)+\alpha)n.
  \end{equation}}
 \end{enumerate}
\end{lemma}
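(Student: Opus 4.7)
Part~(1) is an immediate computation: the $0$-dimensional Jacobian equals $1$, whence $\sigma_0(f)=0$; and when $f$ preserves a volume or symplectic form, $|\det Df^n_x|=1$, so $\Jac(f^n,T_xM)=1$ and $\sigma_{d_0}(f)=0$.

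The key observation for part~(2) is submultiplicativity. Writing $\beta_n(f):=\sup_{E\in\Grass_k(TM)}\Jac(f^n,E)$, the chain rule $\Jac(f^{n+m},E)=\Jac(f^n,E)\cdot \Jac(f^m,Df^n E)$ yields $\beta_{n+m}\leq \beta_n\beta_m$, and Fekete's subadditive lemma produces the limit and the identification $\sigma_k(f)=\lim_n(\log\beta_n)/n=\inf_n(\log\beta_n)/n$. The bound $\sup_E \inf_n \tfrac{1}{n}\log\Jac(f^n,E)\leq \sigma_k(f)$ is immediate from $\Jac(f^n,E)\leq \beta_n$; the reverse inequality is the only non-routine step. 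To handle it, I would work on the Grassmann bundle $\Grass_k(TM)$ with the lifted dynamics $F(x,E)=(f(x),Df_xE)$ and continuous potential $\phi(x,E)=\log\Jac(Df_x|_E)$, so that $S_n\phi(x,E)=\log\Jac(f^n,E)$. Given $(x_n,E_n)$ realizing $\beta_n$, a Dvoretzky--Motzkin line-minimum argument (selecting the index $j_n$ which minimizes $j\mapsto S_j\phi(x_n,E_n)-j\log\beta_n/n$) produces shifted points $y_n=F^{j_n}(x_n,E_n)$ satisfying $S_k\phi(y_n)\geq k\sigma_k(f)$ for all $k\in\{1,\dots,n-j_n\}$; a diagonal compactness argument on $\Grass_k(TM)$, applied to orbits of unbounded effective length, extracts a cluster point $E^*$ at which $\log\Jac(f^k,E^*)\geq k\sigma_k(f)$ for every $k\geq 1$. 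This is the main obstacle of the proof: producing a \emph{single} subspace whose forward orbit keeps up with the optimal exponential growth rate at \emph{every} finite time, rather than just in the limit.

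For part~(3), the map $f\mapsto \log\beta_n(f)$ is continuous in the $C^1$ topology, being the supremum over the compact Grassmann bundle of a jointly continuous function of $(f,x,E)$. Hence $\sigma_k=\inf_n(\log\beta_n)/n$ is automatically upper semicontinuous as an infimum of continuous functions of~$f$. For the uniform quantitative estimate~\eqref{e.sigmausc}, I fix $n_0$ with $(\log\beta_{n_0}(f))/n_0\leq \sigma_k(f)+\alpha/3$; by continuity, there is a $C^1$-neighborhood $\cU$ of $f$ on which $(\log\beta_{n_0}(g))/n_0\leq \sigma_k(f)+\alpha/2$ uniformly. Writing $n=qn_0+r$ with $0\leq r<n_0$, submultiplicativity gives $\log\beta_n(g)\leq q\log\beta_{n_0}(g)+\log\beta_r(g)$, and $\log\beta_r(g)$ is bounded uniformly on $\cU$ by some constant $M$ depending only on $f$ and $\cU$ (via the uniform bound on $\|Dg\|$). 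Dividing by $n$ and choosing $N_1$ so that $M/n\leq \alpha/2$ whenever $n\geq N_1$ produces~\eqref{e.sigmausc} uniformly in $g\in\cU$, $k\in\{1,\dots,d_0\}$, $E\in\Grass_k(TM)$ and $n\geq N_1$, which is independent of $g$ and $k$ as required.
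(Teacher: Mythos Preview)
Your proof is correct. Parts~(1), (3), and the first equality in~(2) match the paper's brief argument (immediate computation, Fekete's lemma, and infimum of continuous functions, respectively).

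For the second equality in~(2), the paper offers only the one-line remark that it ``is a consequence of the continuity of $E\mapsto\Jac(f^n,E)$ over the compact space $\Grass_k(TM)$''; you instead give an explicit construction via the line-minimum trick on Birkhoff sums for the lifted dynamics on $\Grass_k(TM)$. This is a genuinely different route: the paper's tacit argument presumably proceeds by contradiction (if $\sup_E\inf_n<\sigma_k(f)-\delta$, a finite subcover of $\Grass_k(TM)$ by the open sets $\{E:S_{n_i}\phi(E)<n_i(\sigma_k(f)-\delta)\}$ combined with a greedy decomposition of long orbits forces $\sigma_k(f)\leq\sigma_k(f)-\delta$), whereas yours directly exhibits a witness $E^*$. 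One small point: your claim that the effective lengths $n-j_n$ are unbounded does not follow automatically when the slope is $c_n=\log\beta_n/n$, since then $b_0=b_n=0$ and the minimum can sit arbitrarily close to $n$. The cleanest fix is to take the slope $\sigma_k(f)-\epsilon$ instead, so that $b_n\geq n\epsilon$ while $b_{j_n}\leq b_0=0$, which forces $n-j_n\geq n\epsilon/(2\|\phi\|_\infty)\to\infty$; then let $\epsilon\downarrow0$ after extracting the limit point.
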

\begin{proof}
The first item is obvious.
Note that 
$$\left(\sup_{E\in\Grass_k(TM)} \Jac(f^n,E)\right)_{n\geq1}$$ 
is submultiplicative.
This gives the convergence in the definition of $\sigma_k$ and first equality of the second item.
The second equality in eq.\ \eqref{e.fekkel} is a consequence of the continuity of $E\mapsto \operatorname{Jac} (f^n,E)$
over the compact space $\Grass_k(TM)$.
The last item is again a consequence of the submultiplicativity and of the continuity
of $(g,E)\mapsto \Jac(g,E)$.
\end{proof}

We now relate $\sigma_k(f)$ to Lyapunov exponents. For $\mu\in \Probe(f)$ we define 
 $$
  \sigma_k(f,\mu):= \sum_{j=d_0-k+1}^{d_0} \lambda_j(f,\mu).
 $$
 We also use the notation $\sigma_k(f,O)$ for a periodic orbit $O$. 

 Note that if $\mu$ has $i$ nonnegative Lyapunov exponents and thus $d_0-j$ negative ones, we have
 $\Delta(f,\mu)=\min(\sigma_i(f,\mu), \sigma_j(f^{-1},\mu))$. One deduces:
$$\Delta(f,\mu)=\max_k\min(\sigma_k(f,\mu), \sigma_{d_0-k}(f^{-1},\mu)).$$
 
\begin{lemma}\label{l-usc1}
The following functions are upper semi-continuous: $(f,\mu)\mapsto \sigma_k(f,\mu)$,
$(f,\mu)\mapsto \Delta(f,\mu)$ and $f\mapsto \sup_{\nu\in \Probe(f)} {\Delta}(f,\nu)$ where $f\in\Diff^1(M)$.
\end{lemma}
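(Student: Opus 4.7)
I plan to establish the three upper semicontinuity claims in the order listed, appealing to Kingman's subadditive ergodic theorem for the first, closure of USC under finite max and min for the second, and weak-$*$ compactness together with ergodic decomposition for the third.

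For the first claim, observe that the cocycle
$a_n(f,x):=\log\sup_{E\in\Grass_k(T_xM)}\Jac(f^n,E)$
is subadditive in $n$, so Fekete's lemma applied to $\mu\mapsto\int a_n(f,\cdot)\,d\mu$ yields
\[
\sigma_k(f,\mu)=\inf_{n\geq1}\frac{1}{n}\int_M a_n(f,x)\,d\mu(x),
\]
the infimum being attained as a monotone decreasing limit. For each fixed $n$, the integrand $x\mapsto a_n(f,x)$ is continuous on the compact manifold $M$ and depends continuously on $f\in\Diff^1(M)$ for the $C^1$ topology (through $Df,\dots,Df^n$ and compactness of the Grassmannian bundle), so $(f,\mu)\mapsto\tfrac1n\int a_n(f,\cdot)\,d\mu$ is continuous on $\Diff^1(M)\times\Prob(M)$ (with the latter carrying the weak-$*$ topology). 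A pointwise infimum of continuous functions is USC. Observe further that $\sigma_k(f,\cdot)$ is a decreasing pointwise limit of affine functions of $\mu$ and is therefore itself affine, a fact I will use below.

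The second claim follows from the identity
$\Delta(f,\mu)=\max_{0\leq k\leq d_0}\min\bigl(\sigma_k(f,\mu),\sigma_{d_0-k}(f^{-1},\mu)\bigr)$
displayed earlier. Since $f\mapsto f^{-1}$ is continuous in the $C^1$ topology, each $(f,\mu)\mapsto\sigma_{d_0-k}(f^{-1},\mu)$ is USC by the first claim, and USC is preserved under finite minima and finite maxima.

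For the third claim, I will take $g_n\to f$ in $\Diff^1(M)$ and ergodic measures $\nu_n\in\Probe(g_n)$ with $\Delta(g_n,\nu_n)\to\alpha:=\limsup_{n}\sup_{\Probe(g_n)}\Delta(g_n,\cdot)$. After extracting a subsequence, $\nu_n\to\nu\in\Prob(f)$ weak-$*$ and the index $k_n$ realizing the maximum in $\Delta(g_n,\nu_n)=\max_k\min(\sigma_k(g_n,\nu_n),\sigma_{d_0-k}(g_n^{-1},\nu_n))$ stabilizes at some $k$. The second claim yields $\sigma_k(f,\nu)\geq\alpha$ and $\sigma_{d_0-k}(f^{-1},\nu)\geq\alpha$. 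The main obstacle, and the hardest step, is that the limit $\nu$ need not be ergodic; to extract an ergodic $\nu^*\in\Probe(f)$ with $\Delta(f,\nu^*)\geq\alpha$, I will apply the ergodic decomposition $\nu=\int\nu_\omega\,dP(\omega)$ and exploit the affinity of $\sigma_k(f,\cdot)$ and $\sigma_{d_0-k}(f^{-1},\cdot)$, which gives $\int\sigma_k(f,\nu_\omega)\,dP\geq\alpha$ and $\int\sigma_{d_0-k}(f^{-1},\nu_\omega)\,dP\geq\alpha$. Selecting an ergodic component on which both Lyapunov sums remain $\geq\alpha$ simultaneously then delivers the required $\nu^*$ and completes the argument.
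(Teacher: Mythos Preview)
Your treatment of the first two claims is essentially the paper's: express $\sigma_k(f,\mu)$ as the infimum over $n$ of $\tfrac1n\int_M\sup_{E}\log\Jac(f^n,E)\,d\mu$, observe that each term is continuous in $(f,\mu)$, and deduce upper semicontinuity; then obtain $\Delta$ by finite min and max.

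For the third claim, the paper is terser than you: it picks near-optimal $\mu_n\in\Prob(f_n)$, passes to a weak-$*$ limit $\mu\in\Prob(f)$, applies the upper semicontinuity of $(f,\mu)\mapsto\Delta(f,\mu)$, and then simply writes $\Delta(f,\mu)\leq\sup_{\nu\in\Probe(f)}\Delta(f,\nu)$ without further comment. You have correctly identified that this last inequality is where the real issue lies, since the limit $\mu$ need not be ergodic and $\Delta(f,\cdot)$, being a maximum of minima of affine functions, is \emph{not} affine.

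However, your proposed resolution does not work. From $\int\sigma_k(f,\nu_\omega)\,dP(\omega)\geq\alpha$ and $\int\sigma_{d_0-k}(f^{-1},\nu_\omega)\,dP(\omega)\geq\alpha$ you cannot in general ``select an ergodic component on which both remain $\geq\alpha$ simultaneously''. Take $d_0=2$, $k=1$, and let $\nu$ be the equal mixture of two ergodic components with Lyapunov spectra $(3,-1)$ and $(1,-3)$. Then $\sigma_1(f,\nu)=\sigma_1(f^{-1},\nu)=2$, yet for each ergodic component one has $\min\bigl(\sigma_1,\sigma_1(f^{-1})\bigr)=1$, and indeed $\Delta=1$ for each; allowing a different index $k'$ per component does not help. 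So no ergodic component has $\Delta\geq 2$, even though $\Delta(f,\nu)=2$ under the max--min formula. Thus the final selection step is a genuine gap; the paper does not fill it either, it merely asserts the inequality. One clean way out, sufficient for all the applications in the paper, is to replace $\Probe(f)$ by $\Prob(f)$ in the third function: then the compactness argument goes through verbatim with no ergodic decomposition needed.
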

 
\begin{proof}{
Oseledets theorem implies that, for $f\in\Diff^1(M)$ and $\mu\in\Prob(f)$,
 $$
   \sigma_k(f,\mu)=\inf_{n\geq1} \int_M \frac1n\log\sup_{E\in\Grass_k(T_xM)} J(f^n,E)\, d\mu.
 $$
This integral is a bounded, continuous function of $(f,\mu)$. The upper semicontinuity of $(f,\mu)\mapsto\sigma_k(f,\mu)$ follows.

The function $(f,\mu)\mapsto\Delta(f,\mu)$ is upper semicontinuous because this property is stable by taking  maximum or minimum over finite families. 

Finally, let $f_n\to f$. Pick $\mu_n\in\Prob(f_n)$ with $\Delta(f_n,\mu_n)>{\sup_{\nu\in \Probe(f_n)} \Delta(f_n,\nu)}-1/n$. By compactness, we can assume { that $(\mu_n)$ converges to some $\mu\in\Prob(f)$} and get, from the semicontinuity of $(f,\mu)\mapsto\Delta(f,\mu)$:
 $$
    \limsup_n \sup_{\nu\in \Probe(f_n)} \Delta(f_n,\nu)=\limsup_n \Delta(f_n,\mu_n)\leq\Delta(f,\mu)\leq{\sup_{\nu\in \Probe(f)} \Delta(f,\nu)}.
 $$}
 \end{proof}

\begin{lemma}\label{l.deltak}
For all $f\in\Diff^1(M)$ and $0\leq k\leq d_0$, we have
$$\sigma_k(f) = \sup_{\mu\in\Probe(f)} \sigma_k(f,\mu).$$
\end{lemma}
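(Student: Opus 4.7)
The plan is to invoke the variational principle for subadditive potentials applied to the ``top-$k$ Jacobian'' cocycle. Set
\[
\phi_n(x) := \log\sup_{E\in\Grass_k(T_xM)}\Jac(f^n,E),
\]
which is continuous on $M$ (the Grassmannian fiber is compact and $\Jac$ is jointly continuous), uniformly bounded by $|\phi_m|\leq mk\log C_0$ with $C_0:=\max(\|Df\|_\infty,\|Df^{-1}\|_\infty)$, and subadditive along orbits: $\phi_{m+n}(x)\leq\phi_m(x)+\phi_n(f^mx)$, via the cocycle identity $\Jac(f^{m+n},E)=\Jac(f^m,E)\cdot\Jac(f^n,Df^m(E))$ followed by two independent maximizations. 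Since $\sup_E\Jac(f^n,E)$ is the product of the $k$ largest singular values of $Df^n$, Oseledets' theorem identifies $\lim_n\phi_n(x)/n=\sigma_k(f,\mu)$ $\mu$-almost everywhere for each ergodic $\mu$; Kingman's subadditive ergodic theorem then gives $\sigma_k(f,\mu)=\phi_*(\mu):=\inf_{n\geq1}\tfrac1n\int\phi_n\,d\mu$ (subadditivity of $n\mapsto\int\phi_n\,d\mu$ follows from $f$-invariance).

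The inequality $\sup_{\mu\in\Probe(f)}\sigma_k(f,\mu)\leq\sigma_k(f)$ is immediate: for every $\mu$ and every $n$, $\tfrac1n\int\phi_n\,d\mu\leq\tfrac1n\sup_x\phi_n(x)$, so take $\inf_n$.

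For the reverse inequality I select $x_n$ realizing $\sup_x\phi_n(x)=\phi_n(x_n)$ (by compactness of $M$) and form the Birkhoff averages $\mu_n:=\tfrac1n\sum_{j=0}^{n-1}\delta_{f^jx_n}$. By Krylov--Bogolyubov, any weak-$*$ subsequential limit $\mu$ lies in $\Prob(f)$. For a fixed $m$ and each $0\leq s<m$, iterated subadditivity gives
\[
\phi_n(x_n)\leq \phi_s(x_n)+\sum_{j:\,s+(j+1)m\leq n}\phi_m(f^{s+jm}x_n)+\phi_{r_s}\!\bigl(f^{n-r_s}x_n\bigr),
\]
with $r_s<m$. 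Summing over $s=0,\dots,m-1$, each index $i\in\{0,\dots,n-m\}$ appears in exactly one interior term (since $s$ is determined by $i\bmod m$), while the endpoint terms $\phi_s,\phi_{r_s}$ together contribute $O(m^2)$ thanks to the uniform bound on $\phi$. Hence
\[
m\,\phi_n(x_n)\leq O(m^2)+\sum_{i=0}^{n-m}\phi_m(f^ix_n)\leq O(m^2)+n\int\phi_m\,d\mu_n.
\]
Dividing by $mn$ and letting $n\to\infty$ along the subsequence for which $\mu_n\to\mu$ (and using continuity of $\phi_m$) yields $\sigma_k(f)\leq\tfrac1m\int\phi_m\,d\mu$. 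Taking $\inf_m$ produces $\sigma_k(f)\leq\phi_*(\mu)$. Finally, decomposing $\mu=\int\mu_t\,dP(t)$ ergodically and applying dominated convergence (the sequence $\phi_n/n$ is uniformly bounded and converges $\mu_t$-a.e. to $\sigma_k(f,\mu_t)$ for $P$-a.e.\ $t$) gives $\phi_*(\mu)=\int\sigma_k(f,\mu_t)\,dP(t)\leq\sup_{\nu\in\Probe(f)}\sigma_k(f,\nu)$, closing the loop.

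The main obstacle is the reverse inequality. The delicate point is the shift-averaging trick: applying subadditivity naively with step $m$ starting at $x_n$ would only produce Birkhoff sums along the $f^m$-orbit, which need not converge to $\int\phi_m\,d\mu$ for the $f$-invariant limit $\mu$. Averaging over the $m$ starting shifts is what yields a genuine $f$-Birkhoff average and, thereby, access to weak-$*$ convergence.
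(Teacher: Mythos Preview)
Your proof is correct, but it follows a genuinely different route from the paper's. You work directly on $M$ with the subadditive sequence $\phi_n(x)=\log\sup_{E\in\Grass_k(T_xM)}\Jac(f^n,E)$, and your reverse inequality is essentially a proof of the subadditive variational principle (\`a la Cao/Schreiber): empirical measures of near-maximizers, plus the shift-averaging trick to turn subadditive growth into a Birkhoff average of $\phi_m$, followed by ergodic decomposition. The paper instead \emph{lifts} to the Grassmannian bundle $X=\Grass_k(TM)$ with the induced map $f_k:E\mapsto Df(E)$, observes that there the cocycle becomes \emph{additive}---$\log\Jac(f^n,E)=\sum_{i=0}^{n-1}\phi_k(f_k^iE)$ with $\phi_k(E)=\log\Jac(f,E)$---applies the standard (additive) fact $\lim_n\max_{E}\tfrac1n S_n\phi_k(E)=\sup_{\nu\in\Probe(f_k)}\nu(\phi_k)$, and then projects the optimal $\nu$ down to an ergodic $\mu=(\pi_k)_*\nu$ on $M$, using Oseledets at a $\mu$-generic point to bound $\nu(\phi_k)\leq\sigma_k(f,\mu)$. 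The trade-off is clear: the paper avoids the subadditive machinery (no shift-averaging, no ergodic decomposition) at the cost of passing to a larger phase space; your argument stays on $M$ and is self-contained, but reproves a special case of a harder theorem. Both yield the same conclusion with comparable effort.
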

\begin{proof}
Let  $\mu\in\Probe(f)$.
Applying the Oseledets theorem at $\mu$-almost every point $x$,
one finds a $k$-dimensional subspace of $T_xM$ whose volume grows at the rate given by the $k$ strongest exponents of $\mu$.
Hence:
\begin{equation}\label{e.oseledets-mu}
\sigma_k(f,\mu)=\;\sup_{E\in\Grass_k(T_xM)}   \; \frac 1 n\log\operatorname{Jac} (f^n,E)\leq \sigma_k(f).
\end{equation}

For the converse inequality, on the compact metric space ${X:=}\Grass_k(TM)$ one considers the homeomorphism
$f_k\colon E\mapsto Df(E)$ and the continuous function $\phi_k\colon E\mapsto\log\operatorname{Jac}(f,E)$.
Note the obvious factor map $\pi_k\colon (\Grass_k(TM), f_k)\to(M,f)$. By a well-known argument, one gets:
 $$
    \sigma_k(f) =  \lim_{n\to\infty}\left[\max_{{E}\in X}\left(\frac1n\sum_{i=0}^{n-1}\phi_k(f_k^i{(E)})\right)\right]
    =\sup_{\nu\in\Probe(f_k)} \nu(\phi_k).$$
Let $\nu\in\Probe(f_k)$. The measure $\mu=(\pi_k)_*(\nu)$ belongs to $\Probe(f)$.
The ergodic theorem gives $x\in M$ satisfying~\eqref{e.oseledets-mu} and $E_0\in \Grass_k(T_xM)$ such that
$$\begin{aligned}
    \nu(\phi_k) &= \lim_{n\to\infty}\frac1n\log \Jac(f^n,E_0)\\
       &\leq \sup_{E\in\Grass_k(T_xM)} \lim_{n\to\infty}\frac1n\log \Jac(f^n,E)  = \sigma_k(f,\mu).
  \end{aligned}$$
This implies $\sigma_k(f)\leq \sup_{\mu\in\Probe(f)} \sigma_k(f,\mu)$.
\end{proof}

Finally, we show the following generic formula, which immediately implies item \eqref{i.entSubmultiplicative} of Theorem \ref{t.entropycons}. 

\begin{proposition}\label{p.EntSubMul}
For $f$ in a dense G$_\delta$ subset of $\Diff^1_\omega(M)$
 \begin{equation}\label{e.entropysigma}
  \Delta(f) = \max_{0<k<d_0} \sigma_k(f).
 \end{equation}
\end{proposition}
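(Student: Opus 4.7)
My plan is to split the equality into two inequalities and prove each separately. The upper bound is pointwise while the lower bound requires working over a dense G$_\delta$ of diffeomorphisms.

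First, I would prove $\Delta(f)\leq\max_{0<k<d_0}\sigma_k(f)$ for every $f\in\Diff^1_\omega(M)$. Let $p$ be a periodic point and let $i_0$ be its number of positive Lyapunov exponents. If $i_0\in\{0,d_0\}$ then $\Delta(f,p)=0$; otherwise $0<i_0<d_0$ and $\Delta(f,p)\leq\Delta^+(f,p)=\sigma_{i_0}(f,p)\leq\sigma_{i_0}(f)$ by Lemma~\ref{l.deltak}. Taking the supremum over $p$ gives the bound.

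The reverse inequality is the heart of the proof and relies crucially on conservativity. Since $f$ preserves $\omega$, every invariant measure $\mu$ satisfies $\sum_i\lambda_i(f,\mu)=0$, so $\Delta^+(f,\mu)=-\Delta^-(f,\mu)=\Delta(f,\mu)$. Combined with the identity $\Delta^+(f,\mu)=\max_k\sigma_k(f,\mu)$ (attained at $k$ equal to the number of positive exponents), this gives $\sigma_k(f,\mu)\leq\Delta(f,\mu)$ for \emph{every} $k$.

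For each $0<k<d_0$ and $n\geq 1$, I would then consider
$$U_{k,n}:=\{g\in\Diff^1_\omega(M):\exists\text{ periodic }p,\;\Delta(g,p)>\sigma_k(g)-1/n\}$$
and prove it is open and dense. Openness follows from the persistence of hyperbolic periodic orbits, the continuity of their Lyapunov exponents, and the upper semicontinuity of $\sigma_k$ (Lemma~\ref{l.sigmak}). For density, given $f$ and a small $\eta>0$, Lemma~\ref{l.deltak} provides an ergodic $\mu$ with $\sigma_k(f,\mu)>\sigma_k(f)-\eta$ and Theorem~\ref{t.ergodic-closing} yields an arbitrarily small perturbation $g$ of $f$ admitting a hyperbolic periodic orbit $O$ whose Lyapunov exponents are within $\eta$ of those of $\mu$; applying the previous paragraph to $g$ gives $\Delta(g,O)\geq\sigma_k(g,O)>\sigma_k(f)-2\eta$, and upper semicontinuity of $\sigma_k$ gives $\sigma_k(g)<\sigma_k(f)+\eta$ once $g$ is $C^1$-close enough to $f$. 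Hence $\Delta(g,O)>\sigma_k(g)-3\eta$, and for $\eta<1/(3n)$ this places $g$ in $U_{k,n}$. Intersecting $U_{k,n}$ over $n\in\NN$ and $1\leq k\leq d_0-1$ gives the desired dense G$_\delta$ on which $\Delta(f)\geq\max_{0<k<d_0}\sigma_k(f)$.

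The main obstacle is precisely the algebraic gap between $\sigma_k(f,\mu)$ and $\Delta(f,\mu)$: it would be false for dissipative $f$, and only the conservative identity $\sum_i\lambda_i=0$ closes it uniformly in $k$. The rest of the argument is then a routine combination of Lemma~\ref{l.deltak}, upper semicontinuity of $\sigma_k$, and the conservative version of the ergodic closing lemma.
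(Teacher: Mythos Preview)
Your proof is correct and follows essentially the same strategy as the paper: both rely on the conservative identity $\Delta(f,\mu)=\Delta^+(f,\mu)=\max_k\sigma_k(f,\mu)$ together with Lemma~\ref{l.deltak} and the ergodic closing lemma. The paper's argument is slightly more economical: rather than building the dense G$_\delta$ by hand via the perturbative Theorem~\ref{t.ergodic-closing} and upper semicontinuity of $\sigma_k$, it simply invokes Corollary~\ref{c.ergodic-closing}, which already gives, for generic $f$, periodic orbits of $f$ itself approximating any ergodic measure in exponents; this immediately yields $\sup_O\sigma_k(f,O)=\sup_{\mu}\sigma_k(f,\mu)=\sigma_k(f)$ and hence $\Delta(f)=\sup_O\max_k\sigma_k(f,O)=\max_k\sigma_k(f)$ in one line, without constructing the sets $U_{k,n}$.

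One small point to tighten in your version: your openness argument for $U_{k,n}$ appeals to persistence of \emph{hyperbolic} periodic orbits, but the definition of $U_{k,n}$ does not require $p$ to be hyperbolic, and Theorem~\ref{t.ergodic-closing} does not guarantee hyperbolicity of $O$. Either redefine $U_{k,n}$ using hyperbolic periodic points and, in the density step, follow the closing lemma by an arbitrarily small Franks perturbation making $O$ hyperbolic, or simply replace your explicit Baire argument by Corollary~\ref{c.ergodic-closing} as the paper does.
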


{Notice that we do not assume that there is no dominated splitting, and also that this formula fails on some obvious open sets in $\Diff^1(M)$.}

\begin{proof}
{
Let $f\in \Diff^1_\omega(M)$ be generic. As $f$ is conservative we know
 $$
   \Delta(f) = \sup_O\Delta(f,O) = \sup_O\max_{0<k<d_0} \sigma_k(f,O).
 $$
By Corollary~\ref{c.ergodic-closing}
any $\mu\in\Probe(f)$ is approximated by periodic orbits $O$ with arbitrarily close Lyapunov exponents and thus we have
 $$
    \sup_{O} \sigma_k(f,O) \geq  \sup_{\mu\in\Probe(f)}  \sigma_k(f,\mu).
  $$
As any periodic orbit defines an ergodic invariant measure, this is in fact an equality.

Together with Lemma \ref{l.deltak} we have
 $$
    \Delta(f)= \max_{0<k<d_0} \sup_{\mu\in\Probe(f)}\sigma_k(f,\mu) = \max_{0<k<d_0} \sigma_k(f).
 $$
}
\end{proof}

\section{No measures of maximal entropy}\label{s.non}

In this section we prove Theorem~\ref{t.nomax}.

\subsection{A concentration phenomenon for high entropy measures}
For $x\in M$, $f\in \Diff(M)$, $\eps>0$ and $N\geq 1$,
{corrected from $(n,\epsilon)$}
the \emph{$(N, \epsilon)$-Bowen ball around $x$} is
$$
B_f(x,\eps,N):=\{ y\, :\, \forall\,  0\leq k< N,\; d(f^k(x),f^k(y))<\eps\}.
$$

\begin{proposition}\label{p.HEconcentration}
Consider any function $\varphi\colon \NN\to\NN$ 
and
fix $\eps,\alpha\in (0,1)$. Then for any $f_0$ in a dense subset of $\NDS$,
there exist a constant $\delta>0$ and a finite set $X\subset M$ such that,
for any $f\in \NDS$ close to $f_0$ and any ergodic  measure $\mu$ for $f$, we have
\begin{equation}\label{e.concentration}
    h(f,\mu)>h_\top(f)-\delta \implies \mu\left(M\setminus \bigcup_{x\in X} B_f(x,\eps,\varphi(\#X))\right)<\alpha.
 \end{equation}
\end{proposition}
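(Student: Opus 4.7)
The idea is to construct $X$ as Markov representatives of a horseshoe $K$ of $f_0$ carrying essentially all the topological entropy. By Theorem~\ref{t.entropycons}\eqref{i.entHS} together with Remark~\ref{rem-small-hs}, a generic $f_0\in\NDS$ admits, for any $\delta_0>0$, a horseshoe $K$ with $h_\top(f_0,K)>h_\top(f_0)-\delta_0$ carrying an invariant (Markov) partition $\{K_1,\dots,K_{L_0}\}$ of diameter less than $\eps/2$, where $L_0$ can be made arbitrarily large. For $f$ sufficiently $C^1$-close to $f_0$, the hyperbolic continuation $K^f$ persists with an analogous structure and $h_\top(f,K^f)>h_\top(f)-2\delta_0$ via Corollary~\ref{c.continuity}.

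\textbf{Construction of $X$.} For each integer $N$ and each word $s=(s_1,\dots,s_N)\in\{1,\dots,L_0\}^N$, pick a representative $y_s$ of the cylinder $K_{s_1}\cap f_0^{-1}(K_{s_2})\cap\dots\cap f_0^{-(N-1)}(K_{s_N})$; let $X:=\{y_s\}$, so $L:=|X|=L_0^N$. Since the pieces of the partition have diameter less than $\eps/2$, the Bowen balls $B_{f_0}(y_s,\eps,N)$ cover $K$, and the same is true for the continuation $K^f$ with slightly increased $\eps$. The pair $(L_0,N)$ is chosen so that $\varphi(L)\leq N$, which by monotonicity of the Bowen ball in its length gives $B_f(x,\eps,\varphi(L))\supseteq B_f(x,\eps,N)$; the balls at scale $\varphi(L)$ thus still cover $K^f$. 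The required pairs $(L_0,N)$ are produced by first fixing $N$ and then taking $L_0$ large enough (Remark~\ref{rem-small-hs}) so that $\varphi(L_0^N)\leq N$; the regime in which $\varphi$ grows too fast for this is handled below.

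\textbf{Concentration.} Set $V:=\bigcup_{x\in X}B_f(x,\eps,\varphi(L))$ and $A:=M\setminus V$. We claim that, for $\delta$ small enough (depending on $\delta_0$), any ergodic $\mu$ for $f$ with $h(f,\mu)>h_\top(f)-\delta$ satisfies $\mu(A)<\alpha$. The essential ingredient is a complexity estimate for orbits spending a positive fraction of time in $A$: using Bowen's topological entropy of a non-invariant subset, together with a Misiurewicz-style partition argument decomposing an orbit according to its visit fractions to $V$ and $A$, one gets
$$h(f,\mu)\leq (1-\mu(A))\,h_\top(f)+\mu(A)\,h_\top(f,A)+H(\mu(A),1-\mu(A)),$$
where $h_\top(f,A)\leq h_\top(f)-c$ for some $c=c(\delta_0)>0$ because $K^f\subset V$ captures nearly all the exponential complexity of $f$, while no orbit fragment in $A$ of length exceeding $\varphi(L)$ can $\eps$-shadow $K^f$. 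This forces $\mu(A)\leq\delta/c<\alpha$.

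\textbf{Main obstacle.} The delicate step is the bound $h_\top(f,A)\leq h_\top(f)-c$ for the non-invariant, non-closed set $A$: one must use Bowen's definition of topological entropy for arbitrary subsets and control the $(n,\eps)$-separated numbers inside $A$ via the shadowing-type constraint just described, converting it into an honest entropy defect of size $c(\delta_0)>0$. A second subtlety is covering the regime of fast-growing $\varphi$, where the constraint $\varphi(L_0^N)\leq N$ cannot be met with $L_0\geq 2$: in that case one covers $K^f$ by Bowen balls shorter than the Markov depth $N$ only up to a controlled $\mu$-error, estimated by a Brin--Katok bound, exploiting that for generic $f\in\NDS$ the tail entropy equals $h_\top(f)$ by Theorem~\ref{t.ualphageneral} and hence Bowen balls at fixed scale $\eps$ carry large $\mu$-measure.
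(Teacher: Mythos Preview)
Your approach has a fatal gap at the step you yourself flag as ``delicate'': the bound $h_\top(f,A)\leq h_\top(f)-c$ is simply false here. Knowing that a horseshoe $K$ has entropy close to $h_\top(f)$ says nothing about the entropy carried by $M\setminus V$. In fact, generically in $\NDS$ the opposite holds: by the very results you invoke (Theorem~\ref{t.entropycons} and Remark~\ref{rem-small-hs}), there are horseshoes with entropy arbitrarily close to $h_\top(f)$ \emph{and} arbitrarily small dynamical diameter located in arbitrarily small neighborhoods of any $\eps$-dense periodic orbit. So for any neighborhood $V$ of your horseshoe $K$, there are other horseshoes disjoint from $V$ with entropy arbitrarily close to $h_\top(f)$, and $h_\top(f,A)=h_\top(f)$. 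Your ``Misiurewicz-style'' inequality then gives no constraint on $\mu(A)$. The handling of fast-growing $\varphi$ is also incoherent: high tail entropy means Bowen balls at scale $\eps$ are \emph{complex}, not that they have large $\mu$-measure; and for the application ($\varphi=\Id$) the constraint $\varphi(L_0^N)\leq N$ is never satisfiable with $L_0\geq 2$.

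The paper's proof takes a completely different route, and the point is that the dense set in the statement is \emph{not} the generic set --- it is obtained by a specific perturbation. Starting from a generic $g\in\cR$ (where $h_\top(g)=\Delta(g)=\max_k\sigma_k(g)$), one picks a periodic orbit $O$ with $\Delta(g,O)$ close to $h_\top(g)$, and perturbs near $O$ to a diffeomorphism $f_0$ with $h_\top(f_0)\geq h_\top(g)+b$ for a definite $b>0$ (Lemma~\ref{l.perturb}). Crucially, $f_0$ agrees with $g$ outside a tiny neighborhood of $O$, and its $k$-dimensional Jacobian growth rate is everywhere bounded by $h_\top(f_0)+\delta$. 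One then sets $X=O$. For $f$ close to $f_0$ and a high-entropy ergodic measure $\mu$, Ruelle's inequality and Oseledets force the Jacobian along a typical orbit to grow at rate at least $h_\top(f_0)-2\delta$; but at iterates outside $\bigcup_{x\in O}B_f(x,\eps,\varphi(\#O))$, the map is essentially $g$ and the Jacobian grows at rate at most $h_\top(g)+2\delta<h_\top(f_0)-2\delta$. A time-average comparison then bounds the fraction of time spent outside the Bowen balls by $\alpha$. The entropy gap is thus \emph{manufactured by the perturbation}, not found inside a single diffeomorphism.
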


\begin{proof}
From Theorem~\ref{t.entropycons} and Corollary~\ref{c.continuity} there exists a dense G$_\delta$ subset $\cR\subset \NDS$ of diffeomorphisms $g$ such that
\begin{itemize}
\item[--] $g$ is a continuity point of $h_\top$, and
\item[--]
$
h_\top(g)=\max_k\sigma_{k}(g)
=\Delta(g).
$
\end{itemize}

Before stating the main perturbation lemma, we need the following fact and number $C_0>0$. For any $z\in M$ and any linear spaces $L,L'\subset T_zM$ having equal dimensions,
there exists a linear map $D$ on $T_zM$ such that $D(L)=L'$. 
In the symplectic case, if $L,L'$ are Lagrangian, then one can choose $D$ symplectic with $\|D\|,\|D^{-1}\|\leq C_0$ for some uniform constant $C_0>0$,  see Proposition~\ref{p.change-basis2}.
In the dissipative or volume-preserving case, one can choose $D$ orthogonal and  set $C_0\geq1$.
\medskip

\begin{lemma}\label{l.perturb}
For any $g\in \cR$, any $\eta$ in some interval
$(0,\eta_0(g))$, any $\delta>0$, and any periodic saddle $O$ for $g$,
 there exists $N_0\geq 0$ with the following property.

For any $\rho>0$, there is an $f_0\in \cR$ that is $\eta$-close to $g$ and satisfies:
\begin{enumerate}
\item\label{i.l1} $f_0$ is arbitrarily $C^1$-close to $g$
outside the $\rho$-neighborhood of $O$;
\item\label{i.l2} $h_\top(f_0)\geq \Delta(g,O)+\frac{1}{10C^2_0(\|Dg\|+\|Dg^{-1}\|)}\eta$; and
\item\label{i.l3} for any $0\leq k \leq d_0$, any $E\in \Grass_k(TM)$, and any $n\geq N_0$,
$$\log |\Jac(f_0^{n},E)|\leq (h_\top(f_0)+\delta)n.$$
\end{enumerate}
\end{lemma}
\begin{proof} We pick $\eta_0(g)>0$ small enough so that 
$\|Df\|_\infty\leq 2\|Dg\|_\infty$ and $\|Df^{-1}\|_\infty\leq 2\|Dg^{-1}\|_\infty$ 
for all $f$ which are $\eta_0(g)$-close to $g$. 
The proof requires several steps.

\paragraph{Control of the $C^1$-size and of the entropy (items 1 and 2).}
Let
$$b=\frac{\eta}{8C^2_0(\|Dg\|+\|Dg^{-1}\|)}.$$
For each point $x\in O$, there exists a linear map $C\colon T_xM\to \RR^{d_0}$
which sends the stable space $E^s(x)$ to $\RR^{\dim E^s(x)}\times \{0\}^{d_0-\dim E^s(x)}$, and satisfies $\|C\|, \|C^{-1}\|\leq C_0$.
We introduce :
 $$
   U_x=C^{-1}\circ  \exp\begin{pmatrix} -b/\dim(E^s) & 0\\ 0 & b/\dim(E^u)\end{pmatrix}\circ C
 $$
(a homothety on $E^s$). We note that $\|U_x\circ Dg(x)-Dg(x)\|$ and $\|(U_x\circ Dg(x))^{-1}-(Dg(x))^{-1}\|$
are bounded by 
$2bC_0^2\max(\|Dg\|,\|Dg^{-1}\|).$
Let $\eta:=8bC_0^2(\|Dg\|+\|Dg^{-1}\|)$ and $V$ be the $\rho$-neighborhood of $O$.
The Franks lemma (Theorem~\ref{t.linearize}) yields a $(\eta/2,V,O)$-perturbation
$h$ with $Dh(x)=U_xDg(x)$ for all $x\in O$.
In particular, $\Delta(h,O)=\Delta(g,O)+b$.
One then chooses $f_0\in \cR$ arbitrarily close to $h$.
We have $h_\top(f_0)= \Delta(f_0)\geq \Delta(g,O)+8b/10$.
This gives the two first items of the lemma.

\paragraph{Intermediate constructions.}
{The third item requires a more precise construction.
For that we need to introduce some preliminary objects for each $0\leq k\leq d_0$.
\begin{itemize}
\item[--] An integer $N_1$ such that (see~\eqref{e.sigmausc}) for any diffeomorphism $f$
$C^1$-close to $g$, any $E\in \Grass_k(TM)$, and any $n\geq N_1$,
\begin{equation}\label{e.growth-g1}
 \log |\Jac( f^{n},E)|< (\max_k\sigma_k(f)+\delta/3)n.
\end{equation}
\item[--] A diffeomorphism $G$ on the tangent bundle $\TMO$,
defined as follow.
Let $V_r$ be the union of the $r$-balls at the origin in each $T_xM$, $x\in O$.
Then one can find a diffeomorphism $G$ of $\TMO$ and $0<r_1<1<r_2<\infty$ such that:
\begin{enumerate}[ (a)]
\item $G$ coincides with $Dg(x)$
outside the unit balls of each space $T_xM$, $x\in O$,
and with $U_xDg(x)$ on $V_{r_1}\cap T_xM$,
\item $G$ is $\eta/2$-close to $Dg_{|O}$, and
\item if $\|u\|\geq r_2$, then $\|G^n(u)\|\geq 1$ for all $n\geq0$ or all $n\leq 0$. 
\end{enumerate}
Let $\Lambda$ be the maximal invariant set of $G$ in $V_{r_2}$.
Property (c) implies that any orbit segment for $G$ can be split into at most three subsegments contained in $V_{r_2}$ or outside of $V_1$. Applying this to arbitrarily long orbit segments, we obtain
that $\sigma_k(G)$ is well defined and satisfies:
 $$
   \sigma_k(G)=\max(\sigma_k(G|\Lambda),\sigma_k(Dg|O))=\sigma_k(G|\Lambda).
 $$

\item[--] An integer $N_2$ such that (see Lemma \ref{l.sigmak}) for any $E\in \Grass_k(TM)$ and $n\geq N_2$,
\begin{equation}\label{e.growth-g2}
 \log |\Jac(G^{n},E)|< (\sigma_k(G)+\delta/3)n.
\end{equation}
\end{itemize}

\paragraph{Construction of $f_0$.}
The map $f_0$ is obtained as follow:
\begin{enumerate}
\item \emph{Linearization.} Theorem~\ref{t.linearize} gives a diffeomorphism $g'$
which is linear in a small neighborhood of $O$ and arbitrarily $C^1$-close to $g$.
\item \emph{Deformation.} The dynamics of $g'$ near $O$  may be identified with the linear cocycle $Dg$
over the tangent bundle $\TMO$.
One chooses $R>0$ small and one defines the perturbation $h$ of $g$ by replacing $g'$ in a small neighborhood
of $O$ by the diffeomorphism $G_R\colon z\mapsto RG(R^{-1}z)$.
In particular $h$ preserves a set $\Lambda_R=R.\Lambda$
and $\sigma_k(h|\Lambda_R)=\sigma_k(G|\Lambda)$.
Note also that the Jacobians are not modified after conjugacy by an homothety
and $G_R$ still satisfies~\eqref{e.growth-g2}.
\item \emph{Genericity.} As in the first step of the construction, one then approximates $h$ by a diffeomorphism
$f_0\in \cR$. One can require that $\Delta(f_0)\geq \max_k\sigma_k(h)$ for each $k$.
Indeed, one can consider a measure $\mu\in \Probe(h)$ and $k$ such that
$\sigma_k(h,\mu)$ approximates $\max_k\sigma_k(h)$. One then applies Theorems
\ref{t.ergodic-closing} and \ref{t.gourmelon} to get a diffeomorphism $h'$ satisfying
$\Delta(h')> \max_k\sigma_k(h)$ and one chooses $f_0$ close to $h'$.
\end{enumerate}
The $C^1$-perturbations in steps 1 and 3 are arbitrarily small
and the support of perturbations in step 2 is contained in an arbitrarily small neighborhood of $O$.

\paragraph{Bound on the Jacobian.}
We now set $N=\max(N_1,N_2)$.
Since $R$ can be chosen arbitrarily small once $N$ is given, any piece of orbit of $h$ of length $N$ of $h$
coincides with a piece of orbit of $g'$ or of $G_R$.
Hence, for any $E\in \Grass_k(TM)$,
$$
 \log |\Jac(h^{N},E)|< (\max(\sigma_k(G),\sigma_k(g'))+\delta/3)N
 \leq (\sigma_k(h|\Lambda)+\delta/3)N.
$$
Note that the integer $N$ only depends on $g$ and $G$,
but not on the choices made in steps 1--3 above.
Since  $\sigma_k$ is lower semicontinuous,
for any diffeomorphism $f_0$ $C^1$-close to $h$
and any $E\in \Grass_k(TM)$,
$$
 \log |\Jac(f_0^{N},E)|< \log|\Jac(h^N,E)|+\tfrac\delta{12}N
   \leq (\sigma_k(h|\Lambda)+\tfrac{5}{12}\delta)N\leq (\sigma_k(f_0|\Lambda)+\delta/2)N.
$$
One deduces that there exists $N_0$, which only depends
on $N$ and on $\|Dg\|$, such that the item 3 holds.

The proof of Lemma~\ref{l.perturb} is complete.}
\end{proof}
\bigskip

Let us continue with the proof of the Proposition~\ref{p.HEconcentration}.
We fix $\varphi,\eps,\alpha$ and let $g\in\cR$.
We have to build $f_0\in\NDS$ arbitrarily close to $g$, a number $\delta>0$, 
and a finite set $X\subset M$ as in the conclusion of the proposition.
We pick an arbitrarily small number $0<\eta\leq\eta_0(g)$ with $\eta_0(g)$ given by Lemma \ref{l.perturb} and set
$$\delta=\alpha\eta/(100C^2_0(\|Dg\|+\|Dg^{-1}\|)).$$

Using Lemma~\ref{l.sigmak} and $g\in \cR$, one chooses $N_1$ such that
for any $0\leq k \leq d_0$, any $E\in \Grass_k(TM)$, and $n\geq N_1$, we have
\begin{equation}\label{e.growth-g}
\log |\Jac(g^{n},E)|\leq (h_\top(g)+\delta)n.
\end{equation}
From the choice of $\cR$, there exists a periodic orbit $O$ such that
$$\Delta(g,O)\geq h_\top(g)- \delta.$$
Lemma \ref{l.perturb} also gives $N_0=N_0(g,O,\delta)$ and we set $N=\max(N_0,N_1,\varphi(\#O))$.
We also choose $\rho>0$ such that for any $f$ that is $2\rho$-close to $g$
for the $C^0$-topology  we have
 \begin{equation}\label{e.piste}
    \bigcup_{0\leq k< N} f^{-k}B_f(O,\rho)\subset \bigcup_{x\in O} B_g(x,\eps,N)
   \subset \bigcup_{x\in O} B_g(x,\eps,\varphi(\# O)).
 \end{equation}
Lemma \ref{l.perturb} now gives a diffeomorphism $f_0$ that is $\eta$-close to $g$ in $\NDS$
and $2\rho$-close to $g$ for the $C^0$-distance.
We check that~\eqref{e.concentration} holds with $X=O$
for any $f\in \NDS$ close to $f_0$ and any ergodic measure $\mu$ for $f$.
Let us assume that $h(f,\mu)\geq h_\top(f)-\delta$.

Note that it is enough to estimate the proportion of time spent
inside $\cup_{x\in X} B_f(x,\eps,\varphi(\#X))$ by the forward
orbit under $f^N$ of $\mu$-almost every point.

Since $f$ is close to $f_0\in \cR$, 
$$
  h(f,\mu)\geq h_\top(f)-\delta> h_\top(f_0)-2\delta.
 $$
By Ruelle's inequality, there exists $0\leq k \leq d_0$ such that
$\sigma_k(\mu)\geq h(f,\mu).$
By Oseledets theorem, for $\mu$-almost every $z$,
there is $E\subset  \Grass_k(T_zM)$ with
\begin{equation}\label{e.limit-h}
\frac 1 n\log \Jac(f^n,E)\underset{n\to+\infty}\longrightarrow \sigma_k(\mu)\geq h_\top(f_0)-2\delta.
\end{equation}
When $f^n(z)$ is not in $\cup_{x\in X} B_f(x,\eps,\varphi(\#X))$,
eq.\ \eqref{e.piste} shows that $f^N(f^nz)$ and $g^N(f^nz)$ are arbitrarily close from item~\ref{i.l1}
of Lemma~\ref{l.perturb}. Together with~\eqref{e.growth-g}
we get 
$$\frac 1 N\log \Jac(f^N,Df^n(E))\leq h_\top(g)+2\delta.$$
When $f^n(z)$ is in $\cup_{x\in X} B_f(x,\eps,\varphi(\#X))$,
item~\ref{i.l3} of the same lemma gives
 $$
  \frac 1 N\log \Jac(f^N,Df^n(E))\leq h_\top(f_0)+2\delta.
 $$

For each $m\geq 1$, we define:
$$p_m=\frac 1 m\#\big\{0\leq \ell\leq m-1,\; f^{\ell N}(z)\not \in \cup_{x\in X} B_f(x,\eps,\varphi(\#X))\big\}.$$
This gives
$$\frac 1 {mN}\log \Jac(f^{mN},E)\leq
(1-p_m) (h_\top(f_0)+2\delta)+ p_m(h_\top(g)+2\delta),$$
and, using~\eqref{e.limit-h},
 $$
    p_m\leq \frac{4\delta}{h_\top(f_0)-h_\top(g)}\leq {100\delta C_0^2(\|Dg\|+\|Dg^{-1}\|)/\eta}\leq\alpha
 $$
from item~\ref{i.l2} of Lemma~\ref{l.perturb} and the choice of $\delta$.
\end{proof}

\subsection{Non-existence of measures of maximal entropy}

With an immediate Baire argument, Proposition~\ref{p.HEconcentration} gives the next result.

\begin{corollary}\label{c.HEconcentration}
For any function $\varphi\colon \NN\to\NN$ there exists a
dense G$_\delta$ set $\cG= \cG(\varphi)\subset \NDS$
such that for any $\eps,\alpha\in (0,1)$ and $f\in \cG$,
 there exist $\delta>0$ and a finite set $X\subset M$ satisfying~\eqref{e.concentration}.
\end{corollary}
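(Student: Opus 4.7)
The plan is to deduce the corollary from Proposition~\ref{p.HEconcentration} by a routine Baire category intersection, exploiting the obvious monotonicity of the conclusion \eqref{e.concentration} in the parameters $\eps$ and $\alpha$ to reduce an uncountable family of conditions to a countable one.

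Fix $\varphi\colon\NN\to\NN$. For each rational pair $(\eps,\alpha)\in \QQ^2\cap(0,1)^2$, define
$$W_{\eps,\alpha}:=\{f\in\NDS\;:\;\exists\,\text{open }V\ni f,\;\exists\,\delta>0,\;\exists\,X\subset M\text{ finite, such that \eqref{e.concentration} holds for every }f'\in V\}.$$
By construction $W_{\eps,\alpha}$ is open in $\NDS$. Proposition~\ref{p.HEconcentration} says precisely that the set of points $f_0$ admitting such a neighborhood is dense in $\NDS$; hence $W_{\eps,\alpha}$ is open and dense.

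Then I would set
$$\cG(\varphi):=\bigcap_{(\eps,\alpha)\in \QQ^2\cap (0,1)^2} W_{\eps,\alpha}.$$
Since $\NDS$ is open in $\Diff^1_\omega(M)$, it is a Baire space, so $\cG(\varphi)$ is a dense G$_\delta$ subset of $\NDS$.

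Finally, given arbitrary real $\eps,\alpha\in(0,1)$ and $f\in\cG(\varphi)$, I would choose rationals $\eps_0\in(0,\eps)$ and $\alpha_0\in(0,\alpha)$. Since $f\in W_{\eps_0,\alpha_0}$, there exist $\delta>0$ and a finite $X\subset M$ such that every ergodic $\mu$ with $h(f,\mu)>h_\top(f)-\delta$ satisfies
$$\mu\Big(M\setminus \bigcup_{x\in X} B_f(x,\eps_0,\varphi(\#X))\Big)<\alpha_0.$$
The inclusion $B_f(x,\eps_0,N)\subset B_f(x,\eps,N)$ (Bowen balls are monotone in the radius, and $\varphi(\#X)$ is unchanged since $X$ is the same) implies that the complement with respect to $\eps$ is contained in the complement with respect to $\eps_0$, so the same $\mu$ satisfies
$$\mu\Big(M\setminus \bigcup_{x\in X} B_f(x,\eps,\varphi(\#X))\Big)\leq\mu\Big(M\setminus \bigcup_{x\in X} B_f(x,\eps_0,\varphi(\#X))\Big)<\alpha_0<\alpha,$$
which is exactly \eqref{e.concentration} for $(\eps,\alpha)$, as desired.

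There is essentially no serious obstacle here: the work has already been done in Proposition~\ref{p.HEconcentration}. The only point requiring a moment's care is the monotonicity argument, which lets us pass from a countable dense family of rational parameters to all real $(\eps,\alpha)\in(0,1)^2$ with the \emph{same} $\delta$ and $X$ (the finite set $X$ does not need to be refined when $\eps$ is enlarged or $\alpha$ is relaxed).
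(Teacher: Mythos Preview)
Your proof is correct and is exactly the ``immediate Baire argument'' the paper invokes without details: you intersect the open dense sets furnished by Proposition~\ref{p.HEconcentration} over a countable family of parameters, and your monotonicity observation in $(\eps,\alpha)$ is the right way to pass from rational to real parameters with the same $\delta$ and $X$.
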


\begin{proof}[Proof of Theorem~\ref{t.nomax}]
We choose $\varphi=\operatorname{Id}_\NN$ and
consider $f$ in the dense G$_\delta$ subset $\cG\subset \NDS$ given by Corollary~\ref{c.HEconcentration}.
Let us assume by contradiction that there is an ergodic measure $\mu$ maximizing the entropy:
 \begin{equation}\begin{aligned}\label{e.KatokEntFor}
   &h_\top(f)=h(f,\mu)=\lim_{\eps\to 0} \limsup_{n\to\infty}\frac1n\log r_f(\mu,\eps,n),
 \end{aligned}\end{equation}
 where $r_f(\mu,\eps,n)$ is the minimal number of sets $B_f(x,\eps,n)$
 needed to cover a set of $\mu$-measure larger than $1/2$ (this is Katok's formula for the Kolmogorov-Sinai entropy \cite{Katok}).
By  Corollary \ref{c.unstability}, we can assume that $h_\top(f)>0$.

Let us fix $\eps>0$ and some $\eps$-dense finite subset $\cA\subset M$.
Let $0<\alpha\ll 1/\log(\# \cA)$.
Corollary~\ref{c.HEconcentration} gives $\delta>0$ and a
set $X\subset M$ (arranging $\cA$ to be disjoint from $X$). We set $N=\# X$.
Notice that eq.\ \eqref{e.concentration} implies that $N=\#X$ is large since  $h(f,\mu)=h_\top(f)>0$ and $\eps$ is small.

Let us bound $r_f(\mu,2\eps,n)$ for $n$ large. The Birkhoff Ergodic Theorem gives $Y\subset M$ measurable with $\mu(Y)\geq 1/2$ and an integer $n_1$ such that, for all $y\in Y$ and $n\geq n_1$,
 $$
    \#\{0\leq k<n:\;f^k(y)\notin \cup_{x\in X}B_f(x,\eps,N)\} < n\alpha.
 $$ 
We associate to each $y\in Y$ the following data:
 \begin{itemize}
  \item[--] integers $a_1,a_2,\dots$ defined inductively as:
 $$a_{i+1}:=\min\{k\geq a_i+N,\; f^k(y)\in \cup_{x\in X}B(x,\eps,N)\}$$
 (by convention $a_0:=-N$),
  \item[--] points $q_0,q_1,\dots\in\cA\sqcup X$ satisfying:
  \begin{equation*}
  \begin{split}
&q_{a_i}=q_{a_i+1}=\dots=q_{a_i+N-1}\in X \text{ with } f^{a_i}(y)\in B_f(q_{a_i},\eps,N) \text{ for each }i\geq 1, \\
 &q_k\in \cA \text{ with } f^k(y)\in B(q_k,\eps) \text{ if } k\in \NN\setminus \bigcup_{i\geq 1} [a_i,a_i+N-1].
  \end{split}
  \end{equation*}
  \end{itemize}
Note that if $y,z$ share the same sequence $(q_0,\dots,q_{n-1})$,
then  $d(f^k(y),f^k(z))<2\eps$ for each $0\leq k<n$.
The number of such sequences $(q_0,\dots,q_{n-1})$ is bounded by 
 $$
    [n/N]\times \binom{n}{[n/N]} \times (\#\cA)^{\alpha.n}\times (\# X)^{n/N+1}.
 $$
Denoting $H(t)=-t\log(t)-(1-t)\log(1-t)$ and letting $n\to\infty$,
Stirling's formula gives
 $$
   \frac 1 n \log r_f(\mu,2\eps,n) \leq \frac{\log n}{n}+
   H(1/N) + \alpha.\log(\#\cA)+\frac 1 N \log(\# X).
 $$
 Since $\alpha.\log(\#\cA)$ is arbitrarily small and
 $N=\# X$ is arbitrarily large, one gets $h_\top(f)=h(f,\mu)=\lim_{\eps\to0}\lim_{n\to\infty}h(f,\mu,\eps) =0$ for all generic $f$ in the dense G$_\delta$ set $\cG$, a contradiction.
\end{proof}

\subsection{Borel classification}

We prove the following stronger version of Corollary \ref{c.almostBorel}. {The fundamental tool is the uniform Borel generator theorem of Hochman \cite{Hochman2}.}

\begin{proposition}\label{p.Borel}
There is a dense G$_\delta$ set of diffeomorphisms $f\in\NDS$ such that the free part of $f$ is Borel conjugate to the free part of any  mixing Markov shift with no measure maximizing the entropy and with Gurevi\v{c} entropy equal to $h_\top(f)$. In particular, the free parts of two diffeomorphisms in $\mathcal G$ are Borel conjugate if and only if they have equal topological entropy. 
\end{proposition}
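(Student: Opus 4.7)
The strategy is to invoke Hochman's Borel classification theorem \cite{Hochman2} for Borel automorphisms admitting a suitable approximation by horseshoe-type subsystems and lacking a measure of maximal entropy, after verifying its hypotheses using Theorems \ref{t.entropycons} and \ref{t.nomax}.

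First, let $\mathcal G \subset \NDS$ denote the dense G$_\delta$ intersection of the generic sets provided by Theorem \ref{t.entropycons}\eqref{i.entHS} and Theorem \ref{t.nomax}. For $f \in \mathcal G$, the topological entropy $h_\top(f)$ equals the supremum of entropies of horseshoes, and $f$ admits no measure of maximal entropy. Invoking Remark \ref{rem-small-hs}, we may further arrange that this supremum is achieved by horseshoes of arbitrarily small dynamical diameter.

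Second, recall that every horseshoe is topologically (hence Borel) conjugate to a mixing subshift of finite type, which is a particular mixing countable state Markov shift. For $f \in \mathcal G$ we thus obtain an increasing sequence of invariant compact subsystems, each Borel conjugate to such a Markov shift, whose topological entropies converge to $h_\top(f)$. Hochman's theorem then asserts that, under these approximation conditions together with the absence of a measure of maximal entropy, the free part of $f$ is Borel conjugate to the free part of any mixing countable state Markov shift of Gurevi\v{c} entropy equal to $h_\top(f)$ and without a measure of maximal entropy.

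The ``in particular'' statement follows by transitivity: two diffeomorphisms $f_1, f_2 \in \mathcal G$ with equal topological entropy have free parts both Borel conjugate to the same reference Markov shift, hence Borel conjugate to each other; the converse is immediate since topological entropy, being the supremum of the measure-theoretic entropies of invariant probability measures (none of which is concentrated on periodic orbits when it is positive), is preserved under Borel conjugacy of the free part.

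The main obstacle is matching the precise form of the approximation hypothesis in Hochman's theorem. If it requires only entropy-density of Borel-embedded SFTs, our items (a) and (b) suffice directly; if it additionally requires that the invariant subsystems approximate every ergodic measure in an appropriate weak-$*$ or full-measure sense, we would supplement the argument with the ergodic closing lemma (Corollary \ref{c.ergodic-closing}) together with an approximation by horseshoes of ergodic measures in the spirit of Theorem \ref{t.katok}, exploiting that the horseshoes in (a) may be taken with arbitrarily small dynamical diameter so that their union is dense in $M$.
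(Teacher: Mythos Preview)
Your approach is essentially the same as the paper's: define the generic set via Theorems~\ref{t.entropycons} and~\ref{t.nomax}, then feed the resulting horseshoe approximation and the absence of a maximal measure into Hochman's classification. There is, however, one concrete gap.

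The assertion that ``every horseshoe is topologically (hence Borel) conjugate to a \emph{mixing} subshift of finite type'' is false. In this paper a horseshoe is defined to be transitive, not mixing; it is therefore conjugate to an irreducible SFT, which may have period greater than one. The version of Hochman's result being invoked asks for mixing SFT subsystems, and the paper addresses exactly this point by appealing to \cite{AbdenurCrovisier}: for generic $f$, any horseshoe can be enlarged to a topologically mixing one, so the approximating horseshoes may be taken mixing with entropy still arbitrarily close to $h_\top(f)$. Once you insert this step, your argument coincides with the paper's; the invocation of Remark~\ref{rem-small-hs} and the final hedging paragraph about possible extra hypotheses are then unnecessary.
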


We recall from \cite{GurevichSavchenko,Sarig} that the Gurevi\v{c} entropy of a Markov shift is the supremum of the entropies of its invariant probability measures and coincides with the supremum of the topological entropies of the shifts of finite type that it contains. Also, for any $0\leq h<\infty$, there exist mixing Markov shifts with Gurevi\v{c} entropy $h$ and no measure maximizing the entropy \cite{RuetteNote}.

\begin{proof}
Let $\mathcal G$ be the set of diffeomorphisms $f\in\NDS$ such that  $h_\top(f)$ is the supremum of the topological entropy of its mixing horseshoes and $f$ has no measure maximizing the entropy. 

Then $\mathcal G$ contains a dense G$_\delta$ subset of $\NDS$. Indeed,
by Theorem \ref{t.entropycons}, generic diffeomorphisms $f\in \NDS$ admit horseshoes with entropy arbitrarily close to $h_\top(f)$ and \cite{AbdenurCrovisier} yields larger, topologically mixing horseshoes. By Theorem~\ref{t.nomax}, the generic diffeomorphisms have no measure of maximal entropy. 

The free parts of any $f\in\mathcal G$ and any Markov shift as described are Borel conjugate, as follows from an easy adaptation of Corollary 1.3 of Hochman \cite{Hochman2} using the techniques of \cite{Hochman1} (see also \cite{BuzziLausanne} for an exposition).
\end{proof}

\section{Tail entropy and entropy structures} \label{s.entropystructure}

We draw the consequences of Theorem \ref{t.entropycons} for the symbolic extension theory of Boyle and Downarowicz \cite{BoyleDownarowicz}. We prove Proposition \ref{p.ualpha} below which is a stronger, more technical version of Theorem~\ref{t.ualphageneral}.  We then discuss the value
of the tail entropy for $C^1$-generic conservative diffeomorphisms allowing a dominated splitting
and prove Theorem~\ref{t.tail-general}.

\subsection{Entropy structures of general homeomorphisms}
We recall some basic definitions and results. Let $f:M\to M$ be a continuous map of a compact metric space. We often omit $f$ from the notation when there is no confusion on the map. 
Downarowicz \cite{DownarowiczEntropyStructure} introduced the \emph{entropy structure} as a ``master invariant for the theory of entropy" in topological dynamics. According to \cite[Theorem 8.4.1]{DownarowiczBook}, it can be defined\footnote{To be precise: the entropy structure is the equivalence
class of the sequence $(h_k)_{k\geq1}$ for a relation called \emph{uniform equivalence}, see \cite[section 8.1.4]{DownarowiczBook}.} as the following sequence of Romagnoli's entropy functions  $h_k:\Prob(f)\to[0,\infty]$:  
 \begin{equation}\label{e.Romagnoli}
    h_k(f,\mu):=\limsup_{n\to\infty} \frac1n \inf_{\mathcal A\in\mathcal P(1/k,n)} -\sum_{A\in\mathcal A} \mu(A)\log\mu(A)
 \end{equation}
where $\mathcal P(\eps,n)$ is the set of finite measurable partitions $\mathcal A$ of $M$ satisfying $\diam(f^jA)<\eps$ for all $A\in\mathcal A$ and $0\leq j<n$.
\medskip

For a function $u:\Prob(f)\to [0, \infty]$ we define
$$\widetilde{u}(\mu):=
\left\{
\begin{array}{llll}
\limsup_{\nu\to\mu} u(\nu)&\textrm{ if }u\textrm{ is bounded},\\
\infty&\textrm{ otherwise.}
\end{array}
\right.
$$
So  $\widetilde{u}$ is the smallest upper semicontinuous function $\Prob(f)\to [0, \infty)$ larger than or equal to $u$ (by convention the constant function $\infty$ is the only unbounded upper semicontinuous function).

According to \cite{BoyleDownarowicz}, the entropy structure determines  functions $u_\beta:\Prob(f)\to[0,\infty]$ for all ordinals $\beta$ by the following transfinite induction:
\begin{itemize}
\item[--] $u_0:=0$, 
\item[--] $u_{\beta+1}:=\lim_{k\to\infty} \widetilde{(u_\beta+h-h_k)}$, and
\item[--] if $\beta$ a limit ordinal, 
then $u_\beta:=\widetilde{\sup_{\gamma<\beta} u_\gamma}. $
\end{itemize}

The entropy structure allows one to recover the tail entropy.

\begin{theorem}[\cite{DownarowiczEntropyStructure}]\label{t.DownarowiczEntropyStructure}
The tail entropy $h^*(f)$ coincides with $\|u_1\|_\infty$.
\end{theorem}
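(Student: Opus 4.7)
The plan is to prove the equality by matching both sides to the same quantity: the entropy concentrated at infinitesimal scales. Recall that $u_1=\lim_{k\to\infty}\widetilde{(h-h_k)}$, and that Romagnoli's theorem gives $h_k(\mu)\nearrow h(\mu)$ pointwise on $\Prob(f)$. Hence $h-h_k$ decreases pointwise to $0$, but the convergence fails to be uniform precisely when some entropy is ``hidden'' below every fixed scale; both $h^*(f)$ and $\|u_1\|_\infty$ detect exactly this failure.

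For the upper bound $\|u_1\|_\infty\le h^*(f)$, I would first establish, for each fixed $k$ and each $\mu\in\Prob(f)$, the scale-localized bound
$$h(\mu)-h_k(\mu)\le h^*_{1/k}(f):=\lim_{\delta\to 0}\limsup_{n\to\infty}\frac1n\log s_f(n,\delta,1/k),$$
which is essentially Misiurewicz's local entropy inequality: cover a $\mu$-generic orbit by $(n,1/k)$-Bowen balls, refine inside each to a partition of diameter less than $1/k$, and pay at most $\log s_f(n,\delta,1/k)$ entropy per ball. Since the right-hand side is a constant depending only on $f$ and $k$, the pointwise inequality is preserved under the u.s.c.\ envelope, giving $\widetilde{(h-h_k)}\le h^*_{1/k}(f)$. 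Passing to $k\to\infty$ yields $u_1\le h^*(f)$ and hence $\|u_1\|_\infty\le h^*(f)$.

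For the reverse inequality, I would construct, for each $\eps>0$ and large $k$, sequences of points $x_j\in M$, horizons $n_j\to\infty$ and scales $\delta_j\searrow 0$ such that $s_f(n_j,\delta_j,1/k)\geq\exp((h^*(f)-\eps)n_j)$, realizing the tail entropy; this is possible from the definition of $h^*$. Pick maximal $(n_j,\delta_j)$-separated sets $E_j\subset B_f(x_j,1/k,n_j)$, form the empirical measures equidistributed on $E_j$, and time-average under $f$ to obtain invariant probabilities $\mu_j\in\Prob(f)$. A classical Katok--Misiurewicz partition argument shows that any weak-$\ast$ accumulation point $\mu_\infty$ of $(\mu_j)$ carries $(h-h_k)(\mu_\infty)\ge h^*(f)-\eps$ after passing to the u.s.c.\ envelope (the measures witness fine-scale entropy inside dynamical balls of coarse scale $1/k$). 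Since this holds for all $k$ along a diagonal, $\|u_1\|_\infty\geq h^*(f)-\eps$; letting $\eps\to 0$ closes the loop.

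The main obstacle is the order of operations between the u.s.c.\ envelope and the limit $k\to\infty$. The function $\mu\mapsto h_k(\mu)$ is in general neither upper nor lower semicontinuous on $\Prob(f)$, so the envelope $\widetilde{(h-h_k)}$ can genuinely differ from $h-h_k$ and can behave unstably as $k$ grows. In particular, the witnessing measure $\mu_\infty$ produced above may not itself satisfy the lower bound pointwise, so one actually controls $\widetilde{(h-h_k)}$ at $\mu_\infty$ by perturbing slightly: the empirical measures $\mu_j$, which cluster to $\mu_\infty$ but need not converge to it, carry the entropy required. Organizing this diagonal argument so that the envelope, the weak-$\ast$ limit, and the refinement $k\to\infty$ all cooperate is the delicate analytical step underlying Downarowicz's framework.
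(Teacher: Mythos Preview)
The paper does not prove this theorem: it is stated with a citation to \cite{DownarowiczEntropyStructure} and invoked as a black box. So there is no ``paper's own proof'' to compare against; the authors rely entirely on Downarowicz's original work.

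That said, your sketch is a reasonable outline of the ideas behind the result. The upper bound direction is essentially correct: the inequality $h(\mu)-h_k(\mu)\le h^*_{1/k}(f)$ is a form of the Bowen--Misiurewicz local entropy estimate, and passing to envelopes and limits is harmless since the right-hand side is constant in $\mu$. The lower bound is where the real content lies, and your proposal is more of a wish than an argument: the empirical measures you build are not invariant (time-averaging a measure supported on a finite set does not give an invariant measure unless you pass to a limit), and the claim that a weak-$*$ limit $\mu_\infty$ satisfies $\widetilde{(h-h_k)}(\mu_\infty)\ge h^*(f)-\eps$ needs a genuine mechanism linking the combinatorial separation count $s_f(n,\delta,\eps)$ to the measure-theoretic defect $h-h_k$ along a converging sequence of \emph{invariant} measures. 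Downarowicz handles this through his theory of entropy structures and the uniform equivalence of candidate sequences (in particular relating Romagnoli's $h_k$ to Newhouse's local entropy via Katok's entropy formula), which is considerably more machinery than your sketch indicates. You correctly identify this as the delicate step, but you do not actually carry it out.
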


There is a smallest ordinal $\alpha$ such that $u_{\alpha+1}=u_\alpha$. It is known to be a countable (possibly finite) ordinal.
It is a topological invariant, called the \emph{order of accumulation of the entropy structure}.
The function $u_\alpha$ describes the ``best" possible symbolic extensions. 

\begin{theorem}[\cite{BoyleDownarowicz}]\label{t.BoyleDownarowicz}
If $\alpha$ is the order of accumulation of the entropy structure,
then $u_\alpha=\infty$ if and only if there is no symbolic extension.
\end{theorem}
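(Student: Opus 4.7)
My plan is to identify $h+u_\alpha$ with the \emph{symbolic extension entropy function}
$$h_{\operatorname{sex}}(\mu):=\inf_\pi\sup\{h_{\tilde\mu}(\sigma):\pi_*\tilde\mu=\mu\},$$
where the infimum runs over all symbolic extensions $\pi\colon(K,\sigma)\to(M,f)$ and is set to $+\infty$ if no such extension exists. Once this identity is established, the theorem is immediate: a symbolic extension exists precisely when $h_{\operatorname{sex}}$ is a bounded upper semicontinuous function, and under the convention recalled in the excerpt this is equivalent to $u_\alpha\ne\infty$.

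The first step is to abstract the notion of a \emph{superenvelope} of $h$ relative to the entropy structure: a nonnegative upper semicontinuous function $E$ on $\Prob(f)$ such that each difference $E-h_k$ is upper semicontinuous and decreases to $E-h$ as $k\to\infty$. I would verify by transfinite induction that $u_\beta\leq E-h$ for every superenvelope $E$ and every ordinal $\beta$, using the monotonicity of the operator $\widetilde{(\cdot)}$ and the fact that it acts as the identity on bounded upper semicontinuous functions. At the stabilization stage $u_{\alpha+1}=u_\alpha$, the defining recursion gives $u_\alpha=\lim_k\widetilde{(u_\alpha+h-h_k)}$, and a diagonal argument exhibits $h+u_\alpha$ itself as a superenvelope. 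Thus $h+u_\alpha$ is the pointwise minimum superenvelope of $h$.

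The main obstacle is the identification of this minimum superenvelope with $h_{\operatorname{sex}}$. One inclusion is structural: for any symbolic extension $\pi$, the function $\mu\mapsto\sup\{h_{\tilde\mu}(\sigma):\pi_*\tilde\mu=\mu\}$ is affine and upper semicontinuous, and the additivity of entropy under factor maps combined with Romagnoli's formula~\eqref{e.Romagnoli} shows that its differences with the $h_k$ are upper semicontinuous, so it is an affine superenvelope. The converse, which is the heart of the Boyle--Downarowicz construction, requires manufacturing a symbolic extension from any affine superenvelope $E$: one approximates ergodic measures by Markov measures via Jewett--Krieger, encodes each one combinatorially by blocks of complexity matching the prescribed defect $E-h$, and glues these codes into a single subshift through a hierarchical block construction whose entropy profile realizes $E$. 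The affineness of $E$ is essential for handling ergodic decompositions of the resulting extension, while the superenvelope property supplies the uniform upper semicontinuous control that makes the gluing converge.

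Combining both inclusions yields $h+u_\alpha=h_{\operatorname{sex}}$, which concludes the proof. The genuinely hard point is the explicit construction of the subshift extension in the reverse inclusion, as it is here that symbolic dynamics, measure-theoretic approximation and topological upper semicontinuity must be made compatible with a single combinatorial object.
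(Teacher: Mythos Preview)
The paper does not prove this theorem at all: it is stated as a citation of Boyle and Downarowicz \cite{BoyleDownarowicz} and used as a black box in the derivation of Theorem~\ref{t.ualphageneral}. So there is no ``paper's own proof'' to compare against.

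That said, your sketch is a faithful outline of the original Boyle--Downarowicz argument. The identification $h+u_\alpha=h_{\operatorname{sex}}$ (the Symbolic Extension Entropy Theorem) is indeed the content behind the statement, and you have correctly located the two halves: the transfinite induction showing $h+u_\alpha$ is the minimal superenvelope, and the hard constructive direction building a subshift extension realizing a prescribed affine superenvelope. One point to sharpen: the minimal superenvelope and the minimal \emph{affine} superenvelope need not coincide a priori, and it is the affine one that equals $h_{\operatorname{sex}}$; part of the Boyle--Downarowicz machinery is showing that $h+u_\alpha$ is already affine (via the harmonic extension from ergodic measures), which your sketch glosses over. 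With that caveat, the strategy is sound, though the ``hard point'' you flag---the block-gluing construction---is genuinely long and would need substantial expansion to constitute a proof rather than a summary.
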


For systems with bounded topological entropy
and no symbolic extension, the order of accumulation $\alpha$ has to be infinite. 

\subsection{Entropy structure of diffeomorphisms in $\NDS$}\paragraph{a-- Inequalities for entropy structures.}
For $\nu\in \Prob(f)$, not necessarily ergodic, and almost any point $x$, we let $\Delta^+(f, x)$ be the sum of the positive Lyapunov exponents at $x$ and we define 
$$
\Delta^+(f, \nu)=\int \Delta^+(f, x) d\nu.
$$

\begin{proposition}\label{p.ualpha}
For any $C^1$-diffeomorphism $f$ of a compact manifold $M$, for all $n\in\NN$ and all $\mu\in \Prob(f)$ we have
 \begin{equation}\label{e.unhdelta}
        u_n(\mu) \leq n \cdot \widetilde{h}(f,\nu) \leq n \cdot \widetilde{\Delta^+}(f,\nu).
 \end{equation}
If $f\in\NDS\subset\Diff^1_\omega(M)$ is generic, these are equalities.
\end{proposition}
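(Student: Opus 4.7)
The second inequality in~\eqref{e.unhdelta} follows from Ruelle's inequality (applied to the ergodic components of $\mu$) combined with the monotonicity of $(\cdot)^{\sim}$. The first, $u_n\leq n\widetilde h$, is proved by induction on $n$: the base case $n=0$ is trivial, and since $h_k\geq 0$ and $h\leq\widetilde h$, the inductive hypothesis gives $u_n+h-h_k\leq(n+1)\widetilde h$; as the right-hand side is already upper semicontinuous, $\widetilde{u_n+h-h_k}\leq(n+1)\widetilde h$, and letting $k\to\infty$ yields $u_{n+1}\leq(n+1)\widetilde h$.

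\textbf{A Katok-type approximation lemma.} All three sharpness assertions reduce to the following claim: \emph{there is a dense $G_\delta$ subset $\mathcal G\subset\NDS$ such that for every $f\in\mathcal G$, $\mu\in\Prob(f)$, and $\eta,\delta>0$, there exist a horseshoe $K$ of $f$ of dynamical diameter below $\delta$ and an $f$-invariant measure $\nu$ supported on $K$ with $d_*(\nu,\mu)<\eta$ and $h(f,\nu)\geq\Delta^+(f,\mu)-\eta$}. To prove the claim, first reduce to ergodic $\mu$ by taking an ergodic component with $\Delta^+$ close to $\Delta^+(f,\mu)$; Corollary~\ref{c.ergodic-closing} then produces a periodic orbit $O$ with periodic measure close to $\mu$ in weak-$*$ and Lyapunov exponents close to those of $\mu$. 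Theorem~\ref{t.newhouse} combined with Remark~\ref{rem-small-hs} builds a horseshoe $K$ containing $O$, of dynamical diameter below $\delta$, inside an arbitrarily small neighborhood of $O$, with $h_\top(f,K)\geq\Delta(f,O)-\eta/2$; in the conservative setting $\Delta(f,O)=\Delta^+(f,O)$ since all Lyapunov exponents sum to zero. Finally, specification within $K$ together with density of periodic measures (Proposition~\ref{p.control-exponents}) yields $\nu\in\Prob(f|_K)$ with entropy close to $h_\top(f,K)$ and weak-$*$ close to the periodic measure of $O$, hence to $\mu$. The set $\mathcal G$ is taken as the intersection of the dense $G_\delta$ subsets of $\NDS$ arising from Corollary~\ref{c.ergodic-closing}, Proposition~\ref{p.control-exponents}, and Theorem~\ref{t.entropycons}.

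\textbf{From the lemma to the equalities.} The equality $\widetilde h=\widetilde{\Delta^+}$ follows by applying the lemma to a sequence $\mu_j\to\mu$ with $\Delta^+(f,\mu_j)\to\widetilde{\Delta^+}(f,\mu)$ and extracting a diagonal subsequence. For $u_n=n\widetilde{\Delta^+}$, I proceed by induction on $n$: assuming the statement at step $n$, fix $\mu$ and choose $\nu_j\to\mu$ supported on horseshoes $K_j$ of dynamical diameter $\delta_j\to 0$ with $h(f,\nu_j)\to\widetilde{\Delta^+}(f,\mu)$. For any fixed $k$, the invariant partition of $K_j$ into pieces of diameter $<\delta_j$ lies in $\mathcal P(1/k,n)$ for every $n$ once $\delta_j<1/k$, so $h_k(f,\nu_j)=0$ for $j$ large. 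The inductive hypothesis and Ruelle's inequality yield $u_n(\nu_j)=n\widetilde{\Delta^+}(f,\nu_j)\geq n\,\Delta^+(f,\nu_j)\geq n\,h(f,\nu_j)$, whence
$$\widetilde{(u_n+h-h_k)}(\mu)\;\geq\;\limsup_j\,[u_n(\nu_j)+h(f,\nu_j)]\;\geq\;(n+1)\widetilde{\Delta^+}(f,\mu),$$
and passing to $k\to\infty$ gives $u_{n+1}(\mu)\geq(n+1)\widetilde{\Delta^+}(f,\mu)$.

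\textbf{Main obstacle.} The delicate step is the measure-theoretic stage of the approximation lemma: producing $\nu$ on $K$ that is simultaneously weak-$*$ close to $\mu$ \emph{and} near-maximal entropy. Weak-$*$ closeness is achieved because the horseshoe construction of Section~\ref{s.main} can be tuned so that the shadowing parameter $\ell$ is arbitrarily large: typical orbits of $K$ then spend almost all their time shadowing $O$, so every invariant measure of $K$ is close to the periodic measure of $O$ in weak-$*$. The entropy closeness for such a measure requires specification-type arguments within the Markov coding of $K$, choosing the weights on the $L$ oscillation branches to balance long shadowing periods against enough branching to recover the full topological entropy.
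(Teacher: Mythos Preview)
Your upper-bound argument is essentially the paper's. The gap is in your approximation lemma.

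\textbf{The reduction to ergodic measures fails.} You write ``first reduce to ergodic $\mu$ by taking an ergodic component with $\Delta^+$ close to $\Delta^+(f,\mu)$''. Affinity of $\Delta^+$ does guarantee such a component $\mu'$, but $\mu'$ is in general far from $\mu$ in the weak-$*$ topology. The horseshoe measure $\nu$ you build will be close to $\mu'$, not to $\mu$, so you cannot conclude $d_*(\nu,\mu)<\eta$. This breaks both uses of the lemma: in the inductive step you explicitly need $\nu_j\to\mu$, and in the proof of $\widetilde h=\widetilde{\Delta^+}$ you apply the lemma to the (generally non-ergodic) $\mu_j$. There is no reason the approximants $\mu_j\to\mu$ realizing $\widetilde{\Delta^+}(\mu)$ can be chosen ergodic: periodic measures are dense for generic $f$, but $\Delta^+$ is only upper semicontinuous, so one cannot trade a non-ergodic $\mu_j$ for a nearby periodic orbit with comparable $\Delta^+$.

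The paper's Lemma~\ref{l.smallapprox} repairs this by using \emph{several} horseshoes: approximate $\mu$ by a convex combination $\sum_i\alpha_i\cO_i$ of periodic orbits via the ergodic decomposition and Corollary~\ref{c.ergodic-closing}, build one small-diameter horseshoe $K_i$ near each $\cO_i$, and set $\nu=\sum_i\alpha_i\nu_i$ with $\nu_i$ the measure of maximal entropy on $K_i$. Because each $K_i$ has small dynamical diameter, \emph{every} invariant measure on $K_i$ is weak-$*$ close to $\cO_i$; hence $\nu$ is close to $\mu$ automatically, and affinity of entropy gives $h(\nu)=\sum_i\alpha_i h_\top(K_i)\approx\sum_i\alpha_i\Delta^+(\cO_i)\approx\Delta^+(\mu)$. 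In particular your ``main obstacle'' dissolves: no specification argument is needed, since the MME already does the job.

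A secondary issue: Theorem~\ref{t.newhouse} produces horseshoes for a \emph{perturbation} $g$, not for $f$. Passing back to the generic $f$ requires an explicit Baire argument (open by structural stability of horseshoes, then intersect countably many open-and-dense sets indexed by a basis of $\Prob(M)\times(0,\infty)$ and the scale $k$), which the paper carries out with the sets $\cV(U,I,n)$. Listing three $G_\delta$ sets to intersect does not substitute for this.
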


One can view this as a rigidity result: generically, away from dominated splitting, all the functions $u_\beta$ are determined by the entropy function. This is in contrast to the general case:
according to \cite{BurguetMcGoff}, all countable ordinals are realized as the order of accumulation of the entropy structure of homeomorphisms of compact metric spaces. 

\begin{proof}[Proof of Theorem \ref{t.ualphageneral} from Proposition \ref{p.ualpha}]
{Let $f$ be a generic element in $\NDS$.}
Proposition~\ref{p.ualpha} implies $\sup u_n=n\cdot h_\top(f)$. By Theorem \ref{t.DownarowiczEntropyStructure}, the equality for $n=1$ yields $h^*(f)=h_\top(f)$.

As $h_\top(f)\ne0$, the functions $u_n$, $n\in\NN$, are pairwise distinct and the order of accumulation $\alpha$ is infinite. Now, $\sup u_\omega=\sup_\mu \sup_{n\in\NN} u_n(\mu)=\infty$, so $u_\omega\equiv\infty$ and $\alpha=\omega$.

From Theorem~\ref{t.BoyleDownarowicz}, one deduces that $f$ has no symbolic extension.
\end{proof}

\paragraph{b-- Perturbations of entropy structures.}
The next lemma will follow from results in \cite{ABC} 
and Theorem \ref{t.newhouse} and is the key step to the proof of Proposition~\ref{p.ualpha}.
Let $d_*$ be a distance on the space $\Prob(M)$ of probability measures on $M$ compatible with the vague topology. 

\begin{lemma}\label{l.smallapprox}
Consider a generic $f\in\NDS$ and $(h_k)_{k\geq1}$ its entropy structure as in \eqref{e.Romagnoli}. For any $\mu\in \Prob(f)$ and $\eps>0$,
there is $\nu\in\Prob(f)$ such that
\begin{enumerate}
\item $d_*(\mu,\nu)<\eps$,
\item 
$|h(f,\nu)-\Delta^+(f,\mu)|<\eps,$ and
\item $h_k(f,\nu)=0$ for any $k\leq 1/\eps$.
\end{enumerate}
\end{lemma}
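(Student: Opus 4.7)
The plan is to construct $\nu$ as a convex combination of ergodic measures supported on small, pairwise disjoint horseshoes that approximate the ergodic components of $\mu$. First, decompose $\mu=\int \mu_\omega\, dm(\omega)$ and approximate this integral by a finite Riemann-type sum: since $\omega\mapsto\Delta^+(f,\mu_\omega)$ is bounded and $m$-integrable, one can find ergodic measures $\mu_1,\dots,\mu_r$ and positive weights $\alpha_i$ with $\sum_i\alpha_i=1$ such that $\tilde\mu:=\sum_i\alpha_i\mu_i$ satisfies both $d_*(\mu,\tilde\mu)<\eps/4$ and $|\Delta^+(f,\tilde\mu)-\Delta^+(f,\mu)|<\eps/4$. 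Because $f$ is conservative and each $\mu_i$ is ergodic, one has $\Delta^+(f,\mu_i)=\Delta(f,\mu_i)$. It therefore suffices to approximate each $\mu_i$ by a measure $\nu_i$ satisfying the three conclusions with small parameters, and then form the weighted sum $\nu:=\sum_i\alpha_i\nu_i$.

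For each ergodic $\mu_i$, Corollary~\ref{c.ergodic-closing}, refined with the generic approximation results of \cite{ABC} so as to control the dominated splitting on the approximating orbit, produces a $T,N$-weak periodic orbit $O_i$ of $f$ whose associated invariant measure is arbitrarily $d_*$-close to $\mu_i$ and whose Lyapunov exponents are arbitrarily close to those of $\mu_i$; in particular $\Delta(f,O_i)$ is close to $\Delta^+(f,\mu_i)$. Theorem~\ref{t.newhouse} combined with Remark~\ref{rem-small-hs} and the Baire-category argument from the proof of Lemma~\ref{l.ineq-entropy} then shows that, for generic $f\in\NDS$, each $O_i$ is contained in a horseshoe $K_i\subset M$ of $f$ itself whose Hausdorff distance to $O_i$ is as small as we wish, whose dynamical diameter is less than $\eps$, and whose topological entropy exceeds $\Delta(f,O_i)-\delta$ for any prescribed $\delta>0$. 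By separating the $O_i$'s we may assume the $K_i$ pairwise disjoint, and on each we select an ergodic $\nu_i$ with $h(f,\nu_i)$ within $\delta$ of $h_\top(f,K_i)$, which exists as horseshoes support measures of maximal entropy.

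Conclusion (1) then holds because each $\nu_i$ is supported in a small neighborhood of $O_i$, hence vaguely close to the periodic measure on $O_i$, which was itself close to $\mu_i$. Conclusion (2) follows from the affinity of the entropy on mutually singular ergodic measures: $h(f,\nu)=\sum_i\alpha_i h(f,\nu_i)$, which is close to $\sum_i\alpha_i\Delta^+(f,\mu_i)\approx\Delta^+(f,\mu)$. For conclusion (3), each $K_i$ carries by construction an invariant partition $K_i=K_{i,1}\cup\dots\cup K_{i,\ell_i}$ into pieces of diameter less than $\eps\leq 1/k$. For any $n$, extending the union $\bigcup_i\{K_{i,l}\}_l$ to a partition $\mathcal A_n$ of $M$ by a partition of the complement whose initial diameters are small enough (relative to the Lipschitz constant of $f$) that the $n$-step dynamical diameters remain below $1/k$ produces an admissible test partition in $\mathcal P(1/k,n)$. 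Since the complement pieces carry zero $\nu$-mass, $H_\nu(\mathcal A_n)=-\sum_i\alpha_i\log\alpha_i+\sum_i\alpha_i H_{\nu_i}(\{K_{i,l}\}_l)\leq \log r+\log\max_i\ell_i$, a bound independent of $n$. Hence $h_k(\nu)\leq \limsup_n \tfrac1n(\log r+\log\max_i\ell_i)=0$, as required.

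The main obstacle is the step requiring that the periodic approximations $O_i$ be $T,N$-weak: a naive use of Corollary~\ref{c.ergodic-closing} does not control dominated splittings on the approximating orbit, and an ergodic $\mu_i$ may admit a dominated splitting on its support even when $f\in\NDS$. The resolution is the $C^1$-generic approximation technology of \cite{ABC}, which allows one, for generic $f$ in $\NDS$, to choose the approximating periodic orbit without any $N$-dominated splitting, making Theorem~\ref{t.newhouse} directly applicable.
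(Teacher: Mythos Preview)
Your outline matches the paper's: approximate $\mu$ by a finite convex combination of ergodic measures, replace each by a periodic orbit with close exponents and no strong domination, build small-diameter horseshoes near these orbits, and take a convex combination of measures on them. The verification of items (1) and (3) is essentially the same as the paper's.

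There is, however, a genuine gap at the step where you pass from ``horseshoes exist for a perturbation'' to ``horseshoes exist for $f$ itself''. You invoke ``the Baire-category argument from the proof of Lemma~\ref{l.ineq-entropy}'', but that argument establishes a \emph{single} open condition (existence of some horseshoe with entropy close to the global quantity $\Delta(f)$). Here the horseshoes $K_i$ depend on the periodic orbits $O_i$, which in turn depend on the measure $\mu$; since $\mu$ ranges over an uncountable set, one cannot simply intersect dense $G_\delta$ sets indexed by $\mu$. The paper handles this by separating the two steps: it first proves a perturbative claim (for any generic $f$, $\mu$, $\eps$ there is $g$ close to $f$ with the desired configuration of horseshoes), and then runs a Baire argument in the \emph{target} space, using a countable basis $\{U_n\times I_n\}$ of $\Prob(M)\times(0,\infty)$ and the sets $\cZ(U,I,n)$ of diffeomorphisms admitting an invariant $\nu\in U$ with $h(\cdot,\nu)\in I$ and $h_k(\cdot,\nu)=0$ for $k\le n$. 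Robustness of horseshoes makes these open, and the perturbative claim gives density; intersecting over the countable basis yields a single dense $G_\delta$ that works for all $\mu$ simultaneously. Your sketch needs either this argument or an equivalent one quantifying over the (countably many) hyperbolic periodic orbits and rational parameters, together with a careful handling of which orbits are $T,N$-weak.

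A minor point on item (2): you select $\nu_i$ with $h(f,\nu_i)$ close to $h_\top(f,K_i)$, but Theorem~\ref{t.newhouse} only gives $h_\top(f,K_i)\geq\Delta(f,O_i)$, with no upper bound, so $h(f,\nu)$ could overshoot $\Delta^+(f,\mu)$. The fix is easy: since a horseshoe carries ergodic measures of every entropy in $[0,h_\top]$, choose $\nu_i$ with $h(f,\nu_i)$ close to $\Delta(f,O_i)$ rather than to $h_\top(f,K_i)$.
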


\begin{proof}
We first explain how to robustly approximate a given measure. 

\begin{claim}\label{cl.stronglyapproximated}
For any $f$ generic in $\NDS$, $\mu\in\Prob(f)$ and $\eps>0$, there are $g\in \NDS$ arbitrarily close to $f$, some horseshoes $K_0,\dots,K_{\ell}$ for $g$, and $\alpha_0,\dots,\alpha_{\ell}>0$ such that \begin{enumerate}
  \item $|\sum_{i=0}^{\ell} \alpha_i\cdot h_\top(g,K_i)-\Delta^+(f,\mu)|<\eps$;
 
   \item each $K_i$ can be written as $\bigcup_{i=0}^{p-1} g^i(\Lambda)$ for some $p\geq1$ and some compact subset $\Lambda$ satisfying $g^p(\Lambda)=\Lambda$ and $\diam(g^i(\Lambda))<\eps$ for all $i\geq0$; and
   
 \item for any $\nu_i\in\Prob(f,K_i)$, if one denotes $\nu:=\sum_{i=0}^{\ell} \alpha_i\nu_i$, then
  $d_*(\mu,\nu)<\eps$.

 \end{enumerate}
 In particular, from the definition~\eqref{e.Romagnoli},
the measure $\nu$ in the third item satisfies $h_k(f,\nu)=0$ for $k<\eps$.
\end{claim}

\begin{proof}
From the ergodic decomposition theorem
and Corollary~\ref{c.ergodic-closing}, there exists a finite family of hyperbolic periodic orbits
$\cO_1,\dots,\cO_\ell$ and $\alpha_0,\dots,\alpha_\ell>0$
such that $\sum_{i=0}^\ell \alpha_i\cO_i$ is arbitrarily close to the measure $\mu$
and $\sum_{i=0}^\ell \alpha_i\Delta^+(f,\cO_i)$ is arbitrarily close
to $\Delta^+(f,\mu)$.
Since $f$ is generic, from~\cite[th\'eor\`eme 1.3]{BC} or~\cite[Theorem 1]{ABC},
the homoclinic class of each $\cO_i$ coincides with $M$.
Since $f$ has no dominated splitting,
from Proposition~\ref{p.control-exponents}
one can replace each $\cO_i$ by another periodic orbit whose
period is arbitrarily weak and which has no $N$-dominated splitting, for some $N$
arbitrarily large.

Since $f$ is conservative one has $\Delta^+(f,\cO_i)=\Delta(f,\cO_i)$.
One can thus apply Theorem~\ref{t.newhouse} independently in a neighborhood of each $\cO_i$
in order to build horseshoes $K_i$ with arbitrarily small dynamical diameter and whose
topological entropy is close to $\Delta^+(f,\cO_i)$. The conclusion follows.
\end{proof}

We conclude the proof of the lemma by a Baire argument.
Let us consider an open set $U$ in $\Prob(M)$ and an open interval $I\subset (0,+\infty)$.
Let $\cZ(U,I,n)$ be the (open) set of diffeomorphisms $f\in \NDS$
such that any diffeomorphism $g$ close admits
an invariant probability measure $\nu\in U$ with
$h(g,\nu)\in I$ and $h_k(g,\nu)=0$ for $k\leq n$.
We define the open and dense set
$$\cV(U,I,n):=\; [\NDS\setminus \operatorname{closure}(\cZ(U,I,n))]\;
\cup\; \cZ(U,I,n).$$

Let $\cG_0$ be a dense G$_\delta$ subset of $\NDS$ whose elements satisfy
the Claim~\ref{cl.stronglyapproximated}.
Let $(U_n,I_n)$ be a family of pairs of open set $U_n\subset\NDS$
and open intervals $I_n\subset (0,+\infty)$ such that $\{U_n\times I_n\}$
is a basis of the topology of $\NDS\times (0,+\infty)$.
Then $\cG:=\cG_0\cap \bigcap_n \cV_n(U_n,I_n,n)$ is a dense G$_\delta$
subset of $\NDS$.

Let $f$ be a diffeomorphism in $\cG$. For any $\mu\in \Prob(f)$ and $\eps>0$,
there is $n>\eps^{-1}$ such that $(\mu,\Delta^+(f,\mu))\in U_n\times I_n$
and the diameters of $U_n$ and $I_n$ are smaller than $\eps$.
Since $f\in \cG_0$, the Claim~\ref{cl.stronglyapproximated}
can be applied. It shows that $f$
is limit in $\NDS$ of diffeomorphisms $g$
satisfying $h(g,\nu)\in I$ and $h_k(g,\nu)=0$ for $k\leq n$.
Since the horseshoes admit a hyperbolic continuation for nearby diffeomorphisms,
the conclusion of the Claim~\ref{cl.stronglyapproximated} holds for an open
set of diffeomorphisms $g$. Hence $f$ belongs to the closure of $\cV(U_n,I_n,n)$.
Since $f\in \cG$, one deduces that $f$ belongs to $\cV(U_n,I_n,n)$
and the conclusion of Lemma~\ref{l.smallapprox} holds.
\end{proof}

We now easily get the stated properties of the functions $u_n$, $n\in\NN$.

\begin{proof}[Proof of Proposition~\ref{p.ualpha}]
We first show $u_n\leq n\cdot \widetilde{h}$. The case $n=0$ is trivial. Assuming the inequality for some $n\geq0$, we get:
$$u_{n+1}\leq \widetilde{u_n+h} \leq \widetilde{u_n}+\widetilde{h} \leq \widetilde{n\cdot\widetilde {h}}+\widetilde{h} = (n+1)\widetilde{h}.
$$
Ruelle's inequality  $h\leq \Delta^+$ yields inequality \eqref{e.unhdelta}.

We now prove inductively $u_n(\mu)=n\cdot\widetilde{\Delta^+}(\mu)$ for $f\in\NDS$ generic and $\mu\in \Prob(f)$ (not necessarily ergodic). This is obvious for $n=0$.
Pick $\mu_N\to\mu$ such that $\Delta^+(\mu_N)\to\widetilde{\Delta^+(\mu)}$. Lemma~\ref{l.smallapprox} yields $\nu_N$ such that
\begin{itemize}
\item[--]  $d_*(\mu_N,\nu_N)<1/N$, 
\item[--] {$|h(\nu_N)-\Delta^+(\mu_N)|<1/N$,  and}
\item[--] $h_k(\nu_N)=0$ for all $k\leq N$. 
\end{itemize}
{By the second item and Ruelle's inequality we have
$$\Delta^+(\nu_N)\geq h(\nu_N)\geq \Delta^+(\mu_N)-1/N.$$}

Assume $u_n(\mu)=n\cdot\widetilde{\Delta^+}(\mu)$ for some $n\geq 0$.  Then 
 $$\begin{aligned}
   u_{n+1}&(\mu) = \lim_{k\to\infty} \widetilde{u_n+\tau_k}(\mu) 
   = \lim_{k\to\infty} \limsup_{\nu\to\mu} n\cdot\widetilde{\Delta^+}(\nu)+h(\nu)-h_k(\nu)\\
   &\geq \lim_{k\to\infty} \limsup_{N\to\infty} n\cdot\Delta^+(\nu_N)+h(\nu_N)-h_k(\nu_N)\\
   &\quad\quad= \lim_{k\to\infty} \limsup_{N\to\infty} n\cdot\Delta^+(\mu_N)+\Delta^+(\mu_N)
   =(n+1)\widetilde{\Delta^+(\mu)}.
 \end{aligned}$$
Hence, the inequality \eqref{e.unhdelta} is an equality: $u_n=n\cdot\widetilde{\Delta^+}$ for  all $n\in\NN$.
\end{proof}

\subsection{Tail entropy}\label{ss.tailentropy}
In this section we prove Theorem \ref{t.tail-general} and Corollaries \ref{c.unstability-hstar} {and~\ref{c.loc-cte}} for the tail entropy of a {generic conservative diffeomorphism}.  However, the main result, given next, is valid for \emph{any} $C^1$-diffeomorphism.

\begin{theorem}\label{t.bound-tail}
Let $f\in \Diff^1(M)$ with a dominated splitting
$TM=E_1\oplus\dots\oplus E_\ell$. Then, the tail entropy is bounded by
$$h^*(f)\leq \sup \{\Delta_{E_i}(f,\mu),\; \mu\in \Probe(f), 1\leq i\leq \ell\}.$$
\end{theorem}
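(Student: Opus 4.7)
}
The plan is to pass from topological to measure-theoretic tail entropy, then to exploit the dominated splitting via plaque families, and finally to bound the local complexity inside each subbundle by a Ruelle-type volumetric estimate. First I would invoke the variational principle for tail entropy in the form developed by Downarowicz: using the entropy structure $(h_k)_{k\geq 1}$ recalled in Section~\ref{s.entropystructure} and Theorem~\ref{t.DownarowiczEntropyStructure}, one has $h^*(f)=\sup_{\mu\in\Probe(f)} h^*(f,\mu)$, where $h^*(f,\mu):=\lim_k(\widetilde{h-h_k})(\mu)$ is the measure-theoretic tail entropy. It therefore suffices to establish, for every ergodic $\mu$,
\begin{equation*}
h^*(f,\mu)\leq \max_{1\leq i\leq \ell}\Delta_{E_i}(f,\mu).
\end{equation*}

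Second, I would fix an ergodic $\mu$ and use the dominated splitting to build locally $f$-invariant plaque families tangent to each partial sum $F_i:=E_1\oplus\cdots\oplus E_i$. At a typical point $x$ and at small scale $\varepsilon>0$, the Bowen ball $B_f(x,\varepsilon,n)$ admits, up to an adapted Riemannian metric making the splitting nearly orthogonal, an approximate product structure along these plaques. Choosing a very small resolution $\delta\ll\varepsilon$, one counts $(n,\delta)$-separated points inside $B_f(x,\varepsilon,n)$ by distributing them among plaques of the intermediate bundles $E_i$. The key consequence of domination is that contributions from the different $E_i$ do \emph{not} add up: an application of Pliss's lemma together with the $N$-domination selects, for each orbit segment, a single index $i=i(x,n)$ such that all other bundles are resolved below scale $\delta$ either forward or backward in time, so the entire complexity at scale $\delta$ is concentrated in the critical bundle $E_i$.

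Third, inside each critical bundle I would apply an intrinsic, localized Ruelle inequality to $f^n$ restricted to an $E_i$-plaque: a covering argument using the Jacobian gives that the number of $(n,\delta)$-separated points along such a plaque piece is at most of order $\Jac(f^n|E_i)\cdot\delta^{-\dim E_i}$, which by Oseledets is $\exp(n(\Delta^+_{E_i}(\mu)+o(1)))\cdot \delta^{-\dim E_i}$. Running the same argument for $f^{-1}$ and taking the minimum yields $\Delta_{E_i}(f,\mu)$; the $\delta^{-\dim E_i}$ prefactor disappears upon dividing by $n$ and then letting $\delta\to 0$ last, as in \eqref{eq-def-tail-h}. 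Maximizing over $i$ and supping over ergodic $\mu$ concludes. The main obstacle is step two—indeed this is precisely the delicate point singled out by Y.~Cao in the acknowledgments: one must rigorously justify that the Bowen-ball complexity at scale $\delta$ concentrates along a single plaque rather than compounding across bundles. This requires choosing an adapted Riemannian metric in which the domination becomes quantitative on each orbit, combined with a Pliss-type hyperbolic-times argument that selects, simultaneously for typical $x$ and $n$, the index $i$ realizing $\max_i \Delta_{E_i}(f,\mu)$.
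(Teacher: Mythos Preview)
Your overall three-step plan---variational principle for tail entropy, selection of a single critical bundle, then a Ruelle-type estimate inside that bundle---matches the paper's architecture. Steps one and three are essentially as in the paper: the paper uses a variant of the variational principle (Proposition~\ref{p.variational}) based on a modified local entropy $h_\loc(f,\mu,\eps)$, and the final volumetric covering estimate (Proposition~\ref{p.bound-exponents}) is exactly the plaque-wise Ruelle argument you describe, with the time-symmetry of $h_\loc$ giving the $\min(\Delta^+_{E_i},-\Delta^-_{E_i})$.

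The genuine difference, and the gap in your proposal, is in step two. The paper's key device is to define $h_\loc$ using \emph{bi-infinite} Bowen balls $\bar B_f(x,\eps,\pm\infty)$ rather than finite ones. They first prove a comparison lemma (Lemma~\ref{l-New-new}) showing this is uniformly equivalent to Newhouse's local entropy, so nothing is lost. Then Proposition~\ref{p.bowen-domination} shows that for each ergodic $\mu$ there is a \emph{single} index $i$, depending only on $\mu$, and a full-measure set $\Xi$ such that $\bar B_f(x,\eps,\pm\infty)\cap\Xi\subset\cD_i(x)$ for \emph{every} $x$. The argument introduces sets $Z_i^\pm$ of points with uniform contraction/expansion of the partial bundles from time zero, and shows via two claims (using local invariance of plaque families and the cone fields) that the smallest $i$ with $\mu(Z_i^-)=0$ equals the largest $j$ with $\mu(Z_j^+)=0$; this pins the bi-infinite ball inside $\cD_i$.

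Your mechanism---finite Bowen balls plus a Pliss argument selecting $i=i(x,n)$---faces a real obstacle. For finite $n$ the ball $B_f(x,\eps,n)$ need not lie near any single $E_i$-plaque; it can be long in several directions simultaneously, and the claim that ``all other bundles are resolved below scale $\delta$ either forward or backward in time'' is exactly what one cannot get without the bi-infinite compression. Allowing $i$ to depend on $(x,n)$ then makes the passage to the limit delicate. Also, the index $i$ the paper produces is determined by the measure's recurrence to the sets $Z_i^\pm$, not by which $\Delta_{E_i}(f,\mu)$ is largest; your final sentence conflates the two. The move to bi-infinite Bowen balls is precisely the fix alluded to in the acknowledgment of Cao.
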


We will bound the tail entropy by a local entropy $h_\loc(f,\mu,\eps)$ defined with respect to ergodic measures $\mu$ and a small, but uniform scale $\eps$. We will then bound the latter quantity using the Lyapunov exponents of $\mu$.

\subsubsection{Local entropy}

Let us fix $\eps>0$.  For $x\in M$, define the closed bi-infinite Bowen $\eps$-ball as:
$$\bar B_f(x,\eps,\pm\infty)
:=\{ y\in M\, :\, \forall\,  n\in \ZZ,\; d(f^k(x),f^k(y))\leq\eps\}.$$

\begin{definition}
The \emph{local entropy at scale $\eps$} of an ergodic measure $\mu$ is:
 $$
   h_\loc(f,\mu,\eps):=\lim_{\sigma\to1}\; \inf_{\mu(Y)>\sigma}\;
     \lim_{\delta\to 0}\;  \limsup_{n\to+\infty}\; \max_{x\in M}\; \frac1n\log \;   r(\delta,n,\bar B_f(x,\eps,\pm\infty)\cap Y \cap f^{-n}(Y)),
  $$
where $Y\subset M$ is a measurable subset and $r(\delta,n,Z)$ denotes the minimal cardinality of an open cover $\mathcal U$ of a set $Z$ where each $U\in\mathcal U$ satisfies $\diam(f^k(U))\leq \delta$ for any $k=0,\dots,n-1$.
\end{definition}

\begin{remark}\label{r.inverse}  \rm The definition of the local entropy is time symmetric, so that:
$$h_\loc(f^{-1},\mu,\eps) = h_\loc(f,\mu,\eps).$$
\end{remark}

The above local entropy is a variant of the one defined by Newhouse \cite{Newhouse1989}:
$$
   h^{New}_\loc(f,\mu,\eps):=\lim_{\sigma\to 1}\; \inf_{\mu(\Lambda)>\sigma}\;
     \lim_{\delta \to0}\;  \limsup_{n\to+\infty}\; \max_{x\in \Lambda}\; \frac1n\log \;   r(\delta,n,\bar B_f(x,\eps,n)\cap \Lambda)
$$
where $\Lambda$ ranges over the compact subsets with $\mu(\Lambda)>\sigma$ {and $\bar B_f(x,\eps,n)$ is the closure of $B_f(x,\eps,n)$.}
We show that the two are closely related:

\begin{lemma}\label{l-New-new}
For any measure $\mu\in\Proberg(f)$, any $\eps>0$,
 $$h^{New}_\loc(f,\mu,\eps)\leq h_\loc(f,\mu,\eps)
 \leq h^{New}_\loc(f,\mu,2\eps).$$
\end{lemma}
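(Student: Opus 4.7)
The plan is to prove the two inequalities separately, each by direct comparison of the covering numbers, combined with inner regularity of $\mu$ to pass between compact $\Lambda$ and measurable $Y$.

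For the upper bound $h_\loc(f,\mu,\eps)\leq h^{New}_\loc(f,\mu,2\eps)$, I would take a compact $\Lambda$ with $\mu(\Lambda)>\sigma$ and use $Y:=\Lambda$ in the definition of $h_\loc$. For each $x\in M$, either $\bar B_f(x,\eps,\pm\infty)\cap Y$ is empty (covering number bounded by $1$), or one picks any $z_0\in\bar B_f(x,\eps,\pm\infty)\cap Y$. The triangle inequality then yields $\bar B_f(x,\eps,\pm\infty)\subset\bar B_f(z_0,2\eps,\pm\infty)\subset\bar B_f(z_0,2\eps,n)$, so
\[
  r\bigl(\delta,n,\bar B_f(x,\eps,\pm\infty)\cap Y\cap f^{-n}(Y)\bigr)\leq r\bigl(\delta,n,\bar B_f(z_0,2\eps,n)\cap\Lambda\bigr),
\]
with $z_0\in\Lambda$. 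Taking $\max_x$, then $\tfrac1n\log$, $\limsup_n$, $\lim_{\delta\to0}$, then the infima (noting the infimum over measurable $Y$ is at most the infimum over compact $\Lambda$), and finally $\lim_{\sigma\to1}$ yields the inequality.

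The lower bound $h^{New}_\loc(f,\mu,\eps)\leq h_\loc(f,\mu,\eps)$ is more delicate, since the bi-infinite Bowen ball in $h_\loc$ is strictly smaller than the finite forward Bowen ball in $h^{New}_\loc$, even though the maximum in $h_\loc$ is over all $x\in M$ rather than $x\in\Lambda$. Given measurable $Y$ with $\mu(Y)>\sigma$, I would pick a compact $\Lambda\subset Y$ with $\mu(\Lambda)>\sigma-\eta$. For $x\in\Lambda$, an $(n,\delta)$-spanning set of $\bar B_f(x,\eps,n)\cap\Lambda$ is related, via Poincar\'e recurrence applied to both $f$ and $f^{-1}$, to a spanning set of $\bar B_f(z,\eps,\pm\infty)\cap Y\cap f^{-n}(Y)$ for some $z\in M$: $\mu$-almost every $y\in\Lambda\cap f^{-n}(\Lambda)$ has its backward orbit and the forward orbit of $f^n y$ returning to $\Lambda$ infinitely often, and a diagonal/compactness argument produces a bi-infinite-orbit center $z$ whose $\eps$-bi-infinite Bowen ball accounts for the relevant $y$'s, up to a subexponential loss absorbed by $\tfrac1n\log$.

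The main obstacle is this second direction: implementing the shadowing/recurrence step so that the scale stays at $\eps$ (rather than $2\eps$) and the combinatorial cost of choosing return times remains subexponential in $n$, so that the Newhouse exponential growth rate is genuinely recovered by the new-definition one and not lost to the approximation.
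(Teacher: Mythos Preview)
Your treatment of the second inequality $h_\loc(f,\mu,\eps)\leq h^{New}_\loc(f,\mu,2\eps)$ is correct and matches the paper's one-line argument exactly.

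For the first inequality your sketch has a genuine gap. The difficulty you identify is real, but the mechanism you propose does not resolve it. You want to start from a point $x\in\Lambda$ with large $r(\delta,n,\bar B_f(x,\eps,n)\cap\Lambda)$ and, via Poincar\'e recurrence of the individual points $y\in\bar B_f(x,\eps,n)\cap\Lambda$, place ``the relevant $y$'s'' inside a single bi-infinite Bowen ball $\bar B_f(z,\eps,\pm\infty)$. But each such $y$ has its \emph{own} sequence of return times to $\Lambda$, and nothing forces the many $(n,\delta)$-separated $y$'s to stay $\eps$-close to a common orbit outside the window $[0,n-1]$; indeed in general they will not. No diagonal/compactness argument on the individual $y$'s can produce a common center $z$ at scale $\eps$, because the data simply do not contain that coherence.

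The paper's idea is different and worth internalizing: do \emph{not} work with $n$ directly. Take the witnesses $(x_k,m_k)$ for $h^{New}_\loc$ with $m_k\to\infty$, so that all the separated points already agree with a single orbit for a very long time $[0,m_k)$. Then cut $[0,m_k)$ into blocks of length $n$ (plus a proportion at most $1-\sigma$ of single steps, placed via Birkhoff so that block endpoints land in a compact $Y'\subset Y$). The number of such cuts is subexponential, so by pigeonhole one block $[a_s,a_s+n)$ carries covering number at least $e^{(H-\eta-h_\sigma)n}$. Crucially, the points in that block are still $\eps$-close to $f^{a_s}(x_k)$ on the much larger window $[-a_s,\,m_k-a_s)$. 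Sending $m_k\to\infty$ (with $a_s\to\infty$ and $m_k-a_s\to\infty$) and using compactness of $Y'$, the centers converge to some $x\in M$ whose genuine bi-infinite ball $\bar B_f(x,\eps,\pm\infty)\cap Y\cap f^{-n}Y$ retains the large covering number. The scale stays at $\eps$ because the long-time coherence was already present in $\bar B_f(x_k,\eps,m_k)$; recurrence of individual points is never used.
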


\newcommand\Ho{H}
\newcommand\Ht{h_\sigma}

\begin{proof}
We fix $\eps>0$ and $\mu\in \Proberg(f)$.
The second inequality follows from the fact that for each $x\in M$, there exists $y\in \Lambda$ such that $\bar B_f(x,\eps,\pm\infty)\cap \Lambda\cap f^{-n}(\Lambda)\subset \bar B_f(y,2\eps,n)\cap \Lambda$. We focus on the first inequality.

\medbreak
%
%
%
We fix $\eta>0$ and consider $\Ho=h^{New}_\loc(f,\mu,\eps)$. By definition, there is $0<\sigma<1$, arbitrarily close to $1$, such that, for all compact subsets $\Lambda\subset M$ with $\mu(\Lambda)>\sigma$, all {small enough} $\delta>0$, there is a sequence $(x_k,m_k)_{k\geq1}$ with $x_k\in \Lambda$ and $m_k\to+\infty$ such that,
 \begin{equation}\label{eq-lower-New}
  \forall k\geq1\quad r(\delta,m_k,\bar B_f(x_k,\eps,m_k)\cap \Lambda) \geq \exp ((H-\eta)m_k).
 \end{equation}
Now, by definition of the local entropy, there is a Borel subset $Y$ with $\mu(Y)>\sigma$ such that, for all $\delta>0$ and $x\in M$,
 \begin{equation}\label{eq-upper-loc}
  \forall n\geq n_2(Y,\delta)\quad
   r(\delta,n,\bar B_f(x,\eps,\pm\infty)\cap Y\cap f^{-n}Y)
  \leq \exp ((h_\loc(f,\mu,\eps)+\eta)n).
 \end{equation}
We will compare these two bounds for the following choice of compact set $\Lambda$. By regularity of the measure, there is a compact subset $Y'\subset Y$ with $\mu(Y')>\sigma$. By Birkhoff's theorem, there is a compact set $\Lambda$ with $\mu(\Lambda)>\sigma$ and numbers $\sigma'>\sigma$ and $N\geq1$ such that for any $m\geq N$ and $y\in \Lambda$, the sequence $y,f(y),\dots,f^{m-1}(y)$ contains at least $\sigma'.m$ points in $Y'$.
\medbreak

Let us choose $n\geq 1$ large.
For each $m\gg n$, we consider an $m$-cut $\cC$, that is, a collection of integers
$0=a_0<a_1<\dots<a_r=m-1$ such that:
\begin{itemize}
\item[--] $a_{s+1}=a_s + 1$ or $a_s+n$ for each $0\leq s<r$,
\item[--] $\{s\; : \; a_{s+1}=a_s+1\}$ has cardinality at most $2(1-\sigma).m$.
\end{itemize}
The number of possible $m$-cuts is bounded by
$C_\sigma e^{\Ht m}$ where $C_\sigma,\Ht$ only depend on $\sigma$ with $\lim_{\sigma\to0} \Ht=0$.
From the properties of $\Lambda$ and $N$, we note that for each $y\in \Lambda$ and $m\geq N$, one can find an $m$-cut $\cC=(a_s)$ such that
$f^{a_s}(y)\in Y'$ and $f^{a_s+n}(y)\in Y'$ for each $s$ such that $a_{s+1}=a_s + n$.

For each $k$ large enough, one considers all the possible $m_k$-cuts $\cC=(a_s)$ and one partitions
 $\Lambda$ into sets $\Lambda_\cC$ of points associated to the $m_k$-cut $\cC$.
There is a cut $\cC$ and a set $\Lambda_\cC\subset \Lambda$ such that (using eq. \eqref{eq-lower-New})
$$r(\delta, m_k,\bar B_f(x_k,\eps,m_k)\cap \Lambda_\cC)\geq \frac{r(\delta, m_k,\bar B_f(x_k,\eps,m_k)\cap \Lambda)}{C_\sigma\exp{\Ht.m_k}}\geq
C_\sigma^{-1}\exp((\Ho-\eta-\Ht).m_k).
$$
Since $f$ is a diffeomorphism, there exists $K>0$ such that
for any $r>0$ small, the images by $f$ and $f^{-1}$ of any ball of radius $r$
can be covered by at most $K$ balls of radius $r$.
For each $a_s=a_{s+1}- n$, one can consider
a cover realizing $r(\delta,n, f^{a_s}(\bar B_f(x_k,\eps,m_k)\cap \Lambda_\cC))$.
Pulling back by $f^{a_s}$ each of these covers, taking their join,
and subdividing again each element into $K$ pieces according to each iterate $f^{a_s}$ such that $a_s=a_{s+1}-1$,
one gets a cover of $B_f(x_k,\eps,m_k)\cap \Lambda_\cC$. This proves:
$$r(\delta, m_k,\bar B_f(x_k,\eps,m_k)\cap \Lambda_\cC)\leq
K^{2(1-\sigma).m_k}
\prod_{s:a_s=a_{s+1}- n} r(\delta, n,f^{a_s}(\bar B_f(x_k,\eps,m_k)\cap \Lambda_\cC)).$$
One deduces that there exists $a_s=a_{s+1}-n$ such that (provided $n$ has been chosen large enough):
\begin{equation*}
\begin{split}
\frac 1 n \log r(\delta, n,f^{a_s}(\bar B_f(x_k,\eps,m_k)\cap \Lambda_\cC))
&\geq \Ho-\eta-2(1-\sigma)\log K-\Ht- \frac 1 {m_k} \log C_\sigma\\
&> \Ho-2\eta-2(1-\sigma)\log K-\Ht.
\end{split}
\end{equation*}
Recall that $f^{a_s}(\Lambda_\cC)$ and $f^{a_s+n}(\Lambda_\cC)$ are both contained in $Y'$.
When $m_k\to \infty$, one can furthermore assume that $a_s\to \infty$ and $m_k-a_s-n\to \infty$,
and (up to passing to a subsequence) that $a_s$ and $m_k-a_s-n$ are both larger than $k$.
One has thus obtained a sequence of points $z_k=f^{a_s-k}(x_k)$
such that
$$\frac 1 n \log r(\delta, n,f^{k}(\bar B_f(z_k,\eps,2k+n))\cap Y'\cap f^{-n}(Y'))
> \Ho-2\eta-2(1-\sigma)\log K-\Ht.$$
As $k\to \infty$, one can assume that $(f^k(x_k))$ converges toward a point $x\in M$. Since the set $Y'$ is compact
and contained in $Y$,
 taking a limit of maximal $(\delta,n)$-separated subsets,
one gets that
$$\frac 1 n \log r(\delta, n,\bar B_f(x,\eps,\pm \infty))\cap Y\cap f^{-n}(Y))
> \Ho-2\eta-2(1-\sigma)\log K-\Ht.$$

Combining with~\eqref{eq-upper-loc}, one gets
$$h_\loc(f,\mu,\eps)+\eta>\Ho-2\eta-2(1-\sigma)\log K-\Ht.$$
Taking $\eta$ and $1-\sigma$ arbitrarily small one concludes the proof.
\end{proof}

In the terminology of \cite{DownarowiczEntropyStructure,DownarowiczBook}, $(h-h^{New}_\loc(f,\cdot,1/k))_{k\geq1}$ and $(h-h_\loc(f,\cdot,1/k))_{k\geq1}$ are uniformly equivalent. Therefore the (harmonic extension of the) latter is an entropy structure and the tail entropy satisfies the following variational principle \cite{DownarowiczEntropyStructure}.

\begin{proposition}\label{p.variational}
$
   h^*(f) = \lim_{\eps\to0} \sup_{\mu\in\Proberg(f)} h_\loc(f,\mu,\eps).$
\end{proposition}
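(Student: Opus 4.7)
The plan is to reduce the desired identity to the classical variational principle for tail entropy expressed in terms of Newhouse's local entropy $h^{New}_\loc$, and then transfer it to $h_\loc$ by sandwiching with Lemma~\ref{l-New-new}.

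First, I would invoke the following theorem of Downarowicz \cite{DownarowiczEntropyStructure,DownarowiczBook}: for any continuous self-map $f$ of a compact metric space, the sequence $(h - h^{New}_\loc(f,\cdot,1/k))_{k\geq 1}$ is uniformly equivalent to the Romagnoli entropy structure defined in \eqref{e.Romagnoli}, and hence is itself an entropy structure; applying the identification $\|u_1\|_\infty = h^*(f)$ of Theorem~\ref{t.DownarowiczEntropyStructure} at the first ordinal stage $u_1=\lim_k \widetilde{(h-h_k)}$ and unwinding the harmonic extension on the simplex of invariant measures then yields
$$h^*(f) \;=\; \lim_{\eps\to 0}\sup_{\mu\in\Proberg(f)} h^{New}_\loc(f,\mu,\eps).$$
The reduction of the supremum from $\Prob(f)$ to $\Proberg(f)$ is automatic by the affinity of the entropy function and of $\mu\mapsto h^{New}_\loc(f,\mu,\eps)$ on the space of invariant measures.

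Second, by Lemma~\ref{l-New-new}, for every $\mu\in\Proberg(f)$ and every $\eps>0$ one has
$$h^{New}_\loc(f,\mu,\eps) \;\leq\; h_\loc(f,\mu,\eps) \;\leq\; h^{New}_\loc(f,\mu,2\eps).$$
Taking the supremum over $\mu\in\Proberg(f)$ and letting $\eps\to 0$, both outer quantities tend to $h^*(f)$ by the previous step, so by sandwich $\sup_\mu h_\loc(f,\mu,\eps)$ has the same limit $h^*(f)$, which is the proposition.

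The main (essentially the only) obstacle is the first step, namely justifying the Newhouse-Downarowicz variational principle. This is handled either by citing Downarowicz's monograph, or by running the entropy-structure machinery directly: one must verify that the functions $h - h^{New}_\loc(f,\cdot,1/k)$ satisfy the axioms (non-decreasing in $k$, pointwise convergence to $h$, and the appropriate upper semicontinuity of their harmonic extensions) before invoking Theorem~\ref{t.DownarowiczEntropyStructure}. Once that is granted, Lemma~\ref{l-New-new} completes the argument in one line.
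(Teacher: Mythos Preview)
Your proposal is correct and follows essentially the same route as the paper: both use Downarowicz's identification of $(h-h^{New}_\loc(\cdot,1/k))_k$ as an entropy structure together with Theorem~\ref{t.DownarowiczEntropyStructure}, and both pass to $h_\loc$ via the sandwich of Lemma~\ref{l-New-new}. The only cosmetic difference is that the paper first uses Lemma~\ref{l-New-new} to declare the $h_\loc$-sequence itself an entropy structure (uniform equivalence) and then applies Theorem~\ref{t.DownarowiczEntropyStructure} directly to it---obtaining the inequality $h^*(f)\leq \lim_{\eps}\sup_\mu h_\loc(f,\mu,\eps)$ and reading off the converse from the definitions---whereas you first write down the variational formula for $h^{New}_\loc$ and sandwich at the end; the content is the same.
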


\begin{proof}
The variational principle for tail entropy (Theorem~\ref{t.DownarowiczEntropyStructure}) yields:
 $$
   h^*(f) =  \sup_{\mu\in\Prob(f)} \lim_{k\to\infty} \limsup_{\nu\to\mu} h_\loc(f,\nu,1/k)
   \leq \lim_{\eps\to0} \sup_{\mu\in\Proberg(f)} h_\loc(f,\mu,\eps). 
    $$
The converse inequality is immediate from the definitions.
\end{proof}

\subsubsection{Local entropy under a dominated splitting}
Let us now assume that $f$ preserves a dominated splitting
\begin{equation}
TM=E_1\oplus E_2\oplus\dots\oplus E_\ell.
\end{equation}

For each $i$, there is
a backward invariant cone field $\cC_i^-$
which satisfies (see~\cite[Appendix B.1]{BDV}):
$$\bigcap_{n} Df^{-n}(f^{n}(x)).\cC_i^-(f^{n}(x))=E_1\oplus\dots\oplus E_i.$$
We will also need the existence of plaque families~\cite{HPS}:
\begin{theorem}[Plaque families]
If $f$ has a dominated splitting $TM=E\oplus F$, there exists
a continuous family of $C^1$ embeddings
$\psi_x\colon E_x\to M$ such that
\begin{itemize}
\item[--] $\psi_x(0)=x$ and the image of $\psi_x$ is a disc $\cD(x)$ tangent to $E_x$,
\item[--] (local invariance) $f(\psi_{f^{-1}(x)}(B(0,1)))\cup f^{-1}(\psi_{f(x)}(B(0,1)))$ is contained in $\cD(x)$.
\end{itemize}
\end{theorem}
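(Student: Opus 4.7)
The plan is to follow the classical Hirsch--Pugh--Shub graph transform argument, adapted to the setting where only domination (and not uniform hyperbolicity) of $E\oplus F$ is available. The key observation is that $N$-domination is precisely what is needed to produce an invariant cone field around $E$, and hence to make a suitable graph transform a contraction; true invariance of $E$ is not required, which is why one only obtains \emph{local} invariance of the plaques.

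First, I would work in the tangent bundle $TM$ via exponential charts. Read through $\exp_x^{-1}$, the dynamics near $x$ becomes a fiberwise map $\hat f_x\colon T_{f^{-1}(x)}M\to T_xM$ whose derivative at the origin is $Df(f^{-1}(x))$. Using a bump function, I extend $\hat f_x$ to a global fiberwise map $\tilde f_x$ that agrees with $\hat f_x$ on a ball $B(0,2)$ and with the linear part $Df(f^{-1}(x))$ outside a slightly larger ball. By choosing the bump carefully, $\|\tilde f_x - Df(f^{-1}(x))\|_{C^1}$ can be made as small as desired, uniformly in $x$.

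Next, I would introduce the complete metric space $\Sigma$ of continuous sections assigning to each $x\in M$ a $C^1$ map $\phi_x\colon B_{E_x}(0,r)\to F_x$ with $\phi_x(0)=0$ and $\|D\phi_x\|\leq \alpha$, equipped with the fiberwise $C^0$ norm on the graphs. Here $\alpha>0$ is chosen so that the $\alpha$-cone around $E$ is $Df^{-N}$-invariant; after passing to an adapted metric obtained by averaging over $N$ iterates, one may assume this invariance holds at each step. The graph transform $\Gamma$ sends $\sigma\in\Sigma$ to the section whose value at $x$ is the graph of $\tilde f_x(\mathrm{graph}(\phi_{f^{-1}(x)}))$ intersected with $B_{E_x}(0,r)\times F_x$. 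Domination ensures that $\Gamma$ preserves the cone condition, and a standard computation shows that $\Gamma$ is a uniform contraction on $\Sigma$.

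By Banach's fixed point theorem, $\Gamma$ has a unique fixed section $\sigma_\infty$, and setting $\psi_x(u):=\exp_x(u,\sigma_\infty(x)(u))$ gives the required embedding $B_{E_x}(0,r)\hookrightarrow M$. Local invariance of $\mathcal{D}(x):=\psi_x(B_{E_x}(0,r))$ is then a direct consequence of $\Gamma\sigma_\infty=\sigma_\infty$: provided $r$ was chosen so that $B(0,1)$ lies inside the region where $\tilde f=\hat f$, pushing the plaque at $f^{-1}(x)$ forward by $f$ lands inside $\mathcal{D}(x)$, and symmetrically the preimage of the unit-ball piece of $\psi_{f(x)}$ lies in $\mathcal{D}(x)$. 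The main obstacle will be upgrading $\sigma_\infty$ from a merely $C^0$-fixed section to a genuinely $C^1$ family with continuous $x$-dependence of the derivative $D\phi_x$. The standard remedy is a second graph transform on the Grassmannian bundle $\Grass_{\dim E}(TM)$: the derivative must satisfy its own cocycle identity, and the same $N$-domination makes the induced transform a contraction on the admissible cone of derivatives, yielding the required continuity.
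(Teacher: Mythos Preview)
The paper does not actually prove this theorem: it is quoted as a known result from Hirsch--Pugh--Shub~\cite{HPS}, with no argument given. Your proposal is precisely the classical HPS graph transform argument, so in that sense it matches what the paper invokes by citation.

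Your outline is sound. A couple of minor points worth tightening: the statement asks for $\psi_x$ defined on all of $E_x$, not just a ball, so after obtaining the fixed section on $B_{E_x}(0,r)$ you should extend each $\psi_x$ to a global $C^1$ embedding of $E_x$ (any smooth extension tangent to $E_x$ at the origin will do, since only the local invariance on the unit ball is claimed). Also, the contraction of the graph transform in the $C^0$ topology uses the domination of $F$ over $E$ (the $F$-component of the graph is contracted relative to the $E$-base), so be careful that the inequality runs in the right direction; your cone argument for preserving the Lipschitz bound is correct as stated. The $C^1$ upgrade via the induced action on $\Grass_{\dim E}(TM)$ is the standard route and works for the same reason.
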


For each index $i$, one considers a locally invariant plaque family $\cD^-_i$ tangent to
$E_1\oplus\dots\oplus E_i$ and a locally invariant plaque family $\cD^+_i$ tangent to
$E_i\oplus\dots\oplus E_\ell$. The intersection $\cD_i(x):=\cD^+_i(x)\cap \cD^-_i(x)$
defines a locally invariant plaque family tangent to $E_i$.

\begin{proposition}\label{p.bowen-domination}
Assume that $f$ has a dominated splitting $TM=E_1\oplus\dots\oplus E_\ell$
and consider locally invariant plaque families $\cD_1,\dots,\cD_\ell$.
Then {for all small enough} $\eps>0$ the following property {holds}.
For any $\mu\in \Proberg(f)$ there exist $\Xi\subset M$ with $\mu(\Xi)=1$ and $i\in \{1,\dots,\ell\}$
such that $$\forall x\in M,\quad \bar B_f(x,\eps,\pm\infty)\cap \Xi\subset \cD_i(x).$$
\end{proposition}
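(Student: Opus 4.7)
My plan is to choose a ``central'' index $i$ driven by the Lyapunov spectrum of $\mu$ and then apply a Liao--Pliss/Pesin argument to two coarser dominated splittings, using at every point the plaque families $\cD_i^\pm$ and the invariant cone fields provided by the domination. First, $N$-domination gives, at a $\mu$-regular point, the strict monotonicity of the Lyapunov spectrum across the blocks of the splitting: iterating $\|Df^{N}\hat v_k\|\le \tfrac12\|Df^{N}\hat w_k\|$ along $n$ consecutive blocks for unit $\hat v_k\in E_j(f^{kN}x)$ and $\hat w_k\in E_{j+1}(f^{kN}x)$ and dividing $\log/(nN)$ yields $\lambda(v)\le\lambda(w)-(\log 2)/N$ for any $v\in E_j,\, w\in E_{j+1}$ with well-defined exponents. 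I then define $i$ as the smallest index such that $E_i$ carries a non-negative Lyapunov exponent of $\mu$ (if no such index exists, take $i=\ell$). By the strict monotonicity just established, every exponent of $\mu$ on $E_1\oplus\cdots\oplus E_{i-1}$ is then strictly negative (or this bundle is absent) and every exponent on $E_{i+1}\oplus\cdots\oplus E_\ell$ is strictly positive (or absent).

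Second, with this $i$ fixed I apply a Liao--Pliss/Pesin argument to the coarser dominated splittings $TM=(E_1\oplus\cdots\oplus E_i)\oplus(E_{i+1}\oplus\cdots\oplus E_\ell)$ and $TM=(E_1\oplus\cdots\oplus E_{i-1})\oplus(E_i\oplus\cdots\oplus E_\ell)$. In the first, the second factor is uniformly expanded on average along $\mu$; working inside the forward-invariant cone field $\cC_i^-$ from the proposition's preamble, which is defined at \emph{every} point of $M$, one obtains a $\mu$-full measure set $\Xi_1$ and a constant $\eps>0$ depending only on the dominated splitting and the plaque size (and in particular not on $\mu$), such that for every $x\in M$ and $y\in\Xi_1$ whose forward orbit stays $\eps$-close to that of $x$, one has $y\in\cD_i^-(x)$: otherwise the component of the displacement transverse to $E_1\oplus\cdots\oplus E_i$ would lie in the forward-expanding cone at $y$, be expanded at a definite rate at the infinitely many $\mu$-hyperbolic times of $y$, and this expansion would transfer to the nearby orbit of $x$, contradicting $\eps$-closeness. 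Running the same argument for $f^{-1}$ and the second splitting produces $\Xi_2$ satisfying $y\in\cD_i^+(x)$ whenever the backward orbit of $y\in\Xi_2$ stays $\eps$-close to that of $x$. Setting $\Xi:=\Xi_1\cap\Xi_2$ then yields, for every $x\in M$,
$$\bar B_f(x,\eps,\pm\infty)\cap\Xi\;\subset\;\cD_i^-(x)\cap\cD_i^+(x)=\cD_i(x).$$

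The main obstacle is the transfer step: one needs a single $\mu$-full set $\Xi$ whose hyperbolic-time statistics at a typical $y\in\Xi$ control the plaques of \emph{every} nearby $x\in M$, not only of $y$ itself. This is feasible precisely because the cone fields $\cC_i^\pm$ coming from the dominated splitting live at every point of $M$ and are $Df$-invariant in the right directions; one also needs that the plaque family $\cD_i^-(x)$, available at any $x\in M$ from the plaque family theorem, can be taken as the local ``stable'' object of the Pliss argument. Fixing $\eps$ small relative to the cone apertures, the plaque sizes, and the radius of local charts is exactly what produces the uniform constant $\eps>0$ appearing in the conclusion of the proposition.
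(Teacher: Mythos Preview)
Your strategy—pick an index $i$ from the Lyapunov spectrum of $\mu$ and run a Pliss argument on each side—is natural, but there is a genuine gap in the claimed $\mu$-independence of $\eps$. With your choice of $i$ (the smallest index at which $E_i$ carries a non-negative exponent), the domination gap indeed forces every exponent on $E_{i+1}\oplus\cdots\oplus E_\ell$ to be at least $(\log 2)/N$; so the forward half producing $\Xi_1$ can be run at a fixed Pliss rate and the corresponding $\eps$ is uniform. The backward half for $\Xi_2$ does not enjoy this: you only know the exponents on $E_1\oplus\cdots\oplus E_{i-1}$ are \emph{strictly} negative. Nothing prevents a measure $\mu$ whose top exponent on $E_{i-1}$ is $-\delta$ with $\delta$ arbitrarily small (take all exponents on $E_i$ positive and close to $(\log 2)/N$; the gap condition is then saturated). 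For such $\mu$ your $f^{-1}$-Pliss rate on $E_{1\dots i-1}$ is of order $\delta$, and the size of the contracted plaque at the resulting hyperbolic times—which must exceed $\eps$ for the transverse-intersection step to go through—tends to $0$ with $\delta$ in the $C^1$ category. Hence the $\eps$ for $\Xi_2$ depends on $\mu$, contradicting the statement. A secondary point: the ``transverse displacement lies in the expanding cone and grows'' picture is itself delicate for $C^1$ maps, since the nonlinear remainder is only $o(\eps)$ and can dominate a transverse component that starts much smaller than $\eps$; the robust implementation goes through plaque intersections and local invariance rather than a linear displacement estimate.

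The paper avoids the asymmetry by decoupling the index from the spectrum. It fixes a rate $\lambda\in(1/2,1)$ once and for all, defines Pliss sets $Z_i^{\pm}$ at that fixed rate, and lets $i$ (resp.\ $j$) be the smallest (resp.\ largest) index with $\mu(Z_i^-)=0$ (resp.\ $\mu(Z_j^+)=0$). The inclusion into $\cD_i^+(x)$ (Claim~\ref{c.inclusion}) then uses visits of the orbit of $y\in\Xi$ to $Z_{i-1}^-$, where the stable plaque in $\cD_{i-1}^-$ has \emph{uniform} size precisely because $\lambda$ is fixed. The inclusion into $\cD_i^-(x)$ comes from a different mechanism (Claim~\ref{c.exponent}): a purely geometric argument using only the domination, with no hypothesis on $\mu$, showing that if the Bowen ball escapes $\cD_i^-(x)$ then \emph{every} orbit inside it must visit $Z_i^-$; combined with $\mu(Z_i^-)=0$ this forces the inclusion and ultimately $i=j$. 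Your scheme can likely be repaired by shifting the threshold defining $i$ from $0$ to some $-c$ with $0<c<(\log 2)/N$, which makes both flanking bundles uniformly hyperbolic along $\mu$; but as written the $\Xi_2$ step does not deliver a $\mu$-independent $\eps$.
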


\begin{proof}
From the dominated splitting,
{we} can choose $N\geq 1$ such that, for each $x$ and $i$, one has:
\begin{equation}
\label{e.domination}
{5}\|Df^{N}(f^{-N}(x))|E_{1}\oplus\dots\oplus E_{i-1}\| \leq \|Df^{-N}(x)|E_{i}\|^{-1}.
\end{equation}
We fix $\lambda\in (1/2,1)$. Then we choose $\eps>0$ small.

Consider any $\mu\in \Proberg(f)$.
For any $i$, one defines the measurable set $Z_i^-$ of points $y$ satisfying
$$\forall k\geq 1, \quad \prod_{s=0}^{k-1}\|Df^N(f^{sN}(y))|E_1\oplus \dots\oplus E_{i}\|\leq \lambda^k.$$
One also defines the measurable set $Z_i^+$ of points $y$ satisfying
$$\forall k\geq 1, \quad \prod_{s=0}^{k-1}\|Df^{-N}(f^{-sN}(y))|E_i\oplus \dots\oplus E_{\ell}\|\leq \lambda^k.$$
By Birkhoff's ergodic theorem, there exists an invariant measurable set $\Xi$ with full $\mu$-measure
which satisfies for each $Z=Z^-_i$ or $Z^+_i$: depending on whether $\mu(Z)=0$,
either the orbit of $\Xi$ is disjoint from $Z$ or the orbit of each point $y\in \Xi$ intersects $Z$ infinitely many times in the future and in the past.

\begin{claim}\label{c.inclusion}
Assume $\mu(\Xi\cap Z^-_{i-1})>0$. Then
for any $x\in M$ we have $\bar B_f(x,\eps, \pm\infty)\cap \Xi\subset \cD_{i}^+(x)$.
\end{claim}
\begin{proof}
We take $y\in \bar B_f(x,\eps, \pm\infty)\cap \Xi$.
By definition of $\Xi$, its orbit meets $Z^-_{i-1}$ arbitrarily many times in the past.
From~\cite[Proposition 8.9]{ABC}, each point in $Z^-_{i-1}$ has a uniform neighborhood in $\cD^-_{i-1}$
which is contracted under forward iterations (and which is contained in the stable manifold).
One may consider arbitrarily large backwards iterates $f^{-k}(y)\in \bar B_f(f^{-k}(x),\eps, \pm\infty)$
such that $f^{-k}(y)\in Z^-_{i-1}$: the local stable manifold of $f^{-k}(y)$ in $\cD^-_{i-1}(f^{-k}(y))$ then intersects the plaque $\cD^+_{i}(f^{-k}(x))$.
Provided $\eps$ has been chosen small enough and by local invariance of the plaques, one deduces that
$y$ is arbitrarily close to $\cD_{i}^+(x)$.
\end{proof}

\begin{claim}\label{c.exponent}
If $\bar B_f(x,\eps, \pm\infty)\not \subset \cD^-_{i}(x)$,
then the orbit of any $z\in \bar B_f(x,\eps, \pm\infty)$ intersects $Z_{i}^-$.
Similarly, if $\bar B_f(x,\eps, \pm\infty)\not \subset \cD^+_{i}(x)$,
then the orbit of any $z\in \bar B_f(x,\eps, \pm\infty)$ intersects $Z_{i}^+$.
\end{claim}
\begin{proof}
There exists $K>0$ such that for any two points $x\in M$ and $z\in \cD^-_{i+1}(x)$,
close enough, there is
$y\in \cD_{i+1}(x)$ satisfying:
\begin{itemize}
\item[(a)] $\max(d(y,x), d(z,y))\leq K d(z,x)$,
\item[(b)] there is an arc tangent to $\cC^-_{i}$ in $\cD^-_{i+1}(x)$ that connects $z$ to $y$.
\end{itemize}

Note that from the backward invariance of the cone-field $\cC^-_{i}$
and {(locally)} of the  plaque family, if $d(f^{-j}(x),f^{-j}(y))$ remains small for each $1\leq j\leq N$, then
one still has $f^{-N}(y)\in \cD_{i+1}(f^{-N}(x))$ and the two properties (a) and (b) above are still satisfied by
$f^{-N}(x)$, $f^{-N}(z)$ and $f^{-N}(y)$.

Considering charts, one can take the arcs between $f^{-j}x$ and $f^{-j}y$ almost linear.
This shows that if $\eps>0$ is small enough{,} then for any $x\in M$ and $z\in{{ \cD_{i+1}^-(x)}}$ that are $\eps$-close, {there is $y\in\cD_{i+1}(x)$ satisfying:}
\begin{equation}\label{e.tail2}
d(x,y)\geq (2\|Df^{-N}(x)|E_{i+1}\|)^{-1}d(f^{-N}(x),f^{-N}(y)).
\end{equation}

Let us now consider a Bowen ball
$\bar B_f(x,\eps,\pm\infty)$ centered at some point $x\in M$.
Up to increasing $i$ {(note $Z_{i+1}^-\subset Z_i^-$)}, one can assume that $\bar B_f(x,\eps, \pm\infty)\subset \cD^-_{i+1}(x)$.
For each $z\in \bar B_f(x,\eps, \pm\infty)$ and each multiple $k$ of $N$, one can find
$y_k\in \cD_{i+1}(f^k(x))$ such that for each $0\leq j\leq k$,
$f^{-j}(y_k)$ belongs to $\cD_{i+1}(f^{k-j}(x))$ and may be connected to $f^{k-j}(z)$
{inside} $\cD_{i+1}^-(f^{k-j}(x))$ by an arc tangent to $\cC^{-}_{i}$.
Taking a limit of $f^{-k}(y_k)$ as $k\to+\infty$, one gets a point $y\in \cD_{i+1}(x)$ satisfying,
for any $j\geq 0$:
\begin{itemize}
\item[--] $\max(d(f^j(z),f^j(y)),\; d(f^{j}(y),f^{j}(x)))\leq K d(f^{j}(z),f^{j}(x))$,
\item[--] there is an arc tangent to $\cC^-_{i}$ in $\cD_{i+1}^-(f^{j}(x))$ that connects $f^{j}(z)$ to $f^{j}(y)$.
\end{itemize}

By definition of $i$, there exists
$z_0\in \bar B_f(x,\eps, \pm\infty)\setminus  \cD^-_{i}(x)$. Let $y_0\in \cD_{i+1}(x)$
{satisfy (a) and (b) with respect to $z_0$ and $x$. We can also impose $d(y_0,x)\neq 0$}
since $z_0\notin \cD^-_{i}(x)$. We set $$C_x:= \frac{\eps K}{d(x,y_0)}.$$
For any $z\in \bar B_f(x,\eps,\pm\infty)$ and $k\geq 1$,
one gets from~\eqref{e.domination}, \eqref{e.tail2} and the continuity of $Df^{N}$:
\begin{equation*}
\begin{split}
\eps &\geq d(f^{kN}(x),f^{kN}(z_0))\geq d(x,y_0)\; K^{-1}\; \prod_{s=0}^{k-1}(2 \|Df^{-N}(f^{{(s+1)}N}(x))|E_{i+1}\|)^{-1}\\
&\geq d(x,y_0)\; K^{-1}\; \prod_{s=0}^{k-1}2 \|Df^{N}(f^{sN}(z))|E_{1}\oplus \dots\oplus E_{i}\|.
\end{split}
\end{equation*}
Our choice of $C_x$ gives:
$$
\forall k\geq 1,\; \prod_{s=0}^{k-1} \|Df^{N}(f^{sN}(z))|E_{1}\oplus \dots\oplus E_{i}\|\leq C_x 2^{-k}.
$$
If one assumes by contradiction that the forward orbit of $z$ does not intersects $Z^-_i$, then
for each iterate $f^{jN}(z)$, there exists $k>j$ such that
$$
\forall k\geq 1,\; \prod_{s=j}^{k-1} \|Df^{N}(f^{sN}(z))|E_{1}\oplus \dots\oplus E_{i}\|> \lambda^{k-j}.
$$
One deduces that there exist  arbitrarily large integers $k$ satisfying
$$
\prod_{s=0}^{k-1} \|Df^{N}(f^{sN}(z))|E_{1}\oplus \dots\oplus E_{i}\|> \lambda^k.
$$
This contradicts our choice $\lambda> 2^{-1}$ and proves the first claim. The second one follows by applying the first one to $f^{-1}$.
\end{proof}

To finish the proof of the proposition, we consider the smallest $i$ such that $\Xi\cap Z^-_{i}=\emptyset$
and the largest $j$ such that $\Xi\cap Z^+_{j}=\emptyset$: one can assume that they exist
since otherwise $\Xi$ intersects $Z_\ell^-$ or $Z_1^+$; the measure $\mu$ is then a sink or a source
and the proposition holds trivially. {We can also assume the existence of $x_0\in M$ such that $\bar B_f(x_0,\eps,\pm\infty)\cap\Xi\not\subset\{x_0\}$.}

Claim~\ref{c.inclusion} gives $\bar B_f(x_0,\eps, \pm\infty)\cap \Xi\subset \cD_{i}^+(x_0)\cap \cD_{j}^-(x_0)$.
Since $\cD_i^+$ is transverse to $\cD^-_{i-1}$, one deduces $j\geq i$.
The claim~\ref{c.exponent} and the definition of $Z,i,j$ imply
$\bar B_f(x_0,\eps, \pm\infty)\subset \cD^-_{i}(x_0)\cap \cD^+_j(x_0)$.
Since $\cD_{i+1}^+$ is transverse to $\cD^-_{i}$, one deduces $j\leq i$.
We have shown that $i=j$.  For any $x\in M$ we now see that 
$$
\bar B_f(x,\eps, \pm\infty)\cap \Xi\subset \cD_{i}^+(x)\cap \cD_{j}^-(x)=\cD_i(x).
$$
\end{proof}

\begin{proposition}\label{p.bound-exponents}
Assume that $f$ has a dominated splitting $TM=E_1\oplus\dots\oplus E_\ell$ and consider corresponding locally invariant plaque families $\cD_1,\dots,\cD_\ell$. Let us fix  $\mu\in\Proberg(f)$ and numbers $\eps>0$ and $i\in \{1,\dots,\ell\}$ such that the following holds for some $\Xi\subset M$ with $\mu(\Xi)=1$:
 $$
   \forall x\in M\quad \bar B_f(x,\eps,\pm\infty)\cap \Xi\subset \cD_i(x).
 $$
Then,
 $$
h_\loc(f,\mu,\eps) \leq \Delta^+_{E_i}(f,\mu).
 $$
\end{proposition}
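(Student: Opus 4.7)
The strategy is to reduce the computation of $h_\loc(f,\mu,\eps)$ to a controlled-distortion covering argument inside the plaques tangent to $E_i$. The hypothesis $\bar B_f(x,\eps,\pm\infty)\cap\Xi\subset\mathcal D_i(x)$ confines the relevant orbit pieces to a $d_i$-dimensional disc (with $d_i:=\dim E_i$), so covering numbers will be governed by the expansion rates of $Df|E_i$ along the positive Oseledets subbundle.

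Fix $\eta>0$. Applying Oseledets to $Df|E_i$ gives a $\mu$-almost everywhere splitting $E_i=E_i^+\oplus E_i^-$, where $E_i^+$ carries the strictly positive Lyapunov exponents inside $E_i$ and $E_i^-$ the non-positive ones. By Egorov's theorem, I choose a set $Y\subset\Xi$ with $\mu(Y)>1-\eta$, a constant $C_\eta>0$, and Pesin charts of uniform size on $Y$ with uniform distortion estimates, such that for every $z\in Y$ and $n\geq 1$ with $f^n(z)\in Y$:
\[
\Jac(Df^n|E_i^+)(z)\leq C_\eta\exp\bigl(n(\Delta^+_{E_i}(f,\mu)+\eta)\bigr),\quad
\|Df^n|E_i^-\|(z)\leq C_\eta\exp(n\eta).
\]
Now, for $x\in M$, pick (if possible) $z\in B:=\bar B_f(x,\eps,\pm\infty)\cap Y\cap f^{-n}Y$; the hypothesis gives $B\subset\mathcal D_i(x)$, and via the Pesin chart at $z$ I identify a neighborhood of $z$ in $\mathcal D_i$ with a neighborhood of $0$ in $E_i(z)=E_i^+(z)\oplus E_i^-(z)$. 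I then cover $B$ by ``ellipsoidal'' cells with half-axes $\delta$ in the $E_i^-(z)$-directions and $\delta\exp(-n\lambda^+_j)$ along each positive Lyapunov direction. The uniform distortion bound ensures that $f^j$ of each such cell has diameter $\leq C\delta$ for every $0\leq j\leq n$. A volume comparison then gives:
\[
r(\delta,n,B)\leq C_d\cdot\frac{\vol_{d_i}(B)}{\delta^{d_i}}\cdot\exp\bigl(n(\Delta^+_{E_i}(f,\mu)+O(\eta))\bigr),
\]
where $\vol_{d_i}(B)$ is bounded in terms of $\eps$ alone, since $B$ lies in a plaque of bounded radius.

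Taking $\tfrac1n\log$, letting $n\to\infty$, then $\delta\to 0$, and finally $\eta\to 0$ (so $\sigma\to 1$ in the definition of $h_\loc$) yields $h_\loc(f,\mu,\eps)\leq\Delta^+_{E_i}(f,\mu)$. The main obstacle is the bounded-distortion step: one must translate the Oseledets cocycle bounds into \emph{uniform} diameter estimates on the ellipsoidal cells for every intermediate time $0\leq j\leq n$, not only at $j=0$ and $j=n$. This is standard Pesin theory: on the good set $Y$, through uniform-size Pesin charts, the dynamics is conjugate to a linear cocycle with controlled distortion, which is the mechanism underlying Katok's entropy formula and Ruelle's inequality.
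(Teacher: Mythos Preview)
Your approach has a genuine gap: it relies on Pesin charts with bounded distortion, which are not available in the $C^1$ category. The proposition (and Theorem~\ref{t.bound-tail} that it feeds into) is stated for arbitrary $f\in\Diff^1(M)$, and the standard machinery you invoke --- uniform-size Lyapunov charts on which the dynamics is conjugate to the linear cocycle with controlled nonlinear remainder --- requires $C^{1+\alpha}$ regularity. Without it, the step ``the uniform distortion bound ensures that $f^j$ of each such cell has diameter $\leq C\delta$ for every $0\leq j\leq n$'' simply does not follow: in $C^1$ you cannot propagate the shape of a thin ellipsoid over $n\to\infty$ iterates with error governed by the derivative at a single base point. (You also blur the distinction between Katok's theorem, which needs $C^{1+\alpha}$, and Ruelle's inequality, which does not --- precisely because Ruelle's argument avoids long-time distortion.) A secondary issue, even granting $C^{1+\alpha}$: along the $E_i^-$ directions your ellipsoid has half-axis $\delta$, but $\|Df^j|E_i^-\|$ is only bounded by $C_\eta e^{j\eta}$, so the intermediate diameters are $\leq C_\eta\delta e^{n\eta}$, not $\leq C\delta$. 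This is repairable by shrinking those half-axes to $\delta e^{-n\eta}$, but the first issue is not.

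The paper's proof sidesteps the regularity problem entirely by mimicking the proof of Ruelle's inequality rather than Katok's theorem. One fixes a large but finite $N$ and a set $Y_0$ on which the singular values of $Df^N|E_i$ are $\eta$-close to the exponents; then an orbit segment of length $n$ is cut into blocks of length $N$ starting in $Y_0$ (where one subdivides by at most $C_M\exp(N(\Delta^+_{E_i}+\eta))$ pieces) and a sublinear number of single steps (each costing a bounded factor $K$). This uses only continuity of $Df^N$ at a \emph{fixed} time, so it works in $C^1$.
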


\begin{proof}
We are going to bound $h_\loc(f,\mu,\eps)$ by selecting a suitable measurable set $Y$ with $\mu(Y)>\sigma$ where $\sigma<1$ is arbitrarily close to $1$.
We denote $\gamma:=1-\sigma$.

Denote the Lyapunov spectrum of $\mu$ restricted to $E_i$ by $(\lambda_1,\dots,\lambda_d)$ where $d:=\dim(E_i)$. The Oseledets theorem provides a first measurable set $Y_0\subset \Xi$ and an integer $N>\gamma^{-1}$ such that $\mu(Y_0)>\sigma$, and, for every $y\in Y_0$,
the singular values of $Df^N(y)|E_i(y)$ are $e^{N(\lambda_i+\gamma_i)}$ with $|\gamma_i|<\gamma/2d$.
If $x\in M$ and $y\in \bar B_f(x,\eps,\pm\infty)\cap Y_0$, then
$Df^N(y)|T_y\cD_i(x)$ can be deduced from $Df^N(y)|E_i(y)$ by composition with two bounded linear maps, hence its singular values 
are  $e^{N(\lambda_i+\gamma'_i)}$ with $|\gamma'_i|<\gamma/d$.

The Birkhoff ergodic theorem gives a subset $Y\subset Y_0$ with $\mu(Y)>\sigma$ and an integer $n_0$
such that  for all $y\in Y$ and $n\geq n_0$, the sequence $y,f(y),\dots,f^n(y)$ contains at least $\sigma.n$ points in $Y_0$.

By a well-known argument (used for proving Ruelle's inequality), there exist constants $C_M$ (depending only on $\dim M$) and $\rho_0>0$
such that for any $0<\rho<\rho_0$ the following property holds:

\begin{claim}\label{c-Ruelle-expansion}
Consider any $x\in M$,  $y\in \bar B_f(x,\eps,\pm\infty)\cap Y_0$, and  open subset $U\subset \cD_i(x)$
which contains $y$ such that $U$ has diameter smaller than $\rho$.
Then one can subdivide $U$ into
at most $C_M\exp(N(\Delta_{E_i}^+(f,\mu)+\gamma))$ open subsets $U'$ such that $f^N(U')$ has diameter smaller than $\rho$.
\end{claim}

Since $f$ is Lipschitz and perhaps after reducing $\rho_0>0$, there exists $K>0$ such that for any $0<\rho<\rho_0$, any $x\in M$ and any open subset $U$ of $\cD_i(x)$
with diameter smaller than $\rho$, one can subdivide $U$ into at most $K$ open subsets whose images by $f$ have diameters
smaller than $\rho$.

We now fix $0<\delta<\rho_0$ small and choose $\delta'>0$ smaller such that any set with diameter smaller than $\delta'$
has its first $N$ images with diameter smaller than $\delta$. We also choose $n\geq n_0$ large.
\medskip

We consider an \emph{$n$-cut} $\cC$.  Similarly to the proof of Lemma \ref{l-New-new}, this is a collection of integers $0=a_0<a_1<\dots<a_r=n-1$ such that:
 \begin{itemize}
  \item[--] $a_{s+1}=a_s+1$ or $a_s+N$ for $0\leq s<r$; and
  \item[--] $\cC^*:=\{s:a_{s+1}\ne a_s+1\}\cup\{r\}$ has cardinality at most $(1-\sigma).n=\gamma.n$.
\end{itemize}

Recall that the number of $n$-cuts is bounded by $C_\sigma e^{\Ht n}$,
where $C_\sigma,\Ht$ only depend on $\sigma$ and $\lim_{\sigma\to1}\Ht=0$. We let 
 $$
   Y_\cC:=\{x\in Y:\forall 1\leq s<r,\; s\notin\cC^*\implies f^{a_s}(x)\in Y_0\}.
 $$
 From the choice of $Y$ and $N$, we note that $Y$ is the union of the sets $Y_\cC$. 
 
Fix some point $x\in M$ and $n$-cut $\cC$. We will find a cover of $\bar B_f(x,\eps,\pm \infty)\cap Y_\cC$ by open sets $U$ satisfying $\diam(f^k(U))<\delta$ for any $k=0,\dots,n-1$. Note that it suffices for the open sets $U$ to satisfy  $\diam(f^{a_s}(U))< \delta'$ for any $0\leq s\leq r$.

For $s=0$, it is enough to select a finite set of $\delta'{/2}$-balls covering $Y_\cC$
with cardinality $C_\delta$ independent of $x$ and $n$. We then build inductively the cover for $s>0$
by refining the cover for $s-1$. Recall that for each element of this cover,
the image by $f^{a_{s-1}}$ has diameter smaller than $\delta'$. Two cases occur:
\begin{itemize}
\item[--] If $a_{s}=a_{s-1}+1$, we subdivide each open set of the cover for $s-1$ into at most $K$ open subsets in order to get a cover for $s$.
(Recall our choice of $K$ above.)
\item[--] If $a_{s}=a_{s-1}+N$, Claim~\ref{c-Ruelle-expansion} shows that each open set of the cover for $s-1$
may be subdivided into $C_M\exp(N(\Delta_{E_i}^+(f,\mu)+\gamma))$ open subsets, giving a cover for $s$.
\end{itemize}
We thus get:
$$
r(\delta,n,\bar B_f(x,\eps,\pm \infty)\cap Y_\cC) \leq 
C_\delta\;.\;K^{(1-\sigma)n}\;.\; C_M^{n/N} e^{n(\Delta^+_{E_i}(f,\mu)+\gamma)}.
$$
Since there are at most $C_\sigma e^{\Ht n}$ $n$-cuts and $N>\gamma^{-1}$, we get
 $$
  \limsup_{n\to\infty}\frac1n\log \max_{x\in M} r(\delta,n,\bar B_f(x,\eps,\pm\infty)\cap Y) \leq
     \Delta_{E_i}^+(f,\mu)+(1+\log C_M)\gamma+ (1-\sigma)\log K  +\Ht.
 $$
As this holds for any $\sigma<1$, $\gamma:=2(1-\sigma)$ and some set with $\mu(Y)>\sigma$,
 $$
   h_\loc(f,\mu,\eps)\leq \Delta_{E_i}^+(f,\mu).
 $$
\end{proof}

\subsubsection{Proof of Theorem~\ref{t.bound-tail}}
{To begin with, we observe that} the conclusion of Proposition~\ref{p.bound-exponents}
can be strengthened into
\begin{equation}\label{e.bound-exponents}
h_\loc(f,\mu,\eps) \leq \Delta_{E_i}(f,\mu)=\min({\Delta^-_{E_i}(f,\mu)}, \Delta^+_{E_i}(f,\mu)).
\end{equation}
Indeed, {note that the assumptions of Proposition~\ref{p.bound-exponents} are time symmetric, as is} the local entropy} (Remark~\ref{r.inverse}), and use  $\Delta^+_{E_i}(f^{-1},\mu)=-\Delta^-_{E_i}(f,\mu)$.
\medbreak
{Theorem~\ref{t.bound-tail} follows from Propositions~\ref{p.variational} and~\ref{p.bowen-domination} and \eqref{e.bound-exponents}.}

\subsubsection{Proof of Theorem~\ref{t.tail-general}}
For a generic conservative diffeomorphism $f$, the Main Theorem (revisited) (together with Remark \ref{rem-small-hs}) yields horseshoes with arbitrarily small dynamical diameter and topological entropy arbitrarily close to {$\Delta^*(f)$}  where, we recall,
 \begin{equation}\label{e-Deltastar0}
   \Delta^*(f):=\sup\{\Delta_{E_i}(f,p)\; : \; p \text{ periodic },\;  1\leq i \leq \ell\}
 \end{equation}
with $TM=E_1\oplus\dots\oplus E_{\ell}$ the finest dominated splitting for $f$.
{Hence, $h^*(f)\geq\Delta^*(f)$.}

For the converse inequality, note that according to Theorem~\ref{t.bound-tail}, {for all $f\in\Diff(M)$, $h^*(f)\leq\hat\Delta^*(f)$ where
 $$
   \hat\Delta^*(f):=\sup\{\Delta_{E_i}(f,\mu):\mu\in\Proberg(f), \;1\leq i\leq\ell\}
    \geq\Delta^*(f).
 $$
Moreover, for generic $f$, $\hat\Delta^*(f)=\Delta^*(f)$ by Corollary~\ref{c.ergodic-closing}. 

We turn to the generic upper semicontinuity of  the tail entropy.}
Note that the finest dominated splitting depends semicontinuously on $f$, hence continuously at generic $f$. Then a variant of Lemma \ref{l-usc1} applied to subbundles implies that a generic $f$ is a continuity point of $\Delta^*(f)$. 

{
Assume by contradiction that there are  $f_n\to f_0$ in $\Diff^1_\omega(M)$ with $f_0$  generic and $h^*(f_n)>h^*(f_0)+\eps_0=\Delta^*(f_0)+\eps_0$ for some $\eps_0>0$. 
 By Theorem \ref{t.bound-tail},   $\hat\Delta^*(f_n)>\Delta^*(f_0)+\eps_0$. 

 Using Corollary \ref{c.ergodic-closing}, we can perturb each $f_n$ to $g_n$ with $\Delta^*(g_n)>\hat\Delta^*(f_n)-\eps_0/2$. This contradicts the upper-semicontinuity of $\Delta^*(f)$ at $f=f_0$.
}


\subsubsection{Proof of Corollary~\ref{c.unstability-hstar}}
Item (i) is clearly open and by Theorem~\ref{t.bound-tail} implies that the tail entropy vanishes.
Consequently for \emph{any} diffeomorphism (i) implies (ii).
\medskip

If {item (iv) fails to hold (i.e.,} $h^*(f)\neq 0$), Theorem~\ref{t.bound-tail} yields a periodic point $p$ and some bundle $E_i$ {of the finest dominated splitting} such that $\Delta_{E_i}(f,p)$ {is nonzero and $\eps/2$ close} to $h^*(f)$. Franks lemma {gives}
a $2\eps$-perturbation $g$ of $f$ in the $C^1$-topology
{with} $\Delta_{E_i}(g,p)> h^*(f)+\eps/2$. Note that this is robust.
Choosing $g$ in a residual set, Theorem~\ref{t.tail-general} applies so
$h^*(g)>h^*(f)+\eps/2$: the tail entropy is not locally constant
{at} $f$. Hence item (ii) implies item (iv). 
\medskip

{By Theorem \ref{t.tail-general}, there is a dense G$_\delta$ set of diffeomorphisms $f\in\Diff^1_\omega(M)$ such that $h^*(f)=\Delta^*(f)$ {(see \eqref{e-Deltastar0})}.
Hence to show the implication (iv)$\implies$(i), it is enough to check that the following is dense and open in $\mathcal G$:
 $$
   \mathcal U:=\{ f\in\mathcal G: \Delta^*(f)>0 \text{ or $f$ satisfies (i)}\}.
 $$
{As noted in the proof of} Theorem \ref{t.tail-general}, $\Delta^*(f)$ {and the finest dominated splitting of $f$} are continuous at generic conservative diffeomorphisms $f$. Hence $\mathcal U$ can be assumed to be open in $\mathcal G$. To see its density, let $f\in\mathcal G$.    Assume that (1) $\Delta^*(f)=0$; and (2) the finest dominated splitting contains a subbundle $E_i$ with dimension at least $2$, since otherwise $f$ itself belongs to $\mathcal U$.
Consider now the Lyapunov exponents along $E_i$. Note that $\Delta_{E_i}(f)=0$ implies that, for each periodic point,  the Lyapunov exponents are all nonnegative or all nonpositive (depending on the point). 
If there exists a periodic orbit with an arbitrarily small Lyapunov exponent along $E_i$, then there is an arbitrarily small perturbation $g$ of $f$ with $\Delta_{E_i}(g)>0$ hence $g\in\mathcal U$. Thus we can assume that there is $\delta>0$ such that, for any periodic points, the Lyapunov exponents along $E_i$ all belong to $(\delta,+\infty)$ or all belong to $(-\infty,-\delta)$. 
If both kinds of periodic points occur, the barycenter property for Lyapunov exponents proved in \cite{ABCDW} yields periodic orbits with an arbitrarily small exponent, and we conclude as previously.

We are reduced to the case where all periodic points have all their Lyapunov exponents along $E_i$ greater than $\delta$ (say). This also holds for all ergodic invariant measures by Corollary \ref{c.ergodic-closing}. A standard argument then shows that $E_i$ is uniformly expanded (or contracted).
Arguing similarly for each sub bundle, one gets $f\in \mathcal U$.
This proves that (iv) implies (i).
}

\medbreak

{It remains to show the equivalence of items (iii) and (iv). By Theorem \ref{t.tail-general},  the tail entropy $h^*(f)$ is upper semicontinuous at a generic diffeomorphism $f$. If $h^*(f)=0$, the lower semicontinuity and hence the continuity is obvious: (iv) implies (iii). For the converse, observe that the tail entropy vanishes by \cite{buzzi} on the dense subset $\Diff^\infty_\omega(M)\subset\Diff^1_\omega(M)$ (the density has been proved by \cite{avila}). }

\subsubsection{Proof of Corollary~\ref{c.loc-cte}}
Let us consider a generic $f\in \Diff^1_\omega(M)$ such that $h_\top(f)=h^*(f)$. Recall that $h_\top(f)>0$.
By Theorem~\ref{t.tail-general}, there exist a subbundle $E_i$ of the finest dominated splitting
and a periodic point $p$ such that $\Delta_{E_i}(f,p)$ is arbitrarily close to $h^*(f)$, hence positive.
Franks lemma yields a perturbation increasing $\Delta_{E_i}(f,p)$ by a quantity
which only depends on the $C^1$-distance between $f$ and that perturbation $g$.
One deduces that there exists an open set of diffeomorphisms $\mathcal{U}$ which contains $f$ in its closure
such that $\Delta^*(g)>\Delta^*(f)$ for any $g\in \mathcal{U}$. For the generic diffeomorphism $g$ in $\mathcal{U}$ we have
$$h_\top(g)\geq h^*(g)=\Delta^*(g)>\Delta^*(f)=h_\top(f).$$
Hence the topological entropy is not constant on any neighborhood of $f$ in $\Diff^1_\omega(M)$.

\section{Support of measures and homoclinic classes}
\label{s.nonconservative}

{In this section we prove Theorems \ref{t.infiniteHC} and \ref{t.katok} for homoclinic classes with no dominated splitting in the dissipative setting. }

\subsection{Infiniteness of classes with large entropy}

{To prove Theorem \ref{t.infiniteHC}, we establish the following consequence of
 the Main Theorem and Proposition~\ref{p.control-exponents}.}

\begin{lemma}\label{l.expell}
If $f$ is $C^1$-generic and $H(p)$ has no dominated splitting,
then, for any $\eps>0$, there exist $g$ arbitrarily close to $f$ in $\Diff^1(M)$,
and a horseshoe $K$ of $g$ such that:
\begin{itemize}
\item[--] $K$ is contained in the $\eps$-neighborhood of $H(p_g)$
of the homoclinic class of the continuation $p_g$,
\item[--] $K$ does not intersect the chain-recurrence class of $p_g$,
\item[--] the topological entropy of $K$ is larger than $\Delta(g,p)$.
\end{itemize}
Moreover these properties are still satisfied by diffeomorphisms close to $g$.
\end{lemma}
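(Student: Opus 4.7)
The plan is to apply Theorem~\ref{t.newhouse} at a well-chosen periodic orbit which, after a preliminary perturbation, sits in a chain-recurrence class strictly disjoint from that of $p$.

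First, using the genericity of $f$ and the absence of dominated splitting on $H(p)$, produce a horseshoe $K_0\subset H(p)$ containing $p$ and $\eps$-dense in $H(p)$; Proposition~\ref{p.control-exponents} then yields a periodic orbit $r\subset K_0$ whose orbit is $\eps$-dense in $H(p)$ and whose Lyapunov exponents are $\eta$-close to those of $p$ for $\eta$ as small as wanted. Since $H(p)$ admits no dominated splitting and $\cO(r)$ is Hausdorff-close to $H(p)$, Lemma~\ref{l.domination} forbids any $N$-dominated splitting on $\cO(r)$, so $r$ is $T,N$-weak. Apply Ma\~n\'e's ergodic closing lemma (Theorem~\ref{t.ergodic-closing}) to the Dirac measure on $\cO(r)$: this yields an arbitrarily $C^1$-small perturbation $f_1$ of $f$ having a periodic orbit $\cO(q)$ whose Lyapunov exponents are $\eta$-close to those of $r$. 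After a further generic perturbation if needed, arrange that $\cO(q)$ lies in a chain-recurrence class distinct from that of $p_{f_1}$; this is possible because the closing lemma creates $q$ as a genuinely new periodic orbit whose position in the chain-recurrence decomposition can be modified. The chain-recurrence class of $p_{f_1}$ being a compact invariant set disjoint from $\cO(q)$, Conley's theory provides a trapping region $U$ of $f_1$ (an open set with $\overline{f_1(U)}\subset U$) containing that class and disjoint from a small neighborhood $V$ of $\cO(q)$.

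Next, shrinking $V$ so that it is also contained in the $\eps$-neighborhood of $H(p)$ and in the complement of $U$, apply Theorem~\ref{t.newhouse} to $f_1$ at $\cO(q)$ with this neighborhood $V$. This produces $g$ arbitrarily $C^1$-close to $f_1$ (hence to $f$), agreeing with $f_1$ outside $V$, together with a horseshoe $K\subset V$ satisfying $h_{\top}(g,K)\geq \Delta(g,q_g)$. A preliminary use of Franks' lemma (Theorem~\ref{t.linearize}) to enlarge the Lyapunov exponents along $\cO(q)$ by a definite amount ensures the strict inequality $h_{\top}(g,K)>\Delta(g,p)$, while the control on the exponents of $r$ and $q$ gives $\Delta(g,q_g)\geq \Delta(f,p)-\eps'$ for any prescribed $\eps'>0$.

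Three properties remain to verify. The inclusion $K\subset V$, together with $V$ being contained in the $\eps$-neighborhood of $H(p)$ and the lower semi-continuity of the homoclinic class at generic $f$, gives the first bullet; the entropy lower bound is the third; the delicate one---which I expect to be the main obstacle of the whole argument---is the disjointness of $K$ from the chain-recurrence class of $p_g$. For this, observe that since $g=f_1$ outside $V$ and $V$ is disjoint from $U$, one still has $\overline{g(U)}\subset U$, so $U$ is a trapping region for $g$ as well. Hence the chain-recurrence class of $p_g$ is contained in $\bigcap_{n\geq 0}g^n(U)\subset U$, which is disjoint from $V\supset K$. Finally, the stated robustness follows from the $C^1$-openness of the three ingredients involved: the structural stability of $K$ and $p_g$, the trapping condition $\overline{h(U)}\subset U$, and the strict inequality $h_{\top}(h,K_h)>\Delta(h,p_h)$.
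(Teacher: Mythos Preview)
Your argument has a genuine gap at precisely the step you flag as ``the main obstacle'': separating the new periodic orbit from the chain-recurrence class of $p$. Two concrete problems:

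\medskip

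\textbf{The closing-lemma step is vacuous.} You apply Theorem~\ref{t.ergodic-closing} to the Dirac measure on the periodic orbit $\cO(r)$. But $\cO(r)$ is already a periodic orbit of $f$, so the closing lemma returns $f_1=f$ and $q=r$. No ``genuinely new periodic orbit'' is created, and $r$ sits inside $H(p)$, hence in the chain-recurrence class of $p$. The sentence ``this is possible because the closing lemma creates $q$ as a genuinely new periodic orbit whose position in the chain-recurrence decomposition can be modified'' is therefore unjustified: you have given no mechanism that expels $q$ from the class of $p$. This is not a detail one can fix by a ``further generic perturbation''; generically the homoclinic class of $p$ contains many periodic orbits and coincides with its chain-recurrence class, and small perturbations do not break homoclinic relations.

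\medskip

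\textbf{What the paper does instead.} The paper builds the separating barrier \emph{by hand}. Starting from $\cO\subset H(p)$ as you found it (via Proposition~\ref{p.control-exponents}), one uses Franks' lemma together with \cite{BochiBonatti} to perturb the linear cocycle along $\cO$ so that $\cO$ becomes a sink or a source for a first perturbation $f_1$. This \emph{forces} the existence of a small neighborhood $U$ of $\cO$, disjoint from $\cO(p)$, with $f_1(\overline U)\subset U$ (or the reverse inclusion). A second application of Franks' lemma then produces $f_2$ which coincides with $f_1$ outside $U$ (so the trapping property of $\partial U$ persists) but coincides with the original $f$ on a smaller neighborhood of $\cO$ (so the Main Theorem applies there with the original exponents). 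Now Theorem~\ref{t.newhouse}, applied to $f_2$ at $\cO$ inside $U$, yields the horseshoe $K\subset U$; the trapping boundary guarantees that the chain-recurrence class of $p_g$ cannot meet $U$, hence not $K$.

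\medskip

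A secondary issue: even granting that $\cO(q)$ lies in a distinct chain-recurrence class, Conley theory gives an attractor--repeller pair separating the two classes, but does not let you prescribe which side is attracting. Your trapping-region argument assumes the forward-trapping region contains the class of $p$; if the orientation is the other way, the perturbation in $V$ could destroy the barrier. The paper's sink/source trick sidesteps this by constructing the trapping region explicitly around $\cO$ with a known direction.
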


{We note that the following argument is classical (see, e.g., \cite{bonatti-diaz-aperiodic}), even though this consequence was not observed before.}

\begin{proof}
Let us fix $\delta>0$.
From Proposition~\ref{p.control-exponents} there exists a periodic orbit $\cO$
in $H(p)$ such that $\Delta(f,p)$ and $\Delta(f,\cO)$ are arbitrarily close
(by genericity, one can even assume $\Delta(f,\cO)>\Delta(f,p)$).
Moreover $\cO$ is arbitrarily dense in $H(p)$. In particular the period of $\cO$ is arbitrarily large
and there is no $N$-dominated splitting on $\cO$, where $N$ can be chosen arbitrarily large.

Now we apply
Franks lemma and results in~\cite{BochiBonatti} to obtain a $\delta/3$-perturbation
$f_1$ of $f$ which turns the orbit $\cO$
into a sink or a source. {In particular}, there exists a small neighborhood $U$ of $\cO$ disjoint from the orbit of $p$
which satisfies $f_1(\overline U)\subset U$ or $f_1^{-1}(\overline U)\subset U$.

Applying Franks lemma again, one gets a new diffeomorphism $f_2$ that is $\delta/3$-close to $f_1$
such that $f_2$ {coincides with $f_1$ outside of $f_1(U)\cap f_1^{-1}(U)$ and with} the initial diffeomorphism $f$ on a smaller neighborhood of $\cO$.
Consequently, the neighborhood $U$ still separates the orbit of $p$ and $\cO$,
and we are in condition to apply the Main Theorem to the orbit of $\cO$.

Since $f_2$ coincides with $f$ along the orbit of $\cO$,
the Main Theorem provides us with a diffeomorphism $g$ having a horseshoe $K$
with entropy larger than $\Delta(f,p)$ contained in a small neighborhood of $\cO$.
The diffeomorphisms $f$ and $g$ are $\delta$-close.
Moreover the chain-recurrence classes of $p$ and $K$ are distinct since they are separated
\end{proof}

\begin{proof}[Proof of Theorem~\ref{t.infiniteHC}]
There exists a dense G$_\delta$ set $\cG\subset \Diff^1(M)$
such that for any $f\in \cG$ and any periodic point $p$,
\begin{itemize}
\item[--] the periodic point $p$ is hyperbolic,
\item[--] the homoclinic class $H(p)$ and the chain-recurrence class of $p$
coincide,
\item[--] the homoclinic class $g\mapsto H(p_g)$ of the continuation of $p$
for diffeomorphisms $g$ close to $f$ is continuous at $f$ (in the Hausdorff topology).
\end{itemize}
The first property is classical \cite{Kupka,Smale}, the second is proved in~\cite{BC},
the third one in~\cite{CMP}.

In particular for any $n\geq 1$, the periodic points of period smaller than $n$
are all hyperbolic, their number is finite and these periodic points can be locally {continued}.
Let us fix such an integer $n$.
The continuity of $g\mapsto H(p_g)$ above
and the robustness of the dominated splitting imply:

\begin{claim}
There exists a dense and open subset $\cU(n)\subset \cG$ such that
for {any} $f\in \cU(n)$ and any periodic point $p$ of period smaller than $n$
either $H(p)$ has a dominated splitting,
and this holds also for the continuation $H(p_g)$ for all $g$ $C^1$-close to $f$,
or $H(p)$ has no dominated splitting,
and this holds also for the continuation $H(p_g)$ for all $g$ $C^1$-close to $f$.
\end{claim}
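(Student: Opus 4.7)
The plan is to combine the robustness of dominated splittings (Lemma \ref{l.domination}) with the Hausdorff-continuity of homoclinic classes along $\cG$ and a standard Baire-type argument. Fix $f_0\in\cG$: all its periodic points are hyperbolic, so by the implicit function theorem together with a compactness argument the (finitely many) periodic orbits $\cO_1,\dots,\cO_k$ of $f_0$ of period smaller than $n$ persist as unique continuations $\cO_i(g)$ on a $C^1$-neighborhood $\cN_{f_0}$ of $f_0$, and $\cN_{f_0}$ may be shrunk so that no further periodic orbit of period smaller than $n$ appears in this neighborhood.

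The next step is to observe that, for each $i$, the condition
\[ D_i:=\{g\in\cN_{f_0}:H(p_g)\text{ has a dominated splitting, where }p\in\cO_i\} \]
is open in $\cN_{f_0}$. Indeed, if $g_0\in D_i$ then $H(p_{g_0})$ carries an $N$-dominated splitting at some index $j$ for some $N\geq 1$; by the third defining property of $\cG$, $g\mapsto H(p_g)$ is continuous (in Hausdorff topology) at $g_0$, and Lemma \ref{l.domination} then guarantees that $H(p_g)$ inherits an $N$-dominated splitting at index $j$ for all $g$ sufficiently $C^1$-close to $g_0$.

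Set $V_i:=D_i\cup\operatorname{int}(\cN_{f_0}\setminus D_i)$; the elementary fact that $A^{\circ}\cup(X\setminus A)^{\circ}$ is open and dense in any topological space $X$ shows that each $V_i$ is open and dense in $\cN_{f_0}$. Moreover, by construction, for every $g\in V_i$ either $H(p_g)$ has a dominated splitting for all $g$ in some neighborhood, or $H(p_g)$ has no dominated splitting for all $g$ in some neighborhood. The finite intersection $\cU_{f_0}:=\bigcap_{i=1}^{k}V_i$ is therefore open and dense in $\cN_{f_0}$, and on $\cU_{f_0}$ the required dichotomy holds for every periodic orbit of period less than $n$ (because those orbits are exactly the continuations of $\cO_1,\dots,\cO_k$). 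Setting
\[ \cU(n):=\cG\cap\bigcup_{f_0\in\cG}\cU_{f_0}, \]
yields an open subset of $\cG$; its density in $\cG$ follows because for every $f_0\in\cG$ the open set $\cU_{f_0}$ is dense in $\cN_{f_0}$, and $\cG$ being a dense $G_\delta$ subset of $\Diff^1(M)$, $\cG\cap\cU_{f_0}$ is dense in $\cU_{f_0}$.

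The only subtle point is the bookkeeping caused by the fact that the family of periodic orbits of period less than $n$ varies with $f$; this is resolved by working locally inside each $\cN_{f_0}$, on which the number of such orbits is constant and each is labeled by a continuation, and then glueing the local open-dense sets into the global $\cU(n)$ through the Baire argument just described.
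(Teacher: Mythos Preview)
Your proof follows the same route as the paper, which merely points to the continuity of $g\mapsto H(p_g)$ on $\cG$ and the robustness of dominated splittings; you have supplied the standard local Baire argument. Two minor corrections are in order.

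First, you invoke the Hausdorff continuity of $g\mapsto H(p_g)$ at a point $g_0\in D_i\subset\cN_{f_0}$, but the third defining property of $\cG$ only guarantees this continuity at points of $\cG$, and $\cN_{f_0}$ is a neighborhood in $\Diff^1(M)$. The remedy is to work throughout relative to $\cG$: define $D_i$, $V_i$, and $\cU_{f_0}$ as subsets of $\cN_{f_0}\cap\cG$ (still a Baire space, since $\cG$ is dense $G_\delta$); the rest of the argument goes through unchanged.

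Second, Lemma~\ref{l.domination} points the wrong way for your purposes: it says that an $N$-dominated splitting passes to Hausdorff \emph{limits} (so that \emph{not} having one is open), whereas to show $D_i$ is open you need that a dominated splitting on $H(p_{g_0})$ \emph{persists} to nearby $g$-invariant sets $H(p_g)$ for $g$ close to $g_0$. This persistence is the classical robustness of dominated splittings (via invariant cone fields defined on a neighborhood of the set), and is precisely what the paper invokes. Combined with the upper semicontinuity of $g\mapsto H(p_g)$ at $g_0\in\cG$ (which forces $H(p_g)$ to stay inside the neighborhood where the cone field lives), it gives the openness you claim.
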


We now conclude the proof of the theorem.
For $\eps>0$, we denote by $\cV(n,\eps)$
the set of diffeomorphisms $f\in \cU(n)$ such that
for any periodic point $p$ with period smaller than $n$,
if $H(p)$ has no dominated splitting, then
for any diffeomorphism $g$ $C^1$-close to $f$,
the $\eps$-neighborhood of $H(p_g)$ contains
a horseshoe $K$ disjoint from the chain-recurrence class of $p_g$,
with topological entropy larger than $\Delta(g,p)$. 
Lemma~\ref{l.expell} shows that $\cV(n,\eps)$
 is a dense and open subset of $\cU(n)$.

Any diffeomorphism $f$ in the dense G$_\delta$ subset
$\bigcap_n \cV(n,1/n)$ satisfies the conclusion of Theorem~\ref{t.infiniteHC}.
\end{proof}

\subsection{Approximation of ergodic measures by horseshoes}

{Given an ergodic measure of a generic diffeomorphism, Corollary \ref{c.ergodic-closing} approximates it by a periodic orbit and, when there is no dominated splitting, the perturbation given by our Main Theorem produces a horseshoe with the required properties. Theorem~\ref{t.katok} claims that the generic diffeomorphism already such a horseshoe. This will follow from} a Baire argument similar to the proof of~\cite[Theorem 3.8]{ABC}.

\begin{proof}[Proof of Theorem \ref{t.katok}]
Note that it is enough to fix $L>0$ and to prove the theorem in restriction to
the set of diffeomorphisms $f$ such that $\|Df\|$ and $\|Df^{-1}\|$ are bounded by $L$:
in particular the topological entropy is bounded by $L\dim(M)$.

Let $\cK(M)$ denote the collection of the compact subsets of $M$
endowed with the Hausdorff metric, a compact metric space.
We define a map $\Phi$ from $\Diff^1(M)$ to
the space $\cK(\Prob(f)\times \cK(M)\times [0,L\dim(M)])$ of compact subsets of $\Prob(f)\times \cK(M)\times [0,L\dim(M)]$
(also endowed with the Hausdorff distance).  This map
associates to each diffeomorphism $f$ the closure of the set of triples
$(\mu,K,h)$ where $K$ is a horseshoe for $f$, $\mu$ is an ergodic measure for $f$ supported on $K$
and $h=h(f, \mu)$.
Since each horseshoe has a hyperbolic continuation, the map $\Phi$
is lower semicontinuous, hence admits a dense G$_\delta$ set $\cG_0$ of continuity points.

Let $\cG_1$ be the dense G$_\delta$ set of diffeomorphisms satisfying
Corollary~\ref{c.ergodic-closing}.
For $f\in \cG_0\cap \cG_1$ and $\mu$ an ergodic measure whose support does not have
any dominated splitting, there exists a sequence of periodic orbits $\cO_n$ which converge
to $\mu$ for the Hausdorff topology {and the weak star} topology, and whose Lyapunov exponents
converge to those of $\mu$. Ruelle's inequality gives $\Delta(f,\mu)\geq h(f,\mu)$.

One deduces from the Main Theorem that there exist a sequence of diffeomorphisms
$(g_n)$ converging to $f$ and a sequence of horseshoes $\Lambda_n$
with topological entropy equal to $h(f,\mu)$
and contained in a small neighborhood of $\cO_n$.
Katok's approximation theorem (\cite{Katok}, see also~\cite[Theorem 1.5]{ACW})
{yields} a sub-horseshoe $K_n\subset \Lambda_n$
whose invariant measures are all close to
$\mu$ and whose topological entropy is close to $h(f,\mu)$.

In particular, considering the measure of maximal entropy $\mu_n$ on $K_n$,
one gets a sequence of triples $(\mu_n,K_n,h_n)\in \Phi(g_n)$ which converge to
$(\mu,\operatorname{Supp}(\mu),h(f,\mu))$.
Since $\Phi$ is continuous, we {obtain}
horseshoes for $f$ close to $\operatorname{Supp}(\mu)$, supporting an ergodic measure $\nu$ weak star close to
$\mu$ with entropy $h(f, \nu)$ close to $h(f, \mu)$.
\end{proof}

\subsection{Dimension of homoclinic classes} \label{ss.dimension}
{We prove Theorem~\ref{t.dimension}, relating the stable and unstable dimensions of $p$ 
and the Hausdorff dimension of the homoclinic class $H(p)$ when the latter has no dominated splitting.
{Recall that Proposition} \ref{p.control-exponents} {yields a periodic orbit without strong dominated splitting allowing our} Main Theorem  {to create a linear horseshoe inside the continuation $H(p_g)$. The estimate} in Proposition~\ref{p.dimension} for {the Hausdorff dimension of further perturbations $H(p_g)$ easily gives:}
\begin{lemma}
If $f$ is $C^1$-generic, if $H(p)$ has no dominated splitting
and if $\Delta^+(f,p)\geq \Delta^-(f,p)$
then, for any $\eps>0$, there exists $g$ arbitrarily close to $f$ in $\Diff^1(M)$
such that $H(p_g)$ has Hausdorff dimension larger than the unstable dimension of $p$.
Moreover this still holds for diffeomorphisms close to $g$.
\end{lemma}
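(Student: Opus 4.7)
The plan is to combine Proposition~\ref{p.control-exponents}, which provides periodic orbits inside $H(p)$ approximating the Lyapunov data of $p$, with the dimension refinement of the Main Theorem (Proposition~\ref{p.dimension}), to produce a linear conformal horseshoe inside $H(p_g)$ whose Hausdorff dimension exceeds the unstable dimension of $p$. Persistence in a $C^1$-neighborhood of $g$ will be immediate from the continuity of $h\mapsto\dim_H(K_h)$ built into Proposition~\ref{p.dimension}.

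First, using that $f$ is $C^1$-generic and $H(p)$ is the Hausdorff limit of horseshoes containing $p$, I would apply Proposition~\ref{p.control-exponents} inside such a horseshoe to obtain a periodic orbit $\cO(q)$ that is $\eps$-dense in $H(p)$, homoclinically related to $\cO(p)$, and whose Lyapunov exponents (repeated according to multiplicity) are $\eps$-close to those of $p$. In particular $q$ has the same stable/unstable dimensions as $p$, arbitrarily large period, values $\Delta^\pm(f,q)$ close to $\Delta^\pm(f,p)$ (so the exponent hypothesis of the lemma transfers to $q$), and by Lemma~\ref{l.domination} combined with the density of $\cO(q)$ in $H(p)$ no $N$-dominated splitting for any prescribed $N$. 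Proposition~\ref{p.dimension} then yields $g\in\Diff^1(M)$ arbitrarily $C^1$-close to $f$, whose perturbation is supported in an arbitrarily small neighborhood of $\cO(q)$, together with a linear conformal horseshoe $K\ni q$ such that $\dim_H(K)$ is strictly larger than the unstable dimension of $p$. The Gourmelon-type Franks lemma (Theorem~\ref{l.gourmelon}) built into the Main Theorem construction preserves the pre-existing homoclinic relation between $\cO(p)$ and $\cO(q)$, so $K$ is homoclinically related to $\cO(p_g)$, and hence $K\subset H(p_g)$ and $\dim_H(H(p_g))\geq\dim_H(K)$ is strictly larger than the unstable dimension of $p$. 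Since $g$ is a continuity point of $h\mapsto\dim_H(K_h)$ and transverse heteroclinic intersections persist, the same strict inequality holds on a whole $C^1$-neighborhood of $g$.

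The main technical obstacle is aligning the exponent hypothesis of the lemma with the sign convention inside Proposition~\ref{p.dimension}: the latter assumes $\Delta(f,\cdot)=\Delta^+(f,\cdot)$, under which the explicit computation on the conformal horseshoe gives $d^u\to d_0-k$ (saturation in the unstable direction) and $d^s>0$, yielding $\dim_H(K)>d_0-k$. This is precisely the regime relevant to Theorem~\ref{t.dimension}, and it transfers from $p$ to $q$ by continuity of the Lyapunov data in the approximation of Proposition~\ref{p.control-exponents}. A secondary issue is keeping $K$ inside $H(p_g)$ rather than letting it escape to a disjoint chain-recurrence class (contrast with Lemma~\ref{l.expell}, where the opposite is arranged by turning the approximating orbit into a sink or source): here the Main Theorem construction leaves the derivative along $\cO(q)$ unchanged at the periodic point and preserves, via Theorem~\ref{l.gourmelon}, the heteroclinic connection to $\cO(p)$, so $\cO(q)$ and $\cO(p_g)$ remain homoclinically related after the perturbation.
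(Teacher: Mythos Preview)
Your proposal is correct and follows essentially the same approach as the paper, which merely sketches: use Proposition~\ref{p.control-exponents} to find a periodic orbit inside $H(p)$ without strong dominated splitting and with exponents close to those of $p$, apply the Main Theorem together with the dimension estimate of Proposition~\ref{p.dimension} to create a conformal horseshoe inside $H(p_g)$, and invoke the continuity of $h\mapsto\dim_H(K_h)$ for robustness. Your version spells out the details (preservation of the homoclinic relation to $\cO(p)$ via the ``moreover'' clause of Theorem~\ref{t.newhouse}, and the transfer of the exponent hypothesis from $p$ to $q$) that the paper leaves implicit.
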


The proof of Theorem~\ref{t.dimension} is then concluded by
a routine Baire argument very similar to the proof of Theorem~\ref{t.infiniteHC}.}

\noindent
\emph{J\'er\^ome Buzzi}\\
{\small Laboratoire de Mathématiques d'Orsay, CNRS - UMR 8628\\
Universit\'e Paris-Sud 11, 91405 Orsay, France}\\
{\tt jerome.buzzi@math.u-psud.fr}\\

\noindent
\emph{Sylvain Crovisier}\\
{\small Laboratoire de Mathématiques d'Orsay, CNRS - UMR 8628\\
Universit\'e Paris-Sud 11, 91405 Orsay, France}\\
{\tt sylvain.crovisier@math.u-psud.fr}\\

\noindent
\emph{Todd Fisher}\\
{\small Department of Mathematics, Brigham Young University,
Provo, UT 84602, USA}\\
{\tt tfisher@math.byu.edu}

\end{document}